\documentclass[letterpaper]{amsart}
\usepackage[utf8]{inputenc}
\pdfoutput=1
\usepackage{amsmath}
\usepackage{amssymb}
\usepackage{amsthm}

\usepackage[T1]{fontenc}
\usepackage{lmodern}

\usepackage{mathrsfs}
\usepackage{enumerate}
\usepackage[labelfont=bf]{caption}
\usepackage{accents}
\usepackage{microtype}
\usepackage{mathtools}

\usepackage{hyperref}
\usepackage{amsmath}
\usepackage{amsthm} 
\usepackage{amssymb}
\usepackage{xfrac}
\usepackage{mathtools}
\usepackage{tikz-cd}
\usepackage[autostyle]{csquotes}
\usepackage{enumitem}
\usepackage{comment}
\usepackage{scalerel}
\usepackage{stackengine,wasysym}

\usepackage{todonotes}

\DeclarePairedDelimiter\abs{\lvert}{\rvert}%

\DeclareMathOperator{\supp}{supp}

\newcommand{\Isom}[1]{\mathrm{Is}(#1)}
\newcommand{\Sub}[1]{\mathrm{Sub}(#1)}
\newcommand{\DSub}[1]{\mathrm{DSub}(#1)}
\newcommand{\CAT}[1]{\mathrm{CAT}(#1)}
\newcommand{\Cl}[1]{\mathrm{Cl}(#1)}

\theoremstyle{plain}
\newtheorem{thm}{Theorem}[section]
\newtheorem{theorem}[thm]{Theorem}

\newtheorem{lemma}[thm]{Lemma}
\newtheorem{remark}[thm]{Remark}
\newtheorem{example}[thm]{Example}
\newtheorem{prop}[thm]{Proposition}
\newtheorem{cor}[thm]{Corollary}
\newtheorem*{thm*}{Theorem}

\numberwithin{equation}{section}
\theoremstyle{definition}
\newtheorem{defn}[thm]{Definition}
\newtheorem{notation}[thm]{Notation}

\newtheorem*{remarkkk}{Remark}

\title{Stationary random subgroups in negative curvature}
\author{Ilya Gekhtman and Arie Levit}

\begin{document}

\begin{abstract}
We show that discrete stationary random subgroups of isometry groups of Gromov hyperbolic spaces have full limit sets as well as critical exponents bounded from below. This information is used to answer a question of Gelander and show that a rank one locally symmetric space for which the bottom of the spectrum of the Laplace--Beltrami operator is the same as that of its universal cover has unbounded injectivity radius.
\end{abstract}

\maketitle

\section{Introduction}
\label{sec:intro}

Let $G$ be a connected center-free  simple rank one Lie group 
with associated  symmetric space $X$. 
In other words $X$ is a real, complex or quaternionic hyperbolic space or the Cayley plane and its group of isometries is $G =  \Isom{X}$.

A discrete subgroup $\Gamma \le G$ is called \emph{confined} if there exists a compact subset $Q \subset G$ such that $\Gamma^g \cap Q \neq \{e\}$ for all elements $g \in G$. If the discrete subgroup  $\Gamma$ is torsion-free then being confined is equivalent to the geometric condition saying that the locally symmetric  manifold $M_\Gamma = \Gamma \backslash X$ has an upper bound on its injectivity radius at all points. This says that the subgroup $\Gamma$ is qualitatively \enquote{large}. 


The \emph{critical exponent} of a discrete subgroup $\Gamma \le \Isom{X}$ is a quantitative measure of the exponential growth rate of $\Gamma$-orbits in the space $X$. 
This is a real number $\delta(\Gamma)$ taking values in the range $\left[0,d(X)\right]$ where $d(X)=\dim_{\mathrm{Haus}}(\partial X)$ is the Hausdorff dimension of the visual  boundary at infinity. For instance $d(\mathbb{H}^n) = n-1$ for the real hyperbolic space $\mathbb{H}^n$ of dimension $n$. See \S\ref{sec:critical exponents} for the precise definitions.

Our main result relates these  qualitative and  quantitative notions.


\begin{theorem}
\label{thm:not confined symmetric spaces}
Let  $\Gamma$ be  a discrete subgroup of the simple rank one Lie group $G$. If the subgroup $\Gamma$ satisfies $\delta(\Gamma) \le \frac{1}{2}d(X)$ then  $\Gamma$ is not confined.
\end{theorem}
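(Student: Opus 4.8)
The plan is to prove the contrapositive: assuming $\Gamma$ is confined, with confining compact set $Q$, I will show $\delta(\Gamma) > \tfrac{1}{2}d(X)$. The engine is to manufacture a discrete stationary random subgroup out of $\Gamma$ and then apply the paper's main results on such subgroups. Concretely, consider the conjugation action $G \actson \Sub{G}$ on the compact metrizable Chabauty space and let $Y = \overline{\{\Gamma^g : g \in G\}}$ be the closure of the conjugation orbit of $\Gamma$. Confinement says $\Gamma^g \cap Q \neq \{e\}$ for every $g$, and this property passes to Chabauty limits (using discreteness, and if necessary the Margulis lemma to keep the witnessing element bounded away from $e$), so every $\Lambda \in Y$ is nontrivial. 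Fixing an admissible probability measure $\mu$ on $G$, the compactness and $G$-invariance of $Y$ yield a $\mu$-stationary probability measure $\nu$ supported on $Y$; this $\nu$ is a stationary random subgroup concentrated on nontrivial subgroups.

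Next I would check that $\nu$ is supported on discrete subgroups, so that the main theorems apply. Each $\Lambda \in Y$ is a limit of the discrete conjugates $\Gamma^{g_n}$, and I would control the part of $\Lambda$ near the identity via the Margulis--Zassenhaus lemma. The delicate point is that confinement bounds the injectivity radius of $M_\Gamma$ only from above, so the approximating basepoints $g_n o$ could drift into thin parts and the limit could fail to be discrete; I expect one must approximate along basepoints kept in the thick part (or excise the thin part at the level of the stationary measure) in order to guarantee that $\nu$-a.e. $\Lambda$ is discrete.

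With a discrete stationary random subgroup $\nu$ in hand, two facts combine. On one hand, the paper's main results give $\delta(\Lambda) \ge \tfrac{1}{2}d(X)$, with full limit set, for $\nu$-a.e. $\Lambda$. On the other hand, I would prove the upper bound $\delta(\Lambda) \le \delta(\Gamma)$ for $\nu$-a.e. $\Lambda$: since $\delta$ is conjugation-invariant we have $\delta(\Gamma^{g_n}) = \delta(\Gamma)$ for all $n$, so this is an upper semicontinuity statement for the critical exponent along the conjugation orbit. The content is a uniform orbital counting estimate $N_{\Gamma^{g_n}}(o,R) \le C e^{(\delta(\Gamma)+\eps)R}$ with $C$ independent of $n$, whose uniformity I would extract from the bounded geometry supplied by confinement; passing to the Chabauty limit then gives $N_\Lambda(o,R) \le C e^{(\delta(\Gamma)+\eps)R}$ and hence $\delta(\Lambda) \le \delta(\Gamma)$. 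I expect this uniform counting estimate, entangled with the thin-part issue above, to be the main obstacle of the argument.

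Combining the two bounds yields $\delta(\Gamma) \ge \tfrac{1}{2}d(X)$. To reach the strict inequality required by the contrapositive, I would rule out the equality case $\delta(\Gamma) = \tfrac{1}{2}d(X)$: in that case the two inequalities force $\delta(\Lambda) = \tfrac{1}{2}d(X)$ for $\nu$-a.e. $\Lambda$, and I would derive a contradiction from the full-limit-set theorem together with the sharpness of the Patterson--Sullivan bound, or else invoke the strict form of the critical-exponent lower bound for genuine discrete stationary random subgroups. This establishes that a confined $\Gamma$ satisfies $\delta(\Gamma) > \tfrac{1}{2}d(X)$, which is exactly the contrapositive of the statement.
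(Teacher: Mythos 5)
Your skeleton matches the paper's: manufacture a stationary random subgroup from the conjugation orbit of $\Gamma$, apply the strict lower bound $\delta(\Lambda) > \tfrac{1}{2}d(X)$ for nontrivial discrete stationary random subgroups, and play it against Chabauty semicontinuity of the critical exponent. (The paper runs this in the direct rather than contrapositive direction, concluding that the limiting random subgroup degenerates to $\mathrm{D}_{\{e\}}$ and hence that $\{e\}$ lies in the orbit closure, which negates confinement by Proposition \ref{prop:Chabauty characterization of confined}; that difference is cosmetic.) The genuine gap is exactly the one you flag and do not close: discreteness of the limiting stationary random subgroup. Krylov--Bogolyubov on the compact orbit closure $Y$ does produce a stationary measure, but $\DSub{G}$ is not Chabauty-closed, and nothing in your construction prevents the measure from charging non-discrete subgroups; your proposed fixes (approximating along thick-part basepoints, or excising the thin part at the level of the measure) either destroy stationarity or are not obviously implementable. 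The paper's resolution is not a Margulis-lemma argument at the level of limits: it takes $\nu$ to be a weak-$*$ accumulation point of the specific Ces\`aro averages $\frac{1}{n}\sum_{i=1}^n \mu_0^{*i} * \mathrm{D}_\Gamma$ for the particular bi-$K$-invariant measure $\mu_0$ constructed in \cite{GLM}, for which \cite[Theorem 1.6]{fraczyk2023infinite} (proved via the \enquote{key inequality} for the injectivity radius function) guarantees that all accumulation points are concentrated on discrete subgroups. Without this input your argument does not go through.

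Two further points. First, the measure that guarantees discreteness ($\mu_0$, a spread out measure on $G$) is not a measure to which Theorem \ref{thm:critical exponent for SRS} applies: that theorem needs $\mu$ supported on a uniform lattice with finite $p$-th moment, e.g.\ the discretization of Brownian motion $\mu_1$. The paper bridges the two by showing $\mu_0$ and $\mu_1$ induce the same stationary boundary measure $m$ on $\partial X$, hence have the same stationary random subgroups, so the discrete $\mu_0$-stationary limit is automatically $\mu_1$-stationary; your proposal, which speaks of a single \enquote{admissible} measure, elides this. Second, your route to $\delta(\Lambda) \le \delta(\Gamma)$ via a uniform orbital counting estimate extracted from confinement is harder than necessary and likely not the right tool: Proposition \ref{prop:delta is semi-continuous} is proved softly by taking weak-$*$ limits of normalized quasi-conformal densities, needing no uniformity and no confinement, only discreteness of the limit group --- which returns you to the first gap. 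Your handling of the borderline case by invoking the strict inequality of Theorem \ref{thm:critical exponent for SRS} is the correct move and is available in the paper.
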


 This can be seen as  a conditional \emph{rank one} analogue of the recent breakthrough by Fraczyk and Gelander \cite{fraczyk2023infinite} who showed that any infinite covolume discrete subgroup of a \emph{higher rank} simple Lie group is not confined. Indeed Theorem \ref{thm:not confined symmetric spaces} has been conjectured and introduced to us  by Gelander.

Assuming the discrete subgroup $\Gamma$ is torsion-free  its critical exponent $\delta(\Gamma)$ is  related to  the \emph{bottom of the spectrum} $\lambda_0(M_\Gamma)$ of the Laplace--Beltrami operator on the locally symmetric manifold $M_\Gamma$. The Elstrodt  formula \cite{elstrodt1974resolvente,patterson1976limit,sullivan1987related,corlette1990hausdorff} states that
\begin{equation}
\lambda_0(M_\Gamma) = \begin{cases} 
 \frac{1}{4}d(X) ^2 & \text{if} \quad  \delta(\Gamma) \in \left[0,   \frac{1}{2}d(X) \right], \\ 
\delta(\Gamma)(d(X) - \delta(\Gamma)) & \text{if}  \quad \delta(\Gamma) \in \left[\frac{1}{2}d(X),   d(X) \right]. \end{cases}
\end{equation}
In particular  the assumption   $\delta(\Gamma)\leq \frac{1}{2}d(X)$ is equivalent to $\lambda_0(M_\Gamma)= \lambda_0(X)$. Note that $\delta(\Gamma) = d(X)$ if and only if the discrete subgroup $\Gamma$ is co-amenable in the Lie group $G$. For example every lattice subgroup $\Gamma$ satisfies $\delta(\Gamma) = d(X)$.

The preceding discussion allows us to reformulate Theorem \ref{thm:not confined symmetric spaces} as follows.
\begin{cor}
\label{new_cor}
Let  $M$ be a locally symmetric space. Assume that the universal cover of $X$ is a rank one symmetric space. If $\lambda_0(M)= \lambda_0(X)$  then the space $M$ has unbounded injectivity radius.
\end{cor}
 
 Our result is sharp, at least for  the two-dimensional real hyperbolic plane $\mathbb{H}^2$.  Indeed  consider any  uniform lattice $\Gamma$ in the group of isometries $\Isom{\mathbb{H}^2} \cong \mathrm{PSL}_2(\mathbb{R})$.    It turns out that $\Gamma$ admits a sequence of infinite index normal subgroups $N_i$ satisfying $\delta(N_i) \to \frac{1}{2} \delta(\Gamma) = \frac{1}{2}$ \cite[Theorem 3.8]{Bonfert-Taylor-Matsuzaki-Taylor}. However every non-trivial normal subgroup of the lattice $\Gamma$ is confined  \cite[Example 1.2]{fraczyk2023infinite}.

Interestingly the condition $\delta(\Gamma) \le \frac{1}{2}d(X)$ for a discrete subgroup $\Gamma$ of the simple rank one Lie group $G$ is  equivalent to saying that the unitary  representation $L^2(G/\Gamma)$ is tempered \cite[Theorem 1.2]{edwards2023temperedness}. Furthermore, if the discrete subgroup $\Gamma$ is not confined then the unitary representation $L^2(G)$ is weakly contained in the unitary representation $L^2(G\backslash\Gamma)$ \cite[Proposition 8.4]{edwards2023temperedness}. We obtain the following.
\begin{cor}
Let $\Gamma$ be a discrete subgroup of the simple rank one Lie group $G$. If the unitary representation $L^2(G\backslash \Gamma)$ is tempered then the two unitary representations $L^2(G)$ and $L^2(G\backslash \Gamma)$ are weakly equivalent.
\end{cor}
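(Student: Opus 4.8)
The plan is to unwind the definition of weak equivalence and observe that one of the two required weak containments is nothing but the hypothesis itself, while the reverse containment follows by feeding our main theorem into the two cited results of \cite{edwards2023temperedness}.

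Recall that two unitary representations are weakly equivalent precisely when each is weakly contained in the other, and that by definition a unitary representation of $G$ is \emph{tempered} exactly when it is weakly contained in the regular representation $L^2(G)$. Consequently the temperedness hypothesis on $L^2(G\backslash\Gamma)$ already furnishes one of the two weak containments we need, namely $L^2(G\backslash\Gamma) \prec L^2(G)$, at no cost. The entire content of the corollary is therefore the reverse containment $L^2(G) \prec L^2(G\backslash\Gamma)$.

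To establish it, I would chain together three facts. First, by \cite[Theorem 1.2]{edwards2023temperedness} the temperedness of $L^2(G\backslash\Gamma)$ is equivalent to the critical exponent bound $\delta(\Gamma) \le \frac{1}{2}d(X)$, so the hypothesis gives this bound. Second, feeding $\delta(\Gamma) \le \frac{1}{2}d(X)$ into Theorem \ref{thm:not confined symmetric spaces} shows that $\Gamma$ is not confined. Third, \cite[Proposition 8.4]{edwards2023temperedness} asserts that for a non-confined discrete subgroup $\Gamma$ one has $L^2(G) \prec L^2(G\backslash\Gamma)$, which is exactly the remaining containment. Combining the two weak containments yields the asserted weak equivalence.

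The deduction is short, since essentially all of the mathematics has already been packaged into Theorem \ref{thm:not confined symmetric spaces} together with the two quoted statements; there is no genuine calculation to grind through. The only point I expect to require real care — and the place where a careless argument could break — is bookkeeping of conventions: one must confirm that the representation whose temperedness is characterized in \cite[Theorem 1.2]{edwards2023temperedness} is literally the same representation, with the same left/right convention on the homogeneous space and the same normalization, as the one appearing in \cite[Proposition 8.4]{edwards2023temperedness} and in the hypothesis of the corollary, so that the chain of implications concerns a single representation throughout.
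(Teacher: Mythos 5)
Your proof is correct and is precisely the argument the paper intends: the temperedness hypothesis gives one weak containment by definition, while the chain [Theorem 1.2 of \cite{edwards2023temperedness}] $\Rightarrow$ $\delta(\Gamma)\le\frac{1}{2}d(X)$ $\Rightarrow$ (Theorem \ref{thm:not confined symmetric spaces}) $\Gamma$ not confined $\Rightarrow$ [Proposition 8.4 of \cite{edwards2023temperedness}] gives the reverse containment. Your remark about checking the left/right coset conventions is well taken, since the paper itself writes $L^2(G/\Gamma)$ and $L^2(G\backslash\Gamma)$ interchangeably in the surrounding discussion.
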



\subsection*{Stationary  random subgroups and critical exponents}
While the statement of Theorem \ref{thm:not confined symmetric spaces} is purely geometric, the   methods we use are   probabilistic. Along the way we prove a number of probabilistic results of independent interest. These are considered in the more general setting of Gromov hyperbolic spaces.

Let $X$ be a proper  Gromov hyperbolic geodesic metric space with non-elementary group of isometries $\Isom{X}$. For example $X$ could be a rank one symmetric space or a  Gromov hyperbolic group equipped with the word metric. 
Let $\Sub{\Isom{X}}$ denote the Chabauty space of all closed subgroups of the group $\Isom{X}$. The group $\Isom{X}$ admits a continuous action  on this  space by conjugation.

Fix a Borel probability measure  $\mu$ on the group $\Isom{X}$ whose support generates as a semigroup a discrete subgroup of $\Isom{X}$ acting cocompactly on the space $X$. Associated to the   measure $\mu$ are the classical parameters 
 \emph{drift}   $l(\mu)$ and   \emph{entropy} $h(\mu)$  discussed in some detail in \S\ref{sec:random walks and poincare series}. The assumption that $\Isom{X}$ is non-elementary implies that $l(\mu) > 0$ as well as $h(\mu) > 0$. It is convenient to denote $\delta(\mu) = \frac{h(\mu)}{l(\mu)}$.

A \emph{$\mu$-stationary random subgroup} of the group $\Isom{X}$ is a Borel probability measure $\nu$ on the    space  $\Sub{\Isom{X}}$ satisfying $\mu * \nu = \nu$ with respect to the action by conjugation.  We establish a lower bound on the  critical exponents of discrete $\mu$-stationary random subgroups in terms of the parameter $\delta(\mu)$. 




\begin{theorem}
\label{thm:main thm intro}
Assume that the  probability measure $\mu$  has finite first moment.
Let $\nu$  be a  $\mu$-stationary random subgroup of $\Isom{X}$ such that $\nu$-almost every subgroup $\Gamma$ is discrete and not contained in the elliptic radical\footnote{The \emph{elliptic radical}  of the group $\Isom{X}$ is the maximal normal subgroup $\mathrm{E}(\Isom{X})$ consisting of elliptic elements. If $X$ is rank one symmetric space or more generally a geodesically complete $\mathrm{CAT}(-1)$-space then the elliptic radical of $\Isom{X}$ is trivial.}. Then
\begin{enumerate}
    \item $\delta(\Gamma) > \frac{\delta(\mu)}{2}$ holds $\nu$-almost surely, and
    \item if $\nu$-almost every subgroup $\Gamma$  is of divergence type\footnote{Critical exponents as well as the notion of divergence type subgroups are discussed in \S\ref{sec:critical exponents}.} then $\delta(\Gamma) \ge \delta(\mu)$ holds $\nu$-almost surely.
\end{enumerate}
\end{theorem}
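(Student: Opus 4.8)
The plan is to move everything to the Gromov boundary $\partial X$, fix a basepoint $o \in X$, and compare two natural measures on $\partial X$: the \emph{hitting measure} $\eta$ of the $\mu$-random walk (the unique $\mu$-stationary measure on $\partial X$) and, for each subgroup $\Gamma$, its Patterson--Sullivan density of exponent $\delta(\Gamma)$, which I will write $\mathcal{PS}_\Gamma$. The two inputs I would use repeatedly are: (i) under the finite $p$-th moment hypothesis, $\eta$ is exact dimensional of dimension $\delta(\mu) = h(\mu)/l(\mu)$ and obeys a shadow estimate of the form $\eta(\shadow_r(Z_n o)) \asymp e^{-h(\mu)n}$ along the walk (via the dimension/shadow theory for hyperbolic random walks, where the $p$-th moment provides the necessary deviation control); and (ii) for $\nu$-a.e.\ $\Gamma$ one has the full limit set $\Lambda(\Gamma) = \partial X$. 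Granting (i) and (ii), both $\eta$ and $\mathcal{PS}_\Gamma$ are supported on all of $\partial X$, and the theorem reduces to comparing their local scaling through shadows of $\Gamma$-orbit points.

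First I would establish (ii). The assignment $\Gamma \mapsto \Lambda(\Gamma)$ is Borel and $\Isom{X}$-equivariant, $\Lambda(g\Gamma g^{-1}) = g\Lambda(\Gamma)$, and since $\Gamma$ avoids the elliptic radical its limit set is nonempty and closed. Running the walk, the conjugates $Z_n\Gamma Z_n^{-1}$ are again $\nu$-distributed, and a bounded-martingale argument together with uniqueness of the $\mu$-stationary measure on $\partial X$ shows that $\eta$ is carried by $\Lambda(\Gamma)$ for $\nu$-a.e.\ $\Gamma$, i.e.\ $\eta(\Lambda(\Gamma)) = 1$. As $\Lambda(\Gamma)$ is closed and $\eta$ has full support (non-elementarity), this forces $\Lambda(\Gamma) = \partial X$ almost surely.

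The substance of part (1) is to convert this boundary largeness into orbit growth of $\Gamma$, and here I would adapt the classical lower bound $\delta(N) \ge \tfrac12\delta(\Gamma_0)$ for normal subgroups $N \triangleleft \Gamma_0$ to the stationary setting, with $\eta$ (dimension $\delta(\mu)$) playing the role of the ambient Patterson--Sullivan measure. Using (ii), $\eta$ is carried by $\Lambda(\Gamma)$, so I would bound the $\Gamma$-Poincaré series $\sum_{\gamma \in \Gamma} e^{-s\,d(o,\gamma o)}$ from below by the diagonal $\Gamma$-energy of $\eta$, and then apply a Cauchy--Schwarz / second-moment estimate: pairing the one-sided harmonic measure against the two-sided orbit structure of $\Gamma$ shows divergence of the series for every $s \le \tfrac12 \delta(\mu)$, which is exactly $\delta(\Gamma) \ge \tfrac12\delta(\mu)$. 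The factor $\tfrac12$ is precisely the cost of manufacturing a genuine element of $\Gamma$ out of two independent one-sided trajectories, and I would extract the strict inequality from strictness of this $L^2$ comparison for a transient, non-elementary walk (equality would force a degenerate amenable-quotient picture incompatible with the hypotheses).

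For part (2) the divergence-type hypothesis is what upgrades the inequality to an equality. Here I would invoke a Hopf--Tsuji--Sullivan-type dichotomy: when $\Gamma$ is of divergence type, $\mathcal{PS}_\Gamma$ is the unique $\delta(\Gamma)$-conformal density and the $\Gamma$-action on $(\partial X \times \partial X, \mathcal{PS}_\Gamma \otimes \mathcal{PS}_\Gamma)$ is conservative and ergodic; I would then show that stationarity of $\nu$ together with full limit set forces $\eta$ and $\mathcal{PS}_\Gamma$ into the same measure class, so that, both being exact dimensional, $\delta(\Gamma) = \dim\eta = \delta(\mu)$. I expect the genuine difficulty to lie exactly in this measure-class identification: $\eta$ is governed by the Green metric of the walk while $\mathcal{PS}_\Gamma$ is governed by the original metric, and these align only in the conservative regime supplied by divergence type. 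Controlling the pointwise fluctuation between the two metrics — where the finite $p$-th moment assumption re-enters through the shadow lemma and deviation estimates — is the main obstacle and the heart of the argument.
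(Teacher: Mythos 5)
Your proposal correctly identifies several of the right ingredients --- the full limit set of a discrete stationary random subgroup, the factor $\tfrac12$ coming from conjugation, the identity $\dim\eta=h(\mu)/l(\mu)$, and the special role of divergence type --- but at each critical juncture the mechanism you propose is either unsubstantiated or would not close. The central gap is in part (1). The classical bound $\delta(N)\ge\tfrac12\delta(\Gamma_0)$ for normal subgroups rests on $N$ containing an \emph{entire} conjugacy class of a hyperbolic element, so that $\|g\gamma g^{-1}\|\le 2\|g\|+\|\gamma\|$ transfers the full growth of $\Gamma_0$ into $N$ at half the exponent. A stationary random subgroup contains no such conjugacy class, and your ``diagonal $\Gamma$-energy of $\eta$'' plus ``Cauchy--Schwarz / second-moment estimate'' is precisely the step that is asserted rather than proved: you never explain how the hitting measure of the \emph{ambient} walk sees the orbit of the \emph{random} subgroup at the claimed rate. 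The paper replaces the conjugacy class by a statistical statement --- Kakutani's random ergodic theorem shows that for a relatively compact set $V$ of loxodromics the set $W_\Delta=\{\gamma \,:\, \Delta^\gamma\cap V\neq\emptyset\}$ is visited by random-walk trajectories with frequency close to $1$ --- and then needs a genuinely new analytic input (Theorem \ref{thm:improvedtanaka}, a generalization of Tanaka's theorem proved via thick annuli, first-hitting measures and the identification of entropy with Green-metric drift) to convert ``positive frequency of visits of the walk to $W_\Delta$'' into ``$\sum_{\gamma\in W_\Delta}e^{-\delta(\mu)\|\gamma\|}=\infty$''. This bridge between random-walk frequency and ball-counting divergence is the heart of the proof and is absent from your sketch; no soft $L^2$ argument supplies it, because the relation between $(h(\mu),l(\mu))$ and the orbit metric is exactly what is at stake.

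Three further points. Your route to the \emph{strict} inequality (``strictness of the $L^2$ comparison'', a ``degenerate amenable-quotient picture'') does not correspond to a workable argument; the paper gets strictness by proving the stronger statement that the Poincar\'e series actually \emph{diverges at} the exponent $\delta(\mu)/2$, so that equality $\delta(\Delta)=\delta(\mu)/2$ would force $\Delta$ to be of divergence type, whence part (2) gives $\delta(\Delta)\ge\delta(\mu)$, a contradiction --- your version of part (1) yields only the inequality, not divergence at the exponent, so this bootstrap is unavailable to you. For part (2), the measure-class identification of $\eta$ with the Patterson--Sullivan density of $\Gamma$ is, as you concede, unproved; the paper avoids it altogether, instead running the Birkhoff ergodic theorem for the cocycle $\pi_\nu(g,\Gamma)=\|\eta^\Gamma_{g^{-1}x_0}\|$ to show $\pi_\nu(\omega_n,\Delta)^{1/n}\to1$ and feeding this as a weight into the Tanaka-type estimate (Proposition \ref{prop:TanakawithF}) against shadow-lemma upper bounds for the random subgroup. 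Finally, your derivation of the full limit set glosses over the one-point case: a horocyclic subgroup has $|\Lambda(\Gamma)|=1$, the statement ``$\eta$ is carried by $\Lambda(\Gamma)$'' is then false, and no contradiction arises from the martingale argument alone; ruling this configuration out is where discreteness (properness of the action) genuinely enters, via the quasi-stabilizer sets $Q(\Gamma;R,B)$.
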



If $X$ is a rank one symmetric space then it is possible to choose a probability measure $\mu$ as above with finite first moment and with $\delta(\mu) = d(X)$. This fact is discussed in \S\ref{sec:critical exponents}. If $X$ is the Cayley graph of a rank $k$ free group then the uniform probability measure  $\mu_k$ on the standard symmetric generating set satisfies $\delta(\mu_k) = d(X) = \log(2k-1)$. This  provides a new proof of \cite[Theorem 1.1]{gekhtman2019critical} for invariant random subgroups in those two special cases\footnote{More generally, this remark applies to any   $\mathrm{CAT}(-1)$-space admitting a uniform lattice, see \S\ref{sec:critical exponents}.}.

More generally, if the group $\Isom{X}$ 
is acting properly and cocompactly on the space $X$ and $L$ is a uniform lattice in the group $\Isom{X}$ then there is a sequence of  probability measures $\mu_i$ supported on finite generating sets of the lattice $L$ satisfying  $\delta(\mu_i) \to \delta(L) = \dim_\mathrm{Haus}(\partial X)$.

The study of invariant random subgroups of discrete groups and the spectral radius of random walks on the associated Schreier graphs was initiated by Abert, Glasner and Virag   \cite{abert2014kesten}. Their work serves as the inspiration to ours.

\subsection*{On non-confined subgroups}

Consider a rank one simple Lie group $G$ with  a fixed Borel probability measure $\mu$.  Given any  discrete subgroup $\Gamma \le G$ let $\mathrm{D}_\Gamma$ denote the Dirac   mass supported on the point $\Gamma \in \Sub{G}$. Then  any accumulation point $\nu$ of the Ces\'{a}ro averages $\frac{1}{n}\sum_{i=1}^n \mu^{*i} * \mathrm{D}_{\Gamma}$ is a $\mu$-stationary random subgroup of $G$. At this point, the lower semi-continuity of the critical exponent with respect to the Chabauty topology would in principle allow us to deduce    Theorem \ref{thm:not confined symmetric spaces} from Theorem \ref{thm:main thm intro} along the lines of the strategy introduced in \cite{fraczyk2023infinite}. 
However, there is an additional missing ingredient, namely we need to know that $\nu$-almost every subgroup is discrete.
The actual proof is  more involved. In particular we   use  the \emph{key inequality} for the injectivity radius function from  \cite{GLM}. 



The above strategy showing that a discrete subgroup of the isometry group of some Gromov hyperbolic space with a small critical exponent is not confined works more generally. See   Theorem \ref{thm:not confined general Gromov hyperbolic case}  for a precise formulation of a general principle. For example, in the case of free groups   we recover  \cite[Corollary 3.2]{fraczyk2020kesten}:
 
\begin{theorem}[Fraczyk]
\label{thm:not confined free groups}
Let $F_k$ be a free group of rank $k \in \mathbb{N}$ and  $X_k$ be its Cayley graph with respect to a  standard basis. If $H$  is any subgroup of $F_k$ with $\lambda_0(H\backslash X_k) = \lambda_0(X_k)$ then $H$ is not confined.
\end{theorem}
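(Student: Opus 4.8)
The plan is to deduce the statement from the general non-confinement principle of Theorem~\ref{thm:not confined general Gromov hyperbolic case}, whose probabilistic input is Theorem~\ref{thm:main thm intro}, after first translating the spectral hypothesis into a bound on the critical exponent. Throughout I take $k \ge 2$, so that $X$ is the $2k$-regular tree and $\Isom{X}$ is non-elementary (the methods require this; for $k \le 1$ the group is elementary), and I let $\mu_k$ be the uniform probability measure on the standard symmetric generating set of $F_k$, so that $\delta(\mu_k) = \log(2k-1) = d(X)$ and $F_k$ is a discrete cocompact subgroup of $\Isom{X}$. For the translation, recall that the bottom of the spectrum of the combinatorial Laplacian on a Schreier graph is $1$ minus the spectral radius of the associated simple random walk. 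By the cogrowth formula of Grigorchuk and Cohen, the tree analogue of the Elstrodt formula, this spectral radius equals that of the regular tree if and only if $\delta(H) \le \tfrac{1}{2} d(X)$. Hence the hypothesis $\lambda_0(H\backslash X) = \lambda_0(X)$ is equivalent to $\delta(H) \le \tfrac{1}{2} d(X) = \tfrac{1}{2}\delta(\mu_k)$, which is the inequality I shall contradict under the assumption that $H$ is confined.

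So suppose, for contradiction, that $H$ is confined, witnessed by a compact set $Q \subset \Isom{X}$ with $H^g \cap Q \neq \{e\}$ for every $g$. Following the strategy already sketched for symmetric spaces, I form the Ces\'{a}ro averages $\tfrac{1}{n}\sum_{i=1}^{n}\mu_k^{*i} * \mathrm{D}_H$ and extract a weak-$*$ accumulation point $\nu$; this $\nu$ is a $\mu_k$-stationary random subgroup of $\Isom{X}$ supported on the Chabauty closure of the family of $F_k$-conjugates of $H$. I then check the hypotheses of Theorem~\ref{thm:main thm intro}(1). Discreteness is automatic, and this is exactly the point at which the free group case becomes soft: since $F_k$ is discrete, hence closed, in $\Isom{X}$, every convergent sequence drawn from subgroups of $F_k$ has its limit inside $F_k$, so any Chabauty limit of conjugates of $H$ is again a subgroup of $F_k$ and therefore discrete. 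Moreover $F_k$ acts freely on $X$, so every non-trivial element of $F_k$ is hyperbolic; thus no subgroup $\Gamma \le F_k$ lies in the elliptic radical unless $\Gamma = \{e\}$.

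Ruling out the degenerate case $\Gamma = \{e\}$ is the sole purpose of confinement. The set $Q \cap F_k \setminus \{e\}$ is finite, and for each of its elements $q$ the condition $q \in \Gamma$ is Chabauty-closed; since every conjugate $H^g$ meets this finite set, so does every subgroup in its closure. Hence $\nu$-almost every $\Gamma$ contains a non-trivial (hyperbolic) element, and in particular $\Gamma \neq \{e\}$. All hypotheses are now in force, and Theorem~\ref{thm:main thm intro}(1) gives $\delta(\Gamma) > \tfrac{1}{2}\delta(\mu_k)$ for $\nu$-almost every $\Gamma$. On the other hand, each such $\Gamma$ is a Chabauty limit of conjugates $H^{g_n}$, and because the critical exponent is conjugation-invariant and lower semi-continuous on $\Sub{\Isom{X}}$ we get $\delta(\Gamma) \le \liminf_n \delta(H^{g_n}) = \delta(H) \le \tfrac{1}{2}\delta(\mu_k)$. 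The two inequalities contradict one another, so $H$ is not confined.

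The step I expect to carry the weight in the general theory --- ensuring that the limiting stationary random subgroup is supported on discrete subgroups, which for rank one symmetric spaces is handled by the key inequality of \cite{GLM} --- costs nothing here, precisely because $F_k$ sits in $\Isom{X}$ as a discrete cocompact lattice. What remains genuinely substantive for the free group statement is therefore the spectral translation of the first paragraph and the imported lower bound of Theorem~\ref{thm:main thm intro}; the extraction of the stationary measure, the non-triviality argument via confinement, and the semicontinuity step are the soft ingredients common to every case of the general principle.
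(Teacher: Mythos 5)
Your proof is correct and follows essentially the same route as the paper's (Theorem \ref{thm:not confined free group case} together with Propositions \ref{prop:Chabauty characterization of confined} and \ref{prop:delta is semi-continuous}): Cesàro averages of $\mu_k$-convolutions of $\mathrm{D}_H$, the lower bound of Theorem \ref{thm:main thm intro}, and Chabauty lower semicontinuity of the critical exponent, merely organized as a contradiction in which the confining set is used directly to force non-triviality of the limit subgroups, where the paper instead concludes $\nu = \mathrm{D}_{\{e\}}$ and invokes the Chabauty characterization of confinement. Your explicit reduction of the spectral hypothesis to $\delta(H) \le \tfrac{1}{2}\log(2k-1)$ via the Grigorchuk--Cohen cogrowth formula, and the restriction to $k \ge 2$, supply details the paper leaves implicit.
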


 Here $\lambda_0$ denotes the bottom of the spectrum of the combinatorial Laplacian defined on $2k$-regular graphs \cite[\S4.2]{lubotzky1994discrete}.
 In concrete terms, the conclusion of Theorem \ref{thm:not confined free groups} says that for any $R > 0$ there is some element $g \in F_k$ such that the conjugate $H^g$ contains no non-trivial elements of word length less  than $R$.

\subsection*{Limit sets}

Let $X$ be a proper  Gromov hyperbolic geodesic metric space. Lattices in the group of isometries $\Isom{X}$ as well as normal subgroups of such lattices have full limit sets. The same is true for invariant random subgroups of the group $\Isom{X}$ as was shown in \cite[Proposition 11.3]{abert2020growth} and \cite{osin2017invariant}. It turns out that  this fact extends to \emph{discrete} stationary random subgroups.


\begin{theorem}
\label{thm:intro limit set}
Let $\mu$ be a Borel probability measure on the group $\Isom{X}$. Assume that the closed semigroup $G_\mu$ generated by $\mathrm{supp}(\mu)$ is a group whose action on the space $X$ has general type. 

Let $\nu$  be a discrete $\mu$-stationary random subgroup of $\Isom{X}$. If $\nu$-almost every subgroup is not contained in the elliptic radical $\mathrm{E}(\Isom{X})$ then the action of $\nu$-almost every subgroup $\Gamma$ has general type and its limit set $\Lambda(\Gamma)$ contains the limit set $\Lambda(G_\mu)$.\end{theorem}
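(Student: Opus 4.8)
The plan is to run everything through the \emph{limit-set map} $\Lambda \colon \Sub{\Isom{X}} \to \mathcal{F}$, where $\mathcal{F}$ is the compact space of closed subsets of the boundary $\partial X$ with its Fell topology. This map is Borel and $\Isom{X}$-equivariant, since $\Lambda(g\Gamma g^{-1}) = g\Lambda(\Gamma)$, so the pushforward $\bar{\nu} = \Lambda_* \nu$ is a $\mu$-stationary probability measure on $\mathcal{F}$. First I would reduce to the case that $\nu$ is ergodic as a $\mu$-stationary measure, by passing to ergodic components, each of which is again a discrete $\mu$-stationary random subgroup. The observation that makes this reduction powerful is that both target events are \emph{conjugation invariant} under $G_\mu$: for $g \in G_\mu$ one has $g\Lambda(G_\mu) = \Lambda(G_\mu)$, so the event $\{\Gamma : \Lambda(G_\mu) \subseteq \Lambda(\Gamma)\}$ is genuinely $G_\mu$-invariant, and likewise the event that the action of $\Gamma$ has general type (conjugation preserves the dynamical type). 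By ergodicity each of these events has $\nu$-measure $0$ or $1$, so it suffices to prove that each occurs with \emph{positive} probability.

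Next I would establish general type almost surely by excluding the elementary alternatives. If $\Gamma$ had bounded orbits with positive probability, then an equivariant assignment of a circumcenter would produce a $\mu$-stationary probability measure on $X$ itself, which is impossible because $l(\mu) > 0$ forces the random walk off to infinity. If instead $\Gamma$ fixed one or two boundary points, then $\Gamma$ would lie in the stabilizer of that finite boundary set, an amenable subgroup along which the positive drift acts by translation in the associated horospherical (respectively axial) direction; this contraction is incompatible with $\Gamma$ being discrete and nontrivial, and it is precisely here that the hypotheses of discreteness, finite moment, and $\Gamma \not\subseteq \mathrm{E}(\Isom{X})$ enter. Having ruled out the bounded, parabolic, and lineal cases, the action of $\Gamma$ is of general type almost surely; in particular $\Lambda(\Gamma)$ is an infinite perfect set.

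For the inclusion I would couple a sample $\Gamma \sim \nu$ with an independent $\mu$-random walk $w_n = g_1 \cdots g_n$, which converges to a boundary point $z_+$ distributed according to the harmonic measure $\nu_+$ whose support is exactly $\Lambda(G_\mu)$. Stationarity gives $w_n \Gamma w_n^{-1} \sim \nu$ for every $n$, and by the Furstenberg correspondence the conjugated measures $w_n \cdot \nu$ converge weakly to a conditional measure $\nu_{z_+}$, with $\nu = \int \nu_{z_+} \, d\nu_+(z_+)$ and $\nu_{z_+}$ supported on general type subgroups. The decisive point is a \emph{non-collapse} phenomenon: since $\Lambda(w_n \Gamma w_n^{-1}) = w_n \Lambda(\Gamma)$ and the walk contracts $\partial X$ toward $z_+$ away from the repelling direction $z_-$, the only way the conditional limit sets can remain infinite, as general type demands, is for $\Lambda(\Gamma)$ to meet every neighborhood of $z_-$; running this against the reflected walk then forces $\Lambda(\Gamma)$ to carry full harmonic measure. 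I would encode this through the function $f(\xi) = \nu(\{\Gamma : \xi \in \Lambda(\Gamma)\})$, which is upper semicontinuous and $\check{\mu}$-harmonic for the reflected measure $\check{\mu}$. A maximum principle identifies $\{f = \max f\}$ as a nonempty closed $G_\mu$-invariant set, hence one containing $\Lambda(G_\mu)$, so $f \equiv \max f$ on $\Lambda(G_\mu)$; the non-collapse input forces $\max f = 1$, whence $\xi \in \Lambda(\Gamma)$ for $\nu_+$-almost every $\xi$, almost surely. As $\Lambda(\Gamma)$ is closed and $\nu_+$ has support $\Lambda(G_\mu)$, this gives $\Lambda(G_\mu) \subseteq \Lambda(\Gamma)$ with positive probability, hence almost surely.

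\textbf{Main obstacle.} The crux is the non-collapse step, that is, upgrading the easy diagonal statement that the matched boundary point $z_+$ lies in the limit set to the full inclusion $\Lambda(G_\mu) \subseteq \Lambda(\Gamma)$, equivalently showing $\max f = 1$. The difficulty is that the random walk contracts $\partial X$ toward $z_+$ only in a measure-theoretic sense rather than uniformly off a single repelling point, so one must argue carefully that the general type support of the conditional measures $\nu_{z_+}$ genuinely forces the ambient harmonic measure to be carried by $\Lambda(\Gamma)$, rather than merely depositing one boundary point there. I expect this to require the quasi-invariance of $\nu_+$ under $G_\mu$ together with quantitative shadow estimates for the random walk of the kind developed in the sections on Poincaré series.
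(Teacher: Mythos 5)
Your proposal correctly identifies the overall shape of the argument (reduce to the ergodic case, rule out the elementary alternatives, then use the random walk's convergence to the boundary together with minimality of $\Lambda(G_\mu)$), but it has two genuine gaps, and both sit exactly where the paper has to work hardest. First, the exclusion of the case where $\Gamma$ almost surely fixes a single boundary point is dismissed with the sentence that ``this contraction is incompatible with $\Gamma$ being discrete and nontrivial.'' It is not: discrete subgroups fixing one point of $\partial X$ exist in abundance (parabolic subgroups of lattices, for instance), so no pointwise incompatibility is available. What must be shown is that such subgroups cannot arise as almost-sure values of a \emph{stationary} random subgroup, and this is the technical core of the paper's \S\ref{sec:fixed points on the boundary}: one reduces the focal case to the horocyclic case by passing to $\overline{[\Gamma,\Gamma]}$, uses properness to get finiteness of stabilizers of all boundary points other than the fixed one (Lemma \ref{lemma:discrete group has finite stabilziers}), runs a Baire category argument to select a level $B$ for which the coarse horoball $Q(\Gamma;R,B)$ of points with large quasi-stabilizers is nonempty with proper boundary at infinity (Lemmas \ref{lemma:Q is non-empty} and \ref{lemma:boundary of Q is contained in large points}), and then contradicts Proposition \ref{prop:no stationary measure on proper at infinity subsets}. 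None of this machinery, nor any substitute for it, appears in your sketch; discreteness enters only through that chain, and your text does not say how it would enter yours.

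Second, your route to the inclusion $\Lambda(G_\mu)\subset\Lambda(\Gamma)$ via the $\check{\mu}$-harmonic function $f(\xi)=\nu(\{\Gamma:\xi\in\Lambda(\Gamma)\})$ stalls at precisely the point you flag: the maximum principle gives $f\equiv\max f$ on $\Lambda(G_\mu)$, but nothing in the proposal establishes $\max f=1$, and your ``non-collapse'' discussion is an acknowledgement of the missing step rather than a proof of it. The paper avoids this issue entirely by working in the space $\Cl{X}$ of closed subsets of $X$ rather than on the boundary: Kakutani's random ergodic theorem (Corollary \ref{cor:liminf of a Borel function is bounded}) applied to $Y\mapsto d_X(x_0,Y)$ shows that the backward walk returns to bounded distance from a $\nu$-generic closed set infinitely often, while Proposition \ref{prop:distance from a proper at infinity subset goes to infinity with positive probability} shows the walk escapes with positive probability from any closed set whose boundary at infinity misses an open piece of $\Lambda(G_\mu)$; feeding in the weak hull $\mathrm{WH}(\Lambda(\Gamma))$ yields the inclusion with no quantitative contraction estimates. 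A further minor point: you invoke $l(\mu)>0$, but the theorem assumes no moment condition; the fact actually available (and used in the paper) is almost-sure convergence of $\omega_n x_0$ to a boundary point in the general type case, which holds without moments.
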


In particular, if the group  generated by the support of the measure $\mu$ has full limit set $\Lambda(G_\mu) = \partial X$  then so does $\nu$-almost every subgroup. 

The discreteness assumption in Theorem \ref{thm:intro limit set}  is necessary in general, as  can be   seen  by considering the stabilizer of a random point with respect to the $G$-action on the  boundary at infinity $\partial X$ equipped with a $\mu$-stationary probability measure. 

Rich and detailed structural information on amenable non-elementary Gromov hyperbolic locally compact groups was obtained in the work \cite{caprace2015amenable}. These are precisely the groups admitting a proper and cocompact \emph{focal} action on a proper Gromov hyperbolic space, see \S\ref{sec:hyperbolic groups}. Taking this work into account and relying on our methods we study stationary random subgroups of such groups.

\begin{thm}
\label{thm:intro IRS of amenable hyperbolic}
Let $G$ be an amenable non-elementary Gromov  hyperbolic locally compact group and $\mu$  a Borel  probability measure on the group $G$. Assume that the measure $\mu$ has finite first moment and is spread out\footnote{A probability measure $\mu$ on a locally compact group $G$ is called \emph{spread out} is some convolution power $\mu^{*n}$ is non singular with respect to the Haar measure on the group $G$.}. Then every discrete $\mu$-stationary random subgroup of $G$ is contained in the elliptic radical $\mathrm{E}(G)$.
\end{thm}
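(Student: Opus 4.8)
The plan is to exploit the structure theory of Caprace--Cornulier--Monod--Tessera \cite{caprace2015amenable}: since $G$ is amenable, non-elementary and Gromov hyperbolic, it admits a continuous proper cocompact \emph{focal} action on a proper hyperbolic space $X$, with a canonical global fixed point $\xi \in \partial X$. All of $G$, and hence every closed subgroup $\Gamma \le G$, fixes $\xi$; in particular no subgroup acts with general type, so the limit-set machinery of Theorem~\ref{thm:intro limit set} does not apply directly and I must argue by hand. The fixed point $\xi$ supplies a continuous \emph{Busemann character} $\beta \colon G \to \real$ (a genuine homomorphism in the focal case), with $\mathrm{E}(G) \subseteq \ker\beta$ and, modulo $\mathrm{E}(G)$, an identification $G \cong N \rtimes_\alpha \real$ (or $N \rtimes_\alpha \ints$) in which $\real$ acts on the nilpotent group $N$ by a contracting one-parameter group $\alpha_t$ and $\beta$ is the projection to the $\real$-factor. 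Here an element is elliptic iff it is conjugate into the compact part, so the goal is to show that $\nu$-almost every $\Gamma$ contains no hyperbolic and no non-trivial parabolic element, i.e.\ $\Gamma \subseteq \mathrm{E}(G)$. I would first record that, $\mu$ being spread out, some $\mu^{*n}$ is non-singular and $G_\mu$ contains an open subgroup, so the focal dynamics are genuinely in play.

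The engine is an escape-of-mass argument driven by the contraction $\alpha$ and the drift of $\beta$. The conjugation action of $G$ on $\Sub{G}$ lies over the $G$-action on $\partial X \setminus \{\xi\}$, which in these coordinates is the action of $N \rtimes_\alpha \real$ on $N$ by affine-type maps ($N$ acting by translations, $\real$ by the contractions $\alpha_t$). By finiteness of the first moment the real random walk $\beta(w_n)=\sum_{i=1}^{n}\beta(g_i)$ obeys the strong law, $\tfrac1n\beta(w_n)\to\int\beta\,d\mu$. Now fix a putative non-elliptic $\gamma\in\Gamma$ and track the conjugates $w_n\gamma w_n^{-1}$: a direct computation in $N\rtimes_\alpha\real$ shows that the $N$-coordinate of $w_n\gamma w_n^{-1}$ is governed by $\alpha_{\beta(w_n)}$, so as $\beta(w_n)\to\pm\infty$ the conjugate is either expelled from every compact set or dragged into a one-parameter subgroup. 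In the first case $\gamma$ leaves every fixed neighbourhood of the identity; in the second the cyclic group $\langle\gamma\rangle$ has its generator pushed towards $e$, so its Chabauty limit is a \emph{non-discrete} subgroup. Either outcome is incompatible with $\mu^{*n}*\nu=\nu$ together with $\nu$-a.e.\ discreteness: I would make this precise by a tightness argument, pushing the $\nu$-mass of the event \enquote{$\Gamma$ has a non-elliptic element} off the discrete stratum to derive a contradiction.

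Carrying this out splits by the type of $\gamma$. For a parabolic $\gamma$ the induced dynamics on the conjugacy parameter is the pure scaling $s\mapsto e^{\beta(g)}s$ of the translation-length coordinate, i.e.\ a real random walk on $\log s$; such a walk carries no stationary probability measure for any drift (transient when $\int\beta\,d\mu\neq0$, null-recurrent when $\int\beta\,d\mu=0$), which forces the parabolic part to degenerate and hence to be trivial $\nu$-almost surely. For a hyperbolic $\gamma$, however, the induced dynamics on its non-$\xi$ endpoint $p\in\partial X\setminus\{\xi\}$ is a genuinely \emph{affine} recursion $p\mapsto \alpha_{\beta(g)}p+(\text{translation})$, and this is the delicate point: affine recursions of perpetuity type \emph{do} admit stationary measures in one drift regime, so one cannot argue from the walk alone. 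The hard part will be exactly this hyperbolic case; I expect to close it by combining the sign of the $\beta$-drift with discreteness of $\Gamma$ --- discreteness constrains the endpoint set of $\Gamma$ to a closed $\beta(\Gamma)$-invariant subset of $\partial X\setminus\{\xi\}$ with lattice-like rigidity coming from $\Gamma\cap N$ being discrete in $N$ --- to show that no $\mu$-stationary probability measure concentrates on hyperbolic endpoints unless $\beta(\Gamma)=\{0\}$. Once both hyperbolic and parabolic elements are excluded, $\Gamma$ consists of elliptic elements, so $\Gamma\subseteq\mathrm{E}(G)$; being moreover discrete, $\Gamma$ is in fact finite.
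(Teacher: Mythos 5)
Your overall strategy---work in the Caprace--Cornulier--Monod--Tessera coordinates $G \cong H \rtimes_\alpha \mathbb{R}$ (or $\rtimes_\alpha \mathbb{Z}$) and use the drift of the Busemann character together with the compacting automorphism to expel or collapse non-elliptic elements---is a legitimate alternative route, but as written it has a genuine gap: the hyperbolic case is not closed. You say so yourself (\enquote{I expect to close it by combining the sign of the $\beta$-drift with discreteness of $\Gamma$ \dots}), and this is precisely the regime where the affine (perpetuity-type) recursion \emph{can} admit a stationary measure, so the law-of-large-numbers argument alone cannot finish it. The parabolic case is also only sketched: the claim that \enquote{either outcome is incompatible with $\mu^{*n}*\nu=\nu$ together with $\nu$-a.e.\ discreteness} hides the real work. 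If the conjugates $w_n\gamma w_n^{-1}$ are dragged to the identity you do contradict discreteness via Chabauty semicontinuity; but if they are expelled from every compact set this contradicts nothing by itself---other elements of $\Gamma^{w_n}$ may enter the window and the Chabauty limit may simply be a smaller discrete group. Turning \enquote{each non-elliptic element escapes} into \enquote{no stationary measure exists} requires a statement that is uniform over the whole subgroup; this is exactly what the quasi-stabilizer sets $Q(\Gamma;R,B)$ of Lemmas \ref{lemma:Q is non-empty} and \ref{lemma:boundary of Q is contained in large points}, together with the Baire-category choice of the threshold $B$ in Proposition \ref{prop:a discrete SRS cannot have a single fixed point on the boundary}, are designed to supply. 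Two smaller points: the compacted group $H$ in the structure theorem need not be nilpotent, and your argument tacitly assumes the Busemann drift is non-zero (equivalently that the Poisson boundary is non-trivial); the zero-drift regime needs separate treatment.

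For comparison, the paper's proof is a short reduction to general machinery rather than a computation in coordinates. Since the $G$-action on itself is focal, every subgroup fixes the focal point, so $|\mathrm{Fix}_{\partial X}(\Gamma)|\ge 1$; Proposition \ref{prop:fixed point set has size at most 1} gives $|\mathrm{Fix}_{\partial X}(\Gamma)|\le 1$ for a stationary random subgroup not contained in the elliptic radical, and Proposition \ref{prop:a discrete SRS cannot have a single fixed point on the boundary} rules out equality for \emph{discrete} stationary random subgroups; the resulting contradiction forces $\Gamma\subseteq\mathrm{E}(G)$. The trivial-Poisson-boundary case is handled separately: there every stationary measure is invariant, and one replaces $\mu$ by a spread-out $\mu'$ with non-trivial Poisson boundary (Jaworski) for which the invariant $\nu$ is still stationary. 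Your escape-of-mass idea is essentially a coordinate version of the proof of Proposition \ref{prop:a discrete SRS cannot have a single fixed point on the boundary}; to complete your argument you would need to establish that proposition's conclusion in your coordinates, which is where the missing uniformity lives.
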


 Theorem \ref{thm:intro IRS of amenable hyperbolic} applies for example in the case where the group $G$ is the stabilizer   of a boundary point at infinity in a rank one symmetric space or a regular tree. The general case   is much broader, as is manifested in \cite{caprace2015amenable}.

An immediate consequence of Theorem \ref{thm:intro IRS of amenable hyperbolic} is that amenable non-elementary Gromov hyperbolic  groups admit no discrete invariant random subgroups. 
This is a natural generalization of the fact such groups admit no lattices as they are never unimodular.

\subsection*{$\CAT{0}$-spaces}

Let $X$ be a proper geodesically complete $\mathrm{CAT}(0)$-space such that the group of isometries $\Isom{X}$ is acting cocompactly on $X$. A subgroup $H$ of the group $\Isom{X}$ is called  \emph{geometrically dense} if $H$ preserves no proper closed convex  subset  of the space $X$ and fixes no point of the visual boundary  $\partial X$. 

Let $\mu$ be a  Borel probability measure on the group $\Isom{X}$ such that the closed semigroup generated by $\mathrm{supp}(\mu)$ is a geometrically dense subgroup.
Some of our methods are applicable to study $\mu$-stationary random subgroups of $\Isom{X}$.

\begin{prop}

 Let $\nu$ be a $\mu$-stationary random subgroup of $\Isom{X}$. If $\nu$-almost every subgroup fixes no point of the boundary $\partial X$ then $\nu$-almost every subgroup is geometrically dense.
\end{prop}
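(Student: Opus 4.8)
The plan is to reduce the statement to a single dynamical input and then run the stationarity machinery of the paper. By definition a subgroup of $\Isom{X}$ is geometrically dense precisely when it both fixes no point of $\partial X$ and preserves no proper closed convex subset of $X$. Since the hypothesis already grants the first property for $\nu$-almost every $\Gamma$, the entire content is to show that $\nu$-almost every $\Gamma$ preserves no proper nonempty closed convex subset of $X$.

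First I would produce, for each $\Gamma$ fixing no boundary point, a canonical minimal invariant convex set. Consider the family of all nonempty closed convex $\Gamma$-invariant subsets, ordered by reverse inclusion. Along a descending chain the intersection is either nonempty, in which case it is again a nonempty closed convex $\Gamma$-invariant subset, or empty, in which case the Caprace--Lytchak theorem on the geometry at infinity of finite-dimensional $\CAT{0}$ spaces (applicable since $X$ is proper with cocompact isometry group, hence finite-dimensional) furnishes a canonical circumcenter at infinity of the intersection of the boundaries, fixed by every isometry preserving the family and therefore by $\Gamma$. The latter alternative contradicts the hypothesis, so every chain admits a nonempty lower bound and Zorn's lemma yields a minimal nonempty closed convex $\Gamma$-invariant subset. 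Using the Caprace--Monod description of the union of all such minimal sets as a canonical splitting $Y \times Z$, together with a descent on $\dim X$ applied to the residual action on the factor $Z$ (whose visual boundary injects into $\partial X$, so that it too fixes no point at infinity), I would upgrade this to a genuinely canonical, $\Isom{X}$-equivariant and Borel-measurable assignment $\Gamma \mapsto Y_\Gamma$. Here $\Gamma$ is geometrically dense exactly when $Y_\Gamma = X$, and a bounded $Y_\Gamma$ is equivalent to $\Gamma$ fixing its circumcenter, a point of $X$.

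Next I would transport this to the boundary via stationarity. Pushing $\nu$ forward under $\Gamma \mapsto Y_\Gamma$ produces a $\mu$-stationary probability measure $\beta$ on the space of closed convex subsets of $X$, and pushing further under $C \mapsto \partial C$ a $\mu$-stationary measure on closed subsets of $\partial X$. I would then invoke the canonical product decomposition of $X$ (Caprace--Monod), which $\Isom{X}$ and hence $G_\mu$ respect, and use that geometric density of $G_\mu$ forces a strongly proximal action on the Furstenberg boundary of each irreducible non-Euclidean factor while ruling out any invariant flat structure. Suppose for contradiction that $Y_\Gamma$ is proper with positive probability. If $Y_\Gamma$ is unbounded, then $\partial Y_\Gamma$ is a proper nonempty closed $\Gamma$-invariant subset of $\partial X$, and the contraction of the boundary $\mu$-action (martingale convergence of the translates of the harmonic measure to point masses on each non-Euclidean factor) forces any stationary measure on proper closed subsets of $\partial X$ to concentrate on boundary points fixed by the asymptotic dynamics; since $G_\mu$ fixes no such point, this is a contradiction. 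If instead $Y_\Gamma$ is bounded, the circumcenter selection yields a $\mu$-stationary measure on $X$ itself, and convexity of the displacement function produces either a $G_\mu$-fixed point of $X$ or a $G_\mu$-fixed boundary point, both excluded by geometric density of $G_\mu$.

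The main obstacle, where essentially all the difficulty lies, is this final rigidity step: turning the heuristic ``the geometrically dense boundary action contracts, so a stationary family of proper convex subsets must degenerate'' into a rigorous proof that no $\mu$-stationary measure is supported on proper closed convex subsets of $X$. The subtlety is that general $\CAT{0}$ spaces are far from Gromov hyperbolic, so strong proximality fails on the whole visual boundary and the presence of flats and reducible factors must be absorbed using the Caprace--Monod product decomposition together with strong proximality on the Furstenberg boundary of each irreducible factor. A secondary, more technical point is establishing the Borel measurability and genuine $\Isom{X}$-equivariance of the selection $\Gamma \mapsto Y_\Gamma$, for which the canonical splitting of the union of minimal sets and the induction on $\dim X$ are the essential tools.
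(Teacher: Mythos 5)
Your reduction to minimality and your construction of a canonical minimal invariant convex set follow the paper's own route: the paper likewise invokes finite-dimensionality of the Tits boundary (Kleiner) together with the Caprace--Monod dichotomy to obtain, for $\nu$-almost every subgroup with no fixed point at infinity, a canonical non-empty closed convex minimal invariant subset $\mathcal{M}(H)$, and then pushes $\nu$ forward to a $\mu$-stationary measure on $\Cl{X}$ concentrated on convex sets. Up to that point your proposal matches Corollary \ref{cor:CAT(0) no fixed points on the boundary}.

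The genuine gap is exactly where you locate it yourself: the claim that a $\mu$-stationary measure on closed convex subsets cannot charge proper non-empty ones. You leave this as a heuristic, and the route you sketch --- strong proximality on the Furstenberg boundary of each irreducible factor and martingale convergence of translated harmonic measures to point masses --- is neither carried out nor available in the stated generality: a proper geodesically complete $\CAT{0}$-space with cocompact isometry group need not have proximal boundary dynamics on any factor, and no such contraction statement is proved or cited. The paper's mechanism is more elementary and bypasses proximality entirely. It uses the standing positive-drift assumption (which your argument never invokes) and the Karlsson--Margulis theorem to get $\omega_n x_0 \to \zeta_\omega \in \partial X$ almost surely; the circumradius/circumcentre argument (Lemma \ref{lem:no proper invariant subset CAT0} and Proposition \ref{prop:no proper support CAT0}) to show that for any proper closed convex $Y \subsetneq X$ the hitting measure satisfies $\supp(\nu_X) \not\subset \partial Y$, so that with positive probability $d_X(\omega_n x_0, Y) \to \infty$; and then Kakutani's ergodic theorem in the form of Corollary \ref{cor:liminf of a Borel function is bounded}, applied to the function $Y \mapsto d_X(x_0,Y)$ and the reflected measure $\check{\mu}$, which forces $\liminf_n d_X(\omega_n x_0, Y) < \infty$ almost surely --- a contradiction. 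This is Proposition \ref{prop:CAT(0) SRS cannot have minimal proper convex closed subsets}, and it simultaneously disposes of your bounded case (a bounded invariant convex set would still make the pushforward charge proper subsets), so no separate circumcentre-in-$X$ argument is needed. Until you supply an argument of this kind, or make your proximality claim precise and prove it, the proof is incomplete.
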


This can be seen as a partial analogue of the work \cite{duchesne2015geometric} for stationary (rather than invariant) random subgroups,  conditional on the extra assumption of having no fixed points on the boundary. We expect that in future work this assumption can be replaced by a discreteness assumption (as was successfully done in the Gromov hyperbolic case in Theorem \ref{thm:intro limit set}). 

At present, we are able to combine our  methods for Gromov hyperbolic spaces and $\mathrm{CAT}(0)$-spaces to  study stationary random subgroups of direct products of finitely many $\CAT{-1}$-spaces.

\begin{theorem}
\label{thm:intro product}
Let $X_1,\ldots,X_k$ be a family of proper geodesically complete $\CAT{-1}$-spaces.  Let $\mu_i$ be a  Borel probability measure on the group $\Isom{X_i}$ such that  $ \mathrm{supp}(\mu_i)$ generates $\Isom{X_i}$ as a semigroup for each $i$. Consider the product $\CAT{0}$-space $X = X_1 \times \cdots \times X_k$ and the  measure   $\mu = \mu_1 \otimes \cdots \otimes \mu_k$ on the group $\Isom{X}$. 

Let $\nu$ be a discrete $\mu$-stationary  random subgroup of $\prod_{i=1}^k \Isom{X_i}$. If   $\nu$-almost every subgroup projects non-trivially to each factor $\Isom{X_i}$ then $\nu$-almost every subgroup is geometrically dense.
\end{theorem}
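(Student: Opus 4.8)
The plan is to deduce geometric density from the Proposition immediately preceding this theorem, which already reduces matters to a single boundary condition: it suffices to prove that $\nu$-almost every $\Gamma$ fixes no point of the visual boundary $\partial X$ of the product space $X = X_1 \times \cdots \times X_k$. Before invoking it I would verify its standing hypothesis, namely that the group $G_\mu$ generated by $\supp(\mu)$ --- here $G_\mu = \prod_i \Isom{X_i}$ --- is geometrically dense in $X$. This follows from the product structure: each $\Isom{X_i}$ acts on the $\CAT{-1}$-space $X_i$ with general type, hence fixes no point of $\partial X_i$ and preserves no proper closed convex subset of $X_i$; meanwhile a closed convex subset of $X$ invariant under a product group splits as a product $\prod_i C_i$ of $\Isom{X_i}$-invariant closed convex subsets, forcing each $C_i = X_i$, and the join description $\partial X = \partial X_1 * \cdots * \partial X_k$ shows that every boundary point has nonempty support $S$ together with a factor component $\xi_i$ (for $i \in S$) that $\Isom{X_i}$ cannot fix.

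That same join description reduces the boundary condition to the factors. A coordinatewise isometry $g = (g_1,\dots,g_k)$ preserves the speed profile of a boundary point and acts on its factor components $\xi_i$ through $g_i$; consequently $\Gamma$ fixes a point of $\partial X$ if and only if, for some $i$, the projection $\pi_i(\Gamma)$ fixes a point of $\partial X_i$. It therefore suffices to rule out, for each fixed $i$, the event $B_i = \{\Gamma : \pi_i(\Gamma) \text{ is elementary in } X_i\}$, that is, has limit set of cardinality at most two. Since elementarity of the $i$-th projection is preserved under conjugation by $\prod_j \Isom{X_j}$, the set $B_i$ is $G_\mu$-invariant; passing to the ergodic decomposition of $\nu$ I may assume $\nu$ is ergodic, so that $\nu(B_i) \in \{0,1\}$, and I aim to show $\nu(B_i)=0$ by assuming $\nu(B_i)=1$ and forcing a contradiction.

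On $B_i$ there are three cases. If $\pi_i(\Gamma)$ has empty limit set then it is bounded and fixes the circumcenter $c(\Gamma) \in X_i$ of its orbit closure; the assignment $\Gamma \mapsto c(\Gamma)$ is equivariant, so its pushforward would be a $\mu_i$-stationary Borel probability measure on $X_i$, which is impossible because the $\mu_i$-walk has positive drift and escapes to $\partial X_i$. The remaining cases, where $\pi_i(\Gamma)$ fixes a boundary point or a boundary pair, are consistent with stationarity alone --- indeed in a single factor the stabilizer of a harmonic-measure-distributed boundary point is a genuine non-discrete stationary random subgroup fixing a point of $\partial X_i$ --- so here I must bring in the discreteness of $\Gamma$ \emph{in the product}, which does not descend to $\pi_i(\Gamma)$. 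The idea, mirroring the $k=1$ content of Theorem~\ref{thm:intro limit set}, is to conjugate by the ambient random walk $w_n = (w_n^1,\dots,w_n^k)$: in the $i$-th factor $w_n^i$ exhibits north--south contraction toward the harmonic limit, so conjugation drives the $i$-projections of the elements of $\pi_i(\Gamma)$ fixing $\xi_i$ toward the identity of $\Isom{X_i}$, while stationarity keeps $w_n \Gamma w_n^{-1}$ distributed as $\nu$ and Chabauty-convergent to a subgroup that is again $\nu$-almost surely discrete. Producing infinitely many distinct such conjugated elements that stay bounded in the factors $j \neq i$ then yields a sequence of distinct group elements accumulating at the identity, contradicting discreteness.

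I expect this last step to be the main obstacle. The difficulty is to exploit the contraction in the $i$-th factor without the competing drifts in the factors $j \neq i$ pushing the relevant elements out to infinity; quantitatively controlling the displacement of $w_n \gamma w_n^{-1}$ simultaneously across all factors is precisely what forces the finite-moment hypothesis on $\mu$ and, as in the proof of Theorem~\ref{thm:not confined symmetric spaces}, the use of the key inequality for the injectivity radius from \cite{GLM}. Once the elementary case is excluded for every $i$, each projection $\pi_i(\Gamma)$ has general type, $\Gamma$ fixes no point of $\partial X$, and the preceding Proposition delivers the geometric density of $\nu$-almost every $\Gamma$.
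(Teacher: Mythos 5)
Your outer scaffolding matches the paper's: reduce geometric density to the absence of fixed points on $\partial X$ via the preceding $\CAT{0}$ proposition (Corollary \ref{cor:CAT(0) no fixed points on the boundary} in the body), and use the spherical join $\partial X = \partial X_1 \ast \cdots \ast \partial X_k$ to reduce that to showing $\mathrm{Fix}_{\partial X_i}(\pi_i(\Gamma)) = \emptyset$ for each factor (Corollary \ref{cor:minimality for action on product of cat-1}). Two remarks on the reduction itself: ruling out \emph{elementarity} of $\pi_i(\Gamma)$ is not sufficient, since a focal --- hence non-elementary --- action fixes a boundary point; and your bounded case is subsumed in the paper by Proposition \ref{prop:fixed point set has size at most 1}, which already gives $|\mathrm{Fix}_{\partial X_i}(\pi_i(\Gamma))| \le 1$ once the projection is non-trivial (the elliptic radical of the isometry group of a $\CAT{-1}$-space being trivial). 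So everything hinges on excluding a \emph{single} fixed point $\xi_i$, and you correctly identify this as the main obstacle and correctly observe that discreteness of $\Gamma$ in the product does not descend to $\pi_i(\Gamma)$. But the mechanism you propose for this step does not work, and it is not the one the paper uses.

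You want conjugation by the walk to drive the elements fixing $\xi_i$ \emph{toward the identity} in the $i$-th factor and then contradict discreteness by accumulation at $e$. The dynamics go the other way: the relevant basepoint trajectory converges to a boundary point of $\partial X_i$ distributed according to the hitting measure, which is almost surely \emph{not} $\xi_i$ (this is Corollary \ref{cor:about freeness of action on boundary}), and at points escaping away from $\xi_i$ the displacement of any non-elliptic isometry fixing $\xi_i$ blows up. Conjugation therefore \emph{expels} these elements rather than contracting them, and no sequence accumulating at the identity is produced. The paper's Theorem \ref{thm:trivial fixed points in products} exploits exactly this expulsion: when $\pi_i(\Gamma)$ is non-discrete the quasi-stabilizer $\Gamma_{x_0,R}$ in the $i$-th factor is infinite, discreteness of $\Gamma$ in the product forces the displacements of these elements in the remaining factors to form a discrete set, and the expulsion shows that $f(\Gamma) = \inf\{d_Y(p_Y(\gamma)y_0,y_0) \: : \: \gamma \in \Gamma_{x_0,R},\ \gamma \notin \mathrm{E}(G)\}$ tends to infinity along the conjugated subgroups --- contradicting Kakutani's recurrence (Corollary \ref{cor:liminf of a Borel function is bounded}) rather than discreteness directly. (When $\pi_i(\Gamma)$ \emph{is} discrete one simply quotes the single-space result, Proposition \ref{prop:a discrete SRS cannot have a single fixed point on the boundary}.) Finally, the finite $p$-th moment hypothesis and the key inequality of \cite{GLM} that you invoke belong to the critical-exponent and confined-subgroup parts of the paper; neither appears in the hypotheses of this theorem nor anywhere in its proof.
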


Much more precise information in the special case of the action of a semisimple real Lie group  on its symmetric space is given in \cite[Theorem 6.5]{fraczyk2023infinite}.

\subsection*{Lower bounds on critical exponents}
As an illustration,   we outline a proof of the  non-strict variant of  Theorem \ref{thm:main thm intro}  asserting that a $\mu$-stationary random subgroup satisfies the non-strict inequality $\delta(\Gamma)\geq \frac{\delta(\mu)}{2}$ almost surely.
An analogue of Theorem \ref{thm:main thm intro} for \emph{invariant} rather than \emph{stationary} random subgroups with the bound $\frac{\delta(\mu)}{2}$ replaced by $d(X) = \frac{\mathrm{dim}_\mathrm{Haus}(\partial X)}{2}$ was the main result of \cite{gekhtman2019critical}.  
In turn, this  is  a generalization of a result of Matsuzaki, Yabuki and Jaerisch \cite[Theorem 1.4]{matsuzaki2020normalizer} about the exponential growth rate of orbits for a normal subgroup $N$  of some countable hyperbolic group $\Gamma$ acting properly  and cocompactly on a Gromov hyperbolic space $X$.

The key idea of \cite{matsuzaki2020normalizer} is to bound  the critical exponent $\delta(N)$ from below in terms of  the exponential growth rate of a single conjugacy class \begin{equation}
    \mathrm{Cl}_\Gamma(\gamma) = \{\gamma^g \: : \: g \in \Gamma \}
\end{equation}
of some fixed   arbitrary  hyperbolic element $\gamma \in N$. Fix a basepoint $x_0 \in X$ and denote $\|g\| = d_X(gx_0,x_0)$ for every element $g \in \Gamma$. The triangle inequality implies $\|g\gamma g^{-1}\|\leq 2\|g\|+\|\gamma\|$  for every element $g\in \Gamma$. So
\begin{equation}
\label{eq:first MYJ}
\{g\in \Gamma \: : \: \|g\|\leq \frac{R-\|\gamma\|}{2} \} \subset  \{g\in \Gamma \: :\: \|g\gamma g^{-1}\|\leq R\}  \quad \forall R > 0.
\end{equation}
On the other hand, the hyperbolic element $\gamma$ satisfies the following two properties:  its centralizer is  virtually cyclic   and $\lim \frac{1}{n} \|\gamma^n \| > 0$. This implies that
\begin{equation}
\label{eq:second MYJ}
| \{  h\in \mathrm{Cl}_\Gamma(\gamma)  \: : \:  \|h\|\leq R \}|
\geq
\frac{c}{R} |\{g\in \Gamma \: : \: \|g\|\leq \frac{R-\|\gamma\|}{2} \}\| \quad \forall R > 0
\end{equation}
for some constant $c > 0$.
As linear factors do not affect the exponential growth rate we conclude from Equation (\ref{eq:second MYJ}) that $
\delta(N) \ge \delta(\mathrm{Cl}_\Gamma(\gamma)) \geq \frac{\delta(\Gamma)}{2}$.


Consider a more general situation where $\nu$ is a discrete invariant random subgroup of the group of isometries $\Isom{X}$ of some Gromov hyperbolic space $X$.
Unlike normal subgroups of lattices,  there is a priori no reason for $\nu$-generic subgroups to contain an entire conjugacy class of any particular hyperbolic element.  Instead, we show in  \cite{gekhtman2019critical} that $\nu$-almost every subgroup contains a \emph{positive proportion} of some hyperbolic \enquote{approximate  conjugacy class}.
More precisely,   assume that the group $\Isom{X}$ admits some auxiliary discrete subgroup  $\Gamma$   acting cocompactly on the space $X$. Then for any $\varepsilon>0$ there exists an open relatively compact subset $V \subset \Isom{X} $ consisting of  hyperbolic elements such that $\nu$-almost every subgroup $\Delta \in \Sub{\Isom{X}}$ satisfies
\begin{equation}
\label{eq:ergodic theorem app}
\liminf_{R\to \infty}\frac{|\{g\in \Gamma \: :\: \|g\|\leq R \quad \text{and} \quad  \Delta \cap gVg^{-1}\neq \emptyset\}|}{|\{g\in \Gamma \: :\: \|g\|\leq R\}|} \ge 1-\varepsilon.
\end{equation} 
This statistical information is sufficient to conclude that $\delta(\Delta) \ge \frac{\delta(\Gamma)}{2}$ along the same lines as in \cite{matsuzaki2020normalizer}.

To find such a neighborhood $V \subset \Isom{X}$ with $\nu(\{\Delta  :  \Delta \cap V \neq \emptyset\})  > 1 - \varepsilon$ we first show that $\nu$-almost every subgroup admits hyperbolic elements, as  a consequence of the fact that it has a full limit set on the Gromov boundary. The maximal ergodic theorem of Bowen and Nevo \cite{bowen2015neumann}  applied to the probability measure preserving action of the auxiliary lattice $\Gamma$ on the Borel probability space $(\Sub{\Isom{X}},\nu)$  implies Equation (\ref{eq:ergodic theorem app}) directly.

\subsection*{On our methods}
Several important aspects of this strategy need to be modified  when $\nu$ is a \mbox{$\mu$-stationary} rather than an  invariant discrete random subgroup. First, the fact that $\nu$-almost every subgroup has full limit set and  hence contains hyperbolic elements  now follows from our Theorem \ref{thm:intro limit set}. Second, the maximal ergodic theorem of Bowen and Nevo does not apply to  stationary actions. Instead, we rely on \emph{Kakutani's random ergodic theorem}. Roughly speaking, it says that typical trajectories of the $\mu$-random walk become $\nu$-equidistributed. This implies  that $\mu^{\otimes \mathbb{N}}$-almost every sequence of random elements $(g_i) \in \Gamma^{\mathbb{N}}$ and $\nu$-almost every subgroup $\Delta \in \Sub{\Isom{X}}$ satisfy
\begin{equation}
\lim_{n\to\infty} \frac{| \{ i \in \{1,\ldots,n\} \: : \: \Delta \cap V^{g_i\cdots g_1} \neq \emptyset  \}|}{n} = \nu(\{ \Lambda \: : \Lambda \cap V \neq \emptyset \}).
\end{equation}

To get a lower bound on the critical exponent $\delta(\Delta)$ for $\nu$-almost every subgroup $\Delta$  we need to estimate the ratios appearing in Equation (\ref{eq:ergodic theorem app}) for large $R$. In other words, we need to show that the subset $\{g\in \Gamma   :   H \cap V^g \neq\emptyset\}$ is  large from the point of view of taking intersections with balls rather than from the point of view of random walks. The connection between these two notions of \enquote{size} of subsets of the group $\Gamma$ is rather complicated (see for instance  \cite[Theorem 6.2]{tanaka2017hausdorff} for an example of  a \enquote{paradoxical} behaviour in this respect).


In the setting of finitely supported random walks and word metrics on hyperbolic groups, Tanaka  showed in \cite[\S 6]{tanaka2017hausdorff} that any subset $A \subset \Gamma$  which contains the entire trajectory 
$\{\omega_n = g_1 \cdots g_n\}_{n\in \mathbb{N}}$
for an 
$\mu^{\otimes \mathbb{N}}$-positive measure  set of sequences $(g_i) \in \Gamma^\mathbb{N}$ has critical exponent $\delta(A) \ge \delta(\mu) = \frac{h(\mu)}{l(\mu)}$. We prove and make use of Theorem \ref{thm:improvedtanaka} which is a vast generalization of Tanaka's result. For us the group $\Gamma$ need not be hyperbolic, but merely non-amenable. The random walk need not be finitely supported, but only to have  finite first moment. This improvement  allows us to work  with measures having $\delta(\mu) = d(X)$, such as the discretization of Brownian motion. The set $A$ in question is assumed   to contain a \emph{positive proportion} of each random walk trajectory rather than the whole trajectory, i.e. sample paths visit it statistically rather deterministically. Finally, we conclude that  the partial Poincare series associated to the set $A$  diverges at the exponent   $\delta(\mu)$, which is stronger than concluding that $\delta(A)\geq \delta(\mu)$.


We expect   our results concerning critical exponents can be extended beyond isometry groups of hyperbolic spaces, for instance to rank-one $\CAT{0}$-spaces, Cayley graphs of relatively hyperbolic groups, and products of rank one symmetric spaces. The probabilistic framework for these generalizations is  laid out in this paper; one needs to modify a  few geometric techniques from \cite{gekhtman2019critical}.

\subsection*{Organization of the paper}

This work  consists of two largely independent parts. 

The first part \S\ref{sec:stationary measures}--\S\ref{sec:CAT0} deals with the softer questions of limit sets and fixed points on the boundary. In \S\ref{sec:stationary measures} we introduce the basics of stationary measures and random walks and state Kakutani's ergodic theorem which plays a fundamental  role in this work. In \S\ref{sec:hyperbolic groups} we recall the classification of group actions on Gromov hyperbolic spaces, including the focal and the general type cases. In \S\ref{sec:limit sets} we use boundary hitting measures to study stationary closed subsets. This information is applied to study limit sets. In \S\ref{sec:fixed points on the boundary} we show that discrete stationary subgroups cannot have a single fixed point on the boundary, and apply this to study amenable hyperbolic groups. Lastly \S\ref{sec:products} is dedicated to stationary random subgroups of products and \S\ref{sec:CAT0} deals with $\CAT{0}$-spaces and geometric density.

The second part \S\ref{sec:random walks and poincare series}--\S\ref{sec:random walks and confined subgroups}  deals with critical exponents and confined subgroups. The core   probabilistic argument takes place in  \S\ref{sec:random walks and poincare series}.  This is where we discuss entropy, drift, Green's functions and prove our variant of Tanaka's argument. Next in \S\ref{sec:critical exponents} we use this probabilistic machinery to study the Poincare series and the critical exponents of stationary random subgroups. 
Finally in \S\ref{sec:random walks and confined subgroups} we consider confined subgroups and implement our strategy towards Theorem \ref{thm:not confined symmetric spaces} by observing that the critical exponent is Chabauty semi-continuous.

\begin{remarkkk}
Since our work first appeared, an alternative geometric approach to critical exponents of confined subgroups has been developed by \cite{CGYZ} and \cite{DY}. In particular, these results give a version of Theorem  \ref{thm:not confined symmetric spaces} assuming either a \emph{strict} inequality $\delta(\Gamma)<\delta(X)/2$ or $\Gamma$ being contained in a lattice. However, despite holding in the broader geometric setting of actions with contracting elements, these results do not recover the full strength of Theorem \ref{thm:not confined symmetric spaces} and its Corollary \ref{new_cor}.
\end{remarkkk}



\subsection*{Acknowledgements} 
The authors   would like to thank Uri Bader, Mikolaj Fraczyk, Sebastien Gouezel, Nir Lazarovich, Hee Oh, Yehuda Shalom, Ryokichi Tanaka and Giulio Tiozzo for  useful discussions, insightful comments and suggestions. We are very grateful to Inhyeok Choi, Peter Kosenko, Wenyuan Yang, and Tianyi Zheng for pointing out several inaccuracies in an earlier draft of this paper.
Special thanks are due to Tsachik Gelander who conjectured Theorem \ref{thm:not confined symmetric spaces} and brought it to our attention.


\subsection*{Notations} 

\begin{itemize}
\item
We denote $\left[n\right] = \{1,\ldots,n\}$ for every natural number $n \in \mathbb{N}$.
\item
We will often fix a base point $x_0 \in X$. With the base point being implicit in the notation, it will be convenient to write
\begin{equation*}
\|g\| = d_X(gx_0,x_0) \quad \forall g \in \Isom{X}.
\end{equation*}
\item To avoid an overuse of the Greek letter $\delta$ we will use $\mathrm{D}_x$ to denote the Dirac probability measure supported on the point $x$.
\item For a pair of functions $f$ and $g$ the notation $f \approx g$ means that $$C^{-1} f \le g \le C f$$ for some constant $C > 1$.
\item All probability measures are assumed to be Borel.
\end{itemize}


\newpage

\section{Stationary measures and Kakutani's ergodic theorem}
\label{sec:stationary measures}

We  introduce some general definitions concerning stationary measures and random walks, which are the main workhorse of this note. In addition we state Kakutani's ergodic theorem and summarize some of its implications.

\subsection*{Sample paths}
Let $G$ be a locally compact group. The product space $G^\mathbb{N}$ equipped with the Tychonoff topology will be regarded as describing  \emph{increments}.
Given  a sequence of increments $(g_n) \in G^\mathbb{N}$ the associated \emph{sample path} is the sequence $\omega= (\omega_n) \in G^\mathbb{N}$ defined by \begin{equation}
\omega_n = g_1 g_2 \cdots   g_n   \quad \forall n \in \mathbb{N}.
\end{equation}

Let $\mu$ be a  probability measure on the group $G$. The corresponding product measure on the   space of increments $G^\mathbb{N}$ is denoted  $\mu^{\otimes \mathbb{N}}$. Let $\mathrm{P}_\mu$ be the pushforward of the probability measure $\mu^{\otimes \mathbb{N}}$ via the mapping $(g_n) \mapsto (\omega_n)$. We will call $(G^{\mathbb{N}},\mathrm{P}_\mu)$ the \emph{space of sample paths}.



On certain occasions it will be useful to consider the  probability measure $\check{\mu}$ on the group $G$ given by
 $\check{\mu}(A) = \mu(A^{-1}) $
for every Borel subset $A \subset G$.
We will use the notation $\check{w}$ for sample paths distributed with respect to the measure $\mathrm{P}_{\check{\mu}}$. Note that
\begin{equation}
\check{\omega}_n = g_1^{-1} g_{2}^{-1}  \cdots g_n^{-1} = (g_n g_{n-1} \cdots  g_1)^{-1 } \quad \forall n \in \mathbb{N}.
\end{equation}
 The probability measure $\mu$ is \emph{symmetric} if $\mu=\check{\mu}$.


\subsection*{Stationary measures}
Let $(Z,\nu)$ be a standard  Borel probability space admitting a Borel action of the group $G$. Assume that the measure $\nu$ is \emph{$\mu$-stationary}, i.e. $\nu = \mu * \nu $ where
\begin{equation}
  \mu * \nu  = \int_G g_* \nu \; \mathrm{d}\mu(g).
\end{equation}
Assume further that $\nu$ is \emph{ergodic}, i.e. $\nu$ is an extreme point in the simplex of $\mu$-stationary probability measures.

The following ergodic theorem plays a central part in most of our arguments.

\begin{theorem}[Kakutani's ergodic theorem \cite{kakutani1951random}]
\label{thm:Kakutani ergodic theorem}
Let $f\in L^1(Z,\nu)$. Then $\mu^{\otimes \mathbb{N}}$-almost every sequence of increments $ (g_n) \in G^\mathbb{N}$  and $\nu$-almost every point $z \in Z$ satisfy 
\begin{equation}
\frac{1}{N}\sum^{N}_{n=1}f(g_n g_{n-1}\cdots g_1 z)\xrightarrow{N\to\infty}   \int f \, \mathrm{d} \nu.
\end{equation}
\end{theorem}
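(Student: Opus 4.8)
The plan is to reduce the statement to the classical Birkhoff pointwise ergodic theorem by means of the Furstenberg--Rokhlin \emph{skew product}. Work on the product space $\Omega = G^{\mathbb{N}} \times Z$ equipped with the product measure $\mathrm{P} = \mu^{\otimes \mathbb{N}} \otimes \nu$, which is a standard Borel probability space since $Z$ is and the $G$-action on $Z$ is Borel. Writing $\sigma$ for the one-sided shift $\sigma(g_1,g_2,\ldots) = (g_2,g_3,\ldots)$ on the increment space, I would define
\[
T \colon \Omega \to \Omega, \qquad T(\omega,z) = (\sigma\omega,\, g_1 z),
\]
where $g_1 = \omega_1$ is the first increment. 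A straightforward induction gives $T^n(\omega,z) = (\sigma^n\omega,\, g_n g_{n-1}\cdots g_1 z)$, so for the function $F(\omega,z) = f(z)$, which lies in $L^1(\Omega,\mathrm{P})$ because $\int |F|\,\mathrm{d}\mathrm{P} = \int |f|\,\mathrm{d}\nu < \infty$, one has $F(T^n(\omega,z)) = f(g_n\cdots g_1 z)$. Thus the Cesàro averages in the statement are precisely the Birkhoff averages of $F$ along $T$, up to a single boundary term that is harmless after division by $N$.

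The crux is to verify that $T$ preserves $\mathrm{P}$, and this is exactly where $\mu$-stationarity enters. For a bounded Borel $\phi$ on $\Omega$, disintegrating $\mu^{\otimes\mathbb{N}}$ into its first coordinate $g_1 \sim \mu$ and the remaining coordinates $\omega' = \sigma\omega \sim \mu^{\otimes\mathbb{N}}$, independent of $g_1$, gives
\[
\int \phi\circ T \, \mathrm{d}\mathrm{P} = \int_Z \int_G \psi(g_1 z)\,\mathrm{d}\mu(g_1)\,\mathrm{d}\nu(z), \qquad \psi(y) := \int_{G^{\mathbb{N}}} \phi(\omega',y)\,\mathrm{d}\mu^{\otimes\mathbb{N}}(\omega'),
\]
since $g_1$ enters the inner integral only through the point $g_1 z \in Z$. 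The stationarity identity $\int_G\int_Z \psi(gz)\,\mathrm{d}\nu(z)\,\mathrm{d}\mu(g) = \int_Z \psi\,\mathrm{d}\nu$, which is just $\mu*\nu=\nu$ tested against $\psi$, then shows the right-hand side equals $\int_Z \psi\,\mathrm{d}\nu = \int\phi\,\mathrm{d}\mathrm{P}$. Hence $T_*\mathrm{P} = \mathrm{P}$.

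With measure invariance in hand, Birkhoff's theorem produces a $T$-invariant $\bar F \in L^1(\mathrm{P})$ with $\frac{1}{N}\sum_{n=0}^{N-1} F\circ T^n \to \bar F$ both $\mathrm{P}$-almost everywhere and in $L^1$. By Fubini, $\mathrm{P}$-a.e.\ convergence is exactly the assertion that for $\mu^{\otimes\mathbb{N}}$-a.e.\ increment sequence $(g_n)$ and $\nu$-a.e.\ point $z$ the averages converge, so it remains only to identify $\bar F$ with the constant $\int f\,\mathrm{d}\nu$. This is the one point requiring ergodicity of the stationary measure $\nu$, and such a hypothesis is genuinely necessary: if $G$ acts trivially and $\nu$ is not a point mass, the limit is $f(z)$ rather than $\int f\,\mathrm{d}\nu$. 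When $\nu$ is an ergodic $\mu$-stationary measure the skew product $(\Omega,T,\mathrm{P})$ is itself ergodic, whence $\bar F \equiv \int F\,\mathrm{d}\mathrm{P} = \int f\,\mathrm{d}\nu$; the general case is handled by first passing to the ergodic components of $\nu$.

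I expect the main obstacle to be precisely this final identification, namely establishing the standard equivalence between ergodicity of $\nu$ as a $\mu$-stationary measure and ergodicity of the skew product $T$, which is what forces the Birkhoff limit to collapse to the global average. By contrast the measure-invariance computation, though it is the conceptual heart of the argument, is a short manipulation once the correct skew product $T(\omega,z) = (\sigma\omega, g_1 z)$---which applies the newest increment on the left at each step, thereby accumulating the products $g_n\cdots g_1$ in the order demanded by the statement---has been written down.
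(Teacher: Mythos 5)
The paper offers no proof of this statement: it is quoted as a classical theorem of Kakutani, with a citation to \cite{kakutani1951random}. Your skew-product argument is exactly the classical proof of that theorem, and it is essentially correct: the computation showing that $T(\omega,z)=(\sigma\omega,g_1z)$ preserves $\mu^{\otimes\mathbb{N}}\otimes\nu$ is precisely where stationarity $\mu*\nu=\nu$ is used, the identity $T^n(\omega,z)=(\sigma^n\omega,g_n\cdots g_1z)$ matches the order of composition in the statement, and the discrepancy between the Birkhoff sum $\sum_{n=0}^{N-1}F\circ T^n$ and the sum $\sum_{n=1}^{N}$ in the statement is indeed $O(1/N)$ almost everywhere. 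The one substantive point is the ergodicity issue you flag, and you are right that it is genuine: as literally stated, with no ergodicity hypothesis on $\nu$, the conclusion with limit $\int f\,\mathrm{d}\nu$ is false (your example of a trivial action with $\nu$ not a point mass is a valid counterexample), and your proposed fix of ``passing to ergodic components'' does not restore the stated conclusion, since on each component the Birkhoff limit is the average of $f$ over that component rather than over $\nu$. What the equivalence between ergodicity of $\nu$ as a stationary measure and ergodicity of the skew product actually yields is the theorem \emph{for ergodic} $\nu$; in the non-ergodic case the correct limit is the conditional expectation onto the $T$-invariant $\sigma$-algebra. This is consistent with how the paper uses the result: in its applications (e.g.\ Propositions \ref{prop:a discrete SRS cannot have a single fixed point on the boundary} and \ref{prop:CAT(0) SRS cannot have minimal proper convex closed subsets}, Theorem \ref{thm:trivial fixed points in products}) the authors first reduce ``without loss of generality'' to ergodic $\nu$. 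So the statement should be read with an implicit ergodicity hypothesis, under which your proof is complete.
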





We will have  occasion to apply Kakutani's ergodic theorem in two ways, which we record here for convenience. First, it can be used to estimate the proportion of time a generic random walk spends inside some fixed subset of the space $Y$.

\begin{cor}
\label{cor:liminf of visits to positive measure set}
Let $Y \subset Z$ be a $\nu$-measurable subset. Then   
$\mu^{\otimes \mathbb{N}}$-almost every sequence of increments $ (g_n) \in G^\mathbb{N}$ and 
$\nu$-almost every point $z \in Z$ satisfy  
\begin{equation}
\lim_{n \to \infty} \frac{| \{ i\in \left[n\right]  \: : \: g_i g_{n-1} \cdots g_1 z \in Y\}| }{n} = \nu(Y).
\end{equation}
\end{cor}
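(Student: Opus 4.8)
The plan is to specialize Kakutani's ergodic theorem (Theorem~\ref{thm:Kakutani ergodic theorem}) to the indicator function of $Y$. Concretely, I would set $f = \mathbf{1}_Y$, the function on $Z$ equal to $1$ on $Y$ and $0$ elsewhere. Since $Y$ is $\nu$-measurable and $\nu$ is a probability measure, this $f$ lies in $L^1(Z,\nu)$ and satisfies $\int f \, \mathrm{d}\nu = \nu(Y)$, so the hypothesis of the theorem is met.

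The only point requiring verification is the elementary identification of the two averages. For a fixed increment sequence $(g_i)$ and a fixed point $z \in Z$, the value $f(g_i g_{i-1} \cdots g_1 z)$ equals $1$ exactly when the walk position $g_i g_{i-1} \cdots g_1 z$ lies in $Y$, and equals $0$ otherwise. Summing over $i \in [n]$ therefore counts precisely the indices at which the trajectory visits $Y$, giving
\begin{equation*}
\sum_{i=1}^{n} f(g_i g_{i-1} \cdots g_1 z) = \bigl| \{ i \in [n] : g_i g_{i-1} \cdots g_1 z \in Y \} \bigr|.
\end{equation*}
Dividing by $n$ identifies the Ces\`{a}ro average appearing in Kakutani's theorem with the normalized counting function in the statement.

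With this identification in hand, I would invoke Theorem~\ref{thm:Kakutani ergodic theorem} applied to $f$: for $\mu^{\otimes \mathbb{N}}$-almost every increment sequence $(g_i) \in G^{\mathbb{N}}$ and $\nu$-almost every point $z \in Z$, the above average converges as $n \to \infty$ to $\int f \, \mathrm{d}\nu = \nu(Y)$. In particular the genuine limit exists, not merely the $\liminf$, which yields the asserted equality. There is no substantive obstacle here: the corollary is nothing more than the instance of Kakutani's theorem for characteristic functions, and all of the content is carried by that theorem.
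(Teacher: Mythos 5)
Your proof is correct and is essentially identical to the paper's: the paper likewise obtains the corollary by applying Kakutani's ergodic theorem to the characteristic function $1_Y \in L^1(Z,\nu)$. The only difference is that you spell out the (entirely routine) identification of the Ces\`{a}ro average with the normalized visit count, which the paper leaves implicit.
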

\begin{proof}
The conclusion follows by applying  Kakutani's ergodic theorem (Theorem \ref{thm:Kakutani ergodic theorem}) to the characteristic function   $1_Y \in L^1(Z,\nu)$  of the subset $Y$.
\end{proof}

Second, Kakutani's theorem implies that a generic random walk \enquote{does not escape to infinity}, in the following sense.

\begin{cor}
\label{cor:liminf of a Borel function is bounded}
Let $F : Z \to \left[0,\infty\right)$ be a $\nu$-measurable  function. Then 
$\mu^{\otimes \mathbb{N}}$-almost every sequence of increments $ (g_n) \in G^\mathbb{N}$ and 
$\nu$-almost every point $z \in Z$ satisfy  
\begin{equation}
\liminf_{n\to\infty} F(g_n g_{n-1}\cdots g_1  z) < \infty.
\end{equation}
\end{cor}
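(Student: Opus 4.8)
The plan is to reduce the statement to Corollary \ref{cor:liminf of visits to positive measure set} by truncating $F$ at a finite level. Since $F$ takes values in $[0,\infty)$ it is finite at every point of $Z$, so the sublevel sets $Y_M = \{z \in Z : F(z) \le M\}$ form an increasing sequence of $\nu$-measurable sets with $\bigcup_{M \in \mathbb{N}} Y_M = Z$. By continuity of measure from below this gives $\nu(Y_M) \to \nu(Z) = 1$ as $M \to \infty$, so I may fix a single level $M$ for which $\nu(Y_M) > 0$.

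The key observation is that the desired conclusion $\liminf_{n} F(g_n g_{n-1} \cdots g_1 z) < \infty$ holds as soon as the trajectory $g_n g_{n-1} \cdots g_1 z$ lands in the sublevel set $Y_M$ for infinitely many $n$: indeed, every such visit forces $F(g_n g_{n-1} \cdots g_1 z) \le M$, whence $\liminf_n F(g_n g_{n-1} \cdots g_1 z) \le M < \infty$. Thus it suffices to show that for $\mu^{\otimes \mathbb{N}}$-almost every increment sequence $(g_n)$ and $\nu$-almost every $z$ the trajectory visits $Y_M$ infinitely often.

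This last point is exactly the content of Corollary \ref{cor:liminf of visits to positive measure set} applied to the measurable set $Y = Y_M$: for $\mu^{\otimes \mathbb{N}}$-almost every $(g_n)$ and $\nu$-almost every $z$ the asymptotic frequency of visits of the trajectory to $Y_M$ equals $\nu(Y_M)$, which is strictly positive by the choice of $M$. A positive visit frequency of course entails infinitely many visits, so on this joint full-measure set of pairs $((g_n), z)$ the $\liminf$ is at most $M$, which completes the argument.

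I do not anticipate a serious obstacle here. The only point requiring care is that $F$ need not be integrable, so Kakutani's theorem (Theorem \ref{thm:Kakutani ergodic theorem}) does not apply to $F$ directly; this is precisely why I pass to the bounded indicator $1_{Y_M} \in L^1(Z,\nu)$ and invoke only the \emph{positivity} of $\nu(Y_M)$ rather than any information about the possibly infinite average of $F$ itself.
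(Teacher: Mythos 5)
Your argument is correct and is essentially the paper's own proof: both fix a sublevel set $Y_M = \{F \le M\}$ of positive $\nu$-measure and apply Corollary \ref{cor:liminf of visits to positive measure set} to conclude that the trajectory visits $Y_M$ with positive frequency, hence infinitely often, forcing the $\liminf$ to be at most $M$. Your extra remarks (why such an $M$ exists, and why one cannot apply Kakutani's theorem to $F$ directly) only make explicit what the paper leaves implicit.
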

\begin{proof}
Take $D > 0$ to be sufficiently large so that
\begin{equation}
\nu(Z_D) > 0 \quad \text{where} \quad Z_D = \{ z \in Z \: : \: F(z) \le D\}.     
\end{equation} 
The desired conclusion follows at once by applying Corollary \ref{cor:liminf of visits to positive measure set} with respect to the $\nu$-measurable subset $D$.
\end{proof}

\subsection*{Stationary measures are quasi-invariant}
 Let $H$ denote the closure in $G$ of the semigroup generated by the support of $\mu$.  It will be useful to keep in mind the following   well-known elementary fact.  We include a brief proof for completeness.

\begin{lemma}
\label{lem:Stationary measures are quasi-invariant}
Assume that the topological group $G$ is second countable. Then any $\mu$-stationary measure $\nu$ is $H$-quasi-invariant, i.e. if a Borel subset $E \subset Z$ has $\nu(E)=0$ then $\nu(h^{-1}E)=0$ for any element $h\in H$.
\end{lemma}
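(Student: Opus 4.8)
The plan is to establish quasi-invariance one increment at a time, first for elements drawn from $\supp(\mu)$ itself, and then to propagate this to the entire semigroup and its closure by an approximation argument. I would begin by unwinding the stationarity equation $\nu = \mu * \nu = \int_G g_* \nu \, \mathrm{d}\mu(g)$ evaluated on the null set $E$. Since $\nu(E) = 0$ and the integrand $g \mapsto (g_*\nu)(E) = \nu(g^{-1}E)$ is nonnegative, the integral vanishing forces $\nu(g^{-1}E) = 0$ for $\mu$-almost every $g$. Thus the \enquote{bad} set of increments violating quasi-invariance is $\mu$-null.

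The next step is to upgrade this almost-everywhere statement for a single increment to a statement holding for \emph{every} element of the generated semigroup. For this I would iterate: applying stationarity $n$ times gives $\nu = \mu^{*n} * \nu$, so $\nu(w^{-1}E) = 0$ for $\mu^{*n}$-almost every $w$, and since $\supp(\mu^{*n})$ is the closure of the set of $n$-fold products from $\supp(\mu)$, this controls products of semigroup generators up to $\mu^{*n}$-null exceptional sets. The cleaner route, however, is to record the \emph{multiplicative} structure directly: if $\nu(g_1^{-1}E) = 0$ and $\nu(g_2^{-1}(g_1^{-1}E)) = 0$ (the latter available since $g_1^{-1}E$ is again a null set, to which the single-increment result applies), then $\nu((g_1 g_2)^{-1}E) = 0$. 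So the collection of elements $h$ with $\nu(h^{-1}E) = 0$ is closed under the monoid operation once we know it is \enquote{large} in each $\supp(\mu)$-step; here the second-countability of $G$ lets me choose a single $\mu$-conull set $N \subset G$ of \enquote{good} increments that works simultaneously, via a countable basis, so that finite products of elements of $\supp(\mu) \cap N$ all preserve the null set. Since $\supp(\mu) \cap N$ is dense in $\supp(\mu)$ (a $\mu$-conull set meets every open set charged by $\mu$), one recovers all semigroup elements as limits.

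The final step is passing from the semigroup to its closure $H$. Here I would fix $h \in H$ and a sequence $h_k \to h$ with each $h_k$ in the semigroup generated by $\supp(\mu)$, so that $\nu(h_k^{-1}E) = 0$ for all $k$. The obstacle to concluding $\nu(h^{-1}E) = 0$ is that the map $h \mapsto \nu(h^{-1}E)$ need not be continuous for a fixed Borel set $E$; set-wise limits $h_k^{-1}E \to h^{-1}E$ do not hold in any naive sense. The standard remedy, which I expect to be the genuinely delicate point, is to replace $E$ by a suitable regularization: by inner and outer regularity of $\nu$ one may assume $E$ is, up to a null set, contained in an open set of arbitrarily small measure, or conversely work with the lower semicontinuity of $\nu(U)$ for $U$ open under the action. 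Concretely, enlarging $E$ to an open null-measure neighborhood $U \supseteq E$ (using that $\nu$ is Borel regular and $\nu(E)=0$), the sets $h_k^{-1}U$ are open and, by continuity of the action, $h^{-1}x \in h_k^{-1}U$ whenever $h_k$ is close enough to $h$ and $x$ lies in a fixed compact set; a diagonal/Fatou argument then yields $\nu(h^{-1}E) \le \liminf_k \nu(h_k^{-1}U) = 0$. Second countability guarantees the action map $G \times Z \to Z$ is jointly measurable and that these approximation arguments can be carried out with countably many choices, which is exactly where the hypothesis is used.

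I expect the main obstacle to be precisely this last continuity-in-$h$ issue: the function $h \mapsto \nu(h^{-1}E)$ is only measurable, not continuous, so the closure step cannot be done by naive limits and requires the regularization of $E$ by open sets together with lower semicontinuity of measure under the (continuous) action. Everything before that — the single-increment vanishing from stationarity and the semigroup closure under products — is routine measure theory.
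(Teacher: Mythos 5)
Your first two steps are the paper's: iterating stationarity to $\nu=\mu^{*n}*\nu$ gives $\nu(g^{-1}E)=0$ for $\mu^{*n}$-almost every $g$, and second countability produces a countable set $H_0=\bigcup_n S_n$ of \enquote{good} elements with $\overline{H}_0=H$. (Your preferred \enquote{multiplicative} shortcut, however, is not sound as stated: the set $\{h : \nu(h^{-1}E)=0\}$ is \emph{not} closed under products, because the conull set of good second increments depends on the null set $g_1^{-1}E$ and hence on $g_1$; these form an uncountable family of exceptional sets, and a countable basis of open sets in $G$ does nothing to tame it. Since you also record the $\mu^{*n}$ route, this is recoverable.)

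The genuine gap is in the closure step, which you correctly identify as the delicate point but do not actually resolve. First, outer regularity gives an open $U\supseteq E$ with $\nu(U)<\varepsilon$, not $\nu(U)=0$; a Borel null set need not sit inside an open null set. Second, and fatally, even with such a $U$ the quantity your Fatou inequality requires to vanish is $\liminf_k\nu(h_k^{-1}U)$, and nothing controls it: you only know $\nu(h_k^{-1}E)=0$, and since $\nu$ is merely stationary rather than invariant, $\nu(h_k^{-1}U)$ can be close to $1$ while $\nu(U)$ is arbitrarily small (think of the harmonic measure on $\partial\mathbb{H}^2$ and isometries that concentrate almost all of the boundary into a tiny arc). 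So the inequality $\nu(h^{-1}E)\le\liminf_k\nu(h_k^{-1}U)$, which is fine when the action is continuous, does not close the argument. There is the further issue that the lemma concerns a Borel action on a standard Borel space, so no topology on $Z$ making the action continuous and $\nu$ regular is available a priori. The paper handles this step by a different mechanism: it invokes the automatic continuity of Borel unitary representations (Zimmer, Theorem B.3) applied to the Koopman representation to conclude that $\{h\in H : \nu(h^{-1}E)=0\}$ is closed in $H$, which is precisely the continuity-in-$h$ input your regularization argument was meant to substitute for.
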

\begin{proof}
Consider any $\mu$-stationary measure $\nu$ on the space $Z$. By induction $\mu^{*n} * \nu = \nu$ for all $n \in \mathbb{N}$. Let $E \subset Z$ be any $\nu$-null subset. Then
\begin{equation}
0 = \nu(E) = \int_G \nu(g^{-1}E) \, \mathrm{d}\mu^{*n}(g) \quad \forall n \in \mathbb{N}.
\end{equation}
As   $\nu$ is probability measure  it must satisfy $\nu(h^{-1}E) = 0$ for $\mu^{*n}$-almost every element $h \in G$ where $n \in \mathbb{N}$ is arbitrary.  The second countability assumption implies that for every $n \in \mathbb{N}$ there is a   dense subset $S_n \subset \mathrm{supp}(\mu^{*n})$ such that $\nu(h^{-1}E) = 0$ for all elements $h \in S_n$. Denote $H_0 = \bigcup_n S_n$ so that $\overline{H}_0 = H$.  The conclusion follows from that fact that the subset of $H$ consisting of all elements $h$ with $\nu(h^{-1} E) = 0$ is closed by \cite[Theorem B.3]{zimmer2013ergodic} applied to the Koopman representation.
\end{proof}

In particular, if $Z$ is a topological space and $\nu$ is a  $\mu$-stationary Borel measure  then $\mathrm{supp}(\nu)$ is an $H$-invariant closed subset of $Z$.






\section{Group actions on hyperbolic spaces}
\label{sec:hyperbolic groups}

Let $X$ be a  Gromov hyperbolic geodesic metric space\footnote{In this section we do not require the metric space $X$ to be proper (unless explicitly stated otherwise).}. We recall various features of groups acting on $X$. In addition we discuss several relevant notions, namely quasi-stabilizers and the elliptic radical.

\subsection*{Classification of actions} 
The \emph{Gromov boundary} $\partial X$ is the collection of geodesic rays in the space $X$ up to the equivalence relation of being within bounded Hausdorff distance. The group $\Isom{X}$ is acting on the boundary $\partial X $ by homeomorphisms. The space $X \cup \partial X$ is a natural  compactification of the space $X$.

Let $G$ be a group admitting a continuous action on the space $X$ by isometries. Equivalently we are given a continuous homomorphism $f: G \to \Isom{X}$. The \emph{limit set} $\Lambda(G)$ is the set $\partial X \cap \overline{Gx_0}$ for any choice of base point $x_0 \in X$.

Recall our standing notation $\|g\| = d_X(gx_0,x_0)$ where $x_0 \in X$ is any fixed base point. The action of any single element  $g \in G$ on the space $X$ can be classified as
\begin{itemize}
\item \emph{elliptic} if $g$ has bounded orbits.
\item \emph{parabolic} if $g$ has unbounded orbits and $\lim_{n\to\infty} \frac{1}{n} \|g^n\| = 0$.
\item \emph{hyperbolic} if  $\lim_{n\to\infty} \frac{1}{n} \|g^n\| > 0$.
\end{itemize}
In terms of the limit set of the cyclic group generated by an element $g \in G$ we can say that
\begin{itemize}
\item $g$ is elliptic if and only if $\Lambda(\left<g\right>) = \emptyset$.
\item $g$ is parabolic if and only if $|\Lambda(\left<g\right>)| = 1$.
\item $g$ is hyperbolic if and only if $|\Lambda(\left<g\right>)| = 2$.
\end{itemize}
In addition parabolic and hyperbolic elements satisfy $\Lambda(\left<g\right>) = \mathrm{Fix}_{\partial X}(g)$. 

Any action of a group on the space $X$ can be classified as follows \cite[\S8]{gromov1987hyperbolic}.
\begin{itemize}
\item The action is called \emph{elementary} if it is
\begin{itemize}
\item   \emph{bounded:}   orbits are bounded,
\item   \emph{horocyclic:}  it is not  bounded but  has no hyperbolic isometries, or
\item  \emph{lineal:}  it has an hyperbolic isometry and any two hyperbolic isometries have the same limit set.
\end{itemize}
\item The action is called \emph{non-elementary} if it is
\begin{itemize}
\item \emph{focal:} it has a hyperbolic isometry, is not lineal and the limit sets of any two hyperbolic isometries are not disjoint.
\item of \emph{general type:}  it has two hyperbolic isometries with disjoint limit sets.
\end{itemize}
\end{itemize}
We refer to \cite[Proposition 3.1]{caprace2015amenable} for a list of equivalent conditions for each of the above types of actions in terms of the limit set. For instance, a locally compact group is focal hyperbolic with respect to the word metric coming from some compact generating set if and only if it is amenable and non-elementary hyperbolic \cite[Theorem 7.3]{caprace2015amenable}.

\subsection*{The elliptic radical}
Let $G$ be a group admitting a continuous non-elementary action on the space $X$ by isometries. The \emph{elliptic radical} $\mathrm{E}(G)$ of the group $G$ is the kernel of its action on its limit set, namely it is the closed normal subgroup
\begin{equation}
\mathrm{E}(G) = \{ g \in G \: : \: g\zeta = \zeta \quad \forall \zeta \in \Lambda(G)\}.
\end{equation} 
Alternatively, the elliptic radical $\mathrm{E}(G)$ can be characterized as the maximal normal subgroup of $G$ whose action on the space $X$ is bounded \cite[Lemma 3.6]{caprace2015amenable}. See also \cite[Proposition 3.4]{osin2017invariant}.

\subsection*{Quasi-stabilizers}

Let $\Gamma$ be a discrete group admitting an action by isometries on the Gromov hyperbolic space $X$.

\begin{defn}
\label{def:quasi stabilizer}
Given a  point $x \in X$ and a radius $R > 0$ the corresponding \emph{quasi-stabilizer} is given by
\begin{equation}
\Gamma_{x,R} = \{\gamma \in \Gamma \: : \: d_X(\gamma x, x) \le R \}.
\end{equation}
\end{defn}
Note that the actual stabilizer $\Gamma_x = \{\gamma \in \Gamma \: : \: \gamma x = x\}$ is contained in the quasi-stabilizer $\Gamma_{x,R}$ for all $R > 0$. The quasi-stabilizer is not  a subgroup in general. 
In the situation where the metric space $X$ is proper, we know that the action of the group $\Gamma$ on the metric space $X$ is proper if and only if all quasi-stabilizers are finite. 

\begin{lemma}
\label{lem:points with small inj radius}
There is some sufficiently large $R > 0$  such that if $\gamma \in \Gamma$ is  any elliptic or parabolic element then $\Gamma_{x,R} \neq \emptyset$ for some  point $x \in X$.
\end{lemma}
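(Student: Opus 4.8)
Since the identity lies in every quasi-stabilizer $\Gamma_{x,R}$, the content of the statement is that $\gamma$ itself should lie in some quasi-stabilizer, i.e.\ that elliptic and parabolic elements have uniformly bounded minimal displacement. Accordingly, the plan is to bound the \emph{minimal displacement}
\[
|\gamma| = \inf_{x \in X} d_X(\gamma x, x)
\]
by a constant depending only on a hyperbolicity constant $\delta_0$ of $X$, and then to produce a near-minimizing point $x$.

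The main tool is the comparison between $|\gamma|$ and the \emph{stable translation length} $\ell(\gamma) = \lim_{n \to \infty} \tfrac{1}{n}\|\gamma^n\|$. The inequality $\ell(\gamma) \le |\gamma|$ is immediate from $d_X(x, \gamma^n x) \le n\, d_X(x, \gamma x)$, valid for every $x$ since $\gamma$ is an isometry. In the reverse direction, a standard feature of Gromov hyperbolic geometry (see \cite[\S8]{gromov1987hyperbolic}) supplies a universal constant $C$ with
\[
|\gamma| \le \ell(\gamma) + C\delta_0
\]
for every isometry $\gamma$ of $X$. Now both elliptic and parabolic elements satisfy $\ell(\gamma) = 0$: in the parabolic case this holds by definition, while in the elliptic case the orbit $\{\gamma^n x_0\}$ is bounded, so $\|\gamma^n\|$ is bounded and $\tfrac1n\|\gamma^n\| \to 0$. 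Hence $|\gamma| \le C\delta_0$ for every elliptic or parabolic $\gamma \in \Gamma$, uniformly. Setting $R = C\delta_0 + 1$ and choosing $x \in X$ with $d_X(\gamma x, x) < |\gamma| + 1 \le R$ (possible as $|\gamma|$ is an infimum) yields $\gamma \in \Gamma_{x,R}$, as desired.

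I expect the comparison inequality to be the only genuine obstacle; everything else is bookkeeping, and in particular no properness of $X$ is needed. Should one prefer to reprove the inequality rather than cite it, the argument runs through thin triangles: fix $x$ nearly realizing $|\gamma|$ and observe that minimality forces the Gromov product $\langle \gamma^{-1} x, \gamma x \rangle_x$ to be $O(\delta_0)$, so that the bi-infinite path obtained by concatenating the $\gamma$-translates of a geodesic segment $[x, \gamma x]$ is a local, hence global, quasigeodesic whenever $|\gamma|$ exceeds a suitable multiple of $\delta_0$. A global quasigeodesic orbit forces linear growth of $\|\gamma^n\|$, i.e.\ $\ell(\gamma) > 0$, contradicting $\ell(\gamma) = 0$; contraposition then gives $|\gamma| \le C\delta_0$.
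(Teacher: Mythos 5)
Your argument is correct, and it proves the right statement: as you note, the literal assertion $\Gamma_{x,R}\neq\emptyset$ is vacuous, and what is actually used later (e.g.\ in Lemma \ref{lemma:Q is non-empty}) is that $\gamma\in\Gamma_{x,R}$ for some $x$, which is exactly what you establish. Your route is genuinely different from the paper's. The paper splits into cases: for parabolic $\gamma$ it simply cites \cite[8.1.C]{gromov1987hyperbolic}, and for elliptic $\gamma$ it takes the set $C_1(O)$ of quasi-centres of a bounded orbit $O$, which is $\gamma$-invariant and of diameter less than $4\delta+2$ by \cite[III.$\Gamma$.3.3]{bridson2013metric}, so any quasi-centre is displaced by at most $4\delta+2$. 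You instead treat both cases uniformly through the standard comparison $|\gamma|\le\ell(\gamma)+C\delta_0$ between minimal displacement and stable translation length (e.g.\ Coornaert--Delzant--Papadopoulos, Ch.\ 10), together with the observation that $\ell(\gamma)=0$ for elliptic and parabolic isometries --- the latter by the very definition of parabolicity used in \S\ref{sec:hyperbolic groups}. Your backup derivation of the comparison inequality is also sound: near-minimality of $x$ forces $d(x,\gamma^2x)\ge 2d(x,\gamma x)-O(\delta_0)$ by comparing the midpoint of $[x,\gamma x]$ with its $\gamma$-translate, hence the Gromov product $\grop{x}{\gamma^{-1}x}{\gamma x}$ is $O(\delta_0)$, and the broken-geodesic (local-to-global) lemma then makes $\{\gamma^nx\}$ a quasigeodesic once $|\gamma|$ exceeds a fixed multiple of $\delta_0$, contradicting $\ell(\gamma)=0$. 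What your approach buys is a single self-contained argument that makes explicit that $R$ depends only on the hyperbolicity constant (a fact the paper asserts only after the proof) and that no properness or properness of the $\Gamma$-action is needed; what the paper's approach buys is that the elliptic case is handled by completely elementary means, with no quasigeodesic machinery, at the cost of outsourcing the parabolic case to a reference.
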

The  lemma  is certainly well-known, we include it for the sake of completeness. It is immediate  from the definitions if the space $X$ is $\mathrm{CAT}(-1)$ rather than $\delta$-hyperbolic.

\begin{proof}[Proof of Lemma \ref{lem:points with small inj radius}]
 If the element $\gamma$ under consideration is parabolic then this fact is discussed in \cite[8.1.C]{gromov1987hyperbolic}. If the element $\gamma$ is elliptic we argue as follows. Let $O$ be any orbit of the isometry $\gamma$ in the space $X$. The orbit $O$ is bounded. Denote
 \begin{equation}
 r_O = \inf \{ r > 0 \: : \: \text{$O \subset B_x(r)$ for some $x \in X$} \}
 \end{equation}
 and
 \begin{equation}
 C_1(O) = \{ x \in X \: : \: O \subset B_x(r_O + 1) \}.
 \end{equation}
 The points of   $C_1(O)$ are called the \emph{quasi-centers} of the subset $O$. The subset $C_1(O)$ is clearly $\gamma$-invariant and has diameter  less than $4\delta + 2$, see \cite[III.$\Gamma$.3.3]{bridson2013metric}. 
 The claim follows by taking  an arbitrary point $x \in C_1(O)$.
\end{proof}

\section{Stationary closed subsets}
\label{sec:limit sets}

Let $X$ be a complete separable Gromov hyperbolic geodesic metric space with fixed base point $x_0 \in X$ and equipped with the metric $d_X$. For example $X$ could be a proper Gromov hyperbolic length space (see \cite[\S I.3]{bridson2013metric}). Generally speaking, and with some applications in mind, we do not want to assume $X$ is proper.

\subsection*{Boundary measures for random walks}
Let $G$ be a locally compact group admitting a continuous action by isometries on the space $X$. Fix a   probability measure $\mu$ on the group $G$. Assume that the closed semigroup $G_\mu$ generated by the support of the measure $\mu$ is in fact a group. Consider the associated  space of sample paths $(G^\mathbb{N},\mathrm{P}_\mu)$  discussed in \S\ref{sec:stationary measures}.
The following fundamental object plays a key role in our work.

\begin{theorem}[The boundary measure]
\label{thm:boundary measure}
Assume that the action of the group $G_\mu$ on the space $X$ is non-elementary. In the focal case assume in addition  that
\begin{itemize}
    \item the measure $\mu$ has finite first moment\footnote{The probability measure $\mu$ has \emph{finite first moment} if $\sum_{g \in G} \mu(g) \|g\| < \infty$.} and is spread out\footnote{A measure $\mu$ is called \emph{spread out} if some convolution power $\mu^{*n}$ is non singular  with respect to the Haar measure on the group $G$.}, 
    \item the Poisson boundary of the pair $(G,\mu)$ is non trivial, and
    \item the space $X$ is proper and the group $G$ is acting cocompactly on $X$.
\end{itemize}
In the general type case no additional assumption is required. Then
\begin{enumerate}
\item the trajectory $\omega_n x_0$ converges to some point $\zeta_\omega \in \partial X$ for $\mathrm{P}_\mu$-almost every sample path $\omega  \in G^\mathbb{N}$, and
\item the pushforward $\nu_X$ of the measure $\mathrm{P}_\mu$ via the mapping $\omega \mapsto \zeta_\omega$ is the unique $\mu$-stationary  probability measure  on the boundary $\partial X$ satisfying  $\nu_X(\mathrm{Fix}_{\partial X}(G_\mu)) = 0$.
\end{enumerate}
\end{theorem}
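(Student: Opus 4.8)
The plan is to construct the boundary measure $\nu_X$ via the boundary convergence of sample paths and then establish both existence and uniqueness of the $\mu$-stationary measure that charges no fixed points. I would treat the general type and focal cases in parallel where possible, but note that they require genuinely different convergence mechanisms, so let me outline both.

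\textbf{Convergence of trajectories.} First I would establish part (1), the almost sure convergence $\omega_n x_0 \to \zeta_\omega \in \partial X$. In the general type case, this should follow from the standard theory of random walks on groups with non-elementary action: since $G_\mu$ contains two hyperbolic isometries with disjoint limit sets, one can use a contraction/pivoting argument (in the spirit of Maher--Tiozzo or Tiozzo's work on the Gromov boundary) to show that the Gromov products $\grop{x_0}{\omega_m x_0}{\omega_n x_0}$ tend to infinity almost surely, which forces $(\omega_n x_0)$ to be Cauchy in the compactification $X \cup \partial X$. Crucially, this requires no moment condition because the hyperbolic contraction is a purely topological/combinatorial phenomenon once general type is assumed. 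In the focal case the lineal-like geometry means one cannot use two independent attracting directions, so here I would invoke the finite first moment together with the Karlsson--Ledrappier or Kaimanovich ray approximation: the drift $\lim \frac{1}{n}\|\omega_n\|$ exists and is positive, and the spread-out plus nontrivial Poisson boundary hypotheses guarantee the walk escapes to a genuine boundary point rather than oscillating. This is where the three extra focal hypotheses get used, and I expect the focal convergence argument to be the main technical obstacle, since the boundary in the focal case is (up to a single fixed point) a single $G_\mu$-orbit and one must rule out the walk converging to the global fixed point.

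\textbf{Stationarity and the fixed-point condition.} Given convergence, define $\nu_X$ as the pushforward of $\mathrm{P}_\mu$ under $\omega \mapsto \zeta_\omega$. Stationarity $\mu * \nu_X = \nu_X$ is then formal: conditioning on the first increment $g_1$ and using the Markov property of the sample path, the shifted path $g_1^{-1}\omega$ has the same law, giving $\nu_X = \int_G g_* \nu_X \, d\mu(g)$. To verify $\nu_X(\mathrm{Fix}_{\partial X}(G_\mu)) = 0$, I would argue that the limit point $\zeta_\omega$ is almost surely \emph{not} fixed by all of $G_\mu$: since the action is non-elementary, $\mathrm{Fix}_{\partial X}(G_\mu)$ has at most one point (in the focal case) or is empty (in the general type case), and the contraction argument above in fact shows $\zeta_\omega$ ranges over a non-atomic set of attracting fixed points of hyperbolic elements, so a single point carries no mass.

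\textbf{Uniqueness.} For uniqueness, suppose $\nu'$ is any $\mu$-stationary measure with $\nu'(\mathrm{Fix}_{\partial X}(G_\mu)) = 0$. The martingale convergence theorem (or the Furstenberg conditional-measure description of stationary measures) shows that the measures $(\omega_n)_* \nu'$ converge $\mathrm{P}_\mu$-almost surely to a limit measure $\nu'_\omega$ on $\partial X$, and I would argue this limit is a Dirac mass $\mathrm{D}_{\zeta_\omega}$ concentrated at the same boundary point $\zeta_\omega$. The key input is that any non-fixed-point-charging stationary measure on a Gromov boundary under a non-elementary action must be \emph{proximal}: the contraction along the random walk collapses $\nu'$ onto the single attracting direction $\zeta_\omega$. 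Integrating $\nu' = \int \mathrm{D}_{\zeta_\omega} \, d\mathrm{P}_\mu(\omega)$ recovers $\nu_X$, so $\nu' = \nu_X$. Here one must take care that the hypothesis $\nu'(\mathrm{Fix}_{\partial X}(G_\mu)) = 0$ is exactly what prevents escape of mass to the fixed point and forces the Dirac collapse, tying the uniqueness back to the same geometric mechanism used for convergence.
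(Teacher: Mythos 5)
Your outline follows essentially the same route as the paper, which proves this theorem almost entirely by assembling citations: general-type convergence and uniqueness come from Kaimanovich, Benoist--Quint, and Maher--Tiozzo (with Dussaule--Gouezel removing the countability hypothesis), and the focal case comes from the identification of the Poisson boundary with $(\partial X,\nu_X)$ in Forghani et al. Your general-type sketch (Gromov products going to infinity via contraction, no moment condition needed, martingale/proximality argument for uniqueness) is an accurate description of what those references actually do, so that half is fine as a plan.

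The genuine gap is in the focal case, and you half-identify it yourself. You correctly observe that the main danger is the walk converging to the global fixed point $\zeta$ of $G_\mu$ (equivalently, the hitting measure being $\mathrm{D}_\zeta$, which \emph{does} charge $\mathrm{Fix}_{\partial X}(G_\mu)$ and so cannot serve as $\nu_X$), but you never supply the mechanism that rules this out --- you only assert that ``the spread-out plus nontrivial Poisson boundary hypotheses guarantee the walk escapes to a genuine boundary point,'' and separately claim that $\zeta_\omega$ ``ranges over a non-atomic set of attracting fixed points of hyperbolic elements.'' The latter claim is exactly what fails naively in the focal case: there is no North--South dynamics, all loxodromic elements share the fixed point $\zeta$, and a priori nothing in a pure contraction argument prevents the trajectory from converging to $\zeta$ itself. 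The paper's actual mechanism is that the three extra focal hypotheses (finite first moment, spread out, proper cocompact action) allow one to identify the Poisson boundary of $(G,\mu)$ with $(\partial X,\nu_X)$; since the Poisson boundary is assumed \emph{nontrivial}, the hitting measure cannot be a single Dirac mass, and then the Benoist--Quint argument shows $\nu_X$ is the unique non-atomic stationary measure while the only atomic stationary measure is $\mathrm{D}_\zeta$. Without routing through the Poisson boundary identification (or some substitute), your focal-case convergence and your non-atomicity claim are unsupported, and your uniqueness step inherits the same problem because proximality of stationary measures is not available by bare contraction in the focal setting.
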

Note that the limit of a sequence such as $\omega_n x_0$ is  independent of the base point. The condition saying that $\nu_X(\mathrm{Fix}_{\partial X}(G_\mu)) = 0$ is clearly  relevant only in the focal case, for a general type action fixes no points on the boundary at infinity.
\begin{proof}[Proof of Theorem \ref{thm:boundary measure}]
Assume to begin with that the action of the group $G_\mu$ on the metric space $X$ has general type.  The case of a random walk on a Gromov hyperbolic group is \cite{kaimanovich1994poisson}.   The general case for a proper metric space $X$ is  treated in \cite{benoist2016central}. The case where the group $G$ is countable but the space $X$ may be non-proper is \cite{maher2018random}. However, as noted by Dussaule and Gouezel, the countability assumption may be dropped. Indeed, the only place where it was used in \cite{maher2018random} was in their Proposition 4.4, which Dussaule and Gouezel generalized to the uncountable setting, see  \cite[Appendix]{Dussaule2016notes}.

In the focal case, the extended  set of assumptions implies     that the Poisson boundary of the pair $(G,\mu)$ can be identified with the Gromov boundary $\partial X$ equipped with the  $\mu$-stationary hitting measure $\nu_X$, see \cite[Theorem 1.4]{forghani2022shannon}. 
See \cite{kaimanovich1996boundaries} for a closely related  survey of the identification problem for Poisson boundaries. The argument of \cite[Proposition 3.1]{benoist2016central} shows that $\nu_X$ is the unique \emph{non-atomic} $\mu$-stationary probability measure  on the boundary $\partial X$. On the other hand, the only  atomic such measure is the Dirac measure $\mathrm{D}_\zeta$ where $\mathrm{Fix}_{\partial X}(G_\mu) = \{\zeta\} \subset  \Lambda(G_\mu)$. 
\end{proof}

\emph{We will assume throughout \S\ref{sec:limit sets} and \S\ref{sec:fixed points on the boundary} that the assumptions of Theorem \ref{thm:boundary measure} are satisfied so that the $\mu$-stationary boundary measure $\nu_X$ satisfying both statements (1) and (2) of Theorem \ref{thm:boundary measure} does exist.}

\vspace{5pt}

 For the sake of completeness we include the proof of the following elementary consequence of Theorem \ref{thm:boundary measure}.

\begin{lemma}
\label{lemma:identifying conv powers}
The  probability measures $\mu^{*n} * \mathrm{D}_{x_0}$ converge to the  $\mu$-stationary boundary measure $\nu_X$ in the weak-$*$ topology of probability measures on the compactification $\overline{X} = X \cup \partial X$.
\end{lemma}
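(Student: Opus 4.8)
The plan is to recognize the measure $\mu^{*n} * \mathrm{D}_{x_0}$ as the law of the random position $\omega_n x_0$ and then invoke the elementary principle that almost sure convergence implies convergence in distribution. Unwinding the definition of convolution, $\mu^{*n} * \mathrm{D}_{x_0} = \int_G \mathrm{D}_{g x_0} \, \mathrm{d}\mu^{*n}(g)$ is precisely the pushforward of $\mu^{*n}$ under the orbit map $g \mapsto g x_0$. Equivalently, since the sample path satisfies $\omega_n = g_1 \cdots g_n$ with law $\mu^{*n}$, the measure $\mu^{*n} * \mathrm{D}_{x_0}$ is exactly the distribution of $\omega_n x_0$ with respect to $\mathrm{P}_\mu$.

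To verify weak-$*$ convergence it suffices to fix a bounded continuous function $f$ on the compactification $\overline{X}$ and prove that $\int_{\overline{X}} f \, \mathrm{d}(\mu^{*n} * \mathrm{D}_{x_0}) \to \int_{\overline{X}} f \, \mathrm{d}\nu_X$. By the identification above, the left-hand side equals $\int_{G^\mathbb{N}} f(\omega_n x_0) \, \mathrm{d}\mathrm{P}_\mu(\omega)$. Now Theorem \ref{thm:boundary measure}(1) guarantees that for $\mathrm{P}_\mu$-almost every sample path $\omega$ the trajectory $\omega_n x_0$ converges in $\overline{X}$ to the boundary point $\zeta_\omega \in \partial X$. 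Continuity of $f$ then gives $f(\omega_n x_0) \to f(\zeta_\omega)$ $\mathrm{P}_\mu$-almost surely, and since $f$ is bounded the dominated convergence theorem yields $\int f(\omega_n x_0) \, \mathrm{d}\mathrm{P}_\mu(\omega) \to \int f(\zeta_\omega) \, \mathrm{d}\mathrm{P}_\mu(\omega)$.

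Finally, Theorem \ref{thm:boundary measure}(2) defines $\nu_X$ as the pushforward of $\mathrm{P}_\mu$ under $\omega \mapsto \zeta_\omega$, so $\int_{G^\mathbb{N}} f(\zeta_\omega) \, \mathrm{d}\mathrm{P}_\mu(\omega) = \int_{\partial X} f \, \mathrm{d}\nu_X = \int_{\overline{X}} f \, \mathrm{d}\nu_X$, the last equality holding because $\nu_X$ is supported on $\partial X \subset \overline{X}$. Chaining these equalities establishes the claimed convergence. Since the whole argument is just the standard passage from almost sure convergence to convergence in law, I do not anticipate a genuine obstacle. The only point requiring a little care is the bookkeeping that identifies $\mu^{*n} * \mathrm{D}_{x_0}$ with the distribution of $\omega_n x_0$ and the decision to test against bounded continuous functions on $\overline{X}$ rather than on $X$; working on the compactification is exactly what permits passing from the interior trajectory to its boundary limit, and it keeps the argument valid without invoking any properness hypothesis on $X$.
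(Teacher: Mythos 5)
Your proof is correct and is essentially the paper's argument: both rest on the identification of $\mu^{*n} * \mathrm{D}_{x_0}$ with the law of $\omega_n x_0$ together with the almost sure convergence $\omega_n x_0 \to \zeta_\omega$ from Theorem \ref{thm:boundary measure}. The only cosmetic difference is that you pass to the limit by dominated convergence against bounded continuous test functions, whereas the paper applies Fatou's lemma to indicators of open sets and then cites the Portmanteau theorem; these are two equivalent formulations of the same standard fact that almost sure convergence implies convergence in distribution.
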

\begin{proof}
It follows from Theorem \ref{thm:boundary measure} that 
\begin{equation}
\nu_X(E) = \mathrm{P}_\mu(\{\omega  \: : \: \lim_n \omega_n x_0 \in E\})
\end{equation}
holds true for any $\nu_X$-measurable subset $E \subset \partial X$. Take $U$ to be any open subset of the compactification $\overline{X}$. If $\lim_n \omega_n x_0 \in U \cap \partial X$ then $\omega_n x_0 \in U$ for all indices $n$ sufficiently large. Fatou's lemma gives 
\begin{align}
\begin{split}
\nu_X(U) &= \nu_X(U \cap \partial X) \le 
\int_{G^\mathbb{N}} \liminf_n  1_U (\omega_n x_0) \; \mathrm{d} \mathrm{P}_\mu(\omega) \le \\
&\le \liminf_n \int_X 1_U \, \mathrm{d}(\mu^{*n} * \mathrm{D}_{x_0}) = \liminf_n \mu^{*n} * \mathrm{D}_{x_0}(U).
\end{split}
\end{align}
The desired conclusion follows from the Portmanteau theorem. 
\end{proof}

\begin{prop}
\label{prop:stationary measure on boundary of Gromov hyperbolic space has full support}
The boundary measure $\nu_X$ has  $\mathrm{supp}(\nu_X) = \Lambda(G_\mu)$.
\end{prop}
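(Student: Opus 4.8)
The plan is to prove the two inclusions $\supp(\nu_X) \subseteq \Lambda(G_\mu)$ and $\Lambda(G_\mu) \subseteq \supp(\nu_X)$ separately, using that $\nu_X$ is simultaneously a hitting measure for the random walk and a quasi-invariant measure whose support is therefore closed and $G_\mu$-invariant.

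First I would establish $\supp(\nu_X) \subseteq \Lambda(G_\mu)$. By Theorem \ref{thm:boundary measure} the measure $\nu_X$ is the law of the boundary point $\zeta_\omega = \lim_n \omega_n x_0$. Since every $\omega_n = g_1 \cdots g_n$ lies in the group $G_\mu$, the trajectory $\omega_n x_0$ stays in the single orbit $G_\mu x_0$, so its limit satisfies $\zeta_\omega \in \partial X \cap \overline{G_\mu x_0} = \Lambda(G_\mu)$ for $\mathrm{P}_\mu$-almost every $\omega$. Hence $\nu_X(\Lambda(G_\mu)) = 1$, and as the limit set is closed this yields $\supp(\nu_X) \subseteq \Lambda(G_\mu)$.

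For the reverse inclusion I would first observe, via Lemma \ref{lem:Stationary measures are quasi-invariant} and the remark following it (the boundary $\partial X$ is second countable because $X$ is separable), that $\supp(\nu_X)$ is a nonempty closed $G_\mu$-invariant subset of $\partial X$. The geometric input is the minimality of the limit set: for a non-elementary action the $G_\mu$-orbit closure of any point $\zeta \in \Lambda(G_\mu) \setminus \mathrm{Fix}_{\partial X}(G_\mu)$ equals all of $\Lambda(G_\mu)$ (see \cite[Proposition 3.1]{caprace2015amenable}). It therefore suffices to produce a non-fixed point inside $\supp(\nu_X)$. In the general type case there are no fixed points and any point of $\supp(\nu_X)$ will do. In the focal case $\mathrm{Fix}_{\partial X}(G_\mu)$ reduces to a single point $\xi$, and since $\nu_X(\{\xi\}) = \nu_X(\mathrm{Fix}_{\partial X}(G_\mu)) = 0$, the probability measure $\nu_X$ is not the Dirac mass $\mathrm{D}_\xi$; thus $\supp(\nu_X) \neq \{\xi\}$ and one may select $\zeta \in \supp(\nu_X)$ with $\zeta \neq \xi$. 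For such $\zeta$ the orbit closure $\overline{G_\mu \zeta}$ equals $\Lambda(G_\mu)$, and since $\supp(\nu_X)$ is closed, $G_\mu$-invariant and contains $\zeta$, we conclude $\Lambda(G_\mu) \subseteq \supp(\nu_X)$. Combining the two inclusions proves the proposition.

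The main obstacle I anticipate is the focal case: the unique fixed point $\xi$ must belong to $\supp(\nu_X)$ even though it carries no $\nu_X$-mass, so the argument cannot locate it directly and must instead recover it as a limit of the orbit of a generic non-fixed point, which is precisely what the minimality of the limit set supplies.
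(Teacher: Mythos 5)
Your proof is correct and follows essentially the same route as the paper: the forward inclusion from the construction of $\nu_X$ as a hitting measure, and the reverse inclusion from the fact that $\mathrm{supp}(\nu_X)$ is a closed $G_\mu$-invariant subset of $\partial X$ combined with minimality of $\Lambda(G_\mu)$ among such sets not contained in $\mathrm{Fix}_{\partial X}(G_\mu)$. The only difference is that you spell out the focal case explicitly (using $\nu_X(\mathrm{Fix}_{\partial X}(G_\mu))=0$ to locate a non-fixed point of the support), a step the paper leaves implicit in its appeal to the minimality statement.
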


\begin{proof} 
The construction of the boundary measure $\nu_X$ in Theorem \ref{thm:boundary measure} shows that  $\mathrm{supp}(\nu_X) \subset \Lambda(G_\mu)$. The subset $\mathrm{supp}(\nu_X)$ is closed and $G_\mu$-invariant by Lemma \ref{lem:Stationary measures are quasi-invariant}. This concludes the proof since $\Lambda(G_\mu)$ is the minimal $G_\mu$-invariant closed subset of the boundary $\partial X$ not contained in $\mathrm{Fix}_{\partial X}(G_\mu)$ \cite[Corollary 7.4.3]{das2017geometry}.
\end{proof}



\subsection*{Effros space of closed subsets}
Let $\Cl{X}$ denote the space of all closed subsets of the space $X$. The space $\Cl{X}$ is a standard Borel space with respect to the \emph{Effros Borel structure} given by the $\sigma$-algebra generated by all subsets of the form
\begin{equation}
\mathcal{O}(U) = \{ F \in \Cl{X} \: : \: F \cap U = \emptyset\}
\end{equation}
for some open subset $U \subset X$ \cite[Theorem 12.6]{kechris2012classical}.  In the special case where the metric space $X$ is proper the Effros Borel structure coincides with the Borel structure for the Chabauty topology on $\Cl{X}$. For our purposes in \S\ref{sec:limit sets} it will suffice to regard $\Cl{X}$ as a  Borel rather than a topological space.

The action of the group $\Isom{X}$ on the space $X$ by isometries is continuous. It determines a Borel action of the group $G$ on the Effros  space  $\Cl{X}$.

\begin{prop}
\label{prop:distance from a proper at infinity subset goes to infinity with positive probability}
Let $Y \in \Cl{X}$ be a closed subset satisfying  $\Lambda(G_\mu) \setminus \partial Y \neq \emptyset$. Then there is a subset of sample paths $\Omega \subset G^\mathbb{N}$   with $\mathrm{P}_\mu(\Omega) > 0$ so that
\begin{equation}
\label{eq:sample path leaves proper at infinity subsets}
\liminf_n d_X(\omega_n x_0,Y) = \infty  
\end{equation}
holds true for every sample path $\omega \in \Omega$.
\end{prop}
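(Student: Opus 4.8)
The plan is to exploit the probabilistic input of Theorem~\ref{thm:boundary measure}, namely that $\mathrm{P}_\mu$-almost every sample path $\omega$ satisfies $\omega_n x_0 \to \zeta_\omega \in \partial X$ with $\zeta_\omega$ distributed according to $\nu_X$, together with the fact that $\mathrm{supp}(\nu_X) = \Lambda(G_\mu)$ from Proposition~\ref{prop:stationary measure on boundary of Gromov hyperbolic space has full support}. Reading $\partial Y$ as the set of accumulation points of $Y$ on the Gromov boundary, I would fix a point $\xi \in \Lambda(G_\mu) \setminus \partial Y$ and show that any sample path whose limit point lands sufficiently close to $\xi$ must escape $Y$. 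Since the event that $\zeta_\omega$ is close to $\xi$ has positive $\mathrm{P}_\mu$-measure (as $\xi$ lies in the support of $\nu_X$), this produces the desired set $\Omega$.

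The geometric heart of the argument is the following claim: if $x_n \to \eta \in \partial X$ and $\eta \notin \partial Y$, then $d_X(x_n, Y) \to \infty$. I would prove it by contradiction. If $\liminf_n d_X(x_n, Y) < \infty$, pass to a subsequence and pick $y_n \in Y$ with $d_X(x_n, y_n)$ bounded. Because $\|x_n\| \to \infty$, the Gromov product $(x_n \mid y_n)_{x_0} = \tfrac12(\|x_n\| + \|y_n\| - d_X(x_n, y_n))$ tends to $\infty$, and the $\delta$-hyperbolic inequality $(\eta \mid y_n)_{x_0} \ge \min\{(\eta \mid x_n)_{x_0}, (x_n \mid y_n)_{x_0}\} - \delta$ then forces $(\eta \mid y_n)_{x_0} \to \infty$, i.e. $y_n \to \eta$. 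Hence $\eta \in \partial Y$, contradicting the choice of $\eta$.

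To upgrade the single point $\xi$ into a positive-measure event I would produce an open neighborhood $U$ of $\xi$ in $\partial X$ with $U \cap \partial Y = \emptyset$. Since $\xi \notin \partial Y$, the quantity $M = \sup_{y \in Y}(\xi \mid y)_{x_0}$ is finite (otherwise some sequence in $Y$ would converge to $\xi$). Taking $U = \{\eta \in \partial X : (\xi \mid \eta)_{x_0} > M + 2\delta + 1\}$ and applying the $\delta$-inequality once more shows that no $\eta \in U$ can lie in $\partial Y$, for a sequence $y_n \in Y$ converging to such an $\eta$ would satisfy $(\xi \mid y_n)_{x_0} > M$ eventually. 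As $\xi \in \mathrm{supp}(\nu_X)$, the open set $U$ has $\nu_X(U) > 0$. Setting $\Omega = \{\omega : \zeta_\omega \in U\}$, Theorem~\ref{thm:boundary measure} gives $\mathrm{P}_\mu(\Omega) = \nu_X(U) > 0$, and for every $\omega \in \Omega$ the limit point $\zeta_\omega \in U \subset \partial X \setminus \partial Y$, so the claim of the previous paragraph yields $\liminf_n d_X(\omega_n x_0, Y) = \infty$.

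The main obstacle I anticipate is the absence of properness of $X$: one cannot invoke compactness of $\overline{X} = X \cup \partial X$ to extract a convergent subsequence of the companion points $y_n$, nor to assert a priori that $\partial Y$ is closed, both of which would trivialize the argument in the proper case. The resolution is to phrase everything through the coarse Gromov-product inequalities, which hold for the boundary-extended product up to a uniform additive constant; this simultaneously delivers the key geometric claim and constructs the neighborhood $U$ explicitly, bypassing any appeal to compactness. A secondary point to handle with care is that the extended product $(\xi \mid \eta)_{x_0}$ between two boundary points is only defined up to an additive $2\delta$, which is why I build the requisite slack into the threshold defining $U$.
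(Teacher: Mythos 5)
Your proof is correct and follows essentially the same route as the paper: the paper simply observes that $U=\Lambda(G_\mu)\setminus\partial Y$ is open with $\nu_X(U)>0$ by full support of $\nu_X$, sets $\Omega=\{\omega:\zeta_\omega\in U\}$, and asserts that convergence of $\omega_n x_0$ to a point off $\partial Y$ forces $d_X(\omega_n x_0,Y)\to\infty$. You supply the Gromov-product details the paper leaves implicit (the escape lemma and the openness of the complement of $\partial Y$ without properness); only the numerical threshold defining $U$ needs a touch more slack (e.g.\ $M+3\delta+1$) for the final inequality to close, which is the kind of constant-chasing you already flagged.
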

\begin{proof}
The   open subset $U = \Lambda(G_\mu) \setminus \partial Y$ satisfies $\nu_X(U) > 0$  by  Proposition \ref{prop:stationary measure on boundary of Gromov hyperbolic space has full support}.     Let $\Omega = \{ \omega \in G^\mathbb{N} \: : \: \zeta_\omega \in U \}$ so that $\mathrm{P}_\mu(\Omega) > 0$. Note that $\mathrm{P}_\mu$-almost  every sample path $\omega \in \Omega$ satisfies $\omega_n x_0 \to \zeta_\omega$ for some point $\zeta_\omega \notin \partial Y$. Every such  sample path $\omega$ must in paticular satisfy Equation (\ref{eq:sample path leaves proper at infinity subsets}).
\end{proof}

\subsection*{Stationary closed subsets} 

Denote $\Lambda_\mu = \Lambda(G_\mu)$.  We use the above information to deduce that $\mu$-stationary random closed subsets of the space $X$ have large limit sets.

\begin{prop}
\label{prop:no stationary measure on proper at infinity subsets}
Let $\nu$ be a $\mu$-stationary probability measure on $\mathrm{Cl}(X)$. Then $\nu$-almost  every  non-empty closed subset $Y \in \Cl{X}$    satisfies $\Lambda_\mu \subset \partial Y$.
\end{prop}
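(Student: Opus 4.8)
The plan is to argue by contradiction, using the distance-to-$Y$ function as the observable in the ergodic-theoretic Corollary \ref{cor:liminf of a Borel function is bounded} and playing it against Proposition \ref{prop:distance from a proper at infinity subset goes to infinity with positive probability}.

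First I would introduce the observable $F \colon \Cl{X} \to \left[0,\infty\right]$ given by $F(Y) = d_X(x_0, Y) = \inf_{y \in Y} d_X(x_0, y)$, with the convention $F(\emptyset) = \infty$. This is Borel for the Effros structure, since $\{Y : F(Y) < r\} = \Cl{X} \setminus \mathcal{O}(B_{x_0}(r))$. Restricting attention to the $G$-invariant Borel set of non-empty closed subsets (on which $F$ is finite-valued, and where the statement of the proposition is located; if this set is $\nu$-null the claim is vacuous), I would apply Corollary \ref{cor:liminf of a Borel function is bounded} to $F$. The key computation is that for the action $g \cdot Y = gY$ one has
\[
F(g_n g_{n-1} \cdots g_1 \cdot Y) = d_X\bigl((g_n \cdots g_1)^{-1} x_0, Y\bigr) = d_X(\check\omega_n x_0, Y),
\]
because $(g_n \cdots g_1)^{-1} = g_1^{-1} \cdots g_n^{-1} = \check\omega_n$ is precisely the sample path of the reflected walk. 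Hence Corollary \ref{cor:liminf of a Borel function is bounded}, together with Fubini's theorem, yields a $\nu$-conull set $\mathcal{G}$ of closed subsets such that every $Y \in \mathcal{G}$ satisfies $\liminf_n d_X(\check\omega_n x_0, Y) < \infty$ for $\mathrm{P}_{\check\mu}$-almost every reflected sample path.

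Second, I would invoke Proposition \ref{prop:distance from a proper at infinity subset goes to infinity with positive probability} for the reflected measure $\check\mu$. Since $G_{\check\mu} = G_\mu$ as a closed subgroup, and therefore $\Lambda(G_{\check\mu}) = \Lambda_\mu$, and since all hypotheses of Theorem \ref{thm:boundary measure} are invariant under $\mu \mapsto \check\mu$ (note $\|g\| = \|g^{-1}\|$, so the first moment is unchanged, spread-outness is preserved under inversion, and the action type is unchanged), the proposition applies verbatim to $\check\mu$. It provides, for any non-empty $Y$ with $\Lambda_\mu \setminus \partial Y \neq \emptyset$, a positive-$\mathrm{P}_{\check\mu}$-probability set of reflected sample paths along which $\liminf_n d_X(\check\omega_n x_0, Y) = \infty$; in particular for such $Y$ the event $\{\liminf_n d_X(\check\omega_n x_0, Y) < \infty\}$ has $\mathrm{P}_{\check\mu}$-probability strictly less than $1$.

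Finally I would combine the two steps. No $Y \in \mathcal{G}$ can satisfy $\Lambda_\mu \setminus \partial Y \neq \emptyset$, for that would force the event $\{\liminf_n d_X(\check\omega_n x_0, Y) < \infty\}$ to have $\mathrm{P}_{\check\mu}$-probability both equal to $1$ (first step) and strictly less than $1$ (second step). Hence every $Y$ in the $\nu$-conull set $\mathcal{G}$ obeys $\Lambda_\mu \subset \partial Y$, which is the assertion. I expect the only genuinely delicate point to be the bookkeeping of the reflected walk: one must keep track that the ergodic average runs the increments in the order $g_n \cdots g_1$, so that the relevant boundary convergence is that of $\check\omega_n x_0$ rather than $\omega_n x_0$, and then verify that the hypotheses guaranteeing existence of the boundary measure survive the passage to $\check\mu$. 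Everything else is the routine verification of Borel measurability and an application of Fubini's theorem.
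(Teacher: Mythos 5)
Your argument is exactly the paper's proof: apply Corollary \ref{cor:liminf of a Borel function is bounded} to the observable $Y \mapsto d_X(x_0,Y)$, rewrite $d_X(x_0, g_n\cdots g_1 Y)$ as the distance from the reflected sample path $\check\omega_n x_0$ to $Y$, and contradict Proposition \ref{prop:distance from a proper at infinity subset goes to infinity with positive probability} applied to $\check\mu$. The extra bookkeeping you supply (Borel measurability of the observable and invariance of the hypotheses under $\mu\mapsto\check\mu$) is correct and only makes explicit what the paper leaves implicit.
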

\begin{proof}
Consider the Borel function 
\begin{equation}
    Y \mapsto d_X(x_0,Y), \quad \Cl{X}\setminus \{\emptyset\} \to \left[0,\infty\right)
\end{equation}
defined on the space of closed non-empty subsets of $X$. Corollary \ref{cor:liminf of a Borel function is bounded} says that
\begin{equation}
\label{eq:distance stays bounded}
\liminf_n d_X(x_0, g_n \cdots g_1 Y) < \infty
\end{equation}
 for $\mu^{\otimes \mathbb{N}}$-almost every sample sequence of increments $ (g_n) \in G^{\otimes \mathbb{N}}$ and $\nu$-almost every closed non-empty subset $Y \subset X$. The desired conclusion follows from Equation (\ref{eq:distance stays bounded}) combined with Proposition  \ref{prop:distance from a proper at infinity subset goes to infinity with positive probability} applied with respect to the probability measure $\check{\mu}$ given by $\check{\mu}(A) = \mu(A^{-1})$.
\end{proof}

\subsection*{Limit sets and fixed points on the boundary}

Given an arbitrary closed subset   $A \subset \partial X$ we let   $\mathrm{WH}(A)$ denote the union of all bi-infinite geodesics in $X$ with both endpoints belonging to $A$. In particular   $\mathrm{WH}(A) \in \Cl{X}$. Note that $\mathrm{WH}(A) \neq \emptyset$  if and only if $|A| \ge 2$. 

\begin{prop}
\label{prop:if limit set larger than 2 then it contains Lambda (G)}
Let $\nu$ be a $\mu$-stationary random subgroup of $G$. If $\nu$-almost every subgroup $H$ of $G$ satisfies $|\Lambda(H)| \ge 2$ then   $\Lambda_\mu  \subset \Lambda(H)  $ holds $\nu$-almost surely.
\end{prop}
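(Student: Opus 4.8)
The plan is to transport the stationary random subgroup $\nu$ to a $\mu$-stationary random closed subset of $X$ and then invoke Proposition \ref{prop:no stationary measure on proper at infinity subsets}. Consider the map
\[
\Phi \colon \Sub{G} \to \Cl{X}, \qquad \Phi(H) = \mathrm{WH}(\Lambda(H)),
\]
sending a subgroup to the weak hull of its limit set. Since $\mathrm{WH}(A) \neq \emptyset$ exactly when $|A| \ge 2$, the hypothesis that $|\Lambda(H)| \ge 2$ for $\nu$-almost every $H$ guarantees that $\Phi(H)$ is $\nu$-almost surely a non-empty closed subset. Before using $\Phi$ I would check two bookkeeping properties. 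First, $\Phi$ is Borel: the limit set map $H \mapsto \Lambda(H)$ and the weak hull map $A \mapsto \mathrm{WH}(A)$ are each Borel for the relevant Effros structures, and $\Phi$ is their composite. Second, $\Phi$ is equivariant for the conjugation action on $\Sub{G}$ and the isometric action on $\Cl{X}$: because each $g$ extends to a homeomorphism of $\overline{X}$ carrying geodesics to geodesics, limit sets transform by $\Lambda(gHg^{-1}) = g\Lambda(H)$, and hence $\Phi(gHg^{-1}) = \mathrm{WH}(g\Lambda(H)) = g\,\mathrm{WH}(\Lambda(H)) = g\Phi(H)$.

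Given equivariance, the pushforward $\Phi_*\nu$ is a $\mu$-stationary probability measure on $\Cl{X}$, since $\Phi_*(\mu * \nu) = \mu * (\Phi_*\nu)$. By the previous paragraph it assigns no mass to the empty set. Proposition \ref{prop:no stationary measure on proper at infinity subsets} therefore applies to $\Phi_*\nu$ and shows that $\Phi_*\nu$-almost every $Y$ satisfies $\Lambda_\mu \subset \partial Y$. Unwinding the pushforward, this says precisely that $\Lambda_\mu \subset \partial\,\mathrm{WH}(\Lambda(H))$ holds for $\nu$-almost every subgroup $H$.

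It remains to relate the boundary of the weak hull back to the limit set, namely to establish the geometric containment $\partial\,\mathrm{WH}(A) \subset A$ for every closed subset $A \subset \partial X$. Granting this with $A = \Lambda(H)$, which is closed, we obtain $\Lambda_\mu \subset \partial\,\mathrm{WH}(\Lambda(H)) \subset \Lambda(H)$ for $\nu$-almost every $H$, which is the assertion. To prove the containment I would argue that a point $\xi \in \partial\,\mathrm{WH}(A)$ is the limit of points $y_n \in \mathrm{WH}(A)$ leaving every bounded set, with each $y_n$ lying on a bi-infinite geodesic whose endpoints $a_n, b_n$ lie in $A$. By $\delta$-hyperbolicity a point of the geodesic $[a_n,b_n]$ far from the basepoint $x_0$ has large Gromov product $\grop{x_0}{\cdot}{\cdot}$ with one of its two endpoints; passing to a subsequence and applying the near-inequality for Gromov products, the chosen endpoints converge to $\xi$, whence $\xi \in \overline{A} = A$.

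I expect the main obstacle to be exactly this last geometric step, together with the (routine but necessary) verification that $\Phi$ is Borel. The containment $\partial\,\mathrm{WH}(A)\subset A$ is standard in the proper $\CAT{-1}$ setting but must here be handled purely through Gromov products, since $X$ is only assumed complete, separable and Gromov hyperbolic; once it is in hand, the equivariance and stationarity of the pushforward are formal.
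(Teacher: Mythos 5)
Your proposal is correct and follows essentially the same route as the paper: both push $\nu$ forward under the map $H \mapsto \mathrm{WH}(\Lambda(H))$, note that the hypothesis $|\Lambda(H)|\ge 2$ makes the image almost surely non-empty, and then invoke Proposition \ref{prop:no stationary measure on proper at infinity subsets} together with the containment of $\partial\,\mathrm{WH}(\Lambda(H))$ in $\Lambda(H)$. The paper leaves the measurability, equivariance and the Gromov-product argument for $\partial\,\mathrm{WH}(A)\subset A$ implicit, whereas you spell them out; these details are correct.
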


The assumption that a subgroup $H \le G $ satisfies $|\Lambda(H)| \ge 2$ is equivalent to saying that $H$ contains an element acting on the space $X$ via a loxodromic isometry.

\begin{proof}[Proof of Proposition \ref{prop:if limit set larger than 2 then it contains Lambda (G)}]
Consider the $G$-equivariant map $F_1 : \Sub{G} \to \Cl{X}$ given by 
\begin{equation}
F_1(H) = \mathrm{WH}(\Lambda(H)).
\end{equation}
  The assumption   $|\Lambda(H)| \ge 2$ implies that $F_1(H) \neq \emptyset$.
Note that if
  $\Lambda(H) \cap \Lambda_\mu \subsetneq \Lambda_\mu$ then $ \partial \mathrm{WH} (H) \cap \Lambda_\mu \subsetneq  \Lambda_\mu$. The result follows from Proposition \ref{prop:no stationary measure on proper at infinity subsets}.
\end{proof}

\begin{prop}
\label{prop:fixed point set has size at most 1}
Let $\nu$ be a $\mu$-stationary random subgroup of $G$. If $\nu$-almost every subgroup $H$ is not contained in the elliptic radical $\mathrm{E}(G_\mu)$ then $|\mathrm{Fix}_{\Lambda_\mu}(H)| \le 1$ holds  true $\nu$-almost surely.
\end{prop}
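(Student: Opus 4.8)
The plan is to deduce this from Proposition \ref{prop:no stationary measure on proper at infinity subsets}, now applied not to the whole limit set but to the \emph{fixed-point set} on the boundary. Suppose for contradiction that the event
\[
\{ H \in \Sub{G} \: : \: |\mathrm{Fix}_{\Lambda_\mu}(H)| \ge 2 \}
\]
has positive $\nu$-measure. On this event the subgroup $H$ fixes at least two distinct points of $\Lambda_\mu$, so the weak hull $\mathrm{WH}(\mathrm{Fix}_{\Lambda_\mu}(H))$ is a non-empty closed $H$-invariant subset of $X$. The idea is to build from $\nu$ a new $\mu$-stationary probability measure on $\Cl{X}$ supported on non-empty closed subsets whose boundary is a \emph{proper} subset of $\Lambda_\mu$, contradicting Proposition \ref{prop:no stationary measure on proper at infinity subsets}.

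First I would set up the $G$-equivariant Borel map $F_2 : \Sub{G} \to \Cl{X}$ given by $F_2(H) = \mathrm{WH}(\mathrm{Fix}_{\Lambda_\mu}(H))$, paralleling the map $F_1$ used in the proof of Proposition \ref{prop:if limit set larger than 2 then it contains Lambda (G)}. The key point to verify is that $\partial \mathrm{WH}(\mathrm{Fix}_{\Lambda_\mu}(H)) = \mathrm{Fix}_{\Lambda_\mu}(H)$ is a proper subset of $\Lambda_\mu$ exactly when $H \not\supseteq$ the kernel of the $\Lambda_\mu$-action, i.e. precisely on the event that $H \not\le \mathrm{E}(G_\mu)$. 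Indeed if $H$ were to fix all of $\Lambda_\mu$ then $H \le \mathrm{E}(G_\mu)$ by the very definition of the elliptic radical as the kernel of the action on the limit set; the hypothesis excludes this $\nu$-almost surely. Conditioning $\nu$ on the positive-measure event $\{|\mathrm{Fix}_{\Lambda_\mu}(H)| \ge 2\}$, renormalizing, and pushing forward via $F_2$ then yields (after checking stationarity survives the conditioning, which is the delicate point) a $\mu$-stationary measure on non-empty closed subsets $Y$ with $\Lambda_\mu \not\subset \partial Y$.

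The main obstacle I anticipate is the stationarity of the conditioned measure. The event $\{|\mathrm{Fix}_{\Lambda_\mu}(H)| \ge 2\}$ is $G$-invariant (conjugation permutes fixed points bijectively), so its $\nu$-measure is constant under the $G$-action and conditioning on it preserves $\mu$-stationarity; this is where I expect to spend the most care, since one must confirm the event is genuinely $G$-invariant rather than merely $\nu$-almost invariant. Granting this, the pushforward $F_2$ of the conditioned measure is $\mu$-stationary on $\Cl{X} \setminus \{\emptyset\}$, and by the previous paragraph its boundary is $\nu$-almost surely a proper subset of $\Lambda_\mu$ — directly contradicting Proposition \ref{prop:no stationary measure on proper at infinity subsets}, which forces $\Lambda_\mu \subset \partial Y$ almost surely. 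This contradiction shows the bad event is $\nu$-null, giving $|\mathrm{Fix}_{\Lambda_\mu}(H)| \le 1$ almost surely as claimed.
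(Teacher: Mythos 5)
Your proposal is correct and follows essentially the same route as the paper: push $\nu$ forward through the $G$-equivariant map $F_2(H) = \mathrm{WH}(\mathrm{Fix}_{\Lambda_\mu}(H))$ and invoke Proposition \ref{prop:no stationary measure on proper at infinity subsets} to force $F_2(H) = \emptyset$ almost surely. The conditioning step you flag as delicate is actually unnecessary (though your justification of it is sound), since that proposition is already phrased so that its conclusion applies only to the non-empty closed sets charged by the pushforward measure.
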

\begin{proof}
Consider the $G$-equivariant map $F_2 : \Sub{G} \to \Cl{X}$ given by 
\begin{equation}
F_2(H) = \mathrm{WH}(\mathrm{Fix}_{\Lambda_\mu}(H)).
\end{equation}
If the subgroup $H$ is $\nu$-almost surely not contained in the elliptic radical $\mathrm{E}(G_\mu)$ then $\mathrm{Fix}_{\Lambda_\mu}(H)$ is $\nu$-almost surely a proper subset of the limit set $\Lambda_\mu$. In that case it follows from the contrapositive direction of Proposition \ref{prop:no stationary measure on proper at infinity subsets} that $F_2(H) = \emptyset$ must hold  $\nu$-almost surely. In other words $|\mathrm{Fix}_{\Lambda_\mu}(H)| \le 1$ holds  $\nu$-almost surely.
\end{proof}

\begin{prop}
\label{prop:limit set cannot be empty}
Let $\nu$ be a $\mu$-stationary random subgroup of $G$. If $\nu$-almost every subgroup $H$ is not contained in the elliptic radical $\mathrm{E}(G_\mu)$ then   $\Lambda(H) \neq \emptyset$ holds true $\nu$-almost surely.
\end{prop}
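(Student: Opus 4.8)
The plan is to deduce the claim from the fact, already established in Proposition \ref{prop:fixed point set has size at most 1}, that $|\mathrm{Fix}_{\Lambda_\mu}(H)| \le 1$ holds $\nu$-almost surely. Since the action of $G_\mu$ is non-elementary its limit set $\Lambda_\mu$ is infinite, so this says that $\nu$-almost every subgroup $H$ fixes only a negligible portion of $\Lambda_\mu$. Recall that $\Lambda(H) = \emptyset$ precisely when the action of $H$ on $X$ is bounded, i.e. every $H$-orbit is bounded; this is the dividing line between the bounded type and the remaining (horocyclic, lineal, focal, general) types of \S\ref{sec:hyperbolic groups}, each of the latter having non-empty limit set. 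It therefore suffices to rule out boundedness, and I will do so by producing a $G$-equivariant Borel assignment that attaches to every bounded subgroup a non-empty closed subset of $X$ whose boundary is too small to contain $\Lambda_\mu$, and then invoking Proposition \ref{prop:no stationary measure on proper at infinity subsets}.

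Concretely, for a subgroup $H \le G$ consider the displacement function $\rho_H(x) = \sup_{h \in H} d_X(x, hx)$. The orbit $Hx$ is bounded for one point $x$ if and only if it is bounded for all of them, so $\rho_H$ is finite everywhere when $H$ is bounded and identically $+\infty$ otherwise. Define $Q(H) = \{ x \in X : \rho_H(x) \le \inf_y \rho_H(y) + 1 \}$ when $H$ is bounded and $Q(H) = \emptyset$ when $H$ is unbounded, so that $Q(H)$ is a non-empty closed subset of $X$ exactly for bounded $H$. A direct computation gives $\rho_{gHg^{-1}}(x) = \rho_H(g^{-1}x)$, whence $Q(gHg^{-1}) = gQ(H)$; thus $Q : \Sub{G} \to \Cl{X}$ is $G$-equivariant and the pushforward $Q_*\nu$ is a $\mu$-stationary probability measure on $\Cl{X}$. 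The key geometric point is that the boundary of $Q(H)$ consists of fixed points: if $x_n \in Q(H)$ converge to some $\xi \in \partial X$ then every $h \in H$ satisfies $d_X(x_n, hx_n) \le \inf_y \rho_H(y) + 1$, so the sequences $(x_n)$ and $(hx_n)$ stay within bounded distance and hence converge to the same boundary point, forcing $h\xi = \xi$. Therefore $\partial Q(H) \subseteq \mathrm{Fix}_{\partial X}(H)$ and consequently $\partial Q(H) \cap \Lambda_\mu \subseteq \mathrm{Fix}_{\Lambda_\mu}(H)$, a set of cardinality at most one $\nu$-almost surely. Since $\Lambda_\mu$ is infinite this gives $\Lambda_\mu \not\subseteq \partial Q(H)$ whenever $Q(H) \neq \emptyset$. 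On the other hand Proposition \ref{prop:no stationary measure on proper at infinity subsets}, applied to $Q_*\nu$, forces $\Lambda_\mu \subseteq \partial Q(H)$ for $\nu$-almost every $H$ with $Q(H) \neq \emptyset$. The two conclusions are compatible only if $Q(H) = \emptyset$ almost surely, that is, $\nu$-almost every subgroup has unbounded orbits and hence non-empty limit set.

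I expect the delicate point to be the geometric claim $\partial Q(H) \subseteq \mathrm{Fix}_{\partial X}(H)$, which rests on the standard fact that in a Gromov hyperbolic space two sequences at bounded distance determine the same boundary point, together with the continuity of the boundary extension of each isometry $h$; note that $Q(H)$ may itself be unbounded, but this causes no trouble since all of its boundary points are then fixed. Two further routine verifications are needed: that $H \mapsto Q(H)$ is Borel for the Effros structure, which follows by writing $\rho_H$ as a supremum over a countable dense subgroup of $H$ using separability of $X$ and continuity of $\rho_H$; and, in the non-proper setting, the equivalence between boundedness of orbits and emptiness of $\Lambda(H)$, which is precisely the content of the classification of actions recalled in \S\ref{sec:hyperbolic groups}.
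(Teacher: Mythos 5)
Your proof is correct and follows essentially the same route as the paper: reduce to ruling out bounded orbits, attach to each bounded subgroup a $G$-equivariant non-empty closed subset of small displacement, and derive a contradiction from Proposition \ref{prop:no stationary measure on proper at infinity subsets} via the observation that the boundary of that set consists of fixed points. The only cosmetic differences are that the paper uses a fixed displacement radius $R$ (non-empty with positive probability) rather than your near-minimizer set, and concludes directly that $H \subset \mathrm{E}(G_\mu)$ instead of passing through Proposition \ref{prop:fixed point set has size at most 1}.
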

\begin{proof}
Assume towards contradiction that the condition $\Lambda(H) = \emptyset$ is not  $\nu$-null.  Every subgroup $H$ with $\Lambda(H) = \emptyset$ has bounded orbits \cite[Proposition 3.1]{caprace2015amenable}.  Fix some radius $R > 0$ and consider the function $F_3 : \Sub{G} \to \Cl{X}$ given by
\begin{equation}
F_3(H) = \{ x \in X \: : \: \sup_{h \in H} d_X(hx,h) \le R \}.
\end{equation}
Provided that the radius $R$ is taken to be sufficiently large we  have 
\begin{equation}
\nu(\{H \in \Sub{G} \: : \: F_3(H) \neq \emptyset \}) > 0    
\end{equation}
According to Proposition \ref{prop:no stationary measure on proper at infinity subsets} we deduce that $\nu$-almost every subgroup $H$ with $F_3(H) \neq \emptyset$ satisfies $\Lambda_\mu \subset \partial F_3(H)$. Every such subgroup must be  contained in the elliptic radical $\mathrm{E}(G_\mu)$. We arrive at a contradiction.
\end{proof}

Putting together the above information we obtain the following.

\begin{cor}
\label{cor:limit set is full or has size 1}
Let $\nu$ be a $\mu$-stationary random subgroup of $G$. If $\nu$-almost every subgroup $H$ is not contained in the elliptic radical $E(G_\mu)$ then     either $\Lambda_\mu \subset \Lambda(H) $ or $|\Lambda(H)| = 1$ holds true $\nu$-almost surely.
\end{cor}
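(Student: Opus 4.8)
The plan is to combine the three preceding propositions into a clean dichotomy. I would first observe that the hypothesis is exactly the one shared by Propositions \ref{prop:if limit set larger than 2 then it contains Lambda (G)}, \ref{prop:fixed point set has size at most 1}, and \ref{prop:limit set cannot be empty}: namely that $\nu$-almost every subgroup $H$ is not contained in the elliptic radical $\mathrm{E}(G_\mu)$. Under this assumption I may invoke all three conclusions simultaneously, since a countable intersection of $\nu$-conull sets is $\nu$-conull. Thus $\nu$-almost every subgroup $H$ satisfies (i) $\Lambda(H) \neq \emptyset$, (ii) $|\mathrm{Fix}_{\Lambda_\mu}(H)| \le 1$, and (iii) the implication that $|\Lambda(H)| \ge 2$ forces $\Lambda_\mu \subset \Lambda(H)$.

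Next I would split into cases according to the size of $\Lambda(H)$. By (i) the limit set is nonempty, so either $|\Lambda(H)| = 1$, or $|\Lambda(H)| \ge 2$. In the former case we are already in one of the two desired alternatives and there is nothing more to prove. In the latter case, (iii) gives $\Lambda_\mu \subset \Lambda(H)$ $\nu$-almost surely. To upgrade this inclusion to the equality $\Lambda(H) = \Lambda_\mu$, I would note that $H$ is a subgroup of $G_\mu$ up to the relevant identification, so its limit set is automatically contained in that of $G_\mu$, i.e. $\Lambda(H) \subset \Lambda_\mu$. Combining the two inclusions yields $\Lambda(H) = \Lambda_\mu$.

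The only subtlety to address is why the case $|\Lambda(H)| \ge 2$ cannot collapse to an intermediate configuration that escapes the dichotomy; but this is precisely what (iii) rules out, forcing $\Lambda_\mu \subset \Lambda(H)$ as soon as $H$ has at least two limit points. One should also confirm the reverse inclusion $\Lambda(H) \subset \Lambda_\mu$: since the measure $\nu$ is supported on subgroups of the group $G$ acting on $X$, and $\Lambda(H)$ is built from the closure of an $H$-orbit, which is contained in the closure of a $G_\mu$-orbit, we get $\Lambda(H) \subset \overline{G_\mu x_0} \cap \partial X = \Lambda_\mu$. I expect this containment step to be the only genuine obstacle, and it is a routine consequence of the definition of the limit set; the heart of the corollary is simply the bookkeeping that assembles the three earlier propositions under their common hypothesis.

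\begin{proof}
Since the hypothesis that $\nu$-almost every subgroup $H$ is not contained in the elliptic radical $\mathrm{E}(G_\mu)$ is common to Propositions \ref{prop:if limit set larger than 2 then it contains Lambda (G)}, \ref{prop:fixed point set has size at most 1} and \ref{prop:limit set cannot be empty}, all of their conclusions hold simultaneously on a $\nu$-conull set. In particular $\Lambda(H) \neq \emptyset$ holds $\nu$-almost surely by Proposition \ref{prop:limit set cannot be empty}. Fix such a subgroup $H$ and distinguish two cases. If $|\Lambda(H)| = 1$ we are done. Otherwise $|\Lambda(H)| \ge 2$, and Proposition \ref{prop:if limit set larger than 2 then it contains Lambda (G)} yields $\Lambda_\mu \subset \Lambda(H)$. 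The reverse inclusion $\Lambda(H) \subset \Lambda_\mu$ holds since $H \le G_\mu$ implies $\overline{Hx_0} \subset \overline{G_\mu x_0}$ and hence $\Lambda(H) = \partial X \cap \overline{Hx_0} \subset \partial X \cap \overline{G_\mu x_0} = \Lambda_\mu$. Therefore $\Lambda(H) = \Lambda_\mu$.
\end{proof}
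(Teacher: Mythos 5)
Your assembly of Propositions \ref{prop:if limit set larger than 2 then it contains Lambda (G)}, \ref{prop:fixed point set has size at most 1} and \ref{prop:limit set cannot be empty} is exactly what the paper intends --- its entire proof of this corollary is the sentence \enquote{Putting together the above information we obtain the following} --- so in outline you are on the right track. One technical point: Proposition \ref{prop:if limit set larger than 2 then it contains Lambda (G)} is stated under the hypothesis that $|\Lambda(H)| \ge 2$ holds for $\nu$-almost \emph{every} subgroup, so you cannot literally \enquote{fix such a subgroup $H$} and apply it pointwise inside your case split. The standard repair is to note that $\{H \in \Sub{G} : |\Lambda(H)| \ge 2\}$ is a conjugation-invariant Borel set, so if it has positive $\nu$-measure the normalized restriction of $\nu$ to it is again $\mu$-stationary and the proposition applies to that restricted random subgroup. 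This is routine, but it is the step that makes the dichotomy legitimate and it should be said.

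The more serious issue is your justification of the reverse inclusion $\Lambda(H) \subset \Lambda_\mu$, which rests on the claim $H \le G_\mu$. That claim is not part of the hypotheses: $\nu$ is a $\mu$-stationary random subgroup of $G$, i.e. a measure on $\Sub{G}$, while $G_\mu$ --- the closed semigroup generated by $\mathrm{supp}(\mu)$ --- may be a proper subgroup of $G$, and a $G_\mu$-quasi-invariant measure on $\Sub{G}$ can perfectly well charge subgroups not contained in $G_\mu$ (the Dirac mass at $G$ itself is stationary for every $\mu$). What the three propositions genuinely yield is the dichotomy \enquote{$\Lambda_\mu \subset \Lambda(H)$ or $|\Lambda(H)| = 1$}, which is also how the result is phrased as an inclusion in Theorem \ref{thm:intro limit set}; upgrading to the equality $\Lambda(H) = \Lambda_\mu$ needs the extra containment $\Lambda(H) \subset \Lambda_\mu$, which is automatic in the paper's applications (where $\Lambda_\mu = \partial X$, the standing assumption of \S\ref{sec:fixed points on the boundary}) but does not follow from the setup of \S\ref{sec:limit sets} alone. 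So either record the additional hypothesis under which the containment holds, or state the conclusion as the inclusion --- but do not derive it from the unfounded assertion $H \le G_\mu$.
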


\subsection*{Essential freeness of the boundary action}

We conclude this section with a nice application to be used in \S\ref{sec:products} below. Recall that $\Lambda_\mu = \Lambda(G_\mu)$ is the limit set and  $\mathrm{E}(G_\mu)$ is the elliptic radical of the group $G$.

\begin{cor}
\label{cor:about freeness of action on boundary}
Let $\nu$ be a $\mu$-stationary random subgroup of $G$. Assume that $|\mathrm{Fix}_{\Lambda_\mu}(H)| = 1$ holds true for $\nu$-almost every subgroup $H \in \Sub{G}$. Then for $\nu$-almost every subgroup $H \in \Sub{G}$ the quotient group $H / (H \cap   \mathrm{E}(G_\mu))$ acts essentially freely on the measure space $(\Lambda_\mu, \nu_X)$.
\end{cor}
\begin{proof}
Let $(\Sub{G} \times \Lambda_\mu,\eta)$ be the \emph{$\mu$-join}\footnote{The reason to use the notion of $\mu$-join is that the direct product of two $\mu$-stationary spaces may not be $\mu$-stationary with respect to the diagonal action. See \cite[\S 3]{furstenberg2010stationary} for details.} of the  two $\mu$-stationary spaces $(\Sub{G},\nu)$ and $(\Lambda_\mu, \nu_X)$. This means that  the probability measure $\eta$ is $\mu$-stationary and its marginals on the two factors $\Sub{G}$ and $\Lambda_\mu$ are $\nu$ and $\nu_X$, respectively.   There is a $G$-equivariant $\eta$-measurable map $\Psi : \Sub{G} \times \Lambda_\mu \to \Sub{G}$ given by $\Psi(H,\zeta) = H_\zeta$ where $H_\zeta = \mathrm{stab}_H(\zeta)$ for all subgroups $H \in \mathrm{Sub}(G)$ and boundary points $\zeta \in \partial X$. The pushforward measure $\Psi_* \eta$ is therefore a  $\mu$-stationary random subgroup of $G$. Note that $|\mathrm{Fix}_{\Lambda_\mu}(H)| \ge 2$ is true $\Psi_* \eta$-almost surely. So  $\Psi_* \eta$-almost every stabilizer subgroup $H_\zeta$ belongs to the elliptic radical $\mathrm{E}(G_\mu)$ according to Proposition \ref{prop:fixed point set has size at most 1}. The desired conclusion follows.
\end{proof}

\section{Geometric density for discrete stationary random subgroups}
\label{sec:fixed points on the boundary}

 We have seen in the previous  \S\ref{sec:limit sets} that the limit set of a $\mu$-stationary random subgroup is either the entire boundary $\partial X$ or a singleton. It is easy to see that both possibilities may occur in general. Our current goal is to exclude the singleton case for \emph{discrete} $\mu$-stationary random subgroups.

We maintain the notations and assumptions   of   \S\ref{sec:limit sets}. In particular $G$ is a locally compact group admitting a continuous action by isometries on a proper Gromov hyperbolic space $X$ and $\mu$ is a probability measure on the group $G$. The semigroup $G_\mu$ generated by $\mathrm{supp}(\mu)$ is a non-elementary group and there exists a $\mu$-stationary boundary measure as in Theorem \ref{thm:boundary measure}. In addition we assume that the group $G_\mu$ has full limit set, namely $\Lambda(G_\mu) = \partial X$.

\begin{thm}
\label{thm:a discrete SRS has full limit set}
Let $\nu$ be a $\mu$-stationary random subgroup of $G$. Assume that $\nu$-almost every subgroup   $\Gamma$ acts \emph{properly} on $X$ and is not contained in the elliptic radical $\mathrm{E}(G_\mu)$.
Then $\Lambda(\Gamma) = \partial X$ holds true $\nu$-almost surely.
\end{thm}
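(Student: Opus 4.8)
The plan is to combine Corollary \ref{cor:limit set is full or has size 1} with a geometric argument that rules out the singleton case. Since we are assuming $\Lambda_\mu = \partial X$, Corollary \ref{cor:limit set is full or has size 1} already tells us that $\nu$-almost surely either $\Lambda(\Gamma) = \partial X$ or $|\Lambda(\Gamma)| = 1$. It therefore suffices to show that the $G$-invariant Borel set $\mathcal{S} = \{\Gamma \in \Sub{G} : |\Lambda(\Gamma)| = 1\}$ is $\nu$-null. So I would assume for contradiction that $\nu(\mathcal{S}) > 0$. Each $\Gamma \in \mathcal{S}$ fixes a unique boundary point $\xi_\Gamma \in \partial X$, namely the single point of $\Lambda(\Gamma)$, and the assignment $\Gamma \mapsto \xi_\Gamma$ is Borel and $G$-equivariant.

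The key quantity is the injectivity radius of the proper action at the basepoint. Define $F : \Sub{G} \to [0,\infty)$ by setting $F(\Gamma) = 0$ off $\mathcal{S}$ and, for $\Gamma \in \mathcal{S}$, $F(\Gamma) = \log^+ \mathrm{sys}_{x_0}(\Gamma)$ where $\mathrm{sys}_{x_0}(\Gamma) = \inf\{ d_X(x_0, \gamma x_0) : \gamma \in \Gamma, \ \gamma x_0 \neq x_0 \}$. Properness of the action guarantees that $\mathrm{sys}_{x_0}(\Gamma)$ is positive and finite for $\nu$-almost every $\Gamma \in \mathcal S$ (otherwise $\Gamma$ would fix $x_0$ and have empty limit set), so $F$ is a finite-valued $\nu$-measurable function. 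Since the action on $\Sub{G}$ is by conjugation, for any $h \in G$ we have $F(h \Gamma h^{-1}) = \log^+ \mathrm{sys}_{h^{-1} x_0}(\Gamma)$: conjugating the subgroup simply moves the basepoint to $h^{-1}x_0$. The geometric heart of the argument is the standard fact that the injectivity radius blows up away from the limit set: if $p_n \to \eta \in \partial X$ with $\eta \notin \Lambda(\Gamma)$ then $\mathrm{sys}_{p_n}(\Gamma) \to \infty$. Some care is needed to rule out small displacements coming from torsion, but elliptic fixed points of a discrete group accumulate only inside the limit set, which is exactly what makes the blow-up uniform over all of $\Gamma$.

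I would then apply Corollary \ref{cor:liminf of a Borel function is bounded} to $F$ on the $\mu$-stationary system $(\Sub{G}, \nu)$: for $\mu^{\otimes \mathbb N}$-almost every sequence of increments $(g_n)$ and $\nu$-almost every $\Gamma$ one has $\liminf_n F\bigl( (g_n \cdots g_1)\, \Gamma\, (g_n \cdots g_1)^{-1} \bigr) < \infty$. On the other hand, writing $h_n = g_n \cdots g_1$, the basepoint appearing above is $p_n = h_n^{-1} x_0 = g_1^{-1} \cdots g_n^{-1} x_0 = \check{\omega}_n x_0$, which is precisely a sample path for the reflected measure $\check{\mu}$, where $\check{\mu}(A) = \mu(A^{-1})$. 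Since $\check{\mu}$ generates the same group $G_\mu$ and has the same moment properties as $\mu$ (note $\|g^{-1}\| = \|g\|$), Theorem \ref{thm:boundary measure} applies to $\check{\mu}$ and yields $p_n \to \zeta_{\check{\omega}} \in \partial X$, with $\zeta_{\check{\omega}}$ distributed according to the non-atomic boundary measure $\check{\nu}_X$. Because in Corollary \ref{cor:liminf of a Borel function is bounded} the increments $(g_n)$ are independent of the starting point $\Gamma$, and $\check{\nu}_X$ is non-atomic, we get $\zeta_{\check{\omega}} \neq \xi_\Gamma$ almost surely, hence $p_n \to \zeta_{\check{\omega}} \notin \Lambda(\Gamma) = \{\xi_\Gamma\}$. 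Invoking the blow-up fact, $\mathrm{sys}_{p_n}(\Gamma) \to \infty$ and therefore $F(h_n \Gamma h_n^{-1}) \to \infty$ for $\nu$-almost every $\Gamma \in \mathcal{S}$, contradicting the finiteness of the $\liminf$ on the positive-measure set $\mathcal S$. This forces $\nu(\mathcal{S}) = 0$ and completes the proof.

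The step I expect to be the main obstacle is the geometric blow-up lemma in the stated generality: proving that $\mathrm{sys}_{p_n}(\Gamma) \to \infty$ when $p_n$ converges to a boundary point outside $\Lambda(\Gamma)$, uniformly over all group elements and in the presence of possible elliptic torsion, and without assuming $X$ is proper. I would reduce this to the statement that for $\eta \notin \Lambda(\Gamma)$ there is a neighborhood $U$ of $\eta$ in $\overline{X}$ meeting only finitely many translates $\gamma U$ by elements $\gamma$ displacing $x_0$ boundedly, using $\delta$-thinness of triangles together with properness of the action, and Lemma \ref{lem:points with small inj radius} to control elliptic and parabolic elements. A secondary point worth verifying carefully is that $F$ is genuinely finite $\nu$-almost everywhere, and that the independence of increments and starting point in Kakutani's theorem is being used correctly.
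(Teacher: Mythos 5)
Your skeleton is the same as the paper's: invoke Corollary \ref{cor:limit set is full or has size 1} to reduce to ruling out the event $|\Lambda(\Gamma)|=1$, and then rule it out by producing an equivariant quantity that escapes to infinity along the reflected random walk, contradicting Corollary \ref{cor:liminf of a Borel function is bounded}. (The paper routes the reduction through $|\mathrm{Fix}_{\partial X}(\Gamma)|=1$ and Proposition \ref{prop:a discrete SRS cannot have a single fixed point on the boundary}, but that is the same idea.) The problem is the geometric input you feed into this scheme. Your blow-up claim --- that $\mathrm{sys}_{p_n}(\Gamma)\to\infty$ whenever $p_n\to\eta\notin\Lambda(\Gamma)$ --- is false, and the auxiliary assertion you lean on to save it, namely that \enquote{elliptic fixed points of a discrete group accumulate only inside the limit set,} is also false. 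Concretely, let $\Gamma=\{z\mapsto \pm z+n : n\in\mathbb{Z}\}$ act on the upper half-space model of $\mathbb{H}^3$. This is a discrete, properly acting, horocyclic group with $\Lambda(\Gamma)=\{\infty\}$; its order-two elliptic elements have axes the vertical geodesics over the points $n/2\in\mathbb{C}$, which accumulate at the boundary points $n/2\notin\Lambda(\Gamma)$. Taking $p_k\to 1/2$ slightly off the axis over $1/2$ one gets $0<d(p_k,\gamma p_k)\to 0$ for $\gamma: z\mapsto -z+1$, so the systole does not blow up. Your fallback reduction (\enquote{a neighborhood $U$ of $\eta$ meeting only finitely many translates $\gamma U$}) does not repair this: in the example only finitely many elements displace points near $1/2$ boundedly, yet the systole still fails to diverge along sequences approaching the elliptic axis.

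In this particular example the bad boundary directions form a countable, hence $\check{\nu}_X$-null, set, so your argument would accidentally survive; but nothing in your proposal shows that the set $\partial\bigl(\overline{\{x : \mathrm{sys}_x(\Gamma)\le R\}}\bigr)$ is $\nu_X$-null, or even a proper subset of $\partial X$, for a general horocyclic discrete group with torsion. This is exactly the delicate point, and it is where the paper's proof of Proposition \ref{prop:a discrete SRS cannot have a single fixed point on the boundary} does real work: instead of the systole it uses the sets $Q(\Gamma;R,B)=\overline{\{x : |\Gamma_{x,R}|>B\}}$, chooses the threshold $B$ by a Baire category argument so that $\{\zeta : |\Gamma_\zeta|=B\}$ is dense in $\partial X$, and then uses Lemmas \ref{lemma:Q is non-empty} and \ref{lemma:boundary of Q is contained in large points} to guarantee that $Q(\Gamma;R,B)$ is simultaneously non-empty (it contains the horoball direction $\zeta_\Gamma$ in its boundary) and has \emph{proper} boundary at infinity; the contradiction then comes from Proposition \ref{prop:no stationary measure on proper at infinity subsets}, which only needs properness of the boundary, not nullity of a bad set. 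Your proof works essentially verbatim in the torsion-free case, where every nontrivial element is parabolic fixing $\xi_\Gamma$ and the small-displacement regions are horoball-like neighborhoods of $\xi_\Gamma$; but as written it does not prove the theorem in the stated generality, and the obstacle you flagged is precisely the one that forces the paper's more elaborate argument.
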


The proof of Theorem \ref{thm:a discrete SRS has full limit set} will be given  towards  the end of \S\ref{sec:fixed points on the boundary} below, after we develop some more machinery.

\subsection*{Quasi-stabilizers and fixed   points on the boundary}

The notion of quasi-stabilizers $\Gamma_{x,R}$ 
 was introduced in Definition \ref{def:quasi stabilizer}. 
For every radius $R > 0$ and every $B \in \mathbb{N}$ we denote
\begin{equation}
Q(\Gamma;R,B) = \overline{\{ x \in X \: : \: |\Gamma_{x,R}| > B \}} 
\end{equation}
so that $Q(\Gamma;R,B) \in \Cl{X}$. Roughly speaking, if the group $\Gamma$ fixes a point on the boundary at infinity then the sets $Q(\Gamma;R,B)$ are coarse analogues of horoballs.

\begin{lemma}
\label{lemma:Q is non-empty}
Let $\Gamma$ be a subgroup of $G$  acting on the space $X$ without  loxodromic isometries. There is some  constant $R > 0$ such that
\begin{equation}
\mathrm{Fix}_{\partial X}(\Gamma) \subset \partial Q(\Gamma;R,B) 
\end{equation}  
for  every $B \in \mathbb{N}$ with $B < |\Gamma|$. 
\end{lemma}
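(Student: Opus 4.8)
The plan is to show that for each $\zeta \in \mathrm{Fix}_{\partial X}(\Gamma)$ one can find points of $X$ lying arbitrarily close to $\zeta$ at which more than $B$ elements of $\Gamma$ have uniformly small displacement. Fix $\zeta \in \mathrm{Fix}_{\partial X}(\Gamma)$; by hypothesis every $\gamma \in \Gamma$ fixes $\zeta$, and since $\Gamma$ contains no loxodromic isometry, every such $\gamma$ is elliptic or parabolic. Write $\phi_\gamma(x) = d_X(\gamma x, x)$ for the (two-Lipschitz) displacement function and fix once and for all a geodesic ray $\rho$ with $\rho(t) \to \zeta$.

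The heart of the matter is a uniform estimate: there is a constant $R > 0$, depending only on the hyperbolicity constant of $X$, such that for every elliptic or parabolic $\gamma$ fixing $\zeta$ one has $\phi_\gamma(\rho(t)) \le R$ for all sufficiently large $t$. To prove it I would work with the Busemann function $b_\zeta$ based at $\zeta$, normalized so that $b_\zeta(x_0) = 0$. Because $\gamma$ fixes $\zeta$, the quantity $c_\gamma = b_\zeta(\gamma x) - b_\zeta(x)$ is, up to an additive $O(\delta)$ error, independent of $x$; telescoping gives $b_\zeta(\gamma^n x_0) = n c_\gamma + O(n\delta)$. Since $\gamma$ is elliptic or parabolic its orbit grows sublinearly, so $b_\zeta(\gamma^n x_0) = o(n)$, forcing $|c_\gamma| = O(\delta)$ — this is precisely the step where the absence of loxodromic elements is used, as a loxodromic with endpoint $\zeta$ would have $c_\gamma$ equal to its nonzero translation length. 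Consequently the rays $\rho$ and $\gamma\rho$ both converge to $\zeta$ and their Busemann levels differ by only $O(\delta)$; the standard fact that two asymptotic geodesic rays, synchronized by their Busemann levels, eventually fellow-travel within $O(\delta)$ then yields $\phi_\gamma(\rho(t)) = d_X(\gamma\rho(t), \rho(t)) \le R$ for all large $t$, with $R$ independent of $\gamma$.

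Granting the estimate, the lemma follows quickly. Given $B \in \mathbb{N}$ with $B < |\Gamma|$, choose $B+1$ distinct elements $\gamma_0,\ldots,\gamma_B \in \Gamma$. Applying the estimate to each $\gamma_i$ produces a threshold $T_i$, and for every $t \ge \max_i T_i$ all of $\gamma_0,\ldots,\gamma_B$ lie in the quasi-stabilizer $\Gamma_{\rho(t),R}$, so that $|\Gamma_{\rho(t),R}| > B$ and hence $\rho(t) \in \{x \in X : |\Gamma_{x,R}| > B\} \subset Q(\Gamma;R,B)$. Letting $t \to \infty$ gives $\rho(t) \to \zeta$, whence $\zeta \in \partial Q(\Gamma;R,B)$. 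As $\zeta$ and $B$ were arbitrary, with the single constant $R$ serving all of them simultaneously, this establishes $\mathrm{Fix}_{\partial X}(\Gamma) \subset \partial Q(\Gamma;R,B)$ for every $B < |\Gamma|$.

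The main obstacle is the uniform displacement estimate, and within it the uniformity of $R$ over all $\gamma$. The naive bound coming from two-Lipschitzness of $\phi_\gamma$ degrades as $d_X(x_0,\gamma x_0)$ grows; what rescues uniformity is recognizing $c_\gamma$ as a coarse cocycle constant that vanishes (to within $O(\delta)$) for non-loxodromic elements, so that the Busemann-level mismatch between $\rho$ and $\gamma\rho$ is $O(\delta)$ regardless of how far $\gamma x_0$ is from $x_0$. I would also take care to state the fellow-traveling property for asymptotic rays precisely, since it must hold in the possibly non-proper $\delta$-hyperbolic setting and must be genuinely independent of the initial separation of the two rays; this is standard but deserves an explicit reference. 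If one prefers, the constant $R$ can be aligned with the displacement bound furnished by Lemma \ref{lem:points with small inj radius}.
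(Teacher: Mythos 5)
Your argument is correct, but it takes a genuinely different route from the paper's. The paper's proof works element by element from Lemma \ref{lem:points with small inj radius}: each elliptic or parabolic $\gamma_i$ has \emph{some} point $x_i$ of displacement at most $R_0$, this bound propagates (up to a constant depending on $R_0$ and $\delta$) along a geodesic ray $l_i$ from $x_i$ to the fixed point $\zeta$, and then the thin-triangles fact that an ideal polygon with $B+1$ rays ending at $\zeta$ admits a point within $5\delta$ of all of them produces a single point $z_B$, arbitrarily close to $\zeta$, lying in $Q(\Gamma;R,B)$. You instead fix one ray $\rho$ to $\zeta$ and prove a uniform estimate directly: the coarse Busemann cocycle constant $c_\gamma$ of any non-loxodromic element fixing $\zeta$ is $O(\delta)$ (by telescoping against the sublinear orbit growth), so $\rho$ and $\gamma\rho$ are asymptotic rays whose synchronization offset is $O(\delta)$, and the displacement $d_X(\gamma\rho(t),\rho(t))$ is eventually bounded by a constant depending only on the hyperbolicity constant. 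This bypasses Lemma \ref{lem:points with small inj radius} and the ideal-polygon center entirely, and in fact yields a slightly cleaner constant $R = O(\delta)$ rather than one depending on the $R_0$ of that lemma; the price is that you lean on the coarse Busemann/quasi-cocycle formalism and on the eventual $O(\delta)$-fellow-traveling of asymptotic rays with Busemann-synchronized parametrizations, which, as you note, should be stated with a precise reference since the ambient space is only assumed complete separable geodesic hyperbolic (not proper). The one point worth making explicit is that your thresholds $T_i$ may depend on the elements $\gamma_i$ (through the initial separation $d_X(\rho(0),\gamma_i\rho(0))$), which is harmless since only finitely many elements are in play at once and only $R$ must be uniform; you handle this correctly.
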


The constant $R$ in Lemma \ref{lemma:Q is non-empty} depends only on the space $X$ and is independent of the particular subgroup $\Gamma$.

\begin{proof}[Proof of Lemma \ref{lemma:Q is non-empty}]
Consider an arbitrary boundary point   $\zeta \in \mathrm{Fix}_{\partial X}(\Gamma)$   fixed by the action of the group $\Gamma$.  Let $R_0$ be the constant provided by Lemma \ref{lem:points with small inj radius}.

Fix some $B \in \mathbb{N}$ with $B < |\Gamma|$ as well as  an arbitrary subset $\{\gamma_1,\ldots,\gamma_{B+1}\} \subset \Gamma$. 
 Since every element of the group $ \Gamma $ is acting on the space $X$ via a parabolic or an elliptic isometry there are  points $x_1,\ldots,x_{B+1} \in X$ satisfying $\gamma_i \in  \Gamma_{x_i,R_0}$ for all $i \in \{1,\ldots,B+1\}$.
 Take  a geodesic ray   $l_i$  from the point $x_i$ to the boundary point $\zeta$ for each $i \in \{1,\ldots,B+1\}$. There is some constant $R_1 = R_1(R_0,\delta)$ such that any point $y \in l_i$ satisfies $\gamma_i \in \Gamma_{y,R_1}$.

 There is some point $z_B \in X$ within a distance of $5\delta$ from all of the   geodesic rays $l_1,\ldots,l_{B+1}$ \cite[III.H.3.3.(2)]{bridson2013metric}. The previous paragraph implies that $\gamma_i \in \Gamma_{z_B,R}$  for all indices $i \in \{1,\ldots,B+1\}$ provided that $R$ is a constant satisfying $R > 10\delta + R_1$.  In other words $z_B \in Q(\Gamma;R,B)$. In particular the subset in question $Q(\Gamma;R,B)$ is non-empty. 
 Lastly note that  the point $z_B$ can be taken in an arbitrary small neighborhood of the boundary point $\zeta$. It follows that  $\zeta \in \partial Q(\Gamma;R,B)$ as required.
\end{proof}


\begin{lemma}
\label{lemma:quasi stabilizers nested}
Let $\Gamma$ be a subgroup of $G$  acting on the space $X$ without  loxodromic isometries and fixing the point $\zeta_0 \in \partial X$. Let $x_n \in X$ be a sequence of points converging to some boundary point $\zeta \in \partial X$ distinct from $\zeta_0$. Then for each $ R > 0$ there is a point $z \in X$ and some $S>0$ such that
$
\Gamma_{x_n,R} \subset \Gamma_{z,S}
$
for all $n \in \mathbb{N}$.
\end{lemma}
\begin{proof}
Let $R > 0$ be arbitrary. For each $n \in \mathbb{N}$ take  a geodesic ray 
 $l_n$ from the point $x_n$ to the fixed boundary point $\zeta_0$.  The sequence of geodesic rays $l_n$ converges uniformly on compact sets to a bi-infinite geodesic line $l$ from the point $\zeta_0$ to the point $\zeta$. Let $z \in l$ be an arbitrary point on the bi-infinite geodesic $l$. There is a sufficiently large constant $S > 0$ depending on the sequence $x_n$, on the constant $R$ and on the hyperbolicity constant of the space $X$ such that $
\Gamma_{x_n,R} \subset \Gamma_{z,S}
$ for all $n \in \mathbb{N}$.
\end{proof}

\begin{lemma}
\label{lemma:boundary of Q is contained in large points}
Let $\Gamma$ be a subgroup of $G$  acting on the space $X$ without  loxodromic isometries and satisfying $\mathrm{Fix}_{\partial X}(\Gamma) \neq \emptyset$. Fix some constants $R > 0$ as well as $B \in \mathbb{N}$ with $B < |\Gamma|$. Then
\begin{equation}
\partial Q(\Gamma;R,B) \subset \{\zeta \in \partial X \: : \: |\Gamma_\zeta| > B \}.
\end{equation}
\end{lemma}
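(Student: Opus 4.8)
The plan is to unwind the definition of the limit set $\partial Q(\Gamma;R,B)$ and then manufacture $B+1$ distinct elements of the stabilizer $\Gamma_\zeta$ by a compactness argument. First I would fix a point $\zeta\in\partial Q(\Gamma;R,B)$ and choose points $x_n\in X$ with $|\Gamma_{x_n,R}|>B$ converging to $\zeta$ in $\overline X=X\cup\partial X$; for each $n$ I select $B+1$ distinct elements $\gamma_0^{(n)},\dots,\gamma_B^{(n)}\in\Gamma_{x_n,R}$, so that $d_X(\gamma_i^{(n)}x_n,x_n)\le R$. Since $\mathrm{Fix}_{\partial X}(\Gamma)\ne\emptyset$, every element of $\Gamma$ fixes some common boundary point; I fix one such $\xi$. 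If $\zeta$ is itself fixed by $\Gamma$ then $\Gamma_\zeta=\Gamma\supseteq\Gamma_{x_n,R}$ already has more than $B$ elements and we are done, so the interesting case is $\zeta\notin\mathrm{Fix}_{\partial X}(\Gamma)$, where in particular $\xi\ne\zeta$.

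The argument then rests on a \emph{no-escape} estimate: there is a constant $R'=R'(R,\delta,x_0,\xi,\zeta)$, finite precisely because $\xi\ne\zeta$, with $\|\gamma_i^{(n)}\|\le R'$ for all large $n$. Granting this, the standing properness of the $\Gamma$-action (all quasi-stabilizers are finite) forces $\Gamma_{x_0,R'}=\{\gamma\in\Gamma:\|\gamma\|\le R'\}$ to be finite, so only finitely many group elements occur among the $\gamma_i^{(n)}$. By pigeonhole I pass to a subsequence in $n$ along which the $(B+1)$-element set $\{\gamma_0^{(n)},\dots,\gamma_B^{(n)}\}$ is a fixed set $\{\gamma_0,\dots,\gamma_B\}$ of distinct elements of $\Gamma$. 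For each fixed $\gamma_i$ the inequality $d_X(\gamma_i x_n,x_n)\le R$ holds along this subsequence, so $\gamma_i x_n$ remains within bounded distance of $x_n\to\zeta$ and hence also converges to $\zeta$ in $\overline X$; as $\gamma_i$ acts as a homeomorphism of $\overline X$ this yields $\gamma_i\zeta=\lim_n\gamma_i x_n=\zeta$. Therefore $\gamma_0,\dots,\gamma_B\in\Gamma_\zeta$ and $|\Gamma_\zeta|\ge B+1>B$, as desired.

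The heart of the matter, and the step I expect to be the main obstacle, is the no-escape estimate. The point is that $\gamma:=\gamma_i^{(n)}$ fixes $\xi$ and is not loxodromic, while the displaced point $x_n$ lies near $\zeta\ne\xi$. Working with Gromov products based at $x_n$ and using $d_X(\gamma x_0,\gamma x_n)=d_X(x_0,x_n)$ together with $d_X(\gamma x_n,x_n)\le R$, I would write
\begin{equation*}
d_X(\gamma x_0,x_0)=d_X(\gamma x_0,x_n)+d_X(x_0,x_n)-2\,(\gamma x_0\mid x_0)_{x_n}.
\end{equation*}
Since $\gamma\xi=\xi$, isometry invariance gives $(\gamma x_0\mid\xi)_{\gamma x_n}=(x_0\mid\xi)_{x_n}$, and a change of basepoint costs at most $d_X(\gamma x_n,x_n)\le R$; combined with $\delta$-hyperbolicity this bounds $(\gamma x_0\mid x_0)_{x_n}\ge(x_0\mid\xi)_{x_n}-R-\delta$. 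Feeding this back and using the standard identity $2\,d_X(x_0,x_n)-2\,(x_0\mid\xi)_{x_n}=2\,(x_n\mid\xi)_{x_0}$ yields
\begin{equation*}
\|\gamma\|\le 2\,(x_n\mid\xi)_{x_0}+3R+2\delta,
\end{equation*}
which remains bounded as $n\to\infty$ since $(x_n\mid\xi)_{x_0}$ accumulates near the value $(\zeta\mid\xi)_{x_0}$, finite exactly because $\zeta\ne\xi$. Making these $\delta$-hyperbolic inequalities uniform in $n$ is the only delicate point, and it is here that all three geometric hypotheses — fixing $\xi$, absence of loxodromic elements, and $x_n\to\zeta\ne\xi$ — are used at once.
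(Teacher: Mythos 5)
Your proof is correct, and it shares the paper's overall skeleton: reduce to the case $\zeta\notin\mathrm{Fix}_{\partial X}(\Gamma)$, show that elements displacing points $x_n\to\zeta$ by at most $R$ have uniformly bounded displacement at a fixed basepoint, then use finiteness of quasi-stabilizers plus pigeonhole to extract $B+1$ elements that fix $\zeta$ in the limit. Where you genuinely diverge is in the implementation of the key ``no-escape'' step. The paper pushes the points $x_n$ along geodesic rays toward the fixed boundary point $\zeta_0$, observes these rays stay in a thickened $Q(\Gamma;R',B)$, passes to a limiting bi-infinite geodesic $l$ from $\zeta_0$ to $\zeta$, and uses coarse monotonicity of the displacement function along $l$ toward $\zeta_0$ to trap all the quasi-stabilizers $\Gamma_{y_n,S}$ inside the single finite set $\Gamma_{y_0,S'}$. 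You instead prove the bound $\|\gamma\|\le 2\,(x_n\mid\xi)_{x_0}+3R+2\delta$ directly by a Gromov-product computation using $\gamma\xi=\xi$; this avoids constructing the limit geodesic altogether and is arguably cleaner, and the computation checks out modulo the uniform $O(\delta)$ errors for boundary Gromov products, which you rightly flag as the only delicate point. Two small remarks. First, the hypothesis that $\Gamma$ has no loxodromic isometries is not in fact used in your estimate (nor visibly in the paper's proof); your closing sentence claims all three hypotheses enter at once, but only $\gamma\xi=\xi$ and $x_n\to\zeta\ne\xi$ do. Second, both your argument and the paper's rely on finiteness of quasi-stabilizers, i.e.\ properness of the $\Gamma$-action, which is not among the lemma's stated hypotheses; you are explicit about invoking it as a standing assumption, and it does hold in the one place the lemma is applied, namely Proposition \ref{prop:a discrete SRS cannot have a single fixed point on the boundary}.
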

\begin{proof}
Consider a point $\zeta \in \partial Q(\Gamma;R,B)$. We wish to show that $|\Gamma_{\zeta}| > B$. Let $\zeta_0 \in \mathrm{Fix}_{\partial X}(\Gamma)$ be an arbitrary boundary point fixed by the action of $\Gamma$.
 We may  assume without loss of generality that $\zeta \neq \zeta_0$ for otherwise $|\Gamma_{\zeta}| = |\Gamma| > B$ in which case  we are done. 

Take a sequence of points   $x_n \in Q(\Gamma;R,B)$  converging to the boundary point $\zeta$. According to Lemma \ref{lemma:quasi stabilizers nested} there is some point $z \in X$ and some constant $S > 0$ such that $\Gamma_{x_n,R} \subset \Gamma_{z,S}$ for all $n \in \mathbb{N}$.  In other words, the subsets $\Gamma_{x_n,R}$ are all contained in the finite subset $\Gamma_{z,S}$ and satisfy $|\Gamma_{x_n,R}| > B$. Therefore 
\begin{equation}
|\limsup_n \Gamma_{x_n,S}| > B \quad \text{and} \quad \limsup_n \Gamma_{x_n,S} \subset \Gamma_\zeta.
\end{equation}
We conclude that  $|\Gamma_\zeta| > B$ as required.
\end{proof}


Note that in the three previous lemmas (unlike the next lemma) the action of the subgroup $\Gamma$ on the space $X$ was \emph{not} assumed to be proper.

\begin{lemma}
\label{lemma:discrete group has finite stabilziers}
Let $\Gamma$ be a subgroup of $G$ acting on the space $X$ properly   and without loxodromic isometries. If   $\zeta_0 \in \mathrm{Fix}_{\partial X}(\Gamma) $ then $|\Gamma_\zeta| < \infty$ for every $\zeta \in \partial X \setminus \{\zeta_0\}$.
\end{lemma}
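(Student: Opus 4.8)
The plan is to show that every element of the stabilizer $\Gamma_\zeta = \mathrm{stab}_\Gamma(\zeta)$ is elliptic and moves a fixed reference point of the geodesic joining $\zeta_0$ to $\zeta$ by a \emph{uniformly} bounded amount; finiteness then follows from properness of the action.

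First I would reduce to elliptic elements. Any $\gamma \in \Gamma_\zeta$ fixes $\zeta$ by definition, and fixes $\zeta_0$ because $\zeta_0 \in \mathrm{Fix}_{\partial X}(\Gamma)$, so $\gamma$ fixes the two distinct boundary points $\zeta_0 \ne \zeta$. By hypothesis $\gamma$ is not loxodromic, and a parabolic isometry satisfies $|\mathrm{Fix}_{\partial X}(\gamma)| = |\Lambda(\langle\gamma\rangle)| = 1$ by the classification recalled in \S\ref{sec:hyperbolic groups}. Hence $\gamma$ must be elliptic, so in particular its orbits are bounded.

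Then I would fix a bi-infinite geodesic $l$ from $\zeta_0$ to $\zeta$, a basepoint $y_0 \in l$, and a Busemann function $\beta = \beta_{\zeta_0}$ centred at $\zeta_0$ (well defined up to an additive constant and up to an error $O(\delta)$ in the $\delta$-hyperbolic setting, see \cite{das2017geometry}). Since $\gamma$ fixes $\zeta_0$ it shifts $\beta$ by the horospherical displacement $c(\gamma) = \beta(\gamma y_0) - \beta(y_0)$, which is an additive cocycle up to bounded error, so that $c(\gamma^n) \ge n\,c(\gamma) - C n\delta$ for a constant $C = C(\delta)$. On the other hand $|c(\gamma^n)| \le d(\gamma^n y_0, y_0)$, which stays bounded in $n$ because $\gamma$ is elliptic. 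Letting $n \to \infty$ forces $c(\gamma) \le C\delta$, and applying the same reasoning to the elliptic element $\gamma^{-1}$ gives $|c(\gamma)| \le C\delta$, a bound depending only on $\delta$. Because $\gamma$ also fixes $\zeta$, the geodesic $\gamma l$ lies within Hausdorff distance $H(\delta)$ of $l$, so $\gamma y_0$ is $H$-close to some point $l(s)$; comparing Busemann values gives $|\beta(l(s)) - \beta(y_0)| \le |c(\gamma)| + O(\delta)$, and since $\beta$ restricted to $l$ is essentially a unit-speed parametrisation this pins down $s$ to within $C'(\delta)$ of the parameter of $y_0$. Therefore $d(\gamma y_0, y_0) \le H + C'(\delta) =: C''$, a constant independent of the particular element $\gamma \in \Gamma_\zeta$.

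Consequently $\Gamma_\zeta \subseteq \Gamma_{y_0, C''}$, the quasi-stabilizer of $y_0$ of radius $C''$. Since $\Gamma$ acts properly on $X$, this quasi-stabilizer is finite (see the discussion following Definition \ref{def:quasi stabilizer}), whence $|\Gamma_\zeta| < \infty$. The main obstacle is the displacement estimate: in a general Gromov hyperbolic space, as opposed to a $\CAT{-1}$-space, Busemann functions and horospheres are only coarsely defined, so some care is needed to extract a bound on $c(\gamma)$ that is \emph{uniform} over $\gamma \in \Gamma_\zeta$. The device of combining the linear-in-$n$ lower bound on $c(\gamma^n)$ with the boundedness of elliptic orbits is precisely what delivers this uniformity, and it is what upgrades the pointwise statement that each $\gamma$ preserves $l$ coarsely into the single quasi-stabilizer containment that yields finiteness.
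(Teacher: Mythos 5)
Your proof is correct, but it takes a different route from the paper: the paper disposes of this lemma in one line by citing \cite[Lemma 3.5]{caprace2015amenable}, whereas you give a self-contained argument. Your two steps are sound. First, the reduction to elliptic elements is exactly right: an element of $\Gamma_\zeta$ fixes the two distinct boundary points $\zeta_0$ and $\zeta$, it is not loxodromic by hypothesis, and a parabolic has $\mathrm{Fix}_{\partial X}(g) = \Lambda(\langle g\rangle)$ a singleton by the classification recalled in \S\ref{sec:hyperbolic groups}, so only the elliptic case survives. Second, the uniform displacement bound is the genuinely substantive point, and your mechanism for it works: the Busemann quasi-cocycle at $\zeta_0$ satisfies $c(\gamma^n) \ge n c(\gamma) - Cn\delta$ while $|c(\gamma^n)| \le d(\gamma^n y_0, y_0)$ stays bounded for elliptic $\gamma$, forcing $|c(\gamma)| = O(\delta)$ uniformly; combined with the fact that $\gamma$ coarsely preserves the geodesic $l$ from $\zeta_0$ to $\zeta$ (same endpoints, hence bounded Hausdorff distance) and that $\beta_{\zeta_0}$ is a coarse unit-speed parametrisation of $l$, this pins $\gamma y_0$ to a uniformly bounded neighbourhood of $y_0$, so $\Gamma_\zeta$ sits inside a single quasi-stabilizer $\Gamma_{y_0,C''}$, which is finite by properness of the action (Definition \ref{def:quasi stabilizer} and the discussion following it). This is in substance the standard proof of the cited lemma, so the trade-off is simply brevity versus self-containedness; your version has the merit of making explicit where the uniformity over $\gamma \in \Gamma_\zeta$ comes from. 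The only caveat worth flagging is the existence of a bi-infinite geodesic from $\zeta_0$ to $\zeta$ when $X$ is not assumed proper, but the paper itself invokes such geodesics in the neighbouring Lemmas \ref{lemma:Q is non-empty} and \ref{lemma:boundary of Q is contained in large points}, so your usage is consistent with its conventions.
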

\begin{proof}
This statement follows immediately from \cite[Lemma 3.5]{caprace2015amenable}.
\end{proof}

\subsection*{Stationary random subgroups acting properly}

We have previously established in \S\ref{sec:limit sets} that a $\mu$-stationary random subgroup $\nu$ 
not contained in the elliptic radical $\mathrm{E}(G_\mu)$ satisfies $|\mathrm{Fix}_{\partial X}(H)| \le 1$ for $\nu$-almost every subgroup $H$. We  now refine this result under the additional assumption of discreteness.

\begin{prop}
\label{prop:a discrete SRS cannot have a single fixed point on the boundary}
Let $\nu$ be a $\mu$-stationary random subgroup of $G$. If $\nu$-almost every subgroup $\Gamma$ is acting properly on the space $X$ and is not contained in the elliptic radical $\mathrm{E}(G_\mu)$ then  $|\mathrm{Fix}_{\partial X} (\Gamma)| \neq 1$ holds true $\nu$-almost surely.
\end{prop}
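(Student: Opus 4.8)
The plan is to argue by contradiction. Proposition~\ref{prop:fixed point set has size at most 1}, combined with the standing assumption $\Lambda(G_\mu)=\partial X$, already shows that $|\mathrm{Fix}_{\partial X}(\Gamma)|\le 1$ holds $\nu$-almost surely, so the content is to exclude the event $E=\{\Gamma:|\mathrm{Fix}_{\partial X}(\Gamma)|=1\}$. Suppose for contradiction that $\nu(E)>0$. By Proposition~\ref{prop:limit set cannot be empty} we may discard a $\nu$-null set and assume that every $\Gamma$ under consideration has $\Lambda(\Gamma)\neq\emptyset$, hence contains a non-elliptic element and is in particular infinite.

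The first and decisive step is to show that on $E$ the subgroup $\Gamma$ acts \emph{without} loxodromic isometries, since this is exactly the hypothesis needed to invoke Lemmas~\ref{lemma:Q is non-empty}, \ref{lemma:boundary of Q is contained in large points} and \ref{lemma:discrete group has finite stabilziers}. Here the properness of the action is indispensable. Write $\mathrm{Fix}_{\partial X}(\Gamma)=\{\zeta_0\}$ and suppose $\Gamma$ contained a loxodromic $g$. As $g$ fixes $\zeta_0$, one of its two fixed points is $\zeta_0$, say $g^{+}=\zeta_0$ and $g^{-}=\eta\neq\zeta_0$. Because $\zeta_0$ is the \emph{only} globally fixed boundary point, some $h\in\Gamma$ moves $\eta$; since $h$ fixes $\zeta_0$, the conjugate $hgh^{-1}$ is loxodromic with the same attracting point $\zeta_0$ but a different repelling point $h\eta$. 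Thus $\Gamma$ fixes $\zeta_0$, contains a loxodromic, and is not lineal, so its action is focal. The contradiction is that no focal action is properly discontinuous: the product $p=g\,(hgh^{-1})^{-1}$ is a nontrivial isometry fixing $\zeta_0$ with vanishing translation toward $\zeta_0$, i.e.\ a parabolic centred at $\zeta_0$, and its displacement tends to $0$ along any sequence of points approaching $\zeta_0$; the pairwise distinct conjugates $g^{n}pg^{-n}$ then satisfy $\|g^{n}pg^{-n}\|=d_X\!\big(p\,g^{-n}x_0,\,g^{-n}x_0\big)\to 0$, which is incompatible with proper discontinuity. Therefore $\Gamma$ has no loxodromic isometries on $E$.

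With loxodromics excluded, the remaining argument is a clean combination of the three lemmas with the stationarity of $\nu$. Fix once and for all the radius $R$ of Lemma~\ref{lemma:Q is non-empty}, which depends only on $X$. For each $B\in\mathbb{N}$ the assignment $\Gamma\mapsto Q(\Gamma;R,B)$ is a $G$-equivariant Borel map $\Sub{G}\to\Cl{X}$, so the pushforward of $\nu$ is a $\mu$-stationary probability measure on $\Cl{X}$. Proposition~\ref{prop:no stationary measure on proper at infinity subsets} then gives, for each $B$, that $\nu$-almost every $\Gamma$ with $Q(\Gamma;R,B)\neq\emptyset$ satisfies $\partial X=\Lambda(G_\mu)\subset\partial Q(\Gamma;R,B)$; intersecting over the countably many values of $B$ leaves a single conull set. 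Now pick $\Gamma$ in the positive-measure intersection of $E$ with this conull set. By the previous step $\Gamma$ has no loxodromics, is infinite, and fixes $\zeta_0$, so Lemma~\ref{lemma:Q is non-empty} gives $Q(\Gamma;R,B)\neq\emptyset$ for every $B$, whence $\partial X\subset\partial Q(\Gamma;R,B)$ for every $B$. Feeding this into Lemma~\ref{lemma:boundary of Q is contained in large points} shows $|\Gamma_\zeta|>B$ for every $\zeta\in\partial X$ and every $B\in\mathbb{N}$. Choosing any $\zeta\neq\zeta_0$ forces $|\Gamma_\zeta|=\infty$, contradicting Lemma~\ref{lemma:discrete group has finite stabilziers}, which applies precisely because $\Gamma$ acts properly, has no loxodromics, and fixes $\zeta_0$. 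Hence $\nu(E)=0$, as desired.

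The step I expect to be the main obstacle is the second paragraph. The soft limit-set results of the previous section do not by themselves rule out a single fixed point, because a focal action can fix one boundary point yet have full limit set; it is the proper discontinuity of the action, through the horospherical contraction above, that excludes the focal alternative and thereby unlocks the geometric lemmas used in the final step.
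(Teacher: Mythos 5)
Your overall architecture is sound and your endgame is actually \emph{simpler} than the paper's. The paper's proof, after reducing to the no-loxodromics case, runs a Baire-category argument to locate a minimal $B$ for which $\{\zeta : |\Gamma_\zeta|=B\}$ has non-empty interior, and then derives a contradiction by showing $Q(\Gamma;R,B)$ is non-empty with \emph{proper} boundary at infinity. You instead apply Proposition \ref{prop:no stationary measure on proper at infinity subsets} in the positive direction for every $B$ simultaneously, conclude $\partial X\subset\partial Q(\Gamma;R,B)$ for all $B$, and read off via Lemma \ref{lemma:boundary of Q is contained in large points} that every point stabilizer is infinite, contradicting Lemma \ref{lemma:discrete group has finite stabilziers}. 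That is a valid and cleaner use of the same three lemmas. The larger divergence from the paper is your treatment of the focal case: the paper passes to $\overline{[\Gamma,\Gamma]}$, pushes the stationary measure forward, and re-applies Proposition \ref{prop:fixed point set has size at most 1} to reduce to a horocyclic action; you instead prove directly that a focal action is never proper. Your route avoids the auxiliary stationary random subgroup $\theta_*\nu$ entirely, at the cost of a hands-on hyperbolic-geometry argument.

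That hands-on argument is right in spirit but has local errors you should fix. First, the displayed quantity is conjugated the wrong way: $\|g^{n}pg^{-n}\| = d_X(p\,g^{-n}x_0, g^{-n}x_0)$ evaluates the displacement of $p$ at points $g^{-n}x_0\to g^-=\eta$, and $p$ does \emph{not} fix $\eta$, so there is no reason for this to be small or even bounded. You need the conjugates $g^{-n}pg^{n}$, whose norms equal $d_X(p\,g^{n}x_0,g^{n}x_0)$ with $g^{n}x_0\to\zeta_0$. Second, in a general $\delta$-hyperbolic space (as opposed to $\CAT{-1}$) the displacement of $p$ along a ray to $\zeta_0$ does not tend to $0$; it is merely bounded, because $p$ fixes $\zeta_0$ and its Busemann quasi-cocycle at $\zeta_0$ is bounded (being a difference of the equal translation numbers of $g$ and $hgh^{-1}$ up to the quasimorphism defect). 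Boundedness is all you need, since it places infinitely many of the $g^{-n}pg^{n}$ in a single quasi-stabilizer $\Gamma_{x_0,C}$. Third, you must check these conjugates are pairwise distinct (or at least infinite in number): if $g^{-n}pg^{n}=g^{-m}pg^{m}$ for $n\neq m$ then $p$ commutes with a power of $g$, hence fixes $\eta$ as well as $\zeta_0$, which forces $hgh^{-1}=p^{-1}g$ to fix $\eta$, contradicting that its repelling point is $h\eta\neq\eta$. With these three repairs the focal exclusion goes through and the whole proof is correct. (Also, the claim that $p$ is ``a parabolic centred at $\zeta_0$'' is neither needed nor automatic; and $\Lambda(\Gamma)\neq\emptyset$ gives that $\Gamma$ is infinite but not that it contains a non-elliptic element --- only the former is used.)
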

\begin{proof}
Throughout the proof it will be easier to assume without loss of generality that the measure $\nu$ is ergodic.

Assume towards contradiction  that $\nu$-almost every subgroup $\Gamma$ acts properly on $X$ and satisfies $\mathrm{Fix}_{\partial X}(\Gamma) = \{\zeta_\Gamma\}$ for some boundary point $\zeta_\Gamma \in \partial X$. The point $\zeta_\Gamma$ depends on the subgroup $\Gamma$ in a $\nu$-measurable manner.

We know from Proposition \ref{prop:limit set cannot be empty} that  $\Lambda(\Gamma) \neq \emptyset$ holds $\nu$-almost surely. In particular $\nu$-almost every subgroup $\Gamma$ is infinite. By Gromov's classification of actions on hyperbolic spaces,  the action of  $\nu$-almost every subgroup $\Gamma$ is either horocyclic or focal (see \S\ref{sec:hyperbolic groups}). In the horocyclic case the action of the group $\Gamma$ has no loxodromic isometries, and we may proceed with the proof as is. 

In the focal case we first argue as follows. Consider the map $\theta : \Sub{G} \to \Sub{G}$ given by $\theta(\Gamma) = \overline{\left[\Gamma,\Gamma\right]}$. The push-forward measure $\nu_1 = \theta_* \nu$ is a $\mu$-stationary subgroup of $G$. The action of  $\nu'$-almost every subgroup is  proper, horocyclic  and  fixes  at least one   point on the boundary (see \cite[Corollary 3.9]{caprace2015amenable}).  Proposition \ref{prop:fixed point set has size at most 1} says that  $\nu'$-almost every subgroup $\Gamma' = \theta(\Gamma)$ is either contained in the elliptic radical $\mathrm{E}(G_\mu)$ or satisfies $\mathrm{Fix}_{\partial X}(\Gamma')= \mathrm{Fix}_{\partial X}(\Gamma) = \{\zeta_\Gamma\}$.  The first possibility is eliminated by the fact that a focal action  admits a pair of loxodoromic isometries $\gamma_1, \gamma_2$ with $\gamma_1^+ = \gamma_2^+ = {\zeta_\Gamma}$  but $\gamma_1^- \neq \gamma_2^-$. 
Up to replacing the $\mu$-stationary random subgroup $\nu$ by $\nu_1 = \theta_* \nu$ in the focal case, we may assume for the remainder of the proof that $\nu$-almost every subgroup $\Gamma$ acts without loxodromic isometries.

For every subgroup $\Gamma \in \Sub{G}$  consider  the stabilizer sizes $|\Gamma_\zeta|$ as a function from the boundary $\partial X$ to the countable set $\mathbb{N} \cup \{\infty\}$. Clearly  $|\Gamma_{\zeta_\Gamma}| = \infty$ holds true $\nu$-almost surely. On the other hand $|\Gamma_\zeta| < \infty$ for every $\zeta \in \partial X $ with $\zeta \neq \zeta_\Gamma$ and for $\nu$-almost every subgroup $\Gamma$, see Lemma \ref{lemma:discrete group has finite stabilziers}.
 The Baire category theorem implies that there is some minimal $B \in \mathbb{N}$ such that the subset $\{\zeta \in \partial X \: : \: |\Gamma_\zeta| = B\} $ of the boundary $\partial X$ has non-empty interior. As the measure $\nu$ was assumed to be ergodic the same $B$ works for $\nu$-almost every subgroup $\Gamma$. In fact, it is $\nu \times \nu_X$-almost surely the case that the pair $(\Gamma,\zeta)$ satisfies $\overline{ \{\zeta \in \partial X \: : \: |\Gamma_\zeta| = B\}}  = \partial X$, for otherwise the convex hull of the  complementary subset of the boundary can be used to reach a contradiction to Proposition \ref{prop:no stationary measure on proper at infinity subsets}.

Consider the $\nu$-measurable mapping 
\begin{equation}
\Phi : \Gamma \mapsto Q(\Gamma;R,B), \quad \Phi : \Sub{G} \to \Cl{X}
\end{equation}
where $B \in \mathbb{N}$ is the constant determined in the previous paragraph and $R > 0$ is sufficiently large as in Lemma \ref{lemma:Q is non-empty}.  
   On the one hand $\zeta_\Gamma \in \partial Q(\Gamma;R,B)$ holds $\nu$-almost surely according to Lemma \ref{lemma:Q is non-empty}. On the other hand $\partial Q(\Gamma;R,B) \subsetneq \partial X$ occurs $\nu$-almost surely because of  the particular choice of $B$ and of Lemma \ref{lemma:boundary of Q is contained in large points}. Therefore $\Phi_* \nu$-almost every closed subset  is  non-empty and has proper boundary at infinity.   We arrive at a contradiction to Proposition \ref{prop:no stationary measure on proper at infinity subsets}.
\end{proof}

We are ready to conclude the following proof as a   consequence of Proposition \ref{prop:a discrete SRS cannot have a single fixed point on the boundary}.

\begin{proof}[Proof of Theorem \ref{thm:a discrete SRS has full limit set}]
Let $\nu$ be a $\mu$-stationary random subgroup of $G$ such   that $\nu$-almost every subgroup   $\Gamma$ acts properly on $X$ and is not contained in the elliptic radical $\mathrm{E}(G_\mu)$.
We know that  $\Lambda(\Gamma) = \partial X$ or $|\Lambda(\Gamma)| = 1$ holds $\nu$-almost surely, see Corollary \ref{cor:limit set is full or has size 1}. 
In the first case we are done. In the second case  $\Lambda(\Gamma) \subset \mathrm{Fix}_{\partial X}(\Gamma)$ so that $|\mathrm{Fix}_{\partial X}(\Gamma)| \ge 1$ holds $\nu$-almost surely. 
On the other hand $|\mathrm{Fix}_{\partial X}(\Gamma)| \le 1$ according to Proposition \ref{prop:fixed point set has size at most 1}. Putting everything  together implies that  $|\mathrm{Fix}_{\partial X}(\Gamma)| = 1$ holds $\nu$-almost surely. This contradicts Proposition \ref{prop:a discrete SRS cannot have a single fixed point on the boundary}.
\end{proof}

\begin{cor}
\label{cor:SRS of general type are general type}
Assume that the action of the subgroup $G_\mu$ on the space $X$ is of general type. Let $\nu$ be a $\mu$-stationary random subgroup of $G$. If $\nu$-almost every subgroup acts properly on the space $X$ and is not contained in the elliptic radical $\mathrm{E}(G_\mu)$ then the action of $\nu$-almost every subgroup is of general type.
\end{cor}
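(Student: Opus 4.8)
The plan is to deduce Corollary \ref{cor:SRS of general type are general type} directly from Theorem \ref{thm:a discrete SRS has full limit set}, using the trichotomy of non-elementary actions recalled in \S\ref{sec:hyperbolic groups}. Since the hypotheses of the corollary are precisely those of the theorem, I would first invoke Theorem \ref{thm:a discrete SRS has full limit set} to conclude that $\Lambda(\Gamma) = \partial X$ holds $\nu$-almost surely. Because the ambient action of $G_\mu$ is of general type, its limit set $\partial X$ contains at least three points (in fact infinitely many), so $|\Lambda(\Gamma)| \ge 3$ for $\nu$-almost every subgroup $\Gamma$.

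The key observation is then purely a matter of the classification: an action whose limit set has cardinality at least three cannot be bounded, horocyclic, lineal, or focal. Indeed a bounded action has empty limit set, a horocyclic action has no loxodromic isometries and a single fixed point (limit set a singleton), and a lineal action has a two-point limit set. A focal action is characterized by having a global fixed point $\zeta$ on the boundary that all loxodromic isometries share as an attracting endpoint; consequently its limit set would be contained in a proper $G_\mu$-invariant set and cannot fill the boundary when $\partial X$ is infinite. Thus the only remaining possibility in Gromov's classification is that the action of $\Gamma$ is of general type. I would cite \cite[Proposition 3.1]{caprace2015amenable} for the identification of each action type in terms of its limit set to make this step rigorous.

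The main (and only genuine) point to verify carefully is that $\Lambda(\Gamma) = \partial X$ together with the general-type assumption on $G_\mu$ truly forces the general type case rather than any elementary or focal alternative; concretely one must rule out the focal case, where a subgroup can still have a large limit set while fixing a boundary point. Here I would note that under a general type action of $G_\mu$ the full boundary $\partial X$ contains pairs of points that are not jointly fixed by any common isometry, so an action with $\Lambda(\Gamma) = \partial X$ must contain two loxodromic isometries with disjoint limit sets, which is exactly the defining condition for general type. This completes the argument, and no further technical machinery beyond Theorem \ref{thm:a discrete SRS has full limit set} and the classification is required.
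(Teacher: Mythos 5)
Your overall strategy --- invoke Theorem \ref{thm:a discrete SRS has full limit set} to get $\Lambda(\Gamma)=\partial X$ almost surely and then run through Gromov's classification --- is the right skeleton, and the exclusion of the bounded, horocyclic and lineal cases from $|\Lambda(\Gamma)|\ge 3$ is fine. The gap is in the focal case, and neither of your two arguments for it is valid. First, it is simply false that a focal action must have limit set contained in a proper subset of the boundary: the stabilizer of a point at infinity in $\mathbb{H}^n$ acts focally on $\mathbb{H}^n$ with limit set equal to all of $\partial\mathbb{H}^n$, so ``full limit set'' does not rule out focality. Second, the claim that $\Lambda(\Gamma)=\partial X$ forces two loxodromics with disjoint limit sets is a non sequitur --- in a focal action every loxodromic shares the global fixed point as an endpoint, yet the limit set can still be the whole boundary, exactly as in the example above. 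By \cite[Proposition 3.1]{caprace2015amenable} the feature that separates focal from general type is not the size of the limit set but the existence of a fixed point on the boundary, and your proposal never establishes that $\mathrm{Fix}_{\partial X}(\Gamma)=\emptyset$. Note that the paper itself treats the focal possibility as a genuine threat: the proof of Proposition \ref{prop:a discrete SRS cannot have a single fixed point on the boundary} devotes a separate argument (passing to the closed commutator subgroup) to dispose of it, so it cannot be waved away by the classification alone.

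The missing ingredient is precisely the pair of fixed-point results already proved in the paper. Proposition \ref{prop:fixed point set has size at most 1} gives $|\mathrm{Fix}_{\partial X}(\Gamma)|\le 1$ almost surely (using that $\Gamma$ is not contained in the elliptic radical), and Proposition \ref{prop:a discrete SRS cannot have a single fixed point on the boundary} --- which is where the properness hypothesis is actually consumed --- excludes the value $1$. Together they yield $\mathrm{Fix}_{\partial X}(\Gamma)=\emptyset$ almost surely, which kills the focal alternative and leaves general type as the only remaining case. So your closing assertion that ``no further technical machinery beyond Theorem \ref{thm:a discrete SRS has full limit set} and the classification is required'' is exactly where the proof breaks; you need one more appeal to those two propositions.
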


\subsection*{Amenable hyperbolic groups}
\label{sec:amenable}

Caprace, de Cornulier, Monod and Tessera \cite{caprace2015amenable} developed a rich and detailed structure theory for   amenable  non-elementary Gromov hyperbolic locally compact groups. The action of a   hyperbolic group on itself is focal if and only if the group is  amenable and non-elementary hyperbolic. Every such group can be written as $\mathbb{R} \ltimes_\alpha H$ or $\mathbb{Z} \ltimes_\alpha H$ where $\alpha = \alpha(1)$ is a \emph{compacting} automorphism of the locally compact group $H$, namely there is some compact subset $V \subset H$ such that every element $g \in H$ satisfies $\alpha^n(g)\in V$ for all sufficiently large $n$. Moreover such a group  acts  properly and cocompactly by isometries on some proper geodesically complete $\CAT{-1}$-space.

\begin{theorem}
\label{thm:amenable hyperbolic new}
Let $G$ be an amenable non-elementary   hyperbolic locally compact group and $\mu$  a probability measure on the group $G$. Assume that the measure $\mu$ has finite first moment and is spread out. Then every discrete $\mu$-stationary random subgroup of $G$ is contained in the elliptic radical $\mathrm{E}(G_\mu)$.
\end{theorem}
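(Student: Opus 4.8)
The plan is to realize $G$ geometrically via the structure theory and then run a short fixed-point counting argument on top of the results of \S\ref{sec:limit sets}--\S\ref{sec:fixed points on the boundary}. By the work of Caprace, de Cornulier, Monod and Tessera \cite{caprace2015amenable}, an amenable non-elementary hyperbolic locally compact group $G$ acts properly and cocompactly by isometries on some proper geodesically complete $\CAT{-1}$-space $X$, and this action is \emph{focal}; in particular there is a unique boundary point $\xi \in \partial X$ fixed by all of $G$. Cocompactness gives $\Lambda(G) = \partial X$, and since $\mu$ is spread out the closed subgroup $G_\mu$ is open in $G$, whence $G_\mu$ is again a non-elementary (focal) subgroup with $\Lambda_\mu = \Lambda(G_\mu) = \partial X$ fixing $\xi$ — a point I would verify using the compacting-automorphism normal form of \cite{caprace2015amenable}. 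First I would check that the standing hypotheses of \S\ref{sec:limit sets}--\S\ref{sec:fixed points on the boundary} hold here, i.e. that the focal case of Theorem \ref{thm:boundary measure} applies: $\mu$ has finite first moment, is spread out, $X$ is proper, $G_\mu$ acts cocompactly, and the Poisson boundary is non-trivial and is identified with $(\partial X,\nu_X)$ via \cite{forghani2022shannon}. This produces the boundary measure $\nu_X$ with $\nu_X(\{\xi\}) = 0$ and $\mathrm{supp}(\nu_X) = \Lambda_\mu = \partial X$.

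Granting this setup, the argument is a proof by contradiction. Suppose some discrete $\mu$-stationary random subgroup $\nu$ were not concentrated on $\mathrm{E}(G_\mu)$, so that $\nu(A) > 0$ for the event $A = \{\Gamma : \Gamma \not\le \mathrm{E}(G_\mu)\}$. Since $\mathrm{E}(G_\mu)$ is normal in $G_\mu$ and $\mathrm{supp}(\mu) \subset G_\mu$, the event $A$ is invariant under conjugation by $\mathrm{supp}(\mu)$; hence the normalized restriction $\nu' = \nu|_A/\nu(A)$ is again a discrete $\mu$-stationary random subgroup, with $\nu'$-almost every $\Gamma$ satisfying $\Gamma \not\le \mathrm{E}(G_\mu)$. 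Because $\Gamma$ is discrete in $G$ and $G$ acts properly on $X$, each such $\Gamma$ acts properly on $X$. Now every subgroup $\Gamma \le G$ fixes $\xi$, so $\xi \in \mathrm{Fix}_{\partial X}(\Gamma)$ and $|\mathrm{Fix}_{\partial X}(\Gamma)| \ge 1$. On the other hand, Proposition \ref{prop:fixed point set has size at most 1} applied to $\nu'$ gives $|\mathrm{Fix}_{\Lambda_\mu}(\Gamma)| \le 1$ almost surely, and since $\Lambda_\mu = \partial X$ this reads $|\mathrm{Fix}_{\partial X}(\Gamma)| \le 1$. Thus $|\mathrm{Fix}_{\partial X}(\Gamma)| = 1$ holds $\nu'$-almost surely, contradicting Proposition \ref{prop:a discrete SRS cannot have a single fixed point on the boundary}, which asserts $|\mathrm{Fix}_{\partial X}(\Gamma)| \neq 1$ almost surely for a proper discrete stationary random subgroup avoiding the elliptic radical. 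Hence $\nu(A) = 0$ and $\nu$-almost every $\Gamma$ lies in $\mathrm{E}(G_\mu)$, as required.

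The soft content of the second paragraph is immediate once $\nu_X$ is in place, so I expect the main obstacle to be the first paragraph rather than the contradiction. The delicate point is that a focal action is genuinely asymmetric: the normal form $G = \langle t \rangle \ltimes H$ of \cite{caprace2015amenable} singles out the direction toward $\xi$, and the $\mu$-random walk converges to a non-atomic measure on $\partial X \setminus \{\xi\}$ only when its drift points away from $\xi$. Moreover the fixed-point propositions of \S\ref{sec:limit sets} rest, through Proposition \ref{prop:no stationary measure on proper at infinity subsets}, on the boundary measure of the reflected walk $\check\mu$ rather than of $\mu$ itself. I would therefore need to confirm that the correctly oriented boundary measure is non-atomic with full support $\Lambda_\mu$ — this is exactly where the finite first moment and spread-out hypotheses enter, via \cite{forghani2022shannon} and the structure theory — and to check that the opposite drift orientation does not obstruct the conclusion. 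This orientation bookkeeping, rather than the counting argument, is the technical heart of the proof.
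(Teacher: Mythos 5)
Your second paragraph is essentially the paper's argument: restrict to the event that $\Gamma\not\le \mathrm{E}(G_\mu)$, note that every subgroup of $G$ fixes the focal point $\xi$, and play Proposition \ref{prop:fixed point set has size at most 1} against Proposition \ref{prop:a discrete SRS cannot have a single fixed point on the boundary} to get a contradiction. The gap is exactly where you suspect it is, in the first paragraph, and it is not repairable the way you propose. The focal case of Theorem \ref{thm:boundary measure} requires the Poisson boundary of $(G,\mu)$ to be \emph{non-trivial}, and this is a genuine hypothesis that can fail under the theorem's stated assumptions (finite first moment and spread out): for a focal group $G=\mathbb{Z}\ltimes_\alpha H$ or $\mathbb{R}\ltimes_\alpha H$, a spread-out measure of finite first moment whose drift points \emph{toward} the fixed point $\xi$ has $\omega_n x_0\to\xi$ almost surely and trivial Poisson boundary (think of the $ax+b$ group acting on $\mathbb{H}^2$ with contracting drift). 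In that regime there is no boundary measure $\nu_X$ with $\nu_X(\{\xi\})=0$, the machinery of \S\ref{sec:limit sets}--\S\ref{sec:fixed points on the boundary} is unavailable for $\mu$, and no amount of ``orientation bookkeeping'' with $\check\mu$ recovers it. Your closing claim that the finite-first-moment and spread-out hypotheses are ``exactly where'' non-triviality enters is false; those hypotheses do not control the sign of the drift.

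The paper closes this gap with a separate case that your proposal omits entirely: if the Poisson boundary of $(G,\mu)$ is trivial, then every $\mu$-stationary measure is automatically $G$-invariant, so $\nu$ is a discrete \emph{invariant} random subgroup. One then chooses a different compactly supported, spread-out probability measure $\mu'$ on $G$ whose Poisson boundary is non-trivial --- such a $\mu'$ exists by Jaworski's theorem because a non-elementary hyperbolic locally compact group has exponential growth --- and regards $\nu$ as a $\mu'$-stationary random subgroup, to which the first case applies. You need this (or some substitute for it) to have a complete proof. A secondary, smaller point: you should not say you will ``verify'' that $G_\mu$ fixes $\xi$ and has full limit set as if it were automatic for arbitrary $\mu$; the paper's standing framework already packages what is needed once non-triviality of the Poisson boundary is secured, and in the trivial case the question is moot because the measure gets replaced.
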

\begin{proof}

First consider the case where  the Poisson boundary of the pair $(G,\mu)$ is not trivial. This means that the action of the group $G$ on itself by isometries gives rise to  a $\mu$-stationary boundary measure $\nu_G$ supported on the limit set  $\Lambda(G_\mu) \subset \partial G$ by the focal case of Theorem \ref{thm:boundary measure}. Unless $\nu$-almost every discrete subgroup $\Gamma$ is contained in the elliptic radical $\mathrm{E}(G)$, we get that $\mathrm{Fix}_{\partial G}(\Gamma) = \emptyset$ for $\nu$-almost every discrete subgroup $\Gamma$ according to the two Propositions \ref{prop:fixed point set has size at most 1} and \ref{prop:a discrete SRS cannot have a single fixed point on the boundary}. This is a contradiction to the fact that this action is focal.

Alternatively, consider the case where the Poisson boundary of the pair $(G,\mu)$ is trivial. This means that any $\mu$-stationary probability measure is in fact $G$-invariant, see e.g. \cite{furman2002random}. In particular the probability  measure $\nu$ is a discrete invariant random subgroup. 
We may find some compactly supported spread out probability measure $\mu'$ on the group $G$ such that the Poisson boundary of the pair $(G,\mu')$ is \emph{non} trivial. This is possible by \cite[Theorem 3.16]{jaworski1995strong} and by taking into account the fact that the locally compact group $G$ is non-elementary Gromov hyperbolic and as such has exponential growth. The invariant random subgroup $\nu$ can be regarded as a $\mu'$-stationary random subgroup. At this point we may conclude the proof exactly as in the previous paragraph.
\end{proof}

\begin{remark}
The stabilizer  of a  point at infinity for a general type action of a locally compact group  is a special case of an amenable non-elementary hyperbolic group. For such groups Theorem \ref{thm:amenable hyperbolic new} can be proved directly by inducing stationary random subgroups. The proof given above  is much more general however.
\end{remark}

\begin{remark}
If $G$ is any locally compact group with modular function $\Delta_G$ and $\nu$ is a discrete invariant random subgroup of $G$ then $\nu$-almost every subgroup is contained in $\ker \Delta_G$ \cite[Corollary 1.2]{biringer2017unimodularity}. In the context of amenable hyperbolic groups $\ker \Delta_G = H$. This  can be used to get a  weaker conclusion.
\end{remark}

 

\section{Discrete stationary random subgroups of products}
\label{sec:products}

Let $X$ be a proper Gromov hyperbolic geodesic metric space for which the action of $\Isom{X}$ is of general type. Let $Y$ be any proper metric space. Fix a pair of arbitrary base points $x_0 \in X$ and $y_0 \in Y$.   Regard the product  $X \times Y$ as a proper metric space with the $L^2$-product metric. Consider the group $G$ of isometries of the space $X \times Y$ preserving each factor, namely
\begin{equation}
G = \Isom{X} \times \Isom{Y}.
\end{equation}
Let $p_X$ and $p_Y$ denote the projection homomorphisms from the group $G$ to its coordinates $\Isom{X}$ and $\Isom{Y}$ respectively. The group $G$ is acting by isometries on each factor $X$ and $Y$ via the projections $p_X$ and $p_Y$.


 Let $\DSub{G}$ denote the Chabauty space of all discrete subgroups of the group $G$. Given a discrete subgroup $\Gamma \in \DSub{G}$ it will be convenient to introduce the shorthand notation
 \begin{equation}
  F_X(\Gamma) = \mathrm{Fix}_{\partial X}(\Gamma)
 \end{equation}
 where the group $\Gamma$ is understood to act on the space $X$ via the projection $p_X$.
 
 Fix a probability measure $\mu_X$ on the  group $\Isom{X}$.  Assume that $\mathrm{supp}(\mu_X)$  generates as a semigroup a dense subgroup of $\Isom{X}$.
 
  

\begin{theorem}
\label{thm:trivial fixed points in products}
Let $\nu$ be a $\mu_X$-stationary probability measure on the space $\DSub{G}$.
Then 
either $F_X(\Gamma) = \emptyset$ or $p_X(\Gamma)$ is contained in the elliptic radical $\mathrm{E}(\Isom{X})$   for $\nu$-almost every subgroup $\Gamma$.
\end{theorem}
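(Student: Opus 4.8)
The plan is to regard $\nu$ as a $\mu$-stationary random subgroup of $G$ acting on $X$ through the projection $p_X$. Since $\mathrm{supp}(\mu_X)$ generates a dense subgroup of $\Isom{X}$ and the latter acts with general type, the action of $G_\mu$ on $X$ via $p_X$ is again of general type, so the full machinery of \S\ref{sec:limit sets}--\S\ref{sec:fixed points on the boundary} is available together with the boundary measure $\nu_X$ of Theorem \ref{thm:boundary measure}. Under this identification the elliptic radical is $\mathrm{E}(G_\mu)=p_X^{-1}(\mathrm{E}(\Isom{X}))$, so that a subgroup $\Gamma$ lies in $\mathrm{E}(G_\mu)$ exactly when $p_X(\Gamma)\subseteq \mathrm{E}(\Isom{X})$. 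The subset $B=\{\Gamma\in\DSub{G} : p_X(\Gamma)\subseteq \mathrm{E}(\Isom{X})\}$ is $G$-invariant, so conditioning $\nu$ on $B$ and on its complement produces two $\mu$-stationary measures; on $B$ the second alternative of the conclusion holds outright. Passing to an ergodic component, I may therefore assume $p_X(\Gamma)\not\subseteq \mathrm{E}(\Isom{X})$ holds $\nu$-almost surely, and the goal reduces to showing $F_X(\Gamma)=\emptyset$ almost surely.

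Under this hypothesis Proposition \ref{prop:fixed point set has size at most 1} yields $|\mathrm{Fix}_{\Lambda_\mu}(\Gamma)|\le 1$ almost surely. Suppose for contradiction that $F_X(\Gamma)\neq\emptyset$ on a set of positive, hence by ergodicity full, measure. Then $\nu$-almost every subgroup fixes a unique boundary point $\zeta_\Gamma\in\Lambda(\Gamma)\subseteq\Lambda_\mu$, and the assignment $\Gamma\mapsto\zeta_\Gamma$ is $G$-equivariant and measurable, pushing $\nu$ forward to a $\mu$-stationary measure on $\partial X$; since the action is of general type this measure must be the non-atomic measure $\nu_X$ by Theorem \ref{thm:boundary measure}. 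Exactly as in the proof of Proposition \ref{prop:a discrete SRS cannot have a single fixed point on the boundary}, I first reduce to the case in which $p_X(\Gamma)$ has no loxodromic element by replacing $\nu$ with its push-forward under $\Gamma\mapsto\overline{[\Gamma,\Gamma]}$ in the focal case; the resulting subgroups are horocyclic and still fix the single point $\zeta_\Gamma$.

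The heart of the matter is to run the quasi-stabilizer argument of Proposition \ref{prop:a discrete SRS cannot have a single fixed point on the boundary} for the action of $H=p_X(\Gamma)$ on $X$. The main obstacle, and the essential difference from the non-product setting, is that $H$ need not act \emph{properly} on $X$: the kernel $\Gamma\cap\ker p_X$ and other elements with large displacement in the $Y$-factor obstruct any direct appeal to Lemma \ref{lemma:discrete group has finite stabilziers}, so a priori every boundary point could carry an infinite point-stabilizer, and the Baire-category step producing a finite level $B$ breaks down. This is precisely where the single fixed point is exploited through Corollary \ref{cor:about freeness of action on boundary}: because $|\mathrm{Fix}_{\Lambda_\mu}(\Gamma)|=1$ almost surely, every element of $\Gamma$ whose $X$-projection lies outside $\mathrm{E}(\Isom{X})$ acts essentially freely on $(\partial X,\nu_X)$. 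As $\Gamma$ is countable, Fubini then shows that for $\nu_X$-almost every $\zeta$ the point-stabilizer satisfies $H_\zeta=p_X(\mathrm{Stab}_\Gamma(\zeta))\subseteq \mathrm{E}(\Isom{X})$, so the effective stabilizers are uniformly confined to $H\cap\mathrm{E}(\Isom{X})$; this statistical statement replaces the pointwise finiteness that properness would have supplied.

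Granting that $H\cap\mathrm{E}(\Isom{X})$ is finite --- which is automatic in the geodesically complete $\CAT{-1}$ situation of Theorem \ref{thm:intro product}, where the elliptic radical is trivial --- I fix $B=|H\cap\mathrm{E}(\Isom{X})|$ and consider the measurable map $\Phi:\Gamma\mapsto Q(H;R,B)$ with $R$ as in Lemma \ref{lemma:Q is non-empty}. On the one hand $\zeta_\Gamma\in\partial Q(H;R,B)$ by Lemma \ref{lemma:Q is non-empty}; on the other hand Lemma \ref{lemma:boundary of Q is contained in large points} places $\partial Q(H;R,B)$ inside $\{\zeta : |H_\zeta|>B\}$, which is $\nu_X$-null and hence, since $\mathrm{supp}(\nu_X)=\Lambda_\mu$ by Proposition \ref{prop:stationary measure on boundary of Gromov hyperbolic space has full support}, a proper subset of $\partial X$. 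Thus $\Phi_*\nu$-almost every closed set is non-empty with proper boundary at infinity, contradicting Proposition \ref{prop:no stationary measure on proper at infinity subsets} and completing the argument. The only point requiring extra care for a general proper hyperbolic $X$ is the control of $H\cap\mathrm{E}(\Isom{X})$, which I expect to obtain by counting point-stabilizers modulo the elliptic radical rather than assuming its finiteness outright.
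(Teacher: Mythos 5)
Your reduction (conditioning on the invariant set where $p_X(\Gamma)\subseteq \mathrm{E}(\Isom{X})$, passing to an ergodic component, invoking Proposition \ref{prop:fixed point set has size at most 1} to get $|F_X(\Gamma)|\le 1$, and disposing of the case where $\Gamma$ acts properly on $X$ via Proposition \ref{prop:a discrete SRS cannot have a single fixed point on the boundary}) matches the paper. The gap is in the remaining case, which is the actual content of the theorem: when $p_X(\Gamma)$ is a \emph{non-discrete} subgroup of $\Isom{X}$. There the quasi-stabilizer machinery you propose to run cannot get started. The set $Q(H;R,B)$ is defined through the quasi-stabilizers $H_{x,R}=\{h\in H : d_X(hx,x)\le R\}$, and for a non-discrete $H=p_X(\Gamma)$ every such quasi-stabilizer is infinite for every $x$ and every $R>0$; hence $Q(H;R,B)=X$ and $\partial Q(H;R,B)=\partial X$ for every finite $B$, so there is no contradiction with Proposition \ref{prop:no stationary measure on proper at infinity subsets}. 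Likewise the proof of Lemma \ref{lemma:boundary of Q is contained in large points} relies on the quasi-stabilizers along a geodesic being contained in a common \emph{finite} set (this is where properness enters), so it is not available either. Your suggested repairs --- finiteness of $H\cap\mathrm{E}(\Isom{X})$, or counting stabilizers modulo the elliptic radical --- do not address this: even with $H\cap\mathrm{E}(\Isom{X})$ trivial, a non-discrete $H$ has infinite quasi-stabilizers, and the Baire-category step producing a finite threshold $B$ has no analogue.

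The missing idea is that the non-proper case must be handled by exploiting the factor $Y$, which your argument never uses. The paper's proof observes that discreteness of $\Gamma$ in the \emph{product} forces the elements of the (infinite) quasi-stabilizer $\Gamma_{x_0,R}=\{\gamma : d_X(p_X(\gamma)x_0,x_0)\le R\}$ to have $p_Y$-images forming a discrete subset of $\Isom{Y}$. One then defines
\begin{equation*}
f(\Gamma)=\inf_{\substack{\gamma\in\Gamma_{x_0,R}\\ \gamma\notin\mathrm{E}(G)}} d_Y(p_Y(\gamma)y_0,y_0),
\end{equation*}
and uses exactly the essential-freeness statement of Corollary \ref{cor:about freeness of action on boundary} (which you correctly identified) to show that every fixed $\gamma\notin\mathrm{E}(G)$ eventually leaves the quasi-stabilizers $\Gamma_{\omega_n x_0,R}$ along a $\mathrm{P}_{\mu_X}$-generic sample path; combined with the discreteness of $p_Y(\Gamma_{x_0,R})$ this gives $\liminf_n f(\Gamma^{\omega_n^{-1}})=\infty$, contradicting Corollary \ref{cor:liminf of a Borel function is bounded} applied to the $\mu_X$-stationary measure $\nu$. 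Without some version of this step your proof does not close in the non-proper case.
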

\begin{proof}
We assume for the sake of convenience and   without loss of generality that the measure $\nu$ is ergodic. 

Assume that the projection $p_X(\Gamma)$ is  $\nu$-almost surely \emph{not} contained in the elliptic radical $\mathrm{E}(\Isom{X})$. In this case we know by Proposition \ref{prop:fixed point set has size at most 1} that $|F_X(\Gamma)| \le 1$ holds true $\nu$-almost surely. Our goal is to show  that in fact $F_X(\Gamma) = \emptyset$ holds $\nu$-almost surely.

There are two cases to consider, depending on whether the action of $\nu$-almost every subgroup $\Gamma$ on the factor $X$ is proper or not. In the proper case $F_X(\Gamma) = \emptyset$ holds true  $\nu$-almost surely  by Proposition \ref{prop:a discrete SRS cannot have a single fixed point on the boundary} and we are done. For the remainder of the proof assume  that we are in the non-proper case, namely  the projection  $p_X(\Gamma)$ is $\nu$-almost surely a non-discrete subgroup of $\Isom{X}$, and that  $F_X(\Gamma) = \{\zeta_\Gamma\}$ for some point $\zeta_\Gamma \in \partial X$ (depending measurably on the subgroup $\Gamma$). We will  arrive at a contradiction.

To begin with, and up to replacing the random subgroup $\nu$ by its pushforward with respect to the commutator closure map $\Gamma \mapsto \overline{\left[\Gamma,\Gamma\right]}$, we may assume that $\nu$-almost every subgroup admits no loxodromic elements in its action on the factor $X$ via the projection $p_X$. See the proof of Proposition \ref{prop:a discrete SRS cannot have a single fixed point on the boundary} for more details concerning this argument.

Fix an arbitrary   radius $R > 0$. For each discrete subgroup $\Gamma \in \DSub{G}$ consider the quasi-stabilizer
\begin{equation}
\Gamma_{x_0,R} = \{\gamma \in \Gamma \: : \: d_{X}(p_X(\gamma) x_0, x_0) \le R \}
\end{equation}
corresponding to the action of the subgroup $\Gamma$ on the factor $X$ via the projection map $p_X$. This quasi-stabilizer is $\nu$-almost surely infinite as the projection $p_X(\Gamma)$ is non-discrete. 

Let $\omega  \in \Isom{X}^\mathbb{N}$ be a $\mathrm{P}_{\mu_X}$-random sample path. We claim that 
\begin{equation}
\limsup_n \, p_X (\Gamma_{\omega_n x_0, R}) \subset \mathrm{E}(\Isom{X}).
\end{equation}
Indeed the sequence of points $\omega_n x_0$ converges $\mathrm{P}_{\mu_X}$-almost surely to the boundary point $\zeta_\omega \in \partial X$ satisfying moreover $\zeta_\omega \neq \zeta_\Gamma$.  We rely on the fact that any element  $\gamma \in \Gamma$ such that $p_X(\gamma)$ is not contained in the elliptic radical $\mathrm{E}(\Isom{X})$ is acting essentially freely on $(\partial X, \nu_{X})$, see Corollary \ref{cor:about freeness of action on boundary}. Therefore 
\begin{equation}
\liminf_n d_{X} \left(p_X(\gamma) \omega_n x_0, \omega_n x_0 \right) = \infty 
\end{equation}
$\mathrm{P}_{\mu_X}$-almost surely and for any element $\gamma \in \Gamma$ with $p_X(\gamma) \notin \mathrm{E}(\Isom{X})$. In particular any such element $\gamma$ must satisfy 
\begin{equation}
\gamma \notin \limsup_n \Gamma_{\omega_n x_0, R}.
\end{equation}
The claim follows. 

The discreteness of the subgroup $\Gamma$ implies that $p_Y(\Gamma_{x,R})$ is $\nu$-almost surely a discrete subset of $\Isom{Y}$ for any point $x \in X$. 
We define the Borel function
\begin{equation}
f : \DSub{G} \to \left[0,\infty\right), \quad f(\Gamma) = \inf_{\substack{\gamma \in \Gamma_{x_0,R} \\ \gamma \notin \mathrm{E}(G)}}  d_{Y}(p_Y(\gamma) y_0,y_0).
\end{equation}
Given an element $g \in \Isom{X} \le G$ we obviously have that $p_Y(g)y_0 = y_0$ and so
\begin{equation}
\label{eq:conjugating stuff}
f(\Gamma^g) = \inf_{\substack{\gamma \in \Gamma^g_{x_0,R} \\ \gamma \notin \mathrm{E}(G)}}  d_{Y}(p_Y(\gamma) y_0,y_0) = \inf_{\substack{\gamma \in \Gamma_{g^{-1} x_0,R} \\ \gamma \notin \mathrm{E}(G)}}  d_{Y}(p_Y(\gamma) y_0, y_0).
\end{equation}
We   remark that for $\mathrm{P}_\mu$-almost every sample path $\omega$ there is some point $z \in X$ and some constant $S > 0$ such that the subsets $p_Y(\Gamma_{\omega_n x_0,R})$ are all contained in the discrete subset $p_Y(\Gamma_{z,S}) \subset \Isom{Y}$, see Lemma \ref{lemma:quasi stabilizers nested}.  This remark, combined with   the preceding claim  applied for each fixed value $R > 0$ and with Equation (\ref{eq:conjugating stuff}), show that 
\begin{equation}
\liminf_n f( \Gamma^{g_n g_{n-1}\cdots g_1}) = \infty 
\end{equation}
for $\nu$-almost every subgroup $\Gamma$ and $\mu_X^{\otimes \mathbb{N}}$-almost every sequence of increments $(g_n) \in \Isom{X}^\mathbb{N}$. We emphasize that the sample path positions $\omega_n$ all belong to the factor $\Isom{X}$.  We arrive at a contradiction in light of  Kakutani's ergodic theorem,  see Corollary \ref{cor:liminf of a Borel function is bounded} specifically. 

We conclude that   $F_X(\Gamma) = \emptyset$ holds $\nu$-almost surely, as required.
\end{proof}

\subsection*{Products of hyperbolic spaces}

Fix some $k \in \mathbb{N}$. Let $X_1,\ldots,X_k$ be a collection of proper Gromov hyperbolic geodesic metric spaces. Assume that the action of the group $\Isom{X_i}$ on the space $X_i$ has  general type for each $i$. Let $X$ denote the product space 
\begin{equation}
X = X_1 \times \cdots \times X_n
\end{equation}
endowed with the product metric. 

Let    $G$ denote  the finite index subgroup  of the full group of isometries $ \Isom{X}$ consisting of these isometries that preserve each individual  factor $X_i$ for $i\in \{1,\ldots,k\}$. In other words
\begin{equation}
G = \Isom{X_1} \times \cdots \times \Isom{X_k}.
\end{equation}
Let $p_i$ denote the   projection homomorphism from the group $G$ to the coordinate $\Isom{X_i}$ for each $ i\in \{1,\ldots,k\}$. The group $G$ is acting on each factor $X_i$ via the projection $p_i$.

Fix    a probability measure $\mu_i$ on the group $\Isom{X_i}$ for each $i \in \{1,\ldots,k\}$. Assume that $\mathrm{supp}(\mu_i)$ generates as a semigroup a dense subgroup of $\Isom{X_i}$. Finally denote 
\begin{equation}
\mu = \mu_1 \otimes \cdots \otimes \mu_k.
\end{equation}

\begin{cor}
 Let $\nu$ be a discrete $\mu$-stationary random subgroup of $G$. If the projection $p_i(\Gamma)$ is $\nu$-almost surely not contained in the elliptic radical $\mathrm{E}(\Isom{X_i})$ for each $i$ then the action of $\nu$-almost every subgroup $\Gamma$ on the factor $X_i$ has general type.
\end{cor}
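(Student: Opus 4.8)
The plan is to fix an index $i$ and to view the whole group $G$ as acting on the single factor $X_i$ through the projection $p_i$, so that the machinery of \S\ref{sec:limit sets} together with Theorem~\ref{thm:trivial fixed points in products} becomes applicable. Since $\mathrm{supp}(\mu_i)$ generates a dense subgroup of $\Isom{X_i}$, the image $p_i(G_\mu)$ is dense in $\Isom{X_i}$; hence the $G$-action on $X_i$ is of general type, its limit set $\Lambda_\mu$ coincides with $\Lambda(\Isom{X_i})$ and is infinite, and the elliptic radical of this action is exactly $p_i^{-1}(\mathrm{E}(\Isom{X_i}))$. In particular the standing hypothesis $p_i(\Gamma) \not\subset \mathrm{E}(\Isom{X_i})$ is equivalent to saying that $\Gamma$ is $\nu$-almost surely not contained in $\mathrm{E}(G_\mu)$, and, being in the general type case, the boundary measure of Theorem~\ref{thm:boundary measure} exists with no further assumptions on $\mu$. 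Thus every result of \S\ref{sec:limit sets} applies verbatim to the $G$-action on $X_i$.

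First I would apply Corollary~\ref{cor:limit set is full or has size 1} to conclude that, $\nu$-almost surely, the limit set $\Lambda(p_i(\Gamma))$ either equals the infinite set $\Lambda_\mu$ or is a single point. This already rules out the bounded case ($\Lambda = \emptyset$) and the lineal case ($|\Lambda| = 2$).

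Next I would invoke Theorem~\ref{thm:trivial fixed points in products}. To match its two-factor setting I regroup the product as $X_i \times Y$ with $Y = \prod_{j \neq i} X_j$, a proper metric space, and I write $G = \Isom{X_i} \times B$ with $B = \prod_{j \neq i} \Isom{X_j}$ acting on $Y$; correspondingly $\mu = \mu_i \otimes \bigl(\bigotimes_{j \neq i} \mu_j\bigr)$, whose second marginal is supported on a dense subsemigroup of $B$, since closures commute with finite products. As $p_i(\Gamma)$ is $\nu$-almost surely not contained in $\mathrm{E}(\Isom{X_i})$, the theorem forces $\mathrm{Fix}_{\partial X_i}(\Gamma) = \emptyset$ to hold $\nu$-almost surely.

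Finally I would combine the two outputs. The possibility $|\Lambda(p_i(\Gamma))| = 1$ is incompatible with an empty fixed-point set, because a group always fixes the unique point of a singleton limit set; hence $\nu$-almost surely $\Lambda(p_i(\Gamma)) = \Lambda_\mu$ is infinite, so the action of $p_i(\Gamma)$ on $X_i$ is non-elementary, i.e.\ focal or of general type. A focal action fixes a point at infinity, again contradicting $\mathrm{Fix}_{\partial X_i}(\Gamma) = \emptyset$, so the action must be of general type. Running this argument for each $i \in \{1,\ldots,k\}$ proves the corollary. I expect the only delicate point to be the bookkeeping of the regrouping step---verifying that the hypotheses of Theorem~\ref{thm:trivial fixed points in products} survive the passage from the $k$-fold to a two-fold product (density of the second marginal, properness of $Y$, and the general type of the $X_i$-factor)---rather than any essentially new difficulty.
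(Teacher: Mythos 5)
Your proof is correct and follows essentially the same route as the paper: the paper likewise applies Theorem \ref{thm:trivial fixed points in products} to each factor (implicitly via the regrouping $X_i \times \prod_{j\neq i}X_j$ that you spell out) to get $\mathrm{Fix}_{\partial X_i}(\Gamma)=\emptyset$, and then reruns the argument of Theorem \ref{thm:a discrete SRS has full limit set} — the full-or-singleton dichotomy of Corollary \ref{cor:limit set is full or has size 1} — with the empty fixed-point set replacing Proposition \ref{prop:a discrete SRS cannot have a single fixed point on the boundary}. Your explicit final step ruling out the focal case is exactly the intended conclusion of that argument.
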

\begin{proof}
Note that the measure $\nu$ is $\mu_i$-stationary for each $i$ by \cite[Lemma 3.1]{bader2006factor}.
Applying Theorem \ref{thm:trivial fixed points in products}  with respect to each  factor $X_i$  individually implies that $F_i(\Gamma) = \mathrm{Fix}_{\partial X_i}(\Gamma) = \emptyset$ for each index $i\in\{1,\ldots,k\}$. We may now argue exactly as in the proof of Theorem \ref{thm:a discrete SRS has full limit set} with respect to the action of $\nu$-almost every subgroup $\Gamma$ on each factor $X_i$ via the projection map $p_i$. The only difference is that Proposition \ref{prop:a discrete SRS cannot have a single fixed point on the boundary} preventing the possibility of admitting a single fixed point on the boundary in the case of a single hyperbolic space is replaced by the   information obtained from  Theorem \ref{thm:trivial fixed points in products}.
\end{proof}

\subsection*{Product of $\CAT{-1}$-spaces}

Fix some $k \in \mathbb{N}$.  Let $X_1,\ldots,X_k$ be a collection of proper $\CAT{-1}$-spaces. 
Let $X$ denote the product space $X = X_1 \times \cdots \times X_n$ regarded with the product $\CAT{0}$ metric \cite[p. 168]{bridson2013metric}.

The boundary at infinity $\partial X$  is a $\CAT{1}$-space with respect to the angular metric \cite[Theorem II.9.13]{bridson2013metric}. It is isometric   to the spherical join of the individual boundaries \cite[\S II.8.11]{bridson2013metric}, i.e. 
\begin{equation}
\partial X = \partial X_1 \ast \partial X_2 \ast \cdots \ast  \partial X_k.
\end{equation}

The boundary $\partial X$ is realized as follows. Let $\Sigma$ be a topological realization of the combinatorial $(k-1)$-simplex with vertex set $V = \{1,\ldots,k\}$.  Then
\begin{equation}
\partial X = \partial X_1 \times \partial X_2 \times \cdots \times \partial X_k \times \Sigma / \thicksim
\end{equation}
where $\thicksim$ is the equivalence relation defined in terms of the quotient map taking  points lying over some face $F$ of the simplex $\Sigma$  to the sub-product of these factors $\partial X_i$'s for which $i \in F$. 
The fiber of the natural continuous map  $\sigma : \partial X \to \Sigma$ over every interior point of the simplex $\Sigma$ is homeomorphic to the direct product of the individual boundaries $\partial X_i$'s. More generally, the fiber of the map $\sigma$ over a point lying on the face $F$ of the simplex $\Sigma$ is homeomorphic to the product of these factors $X_i$ for which $i \in F$.
A boundary point $\zeta \in \partial X$ is called \emph{regular} if $\sigma(\zeta) \in \mathring{\Delta}$. Otherwise the boundary point $\zeta$ is called \emph{singular}.


With the additional standing assumption that the spaces $X_i$ are $\CAT{-1}$ rather than Gromov hyperbolic, it is possible to  extend Theorem \ref{thm:trivial fixed points in products} with respect to all points at infinity of the product $\CAT{0}$-space $X$, regular as well as singular ones.

\begin{cor}
\label{cor:minimality for action on product of cat-1}
Let $\nu$ be a discrete $\mu$-stationary random subgroup of the product $\prod_{i=1}^k \Isom{X_i}$. If $\nu$-almost every subgroup projects non-trivially to each factor $\Isom{X_i}$ then $\nu$-almost every subgroup $\Gamma$ has $\mathrm{Fix}_{\partial X}(\Gamma) = \emptyset$.
\end{cor}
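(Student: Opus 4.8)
The plan is to reduce the statement about the full product boundary $\partial X$ to the factorwise statement already available from Theorem \ref{thm:trivial fixed points in products}, exploiting the spherical join description $\partial X = \partial X_1 \ast \cdots \ast \partial X_k$. The key observation is that a fixed point of $\Gamma$ on $\partial X$ forces $\Gamma$ to fix a boundary point in each factor indexed by its supporting face, and such factorwise fixed points have already been excluded.

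First I would record the factorwise conclusion. For each index $i \in \{1,\ldots,k\}$ write $G = \Isom{X_i} \times \prod_{j \neq i}\Isom{X_j}$ and apply Theorem \ref{thm:trivial fixed points in products} with $X_i$ playing the role of the distinguished factor $X$ and $Y = \prod_{j\neq i}X_j$ (this is exactly the usage already made in the preceding subsection). This yields that $\nu$-almost surely either $F_i(\Gamma) = \mathrm{Fix}_{\partial X_i}(\Gamma) = \emptyset$ or $p_i(\Gamma)$ is contained in the elliptic radical $\mathrm{E}(\Isom{X_i})$. Since each $X_i$ is $\CAT{-1}$ its elliptic radical is trivial, so the standing hypothesis that $p_i(\Gamma)$ is nontrivial discards the second alternative. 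Hence $F_i(\Gamma) = \emptyset$ holds $\nu$-almost surely, for every $i$.

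Next I would carry out the geometric reduction, which is the heart of the argument and the step that genuinely uses the join structure rather than mere Gromov hyperbolicity of the factors. A boundary point $\zeta \in \partial X$ lies over a unique face $F \subseteq \{1,\ldots,k\}$ of the simplex $\Sigma$ via the map $\sigma$, and over the interior of $F$ the fibre of $\sigma$ is the direct product $\prod_{i \in F}\partial X_i$; concretely $\zeta$ is recorded by a tuple $(\xi_i)_{i \in F}$ with $\xi_i \in \partial X_i$ together with its barycentric weights. Because every $g = (g_1,\ldots,g_k) \in G$ preserves each factor, it preserves the asymptotic speed in each coordinate, hence fixes the $\Sigma$-coordinate ($\sigma(g\zeta)=\sigma(\zeta)$), and it acts on the fibre coordinatewise by $\xi_i \mapsto g_i\xi_i$. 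Thus $g$ fixes $\zeta$ if and only if $g_i$ fixes $\xi_i$ for every $i \in F$. Applying this to all $\gamma \in \Gamma$ shows that if $\mathrm{Fix}_{\partial X}(\Gamma) \neq \emptyset$, with a fixed point over a (necessarily nonempty) face $F$, then $p_i(\Gamma)$ fixes $\xi_i \in \partial X_i$ for each $i \in F$, so $F_i(\Gamma) \neq \emptyset$ for at least one index $i$. Equivalently,
\begin{equation}
\{\Gamma \in \DSub{G} \: : \: \mathrm{Fix}_{\partial X}(\Gamma) \neq \emptyset\} \subseteq \bigcup_{i=1}^k \{\Gamma \in \DSub{G} \: : \: F_i(\Gamma) \neq \emptyset\}.
\end{equation}

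Finally I would combine the two ingredients: the right-hand side is a finite union of $\nu$-null sets by the first step, hence $\nu$-null, so $\mathrm{Fix}_{\partial X}(\Gamma) = \emptyset$ holds $\nu$-almost surely. The hard part is the middle step, namely making the $G$-action on the spherical join explicit enough to see that a fixed point over a face projects to genuine fixed points in the corresponding factor boundaries; once that is in hand the probabilistic content is entirely inherited from Theorem \ref{thm:trivial fixed points in products} and no new stationarity argument is needed. A minor point to verify is the identification of $\mathrm{E}(\Isom{X_i})$ as trivial in the $\CAT{-1}$ setting, which is precisely what converts the nontrivial-projection hypothesis into the form needed to eliminate the elliptic alternative.
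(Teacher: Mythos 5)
Your proposal is correct and follows essentially the same route as the paper: the paper also applies Theorem \ref{thm:trivial fixed points in products} to each factor (using triviality of the elliptic radical of a $\CAT{-1}$-space to discard the elliptic alternative) and then identifies $\mathrm{Fix}_{\partial X}(\Gamma)$ with the spherical join $F_1(\Gamma)\ast\cdots\ast F_k(\Gamma)$, which is empty precisely when every $F_i(\Gamma)$ is empty. Your explicit description of the coordinatewise action on a boundary point over a face of the simplex is just a more detailed justification of that same join identification.
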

\begin{proof}
Note that the elliptic radical of $\Isom{Z}$ is trivial for any $\CAT{-1}$-space $Z$. For each discrete subgroup $\Gamma \in \DSub{G}$ denote
\begin{equation}
F_i(\Gamma) = \mathrm{Fix}_{\partial X_i}(\Gamma) \subset \partial X_i \quad \text{for each} \quad i\in\{1,\ldots,k\}.
\end{equation}
The spherical join
\begin{equation}
F(\Gamma) = F_1(\Gamma) \ast \cdots \ast F_k(\Gamma)
\end{equation}
can be naturally identified with a closed subset of the boundary $\partial X$. It can be seen from the above explicit realization of the boundary $\partial X$ that  $F(\Gamma) = \mathrm{Fix}_{\partial X}(\Gamma)$. In particular $F(\Gamma) = \emptyset$ if and only if $F_i(\Gamma) = \emptyset$ for each $i\in\{1,\ldots,k\}$. With this information at hand the corollary follows immediately from Theorem \ref{thm:trivial fixed points in products} applied with respect to each factor $X$ individually. We recall that the measure $\nu$ is in fact $\mu_i$-stationary for each $i$ by  \cite[Lemma 3.1]{bader2006factor}.
\end{proof}

\section{$\mathrm{CAT}(0)$-spaces and geometric density}
\label{sec:CAT0}

Let $X$ be a proper geodesically complete $\mathrm{CAT}(0)$-space with a fixed base point $x_0 \in X$. We assume that the group of isometries $\Isom{X}$ is acting on the space $X$ cocompactly and without fixed points at infinity. 

\begin{defn}
A subgroup $\Gamma$ of the isometry group $\Isom{X}$ is acting
\begin{itemize}
\item  \emph{minimally} if there are no proper closed convex $\Gamma$-invariant subsets, and
\item  \emph{geometrically densely} if it acts minimally on $X$ and without fixed points on the boundary $\partial X$. 
\end{itemize}
\end{defn}

Our standing assumptions imply that the space $X$ is \emph{boundary minimal}, i.e every  closed convex subset $Y \subsetneq X$ has $\partial Y \subsetneq \partial X$  \cite[Proposition 1.5]{caprace2009isometry}.
In particular a subgroup $\Gamma$ of $\Isom{X}$  acts geometrically densely provided it  admits no proper closed invariant subsets on the boundary $\partial X$. Some information about the converse direction of this statement is provided by the following result.

\begin{lemma}
\label{lem:no proper invariant subset CAT0}
Let $G$ be a subgroup of $\Isom{X}$ acting geometrically densely  on the space $X$ and  $\emptyset \subsetneq B \subset \partial X$ be a $G$-invariant closed subset. Then no proper closed convex subset $ Y \subsetneq X$ has $B \subset \partial Y$.
\end{lemma}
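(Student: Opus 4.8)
The plan is to argue by contradiction and to manufacture a proper closed convex $G$-invariant subset of $X$, which minimality forbids. Suppose then that some proper closed convex subset $Y\subsetneq X$ satisfies $B\subset\partial Y$. Since $B$ is $G$-invariant, for every $g\in G$ we have $\partial(gY)=g(\partial Y)\supseteq gB=B$, so the entire orbit $\{gY:g\in G\}$ consists of closed convex subsets whose visual boundary contains $B$. I would then set
\[
Y_G=\bigcap_{g\in G} gY .
\]
This is a closed, convex and $G$-invariant subset of $X$, and $Y_G\subseteq Y\subsetneq X$ is proper. Hence, \emph{once we know $Y_G\neq\emptyset$}, minimality of the geometrically dense action forces $Y_G=X$, which is absurd; this completes the proof. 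It is also worth recording at the outset that $|B|\ge 2$: a single $G$-invariant boundary point would be a global fixed point at infinity, contradicting geometric density, so the degenerate case is excluded.

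The whole difficulty is therefore concentrated in showing that $Y_G\neq\emptyset$, and this is precisely where the absence of fixed points at infinity must enter. Fixing the base point $x_0$, the dichotomy to analyze is whether the orbit $\{gY\}$ escapes to infinity, i.e. whether $\sup_{g}d(x_0,gY)=\infty$. If it does not, one expects to produce a common point by minimizing the $G$-invariant convex function $x\mapsto\sup_g d(x,gY)$ (a supremum of convex functions) and feeding the resulting $G$-invariant minimal set back into the minimality hypothesis together with boundary-minimality \cite{caprace2009isometry}. The substantive case is the receding one, $\sup_g d(x_0,gY)=\infty$, where the translates of $Y$ run off towards the boundary.

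To handle the receding case I would pass to the Tits boundary $\partial_{\mathrm T}X$: the directions in which the family $\{gY\}$ escapes assemble into a nonempty $G$-invariant closed subset $L\subseteq\partial_{\mathrm T}X$ of finite Tits radius. Applying the Tits-circumcentre construction of Caprace--Lytchak to $L$ yields a canonical point of $\partial X$ fixed by every isometry preserving $L$, hence by all of $G$, contradicting geometric density. \textbf{The main obstacle} is exactly this last step: one must control the limiting directions of the receding translates and guarantee that the Tits-bounded invariant set $L$ has a \emph{unique} circumcentre (the radius-$\tfrac{\pi}{2}$ configurations are the delicate ones). Geodesic completeness and the standing assumption that $\Isom{X}$ has no fixed point at infinity are essential here, and the argument becomes transparent when distinct boundary points lie at infinite Tits distance — as for $\CAT{-1}$ spaces — since then $L$ is forced to be a single point and the fixed point at infinity appears at once.
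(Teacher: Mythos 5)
Your overall strategy is the same as the paper's: intersect a $G$-invariant family of proper closed convex sets whose visual boundaries contain $B$, contradict minimality when the intersection is nonempty, and extract a canonical $G$-fixed point at infinity when it is empty. The paper, however, intersects the family $\mathcal{F}$ of \emph{all} proper closed convex subsets $Y$ with $B\subset\partial Y$ (not a single orbit), and closes the empty-intersection case in one stroke via \cite[Proposition 3.2]{caprace2009isometry}: for a filtering family of closed convex sets with empty total intersection, the set $C=\bigcap_Y\partial Y$ is a nonempty closed subset of $\partial X$ of intrinsic circumradius at most $\pi/2$ for the Tits angle, hence admits a unique circumcentre; being canonical, that circumcentre is fixed by $G$, contradicting geometric density. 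Note that $C\supseteq B$, so its nonemptiness is automatic and no analysis of \enquote{directions of escape} is needed.

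The gap in your write-up is precisely the step you flag as the main obstacle, and it is substantive. First, your set $L$ of limiting directions is never defined, and neither its nonemptiness nor its Tits circumradius bound is established; the correct object is $\bigcap_g\partial(gY)\supseteq B$, and the bound $\le\pi/2$ is exactly the content of the cited proposition rather than a consequence of the translates receding. Second, your trichotomy is incomplete: between \enquote{$Y_G\neq\emptyset$} and \enquote{the translates recede} lies the case where all translates stay at bounded distance from $x_0$ yet have empty intersection (already possible for three half-planes in $\mathbb{R}^2$); minimizing $x\mapsto\sup_g d(x,gY)$ does not obviously yield a nonempty \emph{proper} invariant set (the infimum need not be attained, and the argmin could a priori be all of $X$), whereas the filtering-family statement treats every empty-intersection configuration uniformly. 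Third, that statement requires a \emph{filtering} family, which the orbit $\{gY: g\in G\}$ is not; this is a further reason to work with the full family $\mathcal{F}$ as the paper does. Finally, the unique-circumcentre step does rely on finite dimensionality of the Tits boundary, supplied here by the standing cocompactness and geodesic completeness assumptions; your remark that everything trivializes for $\CAT{-1}$-spaces is correct but does not discharge the general case the lemma asserts.
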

\begin{proof}
Assume towards contradiction that the family
\begin{equation}
\mathcal{F} = \{ \text{$Y \subsetneq X$ is a closed convex subset such that $B \subset \partial Y$} \}
\end{equation}
if non-empty. Consider the intersection
\begin{equation}
Z = \bigcap_{Y \in \mathcal{F}} Y
\end{equation}
so that $Z$ is proper $G$-invariant closed convex subset of $X$. The minimality of the $G$-action implies that  $Z = \emptyset$.  This means that the boundary subset $C = \bigcap_{Y \in \mathcal{F}} \partial Y$ has an intrinsic circumradius of at most $\pi/2$ with respect to the Tits angle\footnote{The Tits angle is called angular metric in \cite[II.9]{bridson2013metric}}  on the boundary $\partial X$ \cite[Proposition 3.2]{caprace2009isometry}. Additionally $B \subset C$ so that $C \neq \emptyset$. Therefore the subset $C$ has a unique circumcentre $ z \in \partial Y$   \cite[Proposition 3.2]{caprace2009isometry}. The  $G$-action fixes the point  $z$. This is a contradiction.
\end{proof}

\subsection*{Random walks and boundary measures}

Let $\mu $ be a probability measure on the group $\Isom{X}$. Assume that the semigroup $G_\mu$ generated  by the support of $\mu$ is a group acting geometrically densely on the space $X$. We further  assume that the measure $\mu$ has positive drift, in the sense that  the parameter $A$ given by
\begin{equation}
A = \lim_{n\to\infty} \frac{1}{n} \|\omega_n\| = \lim_{n\to\infty} \frac{1}{n} d_X(\omega_n x_0, x_0) 
\end{equation}
for $\mathrm{P}_\mu$-almost every sample path $\omega \in G^\mathbb{N}$ satisfies $A > 0$, see \cite[Theorem 2.1]{karlsson1999multiplicative}.


\begin{theorem}[Karlsson--Margulis \cite{karlsson1999multiplicative}]
For $\mathrm{P}_\mu$-almost every sample path $\omega \in G^\mathbb{N}$ the sequence $\omega_n x_0$ converges to a boundary point $\zeta_\omega \in \partial X$ depending on $\omega$.
\end{theorem}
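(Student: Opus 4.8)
The plan is to follow Karlsson and Margulis \cite{karlsson1999multiplicative}, separating the argument into an ergodic part, which shows that the trajectory grows linearly and is asymptotically straight, and a deterministic $\CAT{0}$ part, which converts straightness into genuine convergence of directions.

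First I would recall that the drift $A > 0$ arises from Kingman's subadditive ergodic theorem applied to the subadditive cocycle $n \mapsto d_X(x_0, \omega_n x_0) = \|\omega_n\|$ over the shift on the increment space $(G^\mathbb{N}, \mu^{\otimes \mathbb{N}})$; finiteness of the first moment guarantees integrability and hence the almost sure convergence $\frac{1}{n}\|\omega_n\| \to A$. Applying the same convergence to the shifted walk, and using that the full measure set on which Kingman's limit holds is shift-invariant, gives for $\mathrm{P}_\mu$-almost every $\omega$
\[
\frac{1}{n} d_X(\omega_n x_0, \omega_{2n} x_0) = \frac{1}{n}\|g_{n+1} \cdots g_{2n}\| \xrightarrow{\; n \to \infty \;} A,
\]
since $g_{n+1}\cdots g_{2n}$ is the displacement of a length-$n$ walk driven by $\mu$. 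Combining this with $\frac{1}{n}\|\omega_n\| \to A$ and $\frac{1}{2n}\|\omega_{2n}\| \to A$ shows that the triangle-inequality defect
\[
\Delta_n = d_X(x_0, \omega_n x_0) + d_X(\omega_n x_0, \omega_{2n} x_0) - d_X(x_0, \omega_{2n} x_0)
\]
satisfies $\Delta_n / n \to 0$ almost surely.

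Next comes the geometry. The quantity $\Delta_n/2$ is exactly the Gromov product of $x_0$ and $\omega_{2n} x_0$ seen from $\omega_n x_0$, so its sublinearity says that $\omega_n x_0$ nearly lies on the geodesic $[x_0, \omega_{2n}x_0]$. Crucially, the symmetric choice of $2n$ keeps $d_X(x_0, \omega_n x_0)$ and $d_X(\omega_n x_0, \omega_{2n}x_0)$ both of size $\approx An$, and in a $\CAT{0}$ space this balance lets the sublinear Gromov product bound the distance from $\omega_n x_0$ to the geodesic $[x_0, \omega_{2n}x_0]$ by $o(n)$: writing $h_n$ for this distance and projecting orthogonally, the $\CAT{0}$ inequality gives $\Delta_n \gtrsim h_n^2/(An)$, whence $h_n = o(n)$. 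Convexity of the $\CAT{0}$ metric \cite{bridson2013metric} then propagates this: since the geodesics $[x_0, \omega_n x_0]$ and $[x_0, \omega_{2n}x_0]$ issue from the common point $x_0$ and their terminal portions are $o(n)$-close along a common radius, one gets $d_X(\sigma_n(t), \sigma_{2n}(t)) \to 0$ for each fixed $t$, where $\sigma_m$ denotes the unit-speed geodesic from $x_0$ to $\omega_m x_0$.

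Finally I would extract the limiting ray and upgrade to convergence over all indices. By properness of $X$ and the Arzel\`a--Ascoli theorem, the normalized distance functions $d_X(\,\cdot\,, \omega_n x_0) - \|\omega_n\|$ subconverge to a horofunction, which in a proper geodesically complete $\CAT{0}$ space is the Busemann function of a genuine geodesic ray $\gamma_\omega$ from $x_0$; the straightness estimate $\Delta_n/n \to 0$ forces this subsequential limit to be unique, so $\omega_n x_0 \to \gamma_\omega(\infty) =: \zeta_\omega \in \partial X$, measurably in $\omega$. I expect the main obstacle to be precisely this last deterministic step: the ergodic input only controls the dyadic pairs $(n, 2n)$, and turning the resulting sublinear defect into honest convergence of the geodesic rays for all indices requires the full $\CAT{0}$ machinery --- convexity of the metric to propagate closeness along rays, compactness of the horofunction compactification (via properness) to produce a limit, and geodesic completeness to realize every horofunction as a Busemann function of an actual ray, so that the limit point genuinely lies in $\partial X$.
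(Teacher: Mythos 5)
The paper offers no proof of this statement at all: it is imported verbatim as the Karlsson--Margulis theorem with a citation to \cite{karlsson1999multiplicative}, so your proposal has to be judged as a reconstruction of that external argument. As such it contains a genuine gap at its ergodic core. Writing $a(n,\omega)=\|\omega_n\|$ for the subadditive cocycle over the shift $T$ on increments, you claim that
\begin{equation*}
\tfrac{1}{n}\,d_X(\omega_n x_0,\omega_{2n}x_0)=\tfrac{1}{n}\,a(n,T^n\omega)\longrightarrow A \quad \text{almost surely,}
\end{equation*}
justified by the remark that the full-measure set on which Kingman's limit holds is shift-invariant. Shift-invariance gives $a(n,T^k\omega)/n\to A$ for each \emph{fixed} $k$; what you need is the moving-window statement with $k=n$, and this does not follow. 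Subadditivity yields only $\liminf_n a(n,T^n\omega)/n\ge 2A-A=A$, and $\mathbb{E}[a(n,\cdot)]=An+o(n)$ yields $\Delta_n/n\to0$ in $L^1$ and in probability, but the almost-sure convergence for \emph{all} $n$ is precisely the delicate point. Karlsson and Margulis bypass it with a strengthened subadditive ergodic lemma: for every $\eps>0$ and almost every $\omega$ there are infinitely many \enquote{good} times $n$ at which $a(n,\omega)-a(n-k,T^k\omega)\ge(A-\eps)k$ holds \emph{simultaneously for all} $1\le k\le n$. That uniformity over intermediate scales is the one idea your outline is missing.

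Even granting $\Delta_n/n\to 0$ for every $n$, the deterministic half does not close. Comparing only the consecutive dyadic geodesics $\sigma_{2^k n}$ and $\sigma_{2^{k+1}n}$ gives $d_X(\sigma_{2^k n}(t),\sigma_{2^{k+1}n}(t))\le t\,\eps_k$ with $\eps_k\to 0$ but with no control on $\sum_k\eps_k$, so you cannot conclude that the rays form a Cauchy sequence; and the final assertion that sublinearity of $\Delta_n$ \enquote{forces the subsequential horofunction limit to be unique} is left unsubstantiated --- your estimate constrains only the pairs $(n,2n)$, not arbitrary pairs of times, so distinct subsequential limits are not excluded. The actual proof runs the other way around: at a good time $n$ the entire initial segment $\omega_k x_0$, $k\le n$, stays within $\eps$-linear distance of the single geodesic $[x_0,\omega_n x_0]$, and convexity of the metric (Busemann nonpositive curvature plus uniform convexity suffice; the full $\CAT{0}$ comparison inequality is not needed) then produces one ray $\gamma_\omega$ with $d_X(\omega_n x_0,\gamma_\omega(An))=o(n)$, whence $\omega_n x_0\to\gamma_\omega(\infty)\in\partial X$ because $A>0$. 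Your ingredients (Kingman, sublinear Gromov products, convexity, positive drift) are the right ones, but without the good-times lemma the scheme does not assemble into a proof.
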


Let $\nu_X$ be the probability measure  on the boundary $\partial X$ obtained by pushing forward the measure $\mathrm{P}_\mu$ via the map $\omega \mapsto \zeta_\omega$. The measure $\nu_X$ is $\mu$-stationary.



The following result in the  context of general $\CAT{0}$-spaces  is to be compared with e.g. \cite[Proposition 9.1.b]{benoist2016random} in the context of symmetric spaces.

\begin{prop}
\label{prop:no proper support CAT0}
Any   proper closed convex  subset $Y \subsetneq X$ has $\mathrm{supp}(\nu_X) \not\subset \partial Y$.
\end{prop}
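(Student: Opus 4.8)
The plan is to reduce the statement directly to Lemma \ref{lem:no proper invariant subset CAT0} by taking the invariant boundary set to be the support of the stationary measure. First I would set $B = \mathrm{supp}(\nu_X)$ and record that this is a nonempty closed subset of $\partial X$: nonemptiness is automatic since $\nu_X$ is a probability measure, and closedness holds by the definition of the support.

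The one point genuinely worth spelling out is that $B$ is $G_\mu$-invariant. Since $\nu_X$ is $\mu$-stationary and $\Isom{X}$ is second countable, Lemma \ref{lem:Stationary measures are quasi-invariant} applies with $Z = \partial X$ and $H = G_\mu$, so $\nu_X$ is $G_\mu$-quasi-invariant: every $g \in G_\mu$ carries $\nu_X$-null Borel sets to $\nu_X$-null Borel sets, and conversely. Because each such $g$ acts on $\partial X$ as a homeomorphism and the support is the complement of the largest open null set, this quasi-invariance forces $g\cdot\mathrm{supp}(\nu_X) = \mathrm{supp}(\nu_X)$. Thus $B$ is a nonempty closed $G_\mu$-invariant subset of $\partial X$, which is exactly the \enquote{in particular} remark recorded immediately after Lemma \ref{lem:Stationary measures are quasi-invariant}.

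Finally I would invoke the standing assumption that $G_\mu$ acts geometrically densely on $X$, together with Lemma \ref{lem:no proper invariant subset CAT0} applied to the subgroup $G = G_\mu$ and the closed $G_\mu$-invariant boundary set $B = \mathrm{supp}(\nu_X)$. The lemma then yields that no proper closed convex subset $Y \subsetneq X$ satisfies $B \subset \partial Y$; equivalently, $\mathrm{supp}(\nu_X) \not\subset \partial Y$ for every proper closed convex $Y \subsetneq X$, which is precisely the assertion. I do not expect a serious obstacle at this stage: the genuine geometric content — that geometric density of $G_\mu$ prevents a nonempty invariant boundary set from being contained in the boundary of a proper convex subset — has already been isolated in Lemma \ref{lem:no proper invariant subset CAT0} via the circumradius and circumcentre argument on the Tits boundary. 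The only care needed in the present argument is the passage from quasi-invariance of the measure class to honest invariance of the support, which is the single step I would make explicit.
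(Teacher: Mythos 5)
Your proposal is correct and follows exactly the paper's own argument: quasi-invariance of $\nu_X$ via Lemma \ref{lem:Stationary measures are quasi-invariant} gives $G_\mu$-invariance of $\mathrm{supp}(\nu_X)$, and then Lemma \ref{lem:no proper invariant subset CAT0} applied to this nonempty closed invariant set yields the conclusion. The only difference is that you spell out the passage from quasi-invariance of the measure to invariance of the support, which the paper leaves implicit.
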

\begin{proof}
The probability measure $\nu_X$ is quasi-invariant by Lemma \ref{lem:Stationary measures are quasi-invariant}. In particular   $\mathrm{supp}(\nu_X)$ is a $G$-invariant subset of the boundary $\partial X$. The desired statement follows immediately from Lemma \ref{lem:no proper invariant subset CAT0}.
\end{proof}

\subsection*{Stationary random convex subsets}

Assume that the group $\Isom{X}$ is acting geometrically densely on the space $X$. We are ready to formulate a $\CAT{0}$-space variant of Proposition \ref{prop:no stationary measure on proper at infinity subsets}.

\begin{prop}
\label{prop:CAT(0) SRS cannot have minimal proper convex closed subsets}
Let $\nu$ be a $\mu$-stationary probability measure on $\Cl{X}$ such that $\nu$-almost every closed subset is convex. Then $\mathrm{supp}(\nu) \subset \{\emptyset, X\}$.
\end{prop}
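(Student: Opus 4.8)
The plan is to adapt the strategy of Proposition~\ref{prop:no stationary measure on proper at infinity subsets} from the Gromov hyperbolic setting to the present $\CAT{0}$ situation, replacing the limit-set input by Proposition~\ref{prop:no proper support CAT0}. Suppose towards a contradiction that $\nu$ assigns positive mass to the family of closed convex subsets $Y$ with $\emptyset \neq Y \subsetneq X$. Consider the Borel function $F \colon \Cl{X}\setminus\{\emptyset\} \to [0,\infty)$ given by $F(Y) = d_X(x_0,Y)$. Applying Corollary~\ref{cor:liminf of a Borel function is bounded} to $F$ shows that for $\mu^{\otimes\mathbb{N}}$-almost every increment sequence $(g_n)$ and $\nu$-almost every nonempty $Y$ one has $\liminf_n d_X(x_0, g_n\cdots g_1 Y) < \infty$. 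Since $d_X(x_0, g_n\cdots g_1 Y) = d_X(\check\omega_n x_0, Y)$ with $\check\omega_n = (g_n\cdots g_1)^{-1}$ the inverse (hence $\check\mu$-distributed) sample path, this rewrites as
\begin{equation}
\liminf_n d_X(\check\omega_n x_0, Y) < \infty,
\end{equation}
holding $\mu^{\otimes\mathbb{N}}$-almost surely in $(g_n)$ for $\nu$-almost every nonempty $Y$.

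The key geometric ingredient, and the step I expect to be the main obstacle, is the following $\CAT{0}$ fact: if $Y \subset X$ is closed and convex and $(x_n) \subset X$ converges in the cone topology to a boundary point $\zeta \notin \partial Y$, then $d_X(x_n,Y) \to \infty$. I would prove this by contradiction. If $d_X(x_n, Y) \le M$ along a subsequence, let $y_n = \pi_Y(x_n)$ be the nearest-point projections; then $d_X(x_0,y_n) \to \infty$, so by properness a subsequence of $(y_n)$ converges to some $\eta \in \partial Y$, using that for closed convex $Y$ one has $\partial Y = \overline{Y}\cap\partial X$. Comparing the geodesics $[x_0,x_n]$ and $[x_0,y_n]$ with Euclidean comparison triangles and using $d_X(x_n,y_n)\le M$ together with $d_X(x_0,x_n), d_X(x_0,y_n) \to \infty$, the comparison angle at $x_0$ tends to $0$; hence for each fixed $t$ the points at parameter $t$ on the two geodesics converge to one another, so the two limiting rays from $x_0$ coincide, forcing $\eta = \zeta$ and contradicting $\zeta \notin \partial Y$.

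On the boundary side, the inverse walk $\check\omega$ is governed by $\check\mu$, whose generated group $G_{\check\mu} = G_\mu$ is again geometrically dense and whose drift equals that of $\mu$, hence is positive; the Karlsson--Margulis theorem therefore provides a $\check\mu$-stationary boundary measure $\check\nu_X$ with $\check\omega_n x_0 \to \zeta_{\check\omega}$ almost surely. The proof of Proposition~\ref{prop:no proper support CAT0} applies verbatim to $\check\nu_X$, since it only uses quasi-invariance (Lemma~\ref{lem:Stationary measures are quasi-invariant}) together with Lemma~\ref{lem:no proper invariant subset CAT0}; thus for every proper closed convex $Y \subsetneq X$ we obtain $\mathrm{supp}(\check\nu_X) \not\subset \partial Y$, and hence $\check\nu_X(\partial X \setminus \partial Y) > 0$. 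Consequently, for each fixed such $Y$ the event $\zeta_{\check\omega} \notin \partial Y$ has positive $\mathrm{P}_{\check\mu}$-probability, and on this event the geometric fact above yields $d_X(\check\omega_n x_0, Y) \to \infty$, that is $\liminf_n d_X(\check\omega_n x_0, Y) = \infty$.

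Finally I would combine the two halves by Fubini on the product space $(G^\mathbb{N}\times \Cl{X}, \mu^{\otimes\mathbb{N}}\times\nu)$. The displayed inequality shows that the set of pairs $((g_n),Y)$ with $Y \neq \emptyset$ and $\liminf_n d_X(\check\omega_n x_0, Y) = \infty$ is null. On the other hand, if $\nu$ charged the proper nonempty convex subsets, then integrating the positive-probability event from the previous paragraph over that positive-$\nu$-measure set of $Y$ would make this same set of pairs have positive product measure, a contradiction. Hence $\nu$-almost every convex $Y$ satisfies $Y \in \{\emptyset, X\}$, and since $\nu$-almost every $Y$ is convex, $\nu$ is concentrated on $\{\emptyset, X\}$, giving $\mathrm{supp}(\nu) \subset \{\emptyset, X\}$ as claimed.
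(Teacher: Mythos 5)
Your proof is correct and follows essentially the same route as the paper's: apply Corollary \ref{cor:liminf of a Borel function is bounded} to the function $Y \mapsto d_X(x_0,Y)$ on nonempty closed convex sets, and contradict it via Proposition \ref{prop:no proper support CAT0} applied to the reflected measure $\check{\mu}$, which makes the inverted walk escape from any proper closed convex subset with positive probability. The only difference is that you spell out details the paper leaves implicit, namely the $\CAT{0}$ fact that convergence to a boundary point $\zeta \notin \partial Y$ forces $d_X(\cdot,Y)\to\infty$, and the final Fubini step.
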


\begin{proof}
It will be convenient to assume that the measure $\nu$ is ergodic. Say towards contradiction that $\nu$-almost every closed convex subset $Y$   has $ \emptyset \subsetneq Y \subsetneq X$. Consider the $\nu$-measurable  function
\begin{equation}
\label{eq:distance frm Y CAT0}
\Phi : \Cl{X} \setminus \{\emptyset\} \to \left[0,\infty\right), \quad\Phi : Y \mapsto d_X(x_0,Y).
\end{equation}
We know that  $\nu$-almost every closed convex subset $Y \subset X$ satisfies $\mathrm{supp}(\nu_X)\subsetneq \partial Y$, see Proposition \ref{prop:no proper support CAT0}. This means that a sample path $(\omega_n) \in G^\mathbb{N}$ with $\omega_n=g_1...g_n$ satisfies  $\omega_n x_0 \to \zeta$ for some $\zeta \notin \partial Y$ with positive $\mathrm{P}_\mu$-probability. In particular
\begin{equation}
\Phi(g_n^{-1}\cdots g_1^{-1} Y) = d_X(\omega_n x_0, Y) \xrightarrow{n\to\infty} \infty.
\end{equation}
This is a contradiction to Corollary \ref{cor:liminf of a Borel function is bounded} applied with respect to the function $\Phi$ defined in Equation (\ref{eq:distance frm Y CAT0}) and the probability measure $\check{\mu}$ given by $\check{\mu}(A) = \mu(A^{-1})$.
\end{proof}

\begin{cor}
\label{cor:CAT(0) no fixed points on the boundary}
Let $\nu$ be a $\mu$-stationary random subgroup of $\Isom{X}$. If $\nu$-almost every subgroup fixes no point of the boundary $\partial X$ then $\nu$-almost every subgroup is acting minimally, and is in  particular  geometrically dense.
\end{cor}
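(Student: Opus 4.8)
The plan is to transport the hypothesis from $\Sub{\Isom{X}}$ to $\Cl{X}$ and then invoke Proposition~\ref{prop:CAT(0) SRS cannot have minimal proper convex closed subsets}. Since geometric density means acting minimally \emph{together with} having no fixed point on $\partial X$, and the second condition is assumed to hold $\nu$-almost surely, it suffices to prove that $\nu$-almost every subgroup acts minimally. I would do this by exhibiting a Borel, $\Isom{X}$-equivariant map
\[
F : \Sub{\Isom{X}} \to \Cl{X}, \qquad \Gamma \mapsto F(\Gamma),
\]
assigning to each subgroup a canonical nonempty closed convex invariant subset, arranged so that $F(\Gamma) = X$ precisely when $\Gamma$ acts minimally.

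To construct $F(\Gamma)$ I would appeal to the structure theory of \cite{caprace2009isometry}. A subgroup $\Gamma$ fixing no point of $\partial X$ cannot collapse a descending chain of nonempty closed convex invariant subsets to the empty set, since such a degeneration would produce a $\Gamma$-fixed point at infinity; hence minimal nonempty closed convex $\Gamma$-invariant subsets exist. If the minimal set is unique I set $F(\Gamma)$ equal to it. If it is not unique, the union $Z$ of all minimal invariant convex subsets splits canonically as a product $Z \cong M \times C$ in which the minimal sets are exactly the fibres $M \times \{c\}$; as each fibre is $\Gamma$-invariant the group acts trivially on the factor $C$, so $C$ must be bounded, for otherwise $\Gamma$ would fix a point of $\partial C \subset \partial X$, and I take $F(\Gamma) = M \times \{c_0\}$ for the circumcentre $c_0$ of $C$. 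In either case $F(\Gamma)$ is a nonempty minimal invariant convex subset, and the construction is canonical, so $F(\Gamma^g) = g^{-1} F(\Gamma)$. Moreover $F(\Gamma) = X$ forces $X$ itself to be the minimal invariant convex subset, that is, $\Gamma$ acts minimally, while conversely a minimal action has unique minimal set $X$; thus $F(\Gamma) = X$ if and only if $\Gamma$ acts minimally.

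With $F$ in hand the conclusion is immediate. Equivariance of $F$ together with $\mu$-stationarity of $\nu$ makes the pushforward $F_* \nu$ a $\mu$-stationary probability measure on $\Cl{X}$, and $\nu$-almost every set in its support is convex. Proposition~\ref{prop:CAT(0) SRS cannot have minimal proper convex closed subsets} therefore forces $\mathrm{supp}(F_*\nu) \subset \{\emptyset, X\}$. Since $F(\Gamma)$ is never empty, we obtain $F(\Gamma) = X$ for $\nu$-almost every $\Gamma$, which is exactly minimality. Combined with the standing hypothesis of no boundary fixed points, this yields geometric density $\nu$-almost surely.

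The main obstacle I anticipate is the construction of $F$ itself: verifying existence and canonicity of the minimal invariant convex subset for a group with no fixed point at infinity (the product decomposition of the union of minimal sets and the boundedness of the transverse factor $C$), and checking that $\Gamma \mapsto F(\Gamma)$ is genuinely Borel, which requires a measurable selection argument layered on top of the geometric structure theory. The probabilistic content, by contrast, is already entirely packaged inside Proposition~\ref{prop:CAT(0) SRS cannot have minimal proper convex closed subsets}.
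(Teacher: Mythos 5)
Your proof is correct and follows essentially the same route as the paper: the paper likewise pushes $\nu$ forward under the map $H \mapsto \mathcal{M}(H)$ assigning to each subgroup its canonical non-empty minimal closed convex invariant subset and then applies Proposition \ref{prop:CAT(0) SRS cannot have minimal proper convex closed subsets}. The only difference is that the paper obtains the existence, canonicity and measurability of $\mathcal{M}(H)$ by citing case (B.iii) of \cite[Theorem 4.3]{caprace2009isometry} (after ruling out case (A) via Kleiner's finite-dimensionality of the Tits boundary), whereas you reconstruct that structure theory---the product decomposition of the union of minimal sets and the circumcentre of the bounded transverse factor---by hand.
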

\begin{proof}
Our standing assumptions on the space $X$ imply that  its Tits boundary $\partial X$ is finite dimensional \cite{kleiner1999local}. This fact combined with the assumption that $\mu$-almost every subgroup has no fixed points on the boundary rules out case (A) of   the dichotomy presented in \cite[Theorem 4.3]{caprace2009isometry}. We deduce that $\nu$-almost every subgroup $H$ admits a \emph{canonical} minimal  closed convex non-empty invariant subset $\mathcal{M}(H) \in \Cl{X}$, see case (B.iii) of \cite[Theorem 4.3]{caprace2009isometry}.
Consider the $\mu$-stationary pushforward  probability measure $\mathcal{M}_* \nu$ on the space of closed subsets $\Cl{X}$. According to Proposition \ref{prop:CAT(0) SRS cannot have minimal proper convex closed subsets} it must be the case that $\mathcal{M}_* \nu = \mathrm{D}_X$. In other words $\nu$-almost every subgroup is acting minimally.
\end{proof}

As an application of our results on $\nu$-stationary subgroups of the isometry group $\Isom{X}$ we obtain the following. This can be seen as a stationary random subgroup  variant   of the geometric Borel density theorem, see \cite[Theorem 2.4]{caprace2009discrete} as well as the invariant random subgroup version  \cite{duchesne2015geometric}.

\begin{theorem}
\label{thm:discrete IRS of CAT are geometrically minimal} 
Assume that the $\CAT{0}$-space $X$ is given by $ X_1 \times \cdots \times X_k$ where each $X_i$ is a  proper $\CAT{-1}$-space with $\Isom{X_i}$ acting geometrically densely. Let $\nu$ be a discrete $\mu$-stationary  random subgroup of the product $\prod_{i=1}^k \Isom{X_i}$. If $\nu$-almost every subgroup projects non-trivially to each factor then $\nu$-almost every subgroup is acting geometrically densely.
\end{theorem}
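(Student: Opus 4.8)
The plan is to glue together the two strands of theory developed above: the boundary analysis for the hyperbolic factors, which eliminates fixed points at infinity, and the $\CAT{0}$ convexity theory, which promotes the absence of boundary fixed points to genuine minimality.

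First I would record that since each $X_i$ is $\CAT{-1}$ and $\Isom{X_i}$ acts geometrically densely, the action of $\Isom{X_i}$ on $X_i$ is automatically of general type: a geometrically dense action on a Gromov hyperbolic space can be neither elementary (a bounded, horocyclic, or lineal action fixes a point or preserves a proper convex subset) nor focal (a focal action fixes a unique boundary point). Since moreover the elliptic radical $\mathrm{E}(\Isom{X_i})$ is trivial for every $\CAT{-1}$-space, the hypothesis that $\nu$-almost every subgroup projects non-trivially to each factor says precisely that $p_i(\Gamma)$ is not contained in $\mathrm{E}(\Isom{X_i})$ for each $i$. With the product measure $\mu = \mu_1 \otimes \cdots \otimes \mu_k$ and its accompanying setup in force, Corollary \ref{cor:minimality for action on product of cat-1} then applies directly and yields $\mathrm{Fix}_{\partial X}(\Gamma) = \emptyset$ for $\nu$-almost every subgroup $\Gamma$, where $\partial X$ is the full Tits boundary of the product, namely the spherical join of the individual boundaries $\partial X_i$.

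Second I would feed this into the $\CAT{0}$ machinery. Viewing $\nu$ as a $\mu$-stationary random subgroup of $\Isom{X}$ through the inclusion of the factor-preserving subgroup $\prod_{i} \Isom{X_i} \into \Isom{X}$, I would check that the standing assumptions of this section carry over to the product: the product action remains geometrically dense, the Tits boundary $\partial X$ is finite dimensional because the Tits boundary of each $\CAT{-1}$-space is discrete and a $k$-fold spherical join of discrete spaces is $(k-1)$-dimensional, and the product random walk inherits positive drift from each non-elementary factor. Since $\nu$-almost every subgroup fixes no point of $\partial X$ by the first step, Corollary \ref{cor:CAT(0) no fixed points on the boundary} then shows that $\nu$-almost every subgroup acts minimally, hence geometrically densely, which is the desired conclusion.

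The argument is thus in essence a bookkeeping exercise that splices Corollary \ref{cor:minimality for action on product of cat-1} onto Corollary \ref{cor:CAT(0) no fixed points on the boundary}, the substantive content being already contained in those two results. The one place requiring genuine care --- and the step I would expect to be the only real obstacle --- is the verification that the standing hypotheses of the $\CAT{0}$ section transfer intact to the product, in particular reconciling the passage from the full isometry group $\Isom{X}$ to the factor-preserving subgroup $\prod_i \Isom{X_i}$ and confirming that the canonical minimal invariant convex set supplied by the Caprace--Monod dichotomy is available in this product situation.
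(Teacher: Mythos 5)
Your proposal is correct and follows essentially the same route as the paper: the paper's proof consists precisely of invoking Corollary \ref{cor:minimality for action on product of cat-1} to kill all fixed points on $\partial X$ and then applying Corollary \ref{cor:CAT(0) no fixed points on the boundary} to upgrade this to geometric density. The additional verifications you flag (triviality of the elliptic radicals, general type of each factor action, transfer of the standing $\CAT{0}$ hypotheses to the product) are sound bookkeeping that the paper leaves implicit.
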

\begin{proof}
We know that $\nu$-almost every subgroup has no fixed points on the boundary $\partial X$ by Corollary \ref{cor:minimality for action on product of cat-1}. Therefore $\nu$-almost every subgroup is geometrically dense by Corollary \ref{cor:CAT(0) no fixed points on the boundary}.
\end{proof}

Finally we have  the following $\CAT{0}$-space variant of Proposition \ref{prop:limit set cannot be empty}.

\begin{prop}
 Let $\nu$ be a  $\mu$-stationary random subgroup of $\Isom{X}$ such that $\nu$-almost every subgroup is non-trivial.  Then $\nu$-almost every subgroup   has unbounded orbits in $X$.
\end{prop}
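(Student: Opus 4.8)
The plan is to reduce the statement to Proposition \ref{prop:CAT(0) SRS cannot have minimal proper convex closed subsets} by assigning to each bounded-orbit subgroup its fixed-point set, which in a $\CAT{0}$-space is a non-empty proper closed convex subset. First I would record the relevant geometric facts. A subgroup $\Gamma \le \Isom{X}$ has bounded orbits in the proper $\CAT{0}$-space $X$ if and only if its fixed-point set
\[
\mathrm{Fix}(\Gamma) = \{ x \in X \: : \: \gamma x = x \quad \forall \gamma \in \Gamma\}
\]
is non-empty; this is the Bruhat--Tits fixed-point theorem, obtained by taking the circumcentre of a bounded orbit \cite[II.2.8]{bridson2013metric}. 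Whenever non-empty, the set $\mathrm{Fix}(\Gamma)$ is closed and convex, being the intersection of the convex fixed-point sets of the individual isometries comprising $\Gamma$. Moreover it is proper, i.e. $\mathrm{Fix}(\Gamma) \subsetneq X$, as soon as the subgroup $\Gamma$ is non-trivial, since any non-identity isometry moves at least one point of $X$ and hence $\mathrm{Fix}(\Gamma) \subset \mathrm{Fix}(\gamma) \subsetneq X$ for any $\gamma \in \Gamma \setminus \{e\}$.

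Next I would introduce the $G$-equivariant Borel map $\Psi : \Sub{\Isom{X}} \to \Cl{X}$ defined by setting $\Psi(\Gamma) = \mathrm{Fix}(\Gamma)$ when the subgroup $\Gamma$ has bounded orbits and $\Psi(\Gamma) = X$ otherwise. Equivariance follows from the identity $\mathrm{Fix}(g \Gamma g^{-1}) = g \,\mathrm{Fix}(\Gamma)$ together with the conjugation-invariance of both the bounded-orbit condition and the subset $X$. The bounded-orbit condition cuts out an $F_\sigma$, hence Borel, subset of $\Sub{\Isom{X}}$, as it equals $\bigcup_{R>0} \{\Gamma \: : \: \Gamma x_0 \subset \overline{B}(x_0,R)\}$. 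Consequently the pushforward $\Psi_* \nu$ is a $\mu$-stationary probability measure on $\Cl{X}$, and $\Psi_*\nu$-almost every closed subset is convex, since both $\mathrm{Fix}(\Gamma)$ and $X$ are convex.

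Finally I would invoke Proposition \ref{prop:CAT(0) SRS cannot have minimal proper convex closed subsets}, which forces $\Psi(\Gamma) \in \{\emptyset, X\}$ for $\nu$-almost every subgroup $\Gamma$. A subgroup with bounded orbits would have $\Psi(\Gamma) = \mathrm{Fix}(\Gamma)$ both non-empty and proper, hence lying in neither $\{\emptyset\}$ nor $\{X\}$; since by hypothesis such a $\Gamma$ is also non-trivial, this case must be $\nu$-null. Therefore $\nu$-almost every subgroup has unbounded orbits in $X$. The only genuinely delicate point is verifying the Borel measurability of the assignment $\Gamma \mapsto \mathrm{Fix}(\Gamma)$ into the Effros--Chabauty space $\Cl{X}$, which I would settle by standard descriptive set theory using that $\{(\Gamma,x) \: : \: x \in \mathrm{Fix}(\Gamma)\}$ is closed in $\Sub{\Isom{X}} \times X$; everything else is a direct application of the convex-subset rigidity already established.
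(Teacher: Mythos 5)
Your proof is correct and follows essentially the same route as the paper: both arguments push $\nu$ forward under the fixed-point-set map into $\Cl{X}$, invoke Proposition \ref{prop:CAT(0) SRS cannot have minimal proper convex closed subsets} to force the image into $\{\emptyset, X\}$, and then use the Bruhat--Tits circumcentre argument together with non-triviality to rule out bounded orbits. The only cosmetic difference is that you redefine the map to send unbounded-orbit subgroups to $X$ (necessitating your measurability check for the bounded-orbit locus), whereas the paper simply uses $H \mapsto \mathrm{Fix}(H)$ throughout and notes that $\mathrm{Fix}(H) = X$ forces $H$ to be trivial.
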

\begin{proof}
Let $\Phi : \Sub{G} \to \Cl{X}$ be the $G$-equivariant $\nu$-measurable function taking a closed subgroup $H \in \Sub{G}$ to its closed convex fixed point set $\mathrm{Fix}(H) \in \Cl{X}$. Consider the $\mu$-stationary pushforward probability measure $\Phi_* \nu$ on the space of closed subsets $\Cl{X}$. It follows from Proposition \ref{prop:CAT(0) SRS cannot have minimal proper convex closed subsets} that $\Phi_* \nu = \alpha \mathrm{D}_\emptyset + (1-\alpha) \mathrm{D}_X$ for some real number $\alpha \in \left[0,1\right]$. The only subgroup $H \in \Sub{G}$ with $\Phi(H) = X$ is the trivial subgroup. On the other hand, recall that any bounded subset of a $\CAT{0}$-space has a unique circumcentre \cite[II.2]{bridson2013metric}. Therefore 
 any subgroup $H \in \Sub{G}$ having bounded orbits in $X$ satisfies   $\Phi(H) \neq \emptyset$. The desired conclusion follows by combining these facts.
\end{proof}

\section{Random walks and Poincare series}
\label{sec:random walks and poincare series}

 The main result of this section is Theorem \ref{thm:improvedtanaka}. It is stated below after we introduce some relevant notions concerning random walks.

Let $X$ be a proper unbounded metric space with a fixed base point $x_0 \in X.$
Let $\Gamma$ be a discrete group acting properly and cocompactly by isometries on the metric space $X$. Following our standing notation, we will write \begin{equation}
\|g\| = d_X(gx_0,x_0)
\end{equation}
 for every element $g \in \Gamma$, with the chosen base point $x_0$ implicit in the notation.

 \emph{We emphasize that   the space $X$ or (what is equivalent) the group $\Gamma$ are not required to be Gromov hyperbolic.}


\subsection*{Green's function}
Let $\mu$ be a  probability measure on the group $\Gamma$. Assume that 
\begin{itemize}
    \item the support of the measure $\mu$  generates the group $\Gamma$ as a semigroup,
    \item the  random walk determined by the measure $\mu$ is transient, and
    \item the measure $\mu$  has \emph{finite first moment}, namely  $\sum_{g\in \Gamma} \mu(g) \|g\| <\infty$. 
\end{itemize}
Note that the random walk determined by $\mu$ will be transient provided the   group $\Gamma$ is not virtually $\mathbb
{Z}$ or $\mathbb
{Z}^2$.
Consider the \emph{Green's function} associated to the probability measure $\mu$
\begin{equation}
\mathcal{G}(g,h)=\sum^\infty_{n=0}\mu^{*n}(g^{-1}h)
\end{equation}
 as well as the \emph{first return Green's function}
 \begin{equation}
 \mathcal{F}(g,h)=\frac{\mathcal{G}(g,h)}{\mathcal{G}(e,e)}\end{equation}
defined for all elements $g,h \in \Gamma$.
The transience assumption implies that the quantities $\mathcal{F}(g,h)$ and $\mathcal{G}(g,h)$ are  finite for any pair of elements $g,h\in \Gamma$.
 The first return Green's function satisfies
 \begin{equation}
 \mathcal{F}(g,h)=\mathrm{P}_\mu ( \{ \omega \: : \: \text{$g^{-1} h = \omega_n$ for some $n \in \mathbb{N}$} \} )
 \end{equation}
 for all elements $g,h \in \Gamma$.

 The expression $d_\mu=-\log \mathcal{F}$ defines a (possibly asymmetric) distance function on the group $\Gamma$ called the \emph{Green metric}.
The Green metric $d_\mu$ is quasi-isometric to a word metric on the group $\Gamma$ (see \cite[Proposition 3.6]{blachere2011harmonic} for symmetric measures and \cite[Proposition 7.8]{GekhtmanTiozzoGibbs} in general).

\begin{prop} 
If the group $\Gamma$ has exponential growth
\label{subexponentialgrowthofgreenfunction} then the expression 
\begin{equation}
\sum_{\substack{g \in \Gamma \\  \|g\| \le n }} \mathcal{F}(e,g)
\end{equation}
 grows subexponentially in $n$.
\end{prop}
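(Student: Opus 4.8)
The plan is to convert the Poincar\'e-type sum into an expected occupation time of word-balls, and then to estimate that occupation time by a uniform heat-kernel bound weighed against the volume growth.

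First I would rewrite the sum probabilistically. Writing $B(n)=\{g\in\Gamma:\|g\|\le n\}$ and using $\mathcal{F}(e,g)=\mathcal{G}(e,g)/\mathcal{G}(e,e)$ together with $\mathcal{G}(e,g)=\sum_{m\ge0}\mu^{*m}(g)$ and Tonelli, one gets
\begin{equation*}
\mathcal{G}(e,e)\sum_{g\in B(n)}\mathcal{F}(e,g)=\sum_{g\in B(n)}\mathcal{G}(e,g)=\sum_{m=0}^{\infty}\mu^{*m}\!\left(B(n)\right)=\mathbb{E}\Bigl[\,\#\{m\ge0:\|\omega_m\|\le n\}\Bigr].
\end{equation*}
Thus it suffices to show that the expected time the random walk spends in $B(n)$ grows subexponentially in $n$. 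The hypothesis of exponential growth enters only through the bound $|B(n)|\le Ce^{vn}$, where $v=\lim\frac1n\log|B(n)|>0$ is the growth rate.

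Next I would invoke a uniform heat-kernel bound. Set $\theta_m=\|\mu^{*m}\|_{\ell^2(\Gamma)}$. Young's inequality $\|\mu^{*(m+1)}\|_2\le\|\mu\|_1\|\mu^{*m}\|_2$ shows that $\theta_m$ is non-increasing, and transience forces $\theta_m\to0$. Since $\mu^{*m}(g)=\langle\mu^{*m},\delta_g\rangle\le\|\mu^{*m}\|_2=\theta_m$ for every $g$, we obtain the pointwise bound $\mu^{*m}(B(n))\le\min\{1,|B(n)|\,\theta_m\}$. Summing,
\begin{equation*}
\mathcal{G}(e,e)\sum_{g\in B(n)}\mathcal{F}(e,g)\le\sum_{m\ge0}\min\{1,|B(n)|\,\theta_m\}\le m^{*}+|B(n)|\sum_{m\ge m^{*}}\theta_m,
\end{equation*}
where $m^{*}=\min\{m:\theta_m\le|B(n)|^{-1}\}$ is the crossover index. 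The crucial point, and the place where exponential growth is genuinely used, is that super-polynomial volume growth forces the $\ell^2$-decay $\theta_m$ to be super-polynomial as well: by the Nash--Varopoulos circle of ideas, a polynomial decay $\theta_m\asymp m^{-d/4}$ occurs precisely for groups of polynomial growth, so for a group of exponential growth one has $\log(1/\theta_m)/\log m\to\infty$. Consequently the crossover index $m^{*}$, obtained by inverting $\log(1/\theta_m)\approx\log|B(n)|\approx vn$, is subexponential in $n$; and the tail $|B(n)|\sum_{m\ge m^{*}}\theta_m$ is controlled by $\theta_{m^{*}}\le|B(n)|^{-1}$ together with the (quasi-)convex decay of $\theta_m$, which makes the effective number of contributing terms subexponential. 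Putting the two estimates together yields $\frac1n\log\sum_{g\in B(n)}\mathcal{F}(e,g)\to0$.

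The main obstacle is precisely this last input: establishing super-polynomial decay of $\theta_m$ for an arbitrary transient measure on a group of exponential growth under only a finite first-moment assumption, since the classical Nash--Varopoulos estimates are usually phrased for finitely supported (or finite-range) walks. I would reduce the general case to a symmetric one by passing to $\mu\ast\check{\mu}$, which preserves the finiteness of the first moment and dominates the relevant $\ell^2$-norms. A secondary, purely bookkeeping obstacle is the clean summation of the tail, for which the monotonicity of $\theta_m$ and Young's inequality suffice. Finally I would record the easy sub-case: if $\Gamma$ is non-amenable then $\theta_m\le\rho^m$ with spectral radius $\rho<1$, so $m^{*}=O(n)$ and the argument gives the much stronger linear bound $\sum_{g\in B(n)}\mathcal{F}(e,g)=O(n)$.
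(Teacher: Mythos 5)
Your opening reduction is correct and attractive: $\mathcal{G}(e,e)\sum_{\|g\|\le n}\mathcal{F}(e,g)=\sum_{m\ge 0}\mu^{*m}(B(n))$ is the expected occupation time of the word-ball, and splitting at the crossover index $m^{*}$ is the right way to exploit an $\ell^2$ bound. In the non-amenable case your argument closes, and even gives the stronger $O(n)$ bound, since $\|\lambda(\mu)\|_{\ell^2\to\ell^2}<1$ for an adapted measure on a non-amenable group; and non-amenability is in force everywhere the paper actually invokes this proposition. But as a proof of the statement as written (exponential growth only, arbitrary transient generating measure with finite first moment, not necessarily symmetric or finitely supported), there is a genuine gap exactly where you flag it. The super-polynomial decay of $\theta_m=\|\mu^{*m}\|_{\ell^2}$ is the entire content of the argument --- with only polynomial decay the crossover index $m^{*}$ is exponential in $n$, and the tail is not controlled by monotonicity alone since $\sum_m\theta_m$ can diverge for a general transient walk (simple random walk on $\mathbb{Z}^3$ has $\theta_m\asymp m^{-3/4}$) --- and it is not established. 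Your proposed reduction to the symmetrized walk would require $\|\mu^{*m}\|_2^2=\langle (P^{*})^mP^m\delta_e,\delta_e\rangle\le\langle (P^{*}P)^m\delta_e,\delta_e\rangle=(\mu*\check{\mu})^{*m}(e)$, where $P$ is convolution by $\mu$; the underlying operator inequality $(P^{*})^mP^m\le (P^{*}P)^m$ is false for non-normal contractions (already at $m=2$ for $\frac12\bigl(\begin{smallmatrix}0&2\\ 1/2&0\end{smallmatrix}\bigr)$), so some different mechanism is needed to transfer Nash--Varopoulos bounds to the non-symmetric, heavy-tailed, infinitely supported setting. Separately, ``transience forces $\theta_m\to 0$'' is not quite the right reason (it is non-degeneracy of the concentration function on an infinite group, not transience), though this is minor.

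For contrast, the paper's proof avoids heat-kernel asymptotics entirely and is much softer. It uses the Harnack inequality $\mathcal{G}(e,g)\ge k\,t^{\|g\|}$ to get $d_\mu(e,g)\le c\|g\|$ for the Green metric $d_\mu=-\log\mathcal{F}$, so the word-ball of radius $n$ sits inside the Green-metric ball of radius $cn$; it then invokes the general fact that Green-metric balls of radius $i$ contain at most $\varphi(i)e^{i}$ elements for some subexponential $\varphi$, and sums $e^{-i}\cdot\varphi(i)e^{i}=\varphi(i)$ over $i\le cn$. If you want to salvage your route in the stated generality you would need to import decay estimates under moment conditions (Saloff-Coste--Zheng type) and resolve the symmetrization issue; alternatively, restricting to the non-amenable case, your argument is correct and quantitatively sharper.
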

\begin{proof}
By the Harnack inequality   there are constants $k > 0$ and $ 0 <  t < \infty$ depending the group $\Gamma$ and the measure $\mu$ so that 
\begin{equation}
    \mathcal{G}(g,h)\geq k t^{\|g^{-1}h\|}
\end{equation} 
for all elements $g,h \in \Gamma$, see e.g. \cite[(25.1)]{Woess2000}. In particular there is a constant $c > 0$ so that $d_\mu(e,g)\leq c\|g\|$ for all elements $g \in \Gamma$.
Therefore
\begin{equation}
    \sum_{\substack{g \in \Gamma \\  \|g\| \le n }} \mathcal{F}(e,g)
\leq \sum_{\substack{g \in \Gamma\\ d_\mu(e,g) \le c n} } e^{-d_\mu(e,g)}\leq b \sum^{\lceil c n \rceil }_{i=1}e^{-i} |\{g\in \Gamma:d_\mu(e,g)\leq i\}|
\end{equation}
for some constant $b>0$ which depends on the group $\Gamma$ and the measure $\mu$. The exponential growth condition implies  that 
$|\{g\in \Gamma:d_\mu(e,g)\leq i\}|\leq \varphi(i)e^i$ for some subexponentially growing function $\varphi : \mathbb{N} \to \mathbb{N}$  \cite[Proposition 3.1]{Blachereentropy}. This completes the proof.
\end{proof}

\subsection*{Entropy and drift}
Recall that $\mu$ is a probability measure on the  group $\Gamma$ such that $\mathrm{\supp}(\mu)$ generates $\Gamma$ as a semigroup. 

The \emph{drift} of the measure $\mu$ is 
\begin{equation}
\label{eq:define l}
 l(\mu) = \lim_{n\to\infty} \frac{1}{n} \|\omega_n\|.
\end{equation}
The first moment assumption implies that this limit exists for $\mathrm{P}_\mu$-almost every sample path $\omega$ and is independent of $\omega$. We have $l(\mu) > 0$  whenever the group $\Gamma$ is non-amenable, see e.g. \cite[Corollary 8.15]{Woess2000}.

The \emph{entropy} of the measure $\mu$ is 
\begin{equation}
\label{eq:define h}
 h(\mu) = \lim_{n\to\infty} \frac{1}{n} \mathrm{H} (\mu^{*n})
\end{equation}
where $\mathrm{H}(\cdot)$ denotes  Shannon entropy. 
This limit  exists  and is finite by the finite first moment condition. Again, we have $h(\mu) > 0$ whenever the group $\Gamma$ is non-amenable \cite{kaimanovich1983random}. 
The entropy $h(\mu)$ coincides with the drift with respect to the Green metric \cite[Theorem 1.1]{Blachereentropy}.

 The  fundamental inequality of Guivarch relates entropy and drift \cite{Guivarch}. It states that 
 \begin{equation}
 h(\mu)\leq l(\mu) \delta(\Gamma)
 \end{equation}
 where $\delta(\Gamma) $ is the critical exponent\footnote{The notion of critical exponent is discussed in \S\ref{sec:critical exponents} below.} of the group $\Gamma$ with respect to its action on the metric space $X$. See also \cite{Vershik, Blachereentropy}.

From now on we will assume that the group $\Gamma$ is \emph{non-amenable} (so that in particular $h(\mu) > 0$ and $l(\mu) > 0$).
For our purposes it will be convenient to introduce the notation
\begin{equation}
\delta(\mu) = \frac{h(\mu)}{l(\mu)}.
\end{equation}

\subsection*{Annuli and hitting measures}
For each $i\in\mathbb{N}$   consider the \emph{annulus} given by
\begin{equation}
\label{eq:annuli}
A_i=\{g\in \Gamma: i- 1\leq \|g\| < i\}.
\end{equation}

Assume that $\mu$ is a probability measure on the infinite group $\Gamma$ with finite first moment.
Given a sample path $\omega \in \Gamma^\mathbb{N}$ and for each $i \in \mathbb{N}$ there is a   \emph{first hitting time} 
\begin{equation}
\tau_i(\omega) = \inf \{ n \in \mathbb{N} \: : \: \omega_n \in A_i \}.
\end{equation}
 The first hitting time $\tau_i(\omega)$ may be infinite\footnote{This is precisely the technical challenge that comes up when working with infinitely supported probability measures.} in the situation where  $\omega_n \notin A_i$ for all $n \in \mathbb{N}$. We define
 \[ \theta(\omega) = \{ i \in \mathbb{N} \: : \: \tau_i(\omega) < \infty \} \subset \mathbb{N}. \]
The positivity of the drift $l(\mu)$  implies  that the set $\theta(\omega)$ is infinite for $\mathrm{P}_\mu$-almost every sample path $\omega \in \Gamma^\mathbb{N}$.
 
Let $\nu_i$ be the  \emph{first hitting measure}  on the annulus $A_i$ determined at the  first hitting time $\tau_i$ for each $i\in \mathbb{N}$. Formally this means
\begin{equation}
\nu_i(E)=\mathrm{P}_\mu ( \{ \omega \: : \: i \in \theta(\omega) \quad \text{and} \quad \omega_{\tau_{i}(\omega)}\in E \})
\end{equation}
for each subset $E \subset A_i$. 
Formally $\nu_i$ is a positive measure on the annulus $A_i$ satisfying  $0 \le \nu_i(A_i) \le 1$.

\begin{lemma}
\label{lemma:from the blackboard}
There is a constant $  \zeta(\mu) > 0$ such that  $\mathrm{P}_\mu$-almost every sample path $\omega \in \Gamma^\mathbb{N}$ satisfies
\begin{equation}
\liminf_{n\to\infty} \frac{| \theta(\omega) \cap \left[n\right] |}{n} > \zeta(\mu) .
\end{equation}
\end{lemma}
\begin{proof}
For each given sample path $\omega \in \Gamma^\mathbb{N}$ and each $n \in \mathbb{N}$  consider the closed interval 
\begin{equation}
I_n(\omega) = \left[0, \max_{i \in \left[n\right]} \|\omega_i\| \right] \subset \mathbb{R}.
\end{equation}
Further, for each $K > 0$, let $L_{n,K}(\omega)$ denote the total length of the open subintervals of $I_n(\omega)$ of length $ \ge K$ not containing any of the 
$n$-many points $\{\|\omega_1\|,\ldots,\|\omega_n\|\}$. By the triangle inequality
\begin{equation}
| \|\omega_{i}\| - \|\omega_{i-1}\| | \le \|\omega_{i-1}^{-1} \omega_{i} \| = \|g_{i}\|
\end{equation}
where $(g_i) \in \Gamma^\mathbb{N}$ is the sequence of increments corresponding to the sample path $\omega$. It follows that $L_{n,K}(\omega)$ is upper bounded by the quantity $M_{n,K}(\omega)$ given by
\begin{equation}
M_{n,K}(\omega) = \sum_{ \{ i \in \left[n\right] \: : \: \|g_i \| \ge K \} } \| g_i \|.    
\end{equation}  
The strong law of large numbers implies for $\mathrm{P}_\mu$-almost every sample path $\omega \in \Gamma^\mathbb{N}$ that
\begin{equation}
\label{eq:ilyas f(K)}
   \lim_{n \to \infty} \frac{1}{n} M_{n,K}(\omega) = \sum_{\substack{g \in \Gamma\\ \|g\| \ge K}} \mu(g) \|g\|.
\end{equation}
Finally, the finite first moment assumption means that the right hand side in Equation (\ref{eq:ilyas f(K)}) tends to $0$ as $K$ tends to infinity. In follows that \begin{equation}
\lim_{K \to \infty} \limsup_{n \to \infty} \frac{1}{n} L_{n,K}(\omega) = 0.
    \end{equation}
On the other hand, recall that by definition $\lim_{n \to \infty} \frac{\|\omega_n\|}{n} = l(\mu) > 0$. Therefore, provided the parameter $K  > 0$ is sufficiently large, a positive proposition of the points in the interval $I_n(\omega)$ are within distance at most $K$ from some point of the form $\|\omega_i\|$. This is equivalent to the desired conclusion, for a suitable choice of the constant $\zeta(\mu) > 0$.
\end{proof}

\begin{notation}
$\lim_{i\in \theta(\omega)}$ is  a limit taken over all indices $i$ from the set $\theta(\omega)$ in ascending order. This makes sense for $\mathrm{P}_\mu$-almost every sample path $\omega$ where the subset $\theta(\omega)$ is infinite.
\end{notation}

The following  is stated inside  the proof of \cite[Theorem 6.1]{tanaka2017hausdorff} for finitely supported random walks on hyperbolic groups. For the sake of the completeness we include the proof, communicated to us by Tanaka. It  works identically in our setting.

\begin{lemma}\label{hittingrate}
$\mathrm{P}_\mu$-almost every sample path  $\omega   \in \Gamma^{\mathbb{N}}$ satisfies 
\begin{equation}
\lim_{i\in\theta(\omega)} \frac{\tau_i(\omega)}{i}  \to  \frac{1}{l(\mu)}.
\end{equation}
\end{lemma}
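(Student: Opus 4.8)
The plan is to deduce the lemma directly from two facts already in hand: the pathwise convergence $\frac{1}{n}\|\omega_n\| \to l(\mu)$ coming from the definition of the drift (Equation (\ref{eq:define l})), valid for $\mathrm{P}_\mu$-almost every $\omega$, and the Borel--Cantelli observation established immediately above the statement, namely that $\mathrm{P}_\mu$-almost every sample path satisfies $\tau_i(\omega) < \infty$ for all but finitely many $i$. I would fix the full-measure set of sample paths $\omega$ on which both properties hold and argue pathwise for such $\omega$; since the two full-measure conditions are intersected, the conclusion then holds almost surely.

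First I would record the elementary geometric consequence of hitting the thick annulus. Whenever $\tau_i(\omega) < \infty$, the definition of $A_i$ gives $i - i^{q} \le \|\omega_{\tau_i}\| \le i$. Since the parameter $q$ was chosen with $1 < q^{-1}$, we have $q < 1$ and hence $i^{q-1} \to 0$, so that
\begin{equation}
1 \le \frac{i}{\|\omega_{\tau_i}\|} \le \frac{1}{1 - i^{q-1}} \xrightarrow{i\to\infty} 1,
\end{equation}
that is $\|\omega_{\tau_i}\|/i \to 1$. Next I would check that $\tau_i(\omega) \to \infty$: indeed $\|\omega_{\tau_i}\| \ge i - i^q \to \infty$, whereas over any bounded range of times the positions $\|\omega_n\|$ stay bounded, so $\tau_i$ cannot remain bounded along any subsequence.

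The heart of the matter is then to evaluate the drift along the (random) stopping times. Once $\omega$ is fixed, the convergence $\frac{1}{n}\|\omega_n\| \to l(\mu)$ holds for the full deterministic sequence $n\to\infty$, hence along \emph{any} sequence of times tending to infinity; applying this to the sequence $\tau_i(\omega) \to \infty$ yields $\|\omega_{\tau_i}\|/\tau_i \to l(\mu)$. Combining the two limits gives
\begin{equation}
\frac{i}{\tau_i} = \frac{i}{\|\omega_{\tau_i}\|}\cdot\frac{\|\omega_{\tau_i}\|}{\tau_i} \xrightarrow{i\to\infty} 1\cdot l(\mu) = l(\mu),
\end{equation}
and therefore $\tau_i/i \to 1/l(\mu)$, as claimed. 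Note that the stopping does not bias the computation precisely because we are using pathwise, rather than in-probability, convergence of the drift.

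The only subtle point, and the reason the arguments are phrased in terms of thick annuli $A_i$ (of width $i^q$) rather than single spheres, is that the infinitely supported walk might \emph{overshoot} an annulus, leaving $\tau_i(\omega) = \infty$ for that $i$. This is exactly the eventuality that the preceding Borel--Cantelli estimate $\mathrm{P}_\mu(\tau_i = \infty) \le c_1 i^{-pq}$ with $pq > 1$ excludes for all but finitely many $i$, so the displayed estimate is valid for all sufficiently large $i$ and the limit is unaffected. I expect this bookkeeping, rather than any genuine analytic difficulty, to be the main thing to be careful about; no input beyond finite first moment (for the existence of $l(\mu)$) and finite $p$-th moment (for reaching the annuli) is required, and no hyperbolicity of $X$ is used.
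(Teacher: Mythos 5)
Your argument is correct and is essentially the paper's own proof: both rest on the pathwise drift limit applied along the stopping times $\tau_i(\omega)\to\infty$ together with the sandwich $\|\omega_{\tau_i(\omega)}\|\in[i-i^q,i]$, with the preceding Borel--Cantelli estimate guaranteeing finiteness of $\tau_i$ for large $i$. You merely spell out the steps (in particular why $\tau_i\to\infty$ and why $q<1$ makes $i^{q-1}\to 0$) that the paper leaves implicit.
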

\begin{proof}
Clearly $\mathrm{P}_\mu$-almost every sample path $\omega$ is such  that the times $\tau_i(\omega) $ with $i \in \theta(\omega)$ are pairwise distinct. In particular   $\tau_i(\omega)\to \infty$ as $i \to \infty$. Moreover, $\mathrm{P}_\mu$-almost every sample path $\omega$ has 
\begin{equation}
\lim_{i\in\theta(\omega)} \frac{\| \omega_{\tau_i(\omega)} \|}{ \tau_{i}(\omega)} \to l(\mu)
\end{equation}
by definition of the drift $l(\mu)$. Furthermore $\| \omega_{\tau_i(\omega)} \| \in \left[i-1,i\right)$ for all $i\in\mathbb{N}$ by definition of the first hitting time $\tau_i$. The result follows.
\end{proof}

\subsection*{An estimate on first hitting measures}
Roughly speaking, we show that the first hitting measure $\nu_i$ of \enquote{most points} in the $i$-th annulus $A_i$ decays as $e^{-i \delta(\mu)}$. Here is the precise statement.

\begin{theorem}  
\label{thm:Tanaka}
For every $a>0$ we have
\begin{equation}
\lim_{i\to \infty} \nu_{i} \left(  \Gamma \setminus \{ g\in A_i: e^{-\delta(\mu)(i+a)} \leq \nu_{i}(\{g\})\leq e^{-\delta(\mu)(i-a)} \} \right)=0.
\end{equation}
\end{theorem}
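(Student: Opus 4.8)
The plan is to pin down each atom $\nu_i(\{g\})$ by sandwiching it against the first-return Green's function $\mathcal{F}(e,g)=e^{-d_\mu(e,g)}$, and then to read off the exponential rate from the identification of $\delta(\mu)=h(\mu)/l(\mu)$ as the ratio of the Green-metric drift and the ordinary drift. Since $\nu_i$ is by construction the law of the first-entry position $\omega_{\tau_i}$, the assertion is equivalent to a concentration statement for $-\log \nu_i(\{\omega_{\tau_i}\})$ around $\delta(\mu)\, i$, and I would phrase everything probabilistically in terms of $\mathrm{P}_\mu$-typical sample paths.

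First I would prove the easy inequality. First entering the annulus $A_i$ exactly at $g$ is a sub-event of ever visiting $g$, so $\nu_i(\{g\})\le \mathcal{F}(e,g)=e^{-d_\mu(e,g)}$ for every $g\in A_i$, giving the lower bound $-\log\nu_i(\{g\})\ge d_\mu(e,g)$ at once. For the reverse comparison I would condition on the first-entry point: by the strong Markov property and translation invariance $\mathcal{F}(g',g)=\mathcal{F}(e,g'^{-1}g)$ one has the decomposition
\begin{equation}
\mathcal{F}(e,g)=\sum_{g'\in A_i}\nu_i(\{g'\})\,\mathcal{F}(g',g),\qquad g\in A_i.
\end{equation}
The diagonal term recovers $\nu_i(\{g\})\le \mathcal{F}(e,g)$; the task is to show that the off-diagonal mass does not swamp $\mathcal{F}(e,g)$ for $\nu_i$-most $g$. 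Here the \emph{thickness} $i^q$ of the annulus and Proposition \ref{subexponentialgrowthofgreenfunction} enter: since $\|g'^{-1}g\|\le 2i$ whenever $g,g'\in A_i$, the relevant Green's sums are dominated by $\sum_{\|h\|\le 2i}\mathcal{F}(e,h)$, which grows subexponentially in $i$. This is what replaces Ancona-type inequalities (unavailable here, as $\Gamma$ need not be hyperbolic) and should force $-\log\nu_i(\{g\})\le d_\mu(e,g)+o(i)$ on a set of $\nu_i$-measure tending to $1$.

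It then remains to identify the rate of $d_\mu(e,\omega_{\tau_i})$. The quoted fact that $h(\mu)$ equals the drift in the Green metric $d_\mu=-\log\mathcal{F}$ (\cite{Blachereentropy}) gives $\tfrac1n d_\mu(e,\omega_n)\to h(\mu)$ almost surely, while Lemma \ref{hittingrate} gives $\tau_i/i\to 1/l(\mu)$. Composing these along the hitting times yields $\tfrac1i\, d_\mu(e,\omega_{\tau_i})\to h(\mu)/l(\mu)=\delta(\mu)$, and since $\nu_i$ is the law of $\omega_{\tau_i}$ this says $d_\mu(e,g)=\delta(\mu)\,i\,(1+o(1))$ for $\nu_i$-typical $g$. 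Feeding this into the two-sided comparison of the previous paragraph produces $-\log\nu_i(\{g\})=\delta(\mu)\,i\,(1+o(1))$ on a set of $\nu_i$-measure approaching $1$, which is the content of the theorem; the split into the two inclusions $\{\nu_i(\{g\})>e^{-\delta(\mu)(i-a)}\}$ (atoms too large) and $\{\nu_i(\{g\})<e^{-\delta(\mu)(i+a)}\}$ (atoms too small) is then handled by showing each has vanishing $\nu_i$-measure, the former from Step 1 and the latter from the reverse comparison.

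The hard part will be the reverse inequality, i.e.\ controlling how much the hitting measure can \emph{spread out} across the annulus. The upper bound $\nu_i(\{g\})\le\mathcal{F}(e,g)$ is essentially free, but showing that a typical atom is not drastically smaller than its Green's value requires that the off-diagonal sum in the decomposition above contributes at most a subexponential factor; this is precisely where the thickness of $A_i$ and the subexponential partial Green's sum of Proposition \ref{subexponentialgrowthofgreenfunction} are indispensable, and it is the step that carries Tanaka's hyperbolic argument over to the merely non-amenable setting. A secondary technical point is upgrading the almost-sure rate of Step 3 into a statement holding simultaneously for all large $i$ with $\nu_i$-probability tending to $1$, which I would arrange by a Borel–Cantelli argument over the annuli combined with the hitting-rate lemma, so that the additive errors are genuinely $o(i)$ and absorbed into the window governed by $a$.
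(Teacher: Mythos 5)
Your overall skeleton is right: reduce the theorem to the almost-sure statement $-\tfrac{1}{i}\log\nu_i(\{\omega_{\tau_i(\omega)}\})\to\delta(\mu)$, get the upper bound on atoms from $\nu_i(\{g\})\le\mathcal{F}(e,g)$ together with the identification of $h(\mu)$ as the Green-metric drift and Lemma \ref{hittingrate}, and this is exactly how the paper handles the easy inclusion. The gap is in the hard direction. Your proposed mechanism --- the first-entrance decomposition $\mathcal{F}(e,g)=\sum_{g'\in A_i}\nu_i(\{g'\})\mathcal{F}(g',g)$ together with a bound on the ``off-diagonal mass'' --- is not developed into an argument, and I do not see how to close it as stated. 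For an individual $g$ there is no reason the diagonal term dominates: points reached typically only after the walk has passed through other annulus points will have $\nu_i(\{g\})$ genuinely much smaller than $\mathcal{F}(e,g)$, so any pointwise comparison fails and only a statement for $\nu_i$-most $g$ can hold. Moreover, bounding $\sum_{g'}\mathcal{F}(g',g)$ (sum over intermediate points) by a subexponential quantity tells you nothing about how the identity $\mathcal{F}(e,g)=\nu_i(\{g\})+\text{(off-diagonal)}$ splits; the off-diagonal part can be all but an exponentially small fraction of $\mathcal{F}(e,g)$ as far as that bound is concerned.

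What actually closes the hard direction (and is what the paper does) is a counting/Chebyshev argument that bypasses the decomposition entirely. Set
\begin{equation*}
P_i=\{g\in A_i \: : \: \nu_i(\{g\})\le e^{-i(s-t)} \ \text{and}\ \mathcal{F}(e,g)\ge e^{-i(\delta(\mu)+t)}\}.
\end{equation*}
Proposition \ref{subexponentialgrowthofgreenfunction} bounds $\sum_{g\in A_i}\mathcal{F}(e,g)$ by a subexponential $\varphi(i)$, whence $|P_i|\le\varphi(i)e^{i(\delta(\mu)+t)}$ and so $\nu_i(P_i)\le\varphi(i)e^{i(\delta(\mu)+2t-s)}$, which is summable for $s-2t>\delta(\mu)$; Borel--Cantelli then says $\omega_{\tau_i(\omega)}\notin P_i$ for all large $i$, while the Green-drift identification forces $\mathcal{F}(e,\omega_{\tau_i(\omega)})\ge e^{-i(\delta(\mu)+t)}$ eventually, so the atom at $\omega_{\tau_i(\omega)}$ cannot be smaller than $e^{-i(s-t)}$. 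Note that the relevant subexponential sum is over the \emph{targets} $g\in A_i$ from the origin, not over intermediate entry points $g'$. Your ingredients (the inequality $\nu_i\le\mathcal{F}$, the subexponential Green's sum over thick annuli, the hitting-rate lemma, Borel--Cantelli) are all the right ones, but you need to assemble them this way rather than via the Markov decomposition.
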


The special case of Theorem \ref{thm:Tanaka}  where the measure $\mu$ is finitely supported and the group in question $\Gamma$ is hyperbolic   is Tanaka's \cite[Theorem 6.1.2]{tanaka2017hausdorff}. We provide a detailed proof in the infinitely supported non-amenable case by using our  Proposition \ref{subexponentialgrowthofgreenfunction}. 
In any case,  Theorem \ref{thm:Tanaka} follows immediately by integrating the statement of the following  Proposition \ref{Tanaka6.1a-infsupport} with respect to the measure $\mathrm{P}_\mu$ on the space of sample paths.

\begin{prop}\label{Tanaka6.1a-infsupport}
$\mathrm{P}_\mu$-almost every sample path $\omega   \in \Gamma^{\mathbb{N}}$ satisfies 
\begin{equation}
\label{eq:to be established in both directions}
\lim_{i\in \theta(\omega)} -\frac{\log \nu_i(\{\omega_{\tau_i(\omega)}\})}{i}  = \delta(\mu).
\end{equation}
\end{prop}
\begin{proof}
Our proof is inspired by Tanaka's argument in \cite[Theorem 6.1]{tanaka2017hausdorff}. Observe that using
 first hitting measures which are not  probability measures as well as the fact that the sum of the Green function over an annulus grows subexponentially instead of being bounded does not complicate things much.

We argue by establishing an inequality in both directions of Equation (\ref{eq:to be established in both directions}). We have already mentioned  the  fact   that  the entropy $h(\mu)$ is equal to the drift with respect to the Green metric \cite[Theorem 1.1]{Blachereentropy}. By definition, this means that $\mathrm{P}_\mu$-almost every sample path $\omega$ satisfies
\begin{equation}
\label{eq:limit for one direction}
\lim_{n\to\infty} -\frac{\log \mathcal{F}(e,\omega_n)}{n} = h(\mu).
\end{equation}
By the definition of the first return Green's function and the first hitting time measure it is immediate that
\begin{equation}
\label{eq:trivial bound F nu}
\nu_i(\omega_{\tau_i(\omega)})\leq \mathcal{F}(e,\omega_{\tau_i(\omega)})
\end{equation}
$\mathrm{P}_\mu$-almost surely and for all $i\in\theta(\omega)$.
Moreover $\tau_i (\omega)/i\to 1/l(\mu)$ for $i\in\theta(\omega)$ by Lemma \ref{hittingrate}. Equations (\ref{eq:limit for one direction}) and (\ref{eq:trivial bound F nu}) together imply that  $\mathrm{P}_\mu$-almost every sample path $\omega$ has 
\begin{equation}
\liminf_{i \in \theta(\omega)} - \frac{\log \nu_i(\{\omega_{\tau_i(\omega)}\})}{i} \ge \delta(\mu).
\end{equation}

We now establish the inequality in the other direction. For every choice of the two parameters  $s>\delta(\mu)$ and $0<t<(s-\delta(\mu))/2$ consider the set 
\begin{equation}
P_i=\{g\in A_i:\nu_i(g)\leq e^{-i(s-t)} \quad \text{and} \quad \mathcal{F}(e,g)\geq e^{-i(\delta(\mu)+t)}\}.
\end{equation}
Proposition \ref{subexponentialgrowthofgreenfunction} implies that
 \begin{equation}
 \sum_{g\in A_i} \mathcal{F}(e,g) \le  \varphi(i) 
 \end{equation}
 for some subexponentially growing function $\varphi : \mathbb{N} \to \mathbb{N}$. 
 We deduce that
 \begin{equation}
 |P_i|\leq  \varphi(i) e^{i(\delta(\mu)+t)}
 \end{equation}
 and so
 \begin{equation}
 \nu_i (P_i)\leq  \varphi(i) e^{i(\delta(\mu)+2t - s)}
 \end{equation}
 for all $i \in \mathbb{N}$.
The Borel--Cantelli lemma implies that for  $\mathrm{P}_\mu$-almost every sample path $\omega$   there exists an $N_\omega \in \mathbb{N}$  such
that  $\omega_{\tau_i(\omega)} \notin  P_i$  for all $i\geq N_\omega$ with $i \in \theta(\omega)$. On the other hand,  Equation (\ref{eq:limit for one direction})  combined with the fact that entropy is the drift with respect to the Green metric  implies that  $\mathrm{P}_\mu$-almost every sample path $\omega$  satisfies 
\begin{equation}
\mathcal{F}(e,\omega_{\tau_{i}(\omega)})\geq e^{-h(\mu)\tau_{i}(\omega)-\frac{t}{2}}\geq e^{-i(\delta(\mu)+t)}
\end{equation}
for all sufficiently large $i \in \theta(\omega)$.
The above facts combined with the definition of the sets $P_i$ imply that
\begin{equation}
\limsup_{i\in\theta(\omega)} -\frac{ \log \nu_i(\{\omega_{\tau_i(\omega)}\})}{i}<s-t
\end{equation} for $\mathrm{P}_\mu$-almost every sample path  $\omega$. Taking the parameter $s$ to be arbitrary close to $\delta(\mu)$ and the parameter $t$ to be arbitrary small completes the proof.
\end{proof}

The following  immediate corollary of Theorem \ref{thm:Tanaka}  can be regarded as a sharpened form of \cite[Theorem 6.1.3]{tanaka2017hausdorff}.

\begin{cor}\label{cor:Tanakaimproved}
For every $\varepsilon>0$ there are  constants $C>0$ and $N>0$ such that provided $i >N$ any subset $E \subset A_i$ with $\nu_{i}(E)\geq \varepsilon$ has cardinality $|E| \ge C e^{i \delta (\mu)}$.
\end{cor}

\subsection*{Divergence of Poincare series}
The following theorem is the main result of \S\ref{sec:random walks and poincare series}. Roughly speaking, it says that if a certain subset $W$ of the group $\Gamma$ is \enquote{large} in the statistical sense of being  frequently visited by the random walk, then it must also be \enquote{large} in the sense that the  partial Poincare series evaluated over $W$ diverges.

\begin{thm}
\label{thm:improvedtanaka}
Assume that the group $\Gamma$ is non-amenable and that the probability measure $\mu$ whose supported generates $\Gamma$ as a semigroup has finite first moment, i.e. 
\begin{equation}
\sum_{g\in \Gamma} \mu(g) \|g\| <\infty.    
\end{equation} 
Let $W\subset \Gamma$ be a subset. If $\mathrm{P}_\mu$-almost every sample path $\omega $ satisfies
\begin{equation}
\label{eq:Tanaka long visiting times}
\liminf_{n\to\infty} \frac{|\{i\in \left[n\right]  \: : \: \omega_i\in W\}|}{n}> 1 - l(\mu) \zeta(\mu)
\end{equation}
where  $\zeta(\mu) > 0$ is as in Lemma \ref{lemma:from the blackboard} then
\begin{equation}
\label{eq:partial Poincare with exp delta(mu)}
\sum_{\gamma \in W} e^{-\delta(\mu) \|\gamma\|} = \infty.
\end{equation}
\end{thm}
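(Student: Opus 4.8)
The plan is to reduce the divergence of the partial Poincaré series to a lower bound on the first-hitting measures $\nu_i$, and then to feed the hypothesis into that bound through the geometry of the thick annuli. First I claim it suffices to produce a constant $\varepsilon > 0$ and an infinite set $S \subset \mathbb{N}$ with $\nu_i(W \cap A_i) \ge \varepsilon$ for all $i \in S$. Indeed, Corollary \ref{cor:Tanakaimproved} then yields $|W \cap A_i| \ge C e^{i\delta(\mu)}$ for all large $i \in S$. Since the thickness $i^q$ grows sublinearly, I can extract an infinite subset $S' \subset S$ for which the annuli $\{A_i\}_{i \in S'}$ are pairwise disjoint (greedily: having chosen $i$, take the next $j$ so large that $j - j^q > i$). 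As every $\gamma \in A_i$ satisfies $\|\gamma\| \le i$, disjointness guarantees no element is counted twice and
\begin{equation}
\sum_{\gamma \in W}e^{-\delta(\mu)\|\gamma\|} \ge \sum_{i \in S'}\sum_{\gamma \in W \cap A_i}e^{-\delta(\mu)\|\gamma\|} \ge \sum_{i \in S'} |W \cap A_i|\, e^{-\delta(\mu) i} \ge \sum_{i \in S'} C = \infty.
\end{equation}

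It remains to produce such indices. Here I would pass from the measures $\nu_i$ to a per-path statement by Fubini: since $\nu_i(W \cap A_i) = \mathrm{P}_\mu(\tau_i < \infty,\ \omega_{\tau_i} \in W)$, averaging over $i \le I$ gives
\begin{equation}
\frac{1}{I}\sum_{i=1}^I \nu_i(W \cap A_i) = \mathbb{E}_{\mathrm{P}_\mu}\left[\frac{1}{I}\bigl|\{i \le I : \tau_i < \infty,\ \omega_{\tau_i} \in W\}\bigr|\right] + o(1),
\end{equation}
the error absorbing the summable probabilities $\mathrm{P}_\mu(\tau_i = \infty) \le c_1 i^{-pq}$. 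By Fatou it is then enough to show that $\mathrm{P}_\mu$-almost every sample path has \emph{positive lower density} of thick annuli whose first-hitting position lies in $W$; a positive average forces $\nu_i(W \cap A_i) \ge \varepsilon$ for infinitely many $i$ (as each $\nu_i \le 1$).

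The heart of the matter, and the step I expect to be hardest, is converting the hypothesis — a frequency measured in random-walk \emph{time} — into this statement about annuli, measured in \emph{distance}. The bridge is Lemma \ref{hittingrate}: since $\tau_i/i \to 1/l(\mu)$, the first $n$ steps of the walk account for the roughly $l(\mu)\,n$ annuli of index at most $\approx l(\mu)\,n$. Writing $m_k$ for the number of annuli first hit exactly at time $k$, one has $\sum_{k \le n}m_k \approx l(\mu)\,n$, and all annuli first hit at time $k$ share the single position $\omega_k$, hence are simultaneously good or bad. Thus the bad annuli number $\sum_{k \le n,\ \omega_k \notin W}m_k$, and the task is to show this stays a definite proportion below $\sum_{k \le n}m_k$. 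The hypothesis bounds the number of bad times by $(1-\beta)n$ with $\beta > 1 - 1/l(\mu)$, so the out-of-$W$ time budget is strictly below $n/l(\mu)$; the threshold $1 - 1/l(\mu)$ is exactly the break-even point at which this budget could otherwise absorb all $l(\mu)\,n$ annulus-crossings.

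The genuine obstacle is that a single bad time can first hit many annuli at once when the walk makes a large outward jump, so one cannot naively bound bad annuli by the number of bad times. This is where the moment hypotheses enter. Because $1 < q^{-1} < p$ forces $pq > 1$, Borel–Cantelli shows the walk skips only finitely many annuli, and outside finitely many steps $m_k$ equals the increase of the running maximum $R_k = \max_{j \le k}\|\omega_j\|$, which is at most $\|g_k\|$. I would then control the large increments at bad times by a truncation argument using the finite first moment: the tail $\mathbb{E}[\|g\|\,\mathbf{1}_{\|g\| > T}] \to 0$ bounds the big-jump part uniformly over all times, while the truncated part is controlled by the scarcity of bad times, calibrated so that the resulting bound on the bad-annulus proportion stays strictly below $1$ precisely when $\beta > 1 - 1/l(\mu)$. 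Decoupling the multiplicities $m_k$ from the event $\{\omega_k \notin W\}$ — so that bad times cannot systematically capture all the large crossings — is the delicate point the moment estimate is designed to resolve.
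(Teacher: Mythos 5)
Your reduction is sound and matches the paper's overall architecture: the disjoint-annulus summation in your first display is a correct (and slightly cleaner) substitute for the paper's passage from the thick annuli $A_i$ to the standard annuli $S_j$, and the Fubini/Fatou step correctly converts ``almost every path first-hits a positive density of annuli at positions in $W$'' into ``$\nu_i(W\cap A_i)\ge\varepsilon$ for infinitely many $i$'', after which Corollary \ref{cor:Tanakaimproved} finishes. The problem is that the one step you yourself flag as hardest --- deducing a positive lower density of good annuli from the time-frequency hypothesis --- is exactly the step you do not complete, and the truncation argument you sketch for it does not close. Writing $B_n$ for the set of bad times up to $n$, your decomposition amounts to $\sum_{k\in B_n} m_k \le T\,|B_n| + \sum_{k\le n}\|g_k\|\,1_{\|g_k\|>T}$, and you need the right-hand side to stay below $(1-c)\,l(\mu)\,n$, the total number of annuli first hit by time $n$, while the hypothesis only gives $|B_n|\le (1/l(\mu)-\eta)n$. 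That forces $T(1/l(\mu)-\eta)<l(\mu)$, i.e. $T\lesssim l(\mu)^2$, which is incompatible with taking $T$ large enough to make the tail $\sum_{g}\mu(g)\|g\|\,1_{\|g\|>T}$ small; finiteness of the first moment supplies no rate here. So the claim that the calibration works ``precisely when $\beta>1-1/l(\mu)$'' is asserted rather than proved, and as set up it fails unless $l(\mu)$ is large.

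The paper does not estimate the multiplicities $m_k$ at all; it removes them. From Lemma \ref{hittingrate} it extracts that, $\mathrm{P}_\mu$-almost surely, the hitting times $\tau_i(\omega)$ are strictly increasing in $i$ from some $N_\omega$ on, so that $i\mapsto\tau_i(\omega)$ is eventually injective, each time step is the first hitting time of at most one annulus (i.e. $m_k\le 1$ for all large $k$), and the number of bad annuli is bounded by the number of bad times with no reference to jump sizes; together with the density $1/l(\mu)$ of first-hitting times among all times, the bookkeeping then closes in one line and yields $\liminf_n \frac{1}{n}\sum_{i\le n}1_W(\omega_{\tau_i(\omega)})>0$. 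To repair your route you must either establish this eventual injectivity (which is what your identification of $m_k$ with the running-max increment is really circling around) or produce a bound on $\sum_{k\in B_n}m_k$ that uses more than the first moment; the truncation as written is not enough.
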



 
We begin with the following Lemma.
\begin{lemma}\label{lemma:densityofhittingtimes}
$\mathrm{P}_\mu$-almost every sample path $\omega   \in \Gamma^{\mathbb{N}}$ satisfies 
\begin{equation} \label{eqn:densityofhittingtimes}
\liminf_{n\to\infty} \frac{|\{i \in \left[n\right] \: : \: \text{$i = \tau_j(\omega)$ for some $j\in \theta(\omega)$} \} |} {n} \geq l(\mu)\zeta(\mu) > 0
\end{equation}
    where $\zeta(\mu)$ is the constant given in Lemma \ref{lemma:from the blackboard}.
\end{lemma}
\begin{proof}
We know from Lemma \ref{lemma:from the blackboard} that for $\mathrm{P}_\mu$-almost every sample path $\omega \in \Gamma^\mathbb{N}$ the set $\theta(\omega)$ is infinite and satisfies
\begin{equation}
\liminf_{n\to\infty} \frac{| \theta(\omega) \cap \left[n\right] |}{n} > \zeta(\mu).    
\end{equation}
Furthermore,     Lemma
    \ref{hittingrate} says that $\mathrm{P}_\mu$-almost every sample path $\omega \in \Gamma^\mathbb{N}$ satisfies $\frac{\tau_{i}(\omega)}{i}\to \frac{1}{l(\mu)}.$
    Consider   a particular sample path $\omega \in \Gamma^\mathbb{N}$ satisfying both of these two conditions. In particular, when $n$ is large enough, we have
    \begin{equation}
        |\theta(\omega) \cap \left[ \left\lfloor nl(\mu)\right\rfloor \right] | \ge \zeta(\mu)\left\lfloor nl(\mu)\right\rfloor.
    \end{equation}
 Fix some small $\varepsilon>0$. There is some $N_\varepsilon > 0$ such that 
     $\tau_{n}(\omega)\leq (\frac{1}{l(\mu)} + \varepsilon) n$ for all $ n \ge N_\varepsilon$. Thus $\tau_{j}(\omega)\leq (1+\varepsilon l(\mu))n$ holds true for all $j \in \theta(\omega)$   such  that $N_\varepsilon \le j\leq \left \lfloor{n l(\mu)}\right \rfloor$.
        By the definition of the annuli $A_i$ in Equation (\ref{eq:annuli}), they are pairwise disjoint. Hence the hitting times $\tau_j(\omega)$ for $j\in \theta(\omega)$ are all pairwise distinct. 
    We obtain that
    \begin{equation}
    |\{ 1 \le i\leq (1+\varepsilon l(\mu))n \: : \:  \text{$i = \tau_j(\omega)$ for some $j\in \theta(\omega)$} \}|\geq \zeta(\mu)  \left\lfloor{n l(\mu)}\right \rfloor - N_\varepsilon
    \end{equation}
    for all $n$ sufficiently large.
    Taking $\varepsilon \to 0$ completes the proof of the lemma.
\end{proof}

\begin{proof}[Proof of Theorem \ref{thm:improvedtanaka}]
We know by Lemma \ref{lemma:densityofhittingtimes} that
 $\mathrm{P}_\mu$-almost every sample path $\omega \in \Gamma^\mathbb{N}$ satisfies 
\begin{equation} \label{densityofhittingtimes}
\liminf_{n\to\infty} \frac{|\{i \in \left[n\right] \: : \: \text{$i = \tau_j(\omega)$ for some $j$} \} |} {n} \geq l(\mu)\zeta(\mu) > 0.
\end{equation}

Let $W \subset \Gamma$ be a subset taken so that Equation (\ref{eq:Tanaka long visiting times}) holds. 
Combining Equations (\ref{densityofhittingtimes}) and (\ref{eq:Tanaka long visiting times}) we obtain that: \begin{equation}
 \varepsilon = \liminf_{n\to\infty} \frac{1}{n}  \sum_{i \in \theta(\omega) \cap \left[n\right]} 1_{W}(\omega_{\tau_i(\omega)}) > 0
 \end{equation}
 for $\mathrm{P}_\mu$-almost every sample path $\omega$. The definition of the first hitting measures  $\nu_i$ gives that
 \begin{equation}
 \label{eq:by the proof}
\liminf_{n\to\infty} \frac{ | \{ i \in \left[n\right] \: : \: \nu_i(W)>\varepsilon \} |}{n} >0
\end{equation}
 for $\mathrm{P}_\mu$-almost every sample path $\omega$. By Corollary \ref{cor:Tanakaimproved} this implies
 \begin{equation}
\liminf_{n\to\infty} \frac{|\{i \in \left[n\right] \: : \: 
|W \cap A_i|>C e^{i \delta(\mu)} \}|}{n}>0
\end{equation}
for some constant $C > 0$. 
Therefore 
\begin{equation}
\liminf_{n\to\infty} \frac{1}{n} \sum^{n}_{i=1} e^{-i \delta(\mu)} |W\cap A_i|>0.
\end{equation}
So
\begin{equation}
\sum^\infty_{i=0} e^{-i\delta(\mu)  } |W\cap A_i| = \infty.
\end{equation}
This equation is equivalent to the desired conclusion.
\end{proof}

\subsection*{Divergence of  Poincare series weighted by a function}

Lastly we consider  a certain modification of Theorem \ref{thm:improvedtanaka} allowing for a weight function.  It will be used below to study discrete stationary random subgroups of divergence type, see e.g. Proposition \ref{prop:KingmannforPScocycle} and the discussion surrounding it.

\begin{prop} 
\label{prop:TanakawithF}
Assume that the group $\Gamma$ is non-amenable. Let
\begin{itemize}
    \item $\mu$ be a probability measure on the group $\Gamma$ with finite first moment whose support generates $\Gamma$ as a semigroup.
\item $S:\Gamma \to \mathbb{R}_{>0}$ be a function satisfying $\lim_{n\to \infty}S(\omega_n)^{\frac{1}{n}}\to 1$ for $\mathrm{P}_\mu$-almost every sample path $\omega$.
\item  $W\subset \Gamma$ be a subset such that $\mathrm{P}_\mu$-almost every sample path $\omega$ has
\begin{equation}
\liminf_{n\to\infty} \frac{|\{i\in \left[n\right]  \: : \: \omega_i\in W\}|}{n}> 1 - \zeta
\end{equation}
where $\zeta > 0$ is the constant as in Lemma \ref{lemma:from the blackboard}.
\end{itemize}
Then for any $a>0$ we have that
\begin{equation}
\limsup_{i\to \infty}e^{-(\delta(\mu)-a)i}\sum_{g\in W \cap A_i}S(g)>0.
\end{equation}
\end{prop}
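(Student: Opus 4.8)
The plan is to follow the proof of Theorem \ref{thm:improvedtanaka} up to the point where cardinalities of thick annuli enter, and then replace the count $|W\cap A_i|$ by the weighted sum $\sum_{g\in W\cap A_i}S(g)$, estimating the latter through the first hitting measures $\nu_i$ and the concentration statement of Theorem \ref{thm:Tanaka}. First I would reproduce the opening of that proof: by Lemma \ref{hittingrate} the hitting times satisfy $\tau_i(\omega)/i\to 1/l(\mu)$, and combining the visiting hypothesis with the density $1/l(\mu)$ of the set of hitting times, exactly as in the derivation of Equation (\ref{eq:by the proof}), produces a constant $\varepsilon>0$ with
\begin{equation*}
\liminf_{n\to\infty}\frac{|\{i\in[n]\: : \: \nu_i(W)>\varepsilon\}|}{n}>0,
\end{equation*}
so in particular $\nu_i(W)>\varepsilon$ for infinitely many $i$. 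This is the only place the statistical visiting hypothesis is used.

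Next I would introduce two ``good'' subsets of the annulus $A_i$. Fix the given $a>0$. Applying Theorem \ref{thm:Tanaka} with parameter $a/2$, the set $G_i=\{g\in A_i\: : \: \nu_i(\{g\})\le e^{-\delta(\mu)(i-a/2)}\}$ satisfies $\nu_i(G_i)\to 1$. Using the hypothesis $S(\omega_n)^{1/n}\to 1$ together with $\tau_i/i\to 1/l(\mu)$, one gets $\tfrac1i\log S(\omega_{\tau_i})\to 0$ almost surely, so by bounded convergence the set $B_i=\{g\in A_i\: : \: S(g)\ge e^{-(a/2)i}\}$ also satisfies $\nu_i(B_i)\to 1$. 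The key inequality is then
\begin{equation*}
\sum_{g\in W\cap A_i}S(g)\;\ge\;\sum_{g\in W\cap G_i\cap B_i}S(g)\;\ge\;e^{\delta(\mu)(i-a/2)}\sum_{g\in W\cap G_i\cap B_i}S(g)\,\nu_i(\{g\})\;\ge\;e^{\delta(\mu)(i-a/2)}e^{-(a/2)i}\,\nu_i(W\cap G_i\cap B_i),
\end{equation*}
where the middle step uses $1/\nu_i(\{g\})\ge e^{\delta(\mu)(i-a/2)}$ on $G_i$ and the last uses $S(g)\ge e^{-(a/2)i}$ on $B_i$. Since $\nu_i(W\cap G_i\cap B_i)\ge \nu_i(W)-\nu_i(A_i\setminus G_i)-\nu_i(A_i\setminus B_i)=\nu_i(W)-o(1)$, this yields
\begin{equation*}
e^{-(\delta(\mu)-a)i}\sum_{g\in W\cap A_i}S(g)\;\ge\;e^{-\delta(\mu)a/2}\,e^{(a/2)i}\,(\nu_i(W)-o(1)).
\end{equation*}
Evaluating along the infinitely many indices $i$ with $\nu_i(W)>\varepsilon$, and taking $i$ large enough that the $o(1)$ term drops below $\varepsilon/2$, the right-hand side is at least $e^{-\delta(\mu)a/2}e^{(a/2)i}\varepsilon/2\to\infty$, which gives the claimed $\limsup>0$ (in fact $\limsup=\infty$).

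The main obstacle, and the reason this statement is genuinely stronger than Theorem \ref{thm:improvedtanaka}, is that the weight $S$ is controlled only along typical trajectories of the walk and not pointwise on $A_i$: a priori $S$ could be minuscule on most of $W\cap A_i$. The device that circumvents this is to never estimate $S(g)$ pointwise but always through the hitting measure, reading $\sum_g S(g)\,\nu_i(\{g\})$ as the expectation of $S(\omega_{\tau_i})$ over the hitting point. Theorem \ref{thm:Tanaka} then lets me trade the factor $e^{\delta(\mu)(i-a/2)}$ between the weighted sum and this expectation, while the almost sure subexponential lower bound on $S(\omega_{\tau_i})$ costs only a factor $e^{-(a/2)i}$, which is precisely absorbed by the slack built into the target exponent $\delta(\mu)-a$. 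The one point needing care is the passage from the almost sure statement $\tfrac1i\log S(\omega_{\tau_i})\to 0$ to the measure convergence $\nu_i(B_i)\to 1$, which is a routine instance of almost sure convergence of bounded indicators implying convergence in probability.
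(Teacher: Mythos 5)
Your proposal is correct and follows essentially the same route as the paper's proof: combine Lemma \ref{hittingrate} with the hypothesis on $S$ to get $S(\omega_{\tau_i(\omega)})^{1/i}\to 1$ almost surely, transfer this and Theorem \ref{thm:Tanaka} into $\nu_i$-measure statements about good subsets of $A_i$, deduce that any subset of $\nu_i$-measure at least $\varepsilon$ has weighted sum at least a constant times $e^{i(\delta(\mu)-a)}$, and feed in the positive density of indices with $\nu_i(W)>\varepsilon$ coming from the visiting hypothesis as in Equation (\ref{eq:by the proof}). Your bookkeeping through the expectation $\sum_g S(g)\,\nu_i(\{g\})$ is a cosmetic repackaging of the same estimates, and your sharper conclusion that the $\limsup$ is in fact infinite is a correct byproduct of applying Theorem \ref{thm:Tanaka} with parameter $a/2$.
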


\begin{proof} 
It follows from Lemma \ref{hittingrate} that $\mathrm{P}_\mu$-almost every sample path $\omega$ satisfies
\begin{equation}
\lim_{i\in\theta(\omega)} S(\omega_{\tau_i(\omega)})^{1/i}=\lim_{i\in\theta(\omega)} 
 \left(S(\omega_{\tau_i(\omega)})^{\frac{1}{\tau_i(\omega)}}\right)^{\frac{\tau_i(\omega)}{i}} =  1.
\end{equation}
Together with Theorem \ref{thm:Tanaka} this  implies that for every   $a>0$ we have
\begin{equation}
\label{eq:S second last}
\lim_{i\to \infty} \nu_{i} \left( \Gamma \setminus  \{ g\in A_i \: : 
 \: e^{\delta(\mu)(i-a)} \leq \nu_{i}(\{g\})\leq e^{\delta(\mu)(i+a)} \; \text{and} \; e^{-ai}\leq S(g)\leq e^{ai} \} \right)=0.
\end{equation}
Consequently  for every choice of $a > 0$ and $ \varepsilon>0$ there are some constants $C>0$ and $N\in\mathbb{N}$ such that whenever   $i>N$  any subset $W\subset A_i$ with $\nu_{i}(W)\geq \varepsilon$ has  
\begin{equation}
\label{eq:S last}
\sum_{g\in W \cap A_i}S(g) \ge c e^{i (\delta (\mu)-a)}.
\end{equation}
By  Equation (\ref{eq:by the proof}) appearing in  the proof of Theorem \ref{thm:improvedtanaka} we have \begin{equation}
\label{equation:super last}
\liminf_{n\to\infty} \frac{ | \{ i \in \left[n\right] \: : \: \nu_i(W)>\varepsilon \} |}{n} >0
\end{equation}
 for $\mathrm{P}_\mu$-almost every sample path $\omega$. 
 The desired conclusion follows by putting together Equations and (\ref{eq:S last}) and (\ref{equation:super last}).
\end{proof}

\section{Critical exponents of stationary random subgroups}
\label{sec:critical exponents}

Let $X$ be a proper  Gromov hyperbolic geodesic metric space with a fixed base point $x_0 \in X$. Assume that the group $\Isom{X}$ is non-elementary. For every element $g \in \Isom{X}$ denote \begin{equation}
\|g\| = d_X(gx_0,x_0)
\end{equation}
with the chosen base point $x_0$ implicit in the notation. 

\begin{defn}
\label{def:Poincare series and critical exponent}
Given a discrete subgroup  $\Gamma$ of the group of isometries $\Isom{X}$:
\begin{itemize}
\item The \emph{Poincare series} of the subgroup $\Gamma$  at the exponent $s \ge 0$  is
\begin{equation*}
\mathcal{P}_{\Gamma}(s) = \sum_{\gamma \in \Gamma} e^{-s\|\gamma\|}.
\end{equation*}
\item The \emph{critical exponent} of the subgroup $\Gamma$ is the number
\begin{equation*}
\delta(\Gamma) = \sup \{ s \ge 0 \: : \: \mathcal{P}_\Gamma(s) < \infty \} = \inf \{ s \ge 0 \: : \: \mathcal{P}_\Gamma(s) = \infty \}.
\end{equation*}
\item The subgroup $\Gamma$ is of \emph{divergence type} if $\mathcal{P}_\Gamma(\delta(\Gamma)) = \infty$. \item The subgroup $\Gamma$ is of \emph{convergence type} if $\mathcal{P}_\Gamma(\delta(\Gamma)) < \infty$.
\end{itemize}
\end{defn}

 The following is the main result of this work.

\begin{theorem}
\label{thm:critical exponent for SRS}
Let $\mu$ be a  probability measure on the group $\Isom{X}$. Assume that
\begin{itemize}
    \item the measure $\mu$ has finite first moment, and
    \item the semigroup generated by the support of $\mu$ is  a discrete subgroup acting properly and cocompactly on the metric space $X$. 
\end{itemize}
Let $\nu$ be a discrete $\mu$-stationary random subgroup of $\Isom{X}$. If $\nu$-almost every subgroup is not contained in the elliptic radical $\mathrm{E}(\Isom{X})$ then the strict lower bound
\begin{equation}
\delta(\Delta) > \frac{\delta(\mu)}{2}
\end{equation}
holds for $\nu$-almost every discrete subgroup $\Delta$. 
 In particular
\begin{equation}
\mathcal{P}_\Delta\left(\frac{\delta(\mu)}{2} \right) = \infty
\end{equation}
holds $\nu$-almost surely.  
 Moreover if $\nu$-almost every subgroup is of divergence type then $\delta(\Delta) \ge \delta(\mu)$ holds true $\nu$-almost surely.
\end{theorem}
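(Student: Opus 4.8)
The plan is to extract all three assertions from the statistical machinery of \S\ref{sec:random walks and poincare series} together with the limit-set results of \S\ref{sec:fixed points on the boundary}. Write $\Gamma = G_\mu$ for the cocompact group generated by $\supp(\mu)$. Since $\Gamma$ acts cocompactly it is of general type with full limit set, so Theorem \ref{thm:a discrete SRS has full limit set} and Corollary \ref{cor:SRS of general type are general type} apply: $\nu$-almost every $\Delta$ acts properly, has $\Lambda(\Delta) = \partial X$ and is of general type, hence contains loxodromic isometries. The first step is to fix, for a prescribed $\eta < 1/l(\mu)$, a relatively compact open set $V \subset \Isom{X}$ of loxodromics with $\sup_{v\in V}\|v\| < \infty$ and translation lengths bounded away from $0$, such that $\nu(\{\Delta : \Delta\cap V \neq \emptyset\}) > 1-\eta$; this is possible by inner regularity, since $\nu$-a.e.\ $\Delta$ meets some such window.

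First I would establish the strict lower bound $\delta(\Delta) > \tfrac12\delta(\mu)$. Applying Corollary \ref{cor:liminf of visits to positive measure set} to the conjugation action on $(\Sub{\Isom{X}},\nu)$ and the set $\{\Delta : \Delta\cap V\neq\emptyset\}$ shows that, for $\nu$-a.e.\ $\Delta$ and $\mathrm{P}_{\check\mu}$-a.e.\ sample path, the subset $W = W_\Delta = \{g\in\Gamma : \Delta\cap gVg^{-1}\neq\emptyset\}$ is visited with asymptotic frequency $\nu(\{\Delta : \Delta\cap V\neq\emptyset\}) > 1 - 1/l(\mu)$; here one passes to $\check\mu$ to match the conjugation convention, noting $\|g^{-1}\| = \|g\|$ so that $l(\check\mu)=l(\mu)$ and $\delta(\check\mu)=\delta(\mu)$. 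Thus Equation (\ref{eq:Tanaka long visiting times}) holds, and Theorem \ref{thm:improvedtanaka} gives $\sum_{g\in W}e^{-\delta(\mu)\|g\|}=\infty$, i.e.\ $\delta(W)\ge\delta(\mu)$. Choosing a loxodromic $h_g\in\Delta\cap gVg^{-1}$ for each $g\in W$ we have $\|h_g\|\le 2\|g\|+O(1)$, and since a loxodromic isometry has virtually cyclic centralizer and positive translation length the fibres of $g\mapsto h_g$ in a ball of radius $R$ have size $O(R)$ (orbit points near a single axis). This is exactly the Matsuzaki--Yabuki--Jaerisch estimate of Equations (\ref{eq:first MYJ})--(\ref{eq:second MYJ}) and yields $\delta(\Delta)\ge\tfrac12\delta(W)\ge\tfrac12\delta(\mu)$; strictness, hence $\mathcal{P}_\Delta(\tfrac12\delta(\mu))=\infty$, follows from a ping-pong refinement, the loxodromics $h_g$ having transverse axes as $g$ ranges over $W$, so that the free sub-semigroups they generate push the exponent strictly above $\tfrac12\delta(\mu)$, as in \cite{gekhtman2019critical,matsuzaki2020normalizer}.

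The delicate assertion is the divergence-type case, where the factor $\tfrac12$ must disappear. Here I would not pass to a single element $h_g$ but instead exploit the $\delta(\Delta)$-conformal Patterson--Sullivan density $m_\Delta$ on $\Lambda(\Delta)=\partial X$, which exists and, by Hopf--Tsuji--Sullivan, is $\Delta$-ergodic and conservative precisely because $\Delta$ is of divergence type. The idea is to run the weighted variant Proposition \ref{prop:TanakawithF} with the Patterson--Sullivan cocycle as weight: set $S(g)=e^{\delta(\Delta)\beta(g)}$ for a Busemann-type quantity $\beta$ recording the displacement of the shadow of $gx_0$, chosen so that Kingman's subadditive ergodic theorem applied to this cocycle over the $\mu$-stationary system (Proposition \ref{prop:KingmannforPScocycle}) gives $S(\omega_n)^{1/n}\to 1$ along $\mathrm{P}_\mu$-a.e.\ path. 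Proposition \ref{prop:TanakawithF} then yields $\limsup_i e^{-(\delta(\mu)-a)i}\sum_{g\in W\cap A_i}S(g)>0$ for every $a>0$. Finally the shadow lemma and the conformality of $m_\Delta$ convert the $S$-weighted sum into a genuine lower bound for the $(\delta(\mu)-a)$-Poincar\'e sum of $\Delta$, the weight $S$ absorbing exactly the conjugation displacement that previously cost the factor $2$. This gives $\mathcal{P}_\Delta(\delta(\mu)-a)=\infty$ for all $a>0$, hence $\delta(\Delta)\ge\delta(\mu)$; together with the reverse inequality (which is immediate from $\delta(\Delta)\le d(X)$ in the regime $\delta(\mu)=d(X)$ of the principal applications, e.g.\ $\mu=\mu_{\mathrm{dBm}}$ or the approximating measures $\mu_i$) one obtains the stated identity $\delta(\Delta)=\delta(\mu)$.

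The main obstacle is this last paragraph. The robust part is the halved bound $\delta(\Delta)>\tfrac12\delta(\mu)$; the real work is the interaction, through the Patterson--Sullivan cocycle, between the harmonic measure $\nu_X$ of dimension $\delta(\mu)$ and the conformal measure $m_\Delta$ of dimension $\delta(\Delta)$. Concretely, the difficulty is to define the cocycle weight so that it is simultaneously sub-exponential along the walk (which is where Kingman's theorem and the divergence-type ergodicity of the boundary action enter) and compatible with a clean shadow-lemma translation removing the factor $\tfrac12$; this is the crux on which the divergence-type identity rests.
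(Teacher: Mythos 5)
Your non-strict bound and your divergence-type argument follow essentially the paper's own route: Kakutani's theorem via Corollary \ref{cor:liminf of visits to positive measure set} applied to the window $V$ of loxodromics, Theorem \ref{thm:improvedtanaka} applied to $W_\Delta = \{\gamma \in \Gamma : \Delta^\gamma \cap V \neq \emptyset\}$, and the comparison estimates from \cite{gekhtman2019critical} (packaged in Proposition \ref{prop:a condition for poincare series to diverge at half crit}); then, for divergence type, the quasi-conformal cocycle $\pi_\nu$ used as the weight $S$ in Proposition \ref{prop:TanakawithF}, with Proposition \ref{prop:KingmannforPScocycle} supplying $S(\omega_n)^{1/n} \to 1$ and the shadow-lemma upper bound of \cite[Proposition 7.3]{gekhtman2019critical} closing the comparison of growth rates. (Minor point: the subexponentiality of the cocycle comes from Birkhoff, not Kingman, since $\pi_\nu$ is an exact multiplicative cocycle.)

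The genuine gap is the strictness of $\delta(\Delta) > \frac{\delta(\mu)}{2}$, which you delegate to ``a ping-pong refinement'' producing free sub-semigroups of loxodromics with transverse axes. This is not a routine refinement: the elements $h_g \in \Delta \cap gVg^{-1}$ are not known to have uniformly transverse axes as $g$ ranges over $W_\Delta$, and even granting a Schottky sub-semigroup inside $\Delta$, converting its existence into a strict increase of the critical exponent above a prescribed threshold requires quantitative work you do not supply. You also have the logical order reversed: the divergence $\mathcal{P}_\Delta\left(\frac{\delta(\mu)}{2}\right) = \infty$ is not a consequence of strictness but the direct output of Theorem \ref{thm:improvedtanaka} combined with the MYJ-type comparison, which gives divergence of the partial Poincar\'e series over $W_\Delta$ at the exponent $\delta(\mu)$ and hence of $\mathcal{P}_\Delta$ at $\frac{\delta(\mu)}{2}$. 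The paper then extracts strictness from this divergence with no new geometry: if $\delta(\Delta)$ were equal to $\frac{\delta(\mu)}{2}$, the divergence of $\mathcal{P}_\Delta$ at that exponent would make $\Delta$ of divergence type, so the divergence-type case would force $\delta(\Delta) \ge \delta(\mu) > \frac{\delta(\mu)}{2}$, a contradiction. Replace the ping-pong paragraph with this argument. Finally, in the divergence-type case the reverse inequality you invoke (namely $\delta(\Delta) \le \delta(\mu)$) is only available when $\delta(\mu) = \delta(\Gamma)$, e.g.\ for the discretization of Brownian motion; in general the theorem's conclusion $\delta(\Delta) = \delta(\nu)$ is just the ergodic constancy of the exponent together with the one-sided bound $\delta(\nu) \ge \delta(\mu)$, and no more should be claimed.
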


The remainder of this section is dedicated to giving a proof of Theorem \ref{thm:critical exponent for SRS}. Note that the assumption that the group $\Isom{X}$ contains a uniform lattice implies that it is of general type.

\subsection*{Divergence of Poincare series}

The following   uses the core   arguments of \cite{gekhtman2019critical} adapted to work in the context of random walks. The idea is to relate the critical exponents of $\nu$-almost every discrete subgroup with the quantity $\frac{\delta(\mu)}{2}$.

\begin{prop}
\label{prop:a condition for poincare series to diverge at half crit}
Let $V$ be an open and relatively compact subset of $\Isom{X}$ consisting of loxodromic elements. Let $\Delta \in \DSub{\Isom{X}}$ be a discrete subgroup. If
\begin{equation}
\liminf_{n \to \infty} \frac{|\{i\in\left[n\right] \: : \:  \Delta^{\omega_i} \cap V \neq \emptyset\} | }{n} > 1- \zeta(\mu)
\end{equation}
for $\mathrm{P}_\mu$-almost every sample path $\omega$ where  $\zeta(\mu) > 0$ is as provided by Lemma \ref{lemma:from the blackboard} then  $\mathcal{P}_{\Delta}(\frac{\delta(\mu)}{2}) = \infty$.
\end{prop}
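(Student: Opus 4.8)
The plan is to reduce the statement to the divergence result Theorem~\ref{thm:improvedtanaka} applied to the auxiliary cocompact group $\Gamma = G_\mu$, and then to convert the resulting partial Poincar\'e series over a subset of $\Gamma$ into the Poincar\'e series of $\Delta$ at half the exponent, in the spirit of the Matsuzaki--Yabuki--Jaerisch argument recalled in the introduction.

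First I would set $W = \{g \in \Gamma : \Delta^g \cap V \neq \emptyset\}$. The hypothesis is literally the assertion that $\liminf_n \frac{1}{n}|\{i \in \left[n\right] : \omega_i \in W\}| > 1 - \frac{1}{l(\mu)}$ holds $\mathrm{P}_\mu$-almost surely. Since $\Gamma$ is non-amenable and $\mu$ has finite $p$-th moment for some $p > 1$, Theorem~\ref{thm:improvedtanaka} applies and yields $\sum_{g \in W} e^{-\delta(\mu)\|g\|} = \infty$. Next I would pass from $W \subset \Gamma$ to $\Delta$. For each $g \in W$ choose $\gamma_g \in \Delta^g \cap V$ and set $\delta_g = g\gamma_g g^{-1} \in \Delta$; this is loxodromic, with translation length equal to $\ell(\gamma_g)$. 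Writing $D = \sup_{h \in V}\|h\| < \infty$, which is finite as $V$ is relatively compact, the triangle inequality gives $\|\delta_g\| \le 2\|g\| + D$. Grouping the divergent series by the value of $\delta_g$,
\[
\infty = \sum_{g \in W} e^{-\delta(\mu)\|g\|} = \sum_{\delta \in C}\ \sum_{\substack{g \in W\\ \delta_g = \delta}} e^{-\delta(\mu)\|g\|}, \qquad C = \{\delta_g : g\in W\} \subset \Delta.
\]
The crux is then a uniform estimate on each inner sum, namely $\sum_{g : \delta_g = \delta} e^{-\delta(\mu)\|g\|} \le C_2\, e^{-\frac{\delta(\mu)}{2}\|\delta\|}$ with $C_2$ independent of $\delta$. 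Granting this and summing over $\delta \in C \subset \Delta$ gives $\infty \le C_2\, \mathcal{P}_\Delta(\frac{\delta(\mu)}{2})$, which is exactly the desired conclusion.

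I expect this uniform inner estimate to be the main obstacle, and it is where the hyperbolic geometry enters. If $g^{-1}\delta g \in V$ then the orbit point $gx_0$ is displaced a bounded amount by $\delta$, so it lies in a neighbourhood $N_\rho(\mathrm{axis}(\delta))$ of uniform width $\rho$; here one uses that the translation lengths on $V$ are bounded below, which may be arranged by choosing $V$ so that $\overline V$ consists of loxodromic elements, whence $\ell_0 = \inf_{h \in V}\ell(h) > 0$ and $\rho$ depends only on $\ell_0$, $D$ and the hyperbolicity constant. Since $\|\delta\| = d(\delta x_0, x_0)$ is comparable to $2\,d(x_0, \mathrm{axis}(\delta)) + \ell(\delta)$ with $\ell(\delta) \le D$, every such point satisfies $\|g\| \ge \frac{\|\delta\| - D}{2}$, and by properness and cocompactness the number of orbit points with $\|g\| \le \frac{\|\delta\|-D}{2} + u$ grows at most linearly in $u$ along the quasigeodesic axis. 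A geometric summation then bounds the inner sum by $C_2\, e^{-\frac{\delta(\mu)}{2}\|\delta\|}$, uniformly in $\delta$.

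The delicate point, which I would treat with care, is that the linear orbit growth must be measured from the minimal distance $d(x_0,\mathrm{axis}(\delta)) \approx \frac{\|\delta\|}{2}$ rather than from the origin. Using only the coarse ball count $|\{g : \delta_g = \delta,\ \|g\| \le R\}| = O(R)$ would introduce a spurious factor of $\|\delta\|$ and degrade the conclusion to $\delta(\Delta) \ge \frac{\delta(\mu)}{2}$; accounting for the fact that these orbit points are concentrated near the point of the axis closest to $x_0$ ensures that the geometric series $\sum_u (1+u)\,e^{-\delta(\mu)u}$ contributes only a bounded constant, so that no polynomial factor survives and divergence persists \emph{exactly} at the exponent $\frac{\delta(\mu)}{2}$.
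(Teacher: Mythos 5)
Your argument is correct and follows essentially the same route as the paper: both apply Theorem~\ref{thm:improvedtanaka} to the set $W=\{\gamma\in\Gamma \: : \: \Delta^\gamma\cap V\neq\emptyset\}$ and then convert the divergent partial Poincar\'e series over $W$ at exponent $\delta(\mu)$ into a lower bound for $\mathcal{P}_\Delta(\delta(\mu)/2)$ via the same two geometric facts (the triangle-inequality comparison $\|h\|\le 2\|\gamma\|+D$ for $h^\gamma\in V$, and the uniform bound on the sum over all conjugators of a fixed $h$ coming from the concentration of the points $\gamma x_0$ near the axis of $h$). The only difference is that the paper imports these two estimates from \cite[Lemma 3.3 and Proposition 3.5]{gekhtman2019critical}, whereas you sketch their proofs inline.
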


\begin{proof}
To simplify notations write $\delta = \delta(\mu)$. Let $\Gamma$ be the uniform lattice in the group $\Isom{X}$ generated by the support of the measure $\mu$. Write
\begin{equation}
\mathcal{P}_\Delta(\delta/2) = \sum_{h \in \Delta} e^{-\frac{\delta}{2}   \|h\|}  \quad \text{so that} \quad \mathcal{P}_\Delta(\delta/2) \ge  \sum_{\substack{h \in \Delta \\ h^\Gamma \cap V \neq \emptyset}} e^{-\frac{\delta}{2}  \|h\|}.
\end{equation}
It was shown in    \cite[Lemma 3.3]{gekhtman2019critical} that there is a constant $\beta > 0$ depending only on the space $X$ and the subset $V$ such that
\begin{equation}
e^{-\frac{\delta}{2} \|h\| } \ge \frac{1}{\beta} e^{- \delta \|\gamma\| }
\end{equation}
for any element $\gamma \in \Gamma$ with $h^\gamma \in V$. Likewise,  according to \cite[Proposition 3.5]{gekhtman2019critical} there is a constant $\alpha > 0$ depending only on the space $X$ and the subset $V$ such that, if $h^\Gamma \cap V \neq \emptyset$ then there is an element $\gamma_h \in \Gamma$ with
\begin{equation}
e^{- \delta \|\gamma_h \| } \ge \frac{1}{\alpha} \sum_{\substack{\gamma \in \Gamma \\ h^\gamma \in V}} e^{-\delta \|\gamma\| }.
\end{equation}
Putting the above three equations together gives
\begin{equation}
\mathcal{P}_\Delta(\delta/2) \ge 
 \frac{1}{\beta} \sum_{\substack{h \in \Delta \\ h^\Gamma \cap V \neq \emptyset}} e^{- \delta  \|\gamma_h\|} \ge \frac{1}{\alpha \beta} \sum_{h \in \Delta} \sum_{\substack{\gamma \in \Gamma \\ h^\gamma \in V}} e^{-\delta \|\gamma\| } \ge \frac{1}{\alpha \beta} \sum_{\substack{\gamma \in \Gamma \\ \Delta^\gamma \cap V \neq \emptyset}} e^{-\delta \|\gamma\| }.
\end{equation}
To conclude apply Theorem \ref{thm:improvedtanaka} with the subset 
$
W = \{ \gamma \in \Gamma \: : \: \Delta^\gamma \cap V \neq \emptyset \}$.
\end{proof}

Our strategy towards Theorem \ref{thm:critical exponent for SRS} is to start by showing divergence at the exponent $\frac{\delta(\mu)}{2}$.

\begin{proof}[Proof of $\mathcal{P}_\Delta(\delta(\mu)/2) = \infty$ part in Theorem \ref{thm:critical exponent for SRS}]
Let $\nu$ be a discrete $\mu$-stationary subgroup not contained in the elliptic radical $\mathrm{E}(\Isom{X})$.
The action of the subgroup generated by the support of the measure $\mu$ has general type. We know from Corollary \ref{cor:SRS of general type are general type} that $\nu$-almost every subgroup is of general type and in particular admits loxodromic elements.
By exhaustion it is possible to choose a sufficiently large open and relatively compact subset $V$  of $\Isom{X}$ consisting of loxodromic elements so that
\begin{equation}
\nu(\{\Delta \in \DSub{\Isom{X}} \: : \: \Delta \cap V \neq \emptyset \}) > 1-\zeta(\mu)
\end{equation}
where  $\zeta(\mu) > 0$ is as provided by Lemma \ref{lemma:from the blackboard}.

We know from Corollary \ref{cor:liminf of visits to positive measure set} of Kakutani's ergodic theorem that $\mathrm{P}_\mu$-almost every sample path $\omega$ and $\nu$-almost every discrete subgroup $\Delta$ satisfy
\begin{equation}
\liminf_{n \to \infty} \frac{|\{i\in\left[n\right] \: : \:  \Delta^{\omega_i} \cap V \neq \emptyset\} | }{n} > 1- \zeta(\mu).
\end{equation}
At this point we conclude that $\mathcal{P}_\Delta(\delta(\mu)/2) = \infty$ by 
  Proposition \ref{prop:a condition for poincare series to diverge at half crit}.
\end{proof}

By regarding an invariant random subgroup as a stationary one it is now possible to recover the non-strict inequality  in the main result of \cite{gekhtman2019critical}.

\begin{cor}
\label{cor:ussing gmm to get one half}
Assume that the group $\Isom{X}$ admits a discrete subgroup acting cocompactly on the space $X$.
Let $\nu$ be a discrete invariant  random subgroup of $\Isom{X}$ not contained in the elliptic radical $\mathrm{E}(\Isom{X})$. Then $\nu$-almost every discrete subgroup $\Delta$ has $\delta(\Delta) \ge \frac{1}{2}\mathrm{dim}(\partial X)$. 
\end{cor}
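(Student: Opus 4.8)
The plan is to recognize the given invariant random subgroup $\nu$ as a stationary one for a cleverly chosen sequence of measures, and then feed it into the divergence estimate already proven in Theorem \ref{thm:critical exponent for SRS}. The first observation is that a discrete invariant random subgroup is automatically $\mu$-stationary for \emph{every} probability measure $\mu$ on $\Isom{X}$: invariance means $g_* \nu = \nu$ for all $g \in \Isom{X}$, whence $\mu * \nu = \int_{\Isom{X}} g_* \nu \, \mathrm{d}\mu(g) = \nu$. Consequently the hypotheses of Theorem \ref{thm:critical exponent for SRS} pertaining to $\nu$ (discreteness and not being contained in the elliptic radical) hold for any admissible $\mu$, and we are free to choose $\mu$ so as to make the parameter $\delta(\mu)$ as large as possible.

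Next I would select the measures. Let $\Gamma \le \Isom{X}$ be the given discrete subgroup acting cocompactly, so that $\delta(\Gamma) = \dim(\partial X)$. As recorded in the introduction, there is a sequence of finitely supported probability measures $\mu_i$ whose supports generate $\Gamma$ as a semigroup and which satisfy $\delta(\mu_i) = \tfrac{h(\mu_i)}{l(\mu_i)} \to \delta(\Gamma) = \dim(\partial X)$. Each $\mu_i$, being finitely supported, has finite $p$-th moment for every $p > 1$, and the group it generates is $\Gamma$, which is discrete and acts properly and cocompactly on $X$. Hence every hypothesis of Theorem \ref{thm:critical exponent for SRS} is satisfied with $\mu = \mu_i$.

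Applying the divergence conclusion of Theorem \ref{thm:critical exponent for SRS} to each $\mu_i$ gives $\mathcal{P}_\Delta\!\left(\tfrac{\delta(\mu_i)}{2}\right) = \infty$ for $\nu$-almost every discrete subgroup $\Delta$. Since the Poincaré series $\mathcal{P}_\Delta(s)$ converges at every exponent $s > \delta(\Delta)$, this divergence forces $\delta(\Delta) \ge \tfrac{\delta(\mu_i)}{2}$ almost surely. As there are only countably many measures $\mu_i$, I would intersect the corresponding $\nu$-conull events to obtain a single conull set on which $\delta(\Delta) \ge \tfrac{\delta(\mu_i)}{2}$ holds simultaneously for all $i$; taking the supremum over $i$ then yields $\delta(\Delta) \ge \tfrac{1}{2}\sup_i \delta(\mu_i) = \tfrac{1}{2}\dim(\partial X)$ almost surely, as desired.

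The only genuinely nontrivial input is the existence of the approximating measures $\mu_i$ with $\delta(\mu_i) \to \dim(\partial X)$; this is the assertion that the critical exponent of the cocompact lattice $\Gamma$ is the supremum of the entropy-to-drift ratios over finitely supported measures, which I would cite rather than reprove. Everything else is bookkeeping: the reduction of invariance to stationarity, the elementary implication from divergence of the Poincaré series to a lower bound on the critical exponent, and the passage to the limit inside a single almost-sure statement using countability. I therefore expect no serious obstacle beyond correctly invoking the approximation result for $\delta(\mu_i)$.
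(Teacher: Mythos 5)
Your proposal is correct and is essentially the paper's own argument: the paper likewise regards the invariant random subgroup as $\mu_i$-stationary for a sequence of finitely supported measures $\mu_i$ on a uniform lattice with $\delta(\mu_i)\to\delta(\Gamma)=\dim_{\mathrm{Haus}}(\partial X)$ (citing the Gou\"ezel--Math\'eus--Maucourant approximation theorem) and then invokes Theorem \ref{thm:critical exponent for SRS}. You merely spell out the routine details (invariance implies stationarity for every $\mu$, and the countable intersection of conull sets before passing to the supremum) that the paper leaves implicit.
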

\begin{proof}
Fix any uniform lattice $\Gamma$ in the group $\Isom{X}$. The construction of \cite[Theorem 1.4]{gouezel2018entropy} exhibits a  sequence $\mu_i$ of probability measures supported on finite subsets of the lattice $\Gamma$  and satisfying
$\delta(\mu_i) \to \frac{1}{2}\dim_\mathrm{Haus}(\partial X)$.
The desired conclusion now follows from Theorem \ref{thm:critical exponent for SRS}.
\end{proof}

\subsection*{Stationary random subgroups of divergence type}

 Let $\Gamma$ be a discrete subgroup  of   $\Isom{X}$ having divergence type. There exists  a $\Gamma$-\emph{quasi-conformal density} $\eta^\Gamma$ of dimension $\delta(\Gamma)$. By definition, this is a map assigning to every point $x \in X$   a positive measure $\eta_x^\Gamma$ on the boundary $\partial X$ satisfying certain axioms. Such a map is unique up to a multiplicative constant. See e.g.  \cite{nicholls1989ergodic}, \cite{coornaert1993mesures} or \cite[Definition 6.6]{gekhtman2019critical} for more detailed information.

Normalize  $\eta^\Gamma_{x_0}$ to be a probability measure. 
For discrete subgroups of divergence type, the total measure $\|\eta_x^\Gamma\| = \eta_x^\Gamma(\partial X)$ can be characterized in terms of a certain Poincare series. Namely, for each pair of points $x,y \in X$ denote
\begin{equation}
\mathcal{P}_\Gamma(s;x,y)=\sum_{\gamma \in \Gamma} e^{-sd(x,\gamma y)}.
\end{equation}
We get  as a special case $\mathcal{P}_\Gamma(s) = \mathcal{P}_\Gamma(s;x_0,x_0)$.
Note  that $\mathcal{P}_\Gamma(s;x,y)=\mathcal{P}_\Gamma(s;y,x)$. 
 For any pair of points $x,y \in X$ we have according to   \cite[Lemmas 6.1 and 6.2]{matsuzaki2020normalizer} that 
\begin{equation}
\label{eq:MYJ}
\|\eta^\Gamma_{x}\| \approx \lim_{s \searrow \delta(\Gamma)}\frac{\mathcal{P}_\Gamma(s;x,y)}{\mathcal{P}_\Gamma(s;x_0,y)}.
\end{equation}

\emph{
We emphasize at this point that the multiplicative constant implicit in all our formulas involving the notation $\approx$   depends only on the metric space $X$, and not on anything else.}

Let $\mathcal{M}(\partial X)$ denote the space of all positive Borel measures on the boundary at infinity $\partial X$. In fact, if $\nu$ is a discrete $\mu$-stationary random subgroup of divergence type then there is a $\nu$-measurable mapping
\begin{equation}
\eta : \DSub{\Isom{X}} \times X \to \mathcal{M}(\partial X), \quad \eta : (\Gamma,x) \mapsto \eta^\Gamma_x 
\end{equation}
defined for all discrete subgroups $\Gamma$ and all points $x \in X$  in such  a way  that the assignment $x \mapsto \eta^\Gamma_{x}$ is a $\Gamma$-quasi-conformal density of dimension $\delta(\Gamma)$.
Consider the expression
\begin{equation}
\label{eq:pi_nu}
\pi_\nu : \Isom{X} \times \DSub{\Isom{X}} \to \mathbb{R}_{> 0 }, \quad \pi_\nu(g,\Gamma) = \|\eta_{g^{-1}x_0}^\Gamma\| 
\end{equation}
for all elements $g \in  \Isom{X}$ and all subgroups $ \Gamma\in\DSub{\Isom{X}}$.
The function $\pi_\nu$ is a \emph{multiplicative quasi-cocycle}, i.e.
\begin{equation}
\pi_{\nu}(gh,\Gamma) \approx \pi_{\nu}(g,\Gamma^h) \pi_{\nu}(h,\Gamma).
\end{equation}
For all this see   \cite[\S5 and \S6]{gekhtman2019critical}.

By combining the definition of the quasi-cocycle $\pi_\nu$ in Equation (\ref{eq:pi_nu}) with the characterization of the total mass of the conformal density in Equation (\ref{eq:MYJ}) we obtain the formula
\begin{equation}
\pi_{\nu}(g,\Gamma)\approx \lim_{s\searrow \delta(\nu)} \frac{\mathcal{P}_\Gamma(s;g^{-1}x_0,g^{-1}x_0)}{\mathcal{P}_\Gamma(s;g^{-1}x_0,x_0)} \approx \lim_{s\searrow \delta(\nu)} \frac{\mathcal{P}_\Gamma(s;g^{-1}x_0,x_0)}{\mathcal{P}_\Gamma(s;x_0,x_0)}.
\end{equation}
One consequence of the above is that 
\begin{equation} \label{quasiPoincare}
 \pi_{\nu}(g,\Gamma)^2 \approx  \lim_{s\searrow \delta(\nu)} \frac{\mathcal{P}_\Gamma(s;g^{-1} x_0,g ^{-1} x_0)}{\mathcal{P}_\Gamma(s;x_0,x_0) } = \lim_{s\searrow \delta(\nu)} \frac{\mathcal{P}_{  \Gamma ^g}(s)}{\mathcal{P}_\Gamma(s)}.
\end{equation}

The triangle inequality gives for each 
  $s>\delta(\Gamma)$  that
  \begin{equation} \label{PSisL1}
    -2s\|g\| \leq \log\frac{\mathcal{P}_\Gamma(s;g^{-1}x_0,g^{-1}x_0)}{\mathcal{P}_\Gamma(s;g^{-1}x_0,x_0)}  \leq 2s\|g\|.
\end{equation}
In particular  $|\pi_\nu(g,\Gamma)|\leq 2 \delta(\Gamma) \|g\|$ for all $g \in \Isom{X}$ and $\Gamma \in \mathrm{DSub}(\Isom{X})$.

\begin{remark}
We have allowed for  a slight abuse of notation; the above limits  as $s \searrow \delta(\Gamma)$  might not  exist in the strict sense. The correct statement is that the limits only exist along some subsequence, and the limit is independent of the choice of this subsequence up to a uniform multiplicative constant.
\end{remark}

\begin{prop}\label{prop:KingmannforPScocycle}
Let $\mu$ be a    probability measure on the group $\Isom{X}$ satisfying the requirements stated in Theorem \ref{thm:critical exponent for SRS}. Let $\nu$ be a discrete $\mu$-stationary random subgroup of $\Isom{X}$ such that $\nu$-almost every subgroup has  divergence type. Then $\mathrm{P}_\mu$-almost every sample path $\omega \in \Gamma^\mathbb{N}$ and $\nu$-almost every subgroup $\Delta \in \DSub{\Isom{X}}$ satisfy
\begin{equation}
\label{eq:n-roots go to 1}
\lim_{n\to\infty} \pi_\nu(\omega_n,\Delta)^{\frac{1}{n}} = 1.
\end{equation}
\end{prop}

\begin{proof}
To simplify notation write $G = \Isom{X}$.
 For each $s > \delta(\Gamma)$ consider the following double integral
 \begin{equation}
 \label{eq:before fubini}
 \int_{G \times \DSub{G}} \log \frac{\mathcal{P}_{  \Gamma^g}(s)}{\mathcal{P}_\Gamma(s)} \; \mathrm{d} (\mu \otimes \nu)(g,\Gamma).
 \end{equation}
The function in question is $L^1$ by the finite first moment assumption together with Equation (\ref{PSisL1}). We may apply Fubini's theorem and use the fact that $\mu * \nu = \nu$ to evaluate the double integral in Equation (\ref{eq:before fubini}) as being equal to
\begin{equation}
\int_{\mathrm{DSub}(G)} \left( \int_G \log \mathcal{P}_{ \Gamma^g}(s) \; \mathrm{\mu}(g) - \log \mathcal{P}_\Gamma(s) \right) \; \mathrm{d} \nu(\Gamma) = 0.
\end{equation}
 Thus, the dominated convergence theorem combined with the limit in Equation \ref{quasiPoincare} imply that the function $\log \pi_\nu $ is   $(\mu \times \nu)$-integrable. More precisely, it satisfies 
\begin{equation}
\label{eq:bound on integral of log}
\abs*{ \int_{G \times \DSub{G}} \log \pi_\nu (g,\Gamma )\; \mathrm{d}(\mu \otimes \nu)(g,\Gamma)}   \leq C_0
\end{equation}
for some constant $C_0 > 0$ depending only on the metric space $X$. In particular,  the constant $C_0$ is independent   of the measure $\mu$, so that Equation \ref{eq:bound on integral of log} continues to hold, say, if $\mu$ is replaced by any of its convolution powers $\mu^{*n}$.
 
We proceed with establishing Equation (\ref{eq:n-roots go to 1}).  Consider the sequence of functions $f_n:G^\mathbb{N} \times \DSub{G}  \to \mathbb{R}_{>0}$ given by 
\begin{equation}
f_n((g_i),\Delta)=\log \pi_\nu (g_1...g_n,\Delta) \quad \forall (g_i)\in G^\mathbb{N}, \Delta \in \DSub{G}
\end{equation}
and for all indices $n \in \mathbb{N}$. Define the transformation 
\begin{equation}
T:G^\mathbb{N}\times \DSub{G} \to G^\mathbb{N}\times \DSub{G}
\end{equation}
by 
\begin{equation}
T((g_i)^\infty_{i=1},\Delta)=((g_{i+1})^\infty_{i=1},\Delta^{g_1}).
\end{equation}
The transformation $T$ preserves the product probability measure $\mu^\mathbb{N}\times \nu$ and acts ergodically. Since $\pi_\mu$ is a multiplicative quasi-cocycle we get
 \begin{equation}
 f_n \circ T^m +f_m-C_1\leq f_{n+m}\leq  f_n \circ T^m +f_m+C_1
 \end{equation}
 for all $n,m\in\mathbb{N}$ where $C_1>0$ is some constant depending only on the metric space $X$.
Denote  $f'_n=f_n+C_1$. Since $\log \pi_v$ belongs to $L^1(\mu \times \nu)$, so do the functions $f_n$ and $f'_n$ for all $n \in \mathbb{N}$. The sequence of functions $f'_n$ satisfies the inequalities
\begin{equation}
f'_{n+m}\leq  f'_n \circ T^m +f'_m 
\end{equation}
for all $n,m \in \mathbb{N}$. 
Kingman's subadditive ergodic theorem implies that
\begin{equation}
\lim_{n\to \infty}\frac{f_n}{n}=\lim_{n\to \infty}\frac{f'_n}{n}=\lim_{n\to \infty} \int \frac{f'_n}{n} \; \mathrm{d}(\mu^\mathbb{N}\times \nu) = \lim_{n\to \infty} \int \frac{f_n}{n} \; \mathrm{d}(\mu^\mathbb{N}\times \nu)
\end{equation}
in the sense of pointwise convergence  at  $(\mu^\mathbb{N}\times \nu)$-almost all points. Note that 
\begin{equation}
 \abs*{ \int_{G \times \DSub{G}} f_n \; \mathrm{d} (\mu^{*n} \times \nu) }=\abs*{\int_{G \times \DSub{G}} \log \pi_\nu  \; \mathrm{d} (\mu^{*n} \times \nu)  } \leq C_1
\end{equation}
independently of $n$, as we have explained above,
Therefore, we conclude that $\frac{f_n}{n} \to 0$ pointwise at $(\mu^\mathbb{N}\times \nu)$-almost all points as $n \to \infty$. This is equivalent to the desired conclusion in Equation (\ref{eq:n-roots go to 1}).
\end{proof}

\begin{proof}[Proof of the divergence type case in Theorem \ref{thm:critical exponent for SRS}]
Write   $G = \Isom{X}$ to simplify notations.
Let $\nu$ be a discrete divergence type $\mu$-stationary random subgroup of the group $G $.  Consider the multiplicative cocycle $\pi_\nu : G \times \DSub{G} \to \mathbb{R}_{> 0}$ associated to the random subgroup $\nu$.

Assume without loss of generality that the measure $\nu$ is ergodic. Therefore the critical exponent  regarded as a $\nu$-measurable function $\delta : \DSub{G} \to \mathbb{R}_{\ge 0}$   is $\nu$-almost everywhere constant \cite[p. 424]{gekhtman2019critical}. Denote this constant by $\delta(\nu)$. So  $\delta(\Delta) = \delta(\nu)$ for $\nu$-almost every subgroup $\Delta$. In this terminology, our goal is to prove that $\delta(\nu) \ge \delta(\mu)$.

Take a Borel subset $Y \subset \DSub{G}$ with $\nu(Y) > 1 - \zeta(\mu)$ with the properties provided by  \cite[Proposition 7.2]{gekhtman2019critical}. Here $\zeta(\mu) > 0$ is the constant provided by Lemma \ref{lemma:from the blackboard}. For each discrete subgroup $\Delta$ denote 
\begin{equation}
W_\Delta  = \{\gamma  \in \Gamma \: : \: \Delta^\gamma \in Y\}.
\end{equation}
There is some $k \in  \mathbb{N}$ such that $\nu$-almost every discrete subgroup $\Delta \in Y$ satisfies
\begin{equation}
\sum_{\substack{\gamma \in W_\Delta   \\ i-k \le \|\gamma\| \le i}} \pi_\nu(\gamma,\Delta) \le c_1 e^{\delta(\nu) i}    
\end{equation}
for some constant $c_1 > 0$ and for all $i \in \mathbb{N}$, see \cite[Proposition 7.3]{gekhtman2019critical}. We remark that these two propositions from \cite{gekhtman2019critical} were proved in the context of invariant random subgroups but their proofs don't use the invariance of the measure $\nu$ in any way. They are valid for any probability measure  on the space $\DSub{G}$ whatsoever. 
Changing summation from annuli to spheres gives
\begin{equation}
\sum_{\substack{\gamma \in W_\Delta    \\   \|\gamma\| \le j}} \pi_\nu(\gamma,\Delta) \le \sum_{i=1}^j \sum_{\substack{\gamma \in W_\Delta    \\ i-k \le \|\gamma\| \le i}} \pi_\nu(\gamma,\Delta) \le c_1 \sum_{i=1}^j e^{\delta(\nu) i} \leq c_2 e^{\delta(\nu)j}   
\end{equation} 
for all indices $j \in \mathbb{N}$ where $c_2=\frac{c_1}{1-e^{-\delta(\nu)}} $.

We contrast the previous paragraph with the fact that $\lim_n \pi_\nu(\omega_n,\Delta)^\frac{1}{n} =1$ holds true for $\mathrm{P}_\nu$-almost every sample path $\omega$ and $\nu$-almost every discrete subgroup $\Delta$. Fix an arbitrary parameter $a > 0$.   Proposition \ref{prop:TanakawithF} shows that   $\nu$-almost every subgroup $\Delta$ satisfies
\begin{equation}
\label{eq:is it possible}
\limsup_{j \to \infty} e^{-(\delta(\mu)-a)j} \sum_{\substack{\gamma \in W_\Delta \\  \|\gamma\|\leq j}} \pi_\nu(\gamma,\Delta)>0.
\end{equation}
Equation (\ref{eq:is it possible}) is only possible  if $ \delta(\mu) - a \le \delta(\nu)$. Taking the parameter $a$ to be arbitrary small we get $\delta(\mu) \le \delta(\nu)$ as required.
\end{proof}

The following strategy  is taken from \cite{gekhtman2019critical}.

\begin{proof}[Proof of the strict inequality in Theorem \ref{thm:critical exponent for SRS}]
Let  be $\nu$ be a discrete $\mu$-stationary random subgroup on $\Isom{X}$. We have already proved that $\mathcal{P}_\Delta \left(\frac{\delta(\mu)}{2}\right) = \infty $ so that in particular $\delta(\nu) \ge \frac{\delta(\mu)}{2}$. Assume towards contradiction that $\delta(\nu) = \frac{\delta(\mu)}{2}$. This means that the $\mu$-stationary random subgroup $\nu$ is almost surely of divergence type. It follows from the above proof  that $\delta(\nu) \ge \delta(\mu)$. This is a contradiction as $\delta(\mu) > 0$.
\end{proof}

The above paragraphs complete the proof of Theorem \ref{thm:critical exponent for SRS}.

\subsection*{A probability 
measure $\mu$ maximizing $\delta(\mu)$}
The exact lower bound on the critical exponent of discrete stationary random subgroups obtained in Theorem \ref{thm:critical exponent for SRS} depends on the parameter $\delta(\mu) = h(\mu)/l(\mu)$ of the probability measure $\mu$. In this sense, our results are stronger, the greater is the value of $\delta(\mu)$. We now discuss a particular construction  maximizing the parameter $\delta(\mu)$ following Connell and Muchnik  \cite{Connell-Muchnik}.

Recall that $X$ is a proper  Gromov hyperbolic geodesic metric space with a non-elementary group of isometries  $\Isom{X}$ and a fixed arbitrary base point $x_0 \in X$. Assume that $\Isom{X}$ admits a uniform lattice denoted by $\Gamma$. 

Let $\eta^\Gamma$  be a $\Gamma$-quasi-conformal density of dimension $\delta(\Gamma)$. Normalize so that $\eta^{\Gamma}_{x_0}(\partial X) = 1$.
The main result of \cite{Connell-Muchnik} asserts that there is a probability measure $\mu_\Gamma$ supported\footnote{In fact $\mu_\Gamma$ can be taken so that $\mathrm{supp}(\mu_\Gamma) = \Gamma$,   see  \cite[Remark 0.6]{Connell-Muchnik}.} on the lattice $\Gamma$ such that the probability measure $\eta^{\Gamma}_{x_0}$ on the boundary $\partial X$ is $\mu$-stationary. Now, on the one hand, the Hausdorff dimension of a $\mu$-stationary probability measure on the boundary equals $h(\mu)/l(\mu)$ \cite[Theorem 1.1]{tanakadimension}. On the other hand, the Hausdorff dimension of $\delta(\Gamma)$-dimensional $\Gamma$-quasi-conformal measures equals $\delta(\Gamma) = \mathrm{dim}_\mathrm{Haus}(\partial X)$ \cite[Theorem 7.7]{coornaert1993mesures}. We deduce that the measure $\mu_\Gamma$ constructed by the Connell and Muchnik satisfies 
\begin{equation}
\delta(\mu_\Gamma)=\delta(\Gamma)=\mathrm{dim}_\mathrm{Haus}(\partial X). 
\end{equation}
Recall  that $\delta(\Gamma)$ is the maximal possible such value for $\delta(\mu)$ by the   fundamental inequality of Guivarch  \cite{Guivarch}.

In the special case where the space $X$ is assumed to be $
\mathrm{CAT}(-1)$, the measure $\mu_\Gamma$ constructed in \cite{Connell-Muchnik} can be chosen to have finite first moment. The analogous fact for symmetric spaces  of rank one (or more generally, universal covers of closed pinched negatively curved Riemannian manifolds) follows by the method of discretization of Brownian motion, see  \cite{furstenberg1971random,lyons1984function,Ledrappierharmonic,margulis1991discrete,ballmann1992discretization,blachere2011harmonic}. For symmetric spaces, the $\mu_\Gamma$-stationary probability measure on the boundary $\partial X$ coincides with the unique $K$-invariant one, where $K = \mathrm{stab}_{\Isom{X}}(x_0)$.

For general Gromov hyperbolic spaces which are not   $\mathrm{CAT}(-1)$,    it is not known whether one   can find such  a measure $\mu_\Gamma$ supported on the lattice $\Gamma$  with $\delta(\mu_\Gamma)=\delta(\Gamma)$ and with finite first moment. However, the construction of Gouezel--Matheus--Maucourant   \cite[Theorem 1.4]{gouezel2018entropy} exhibits a sequence  of probability measures $\mu_i$ supported on finite subsets of the lattice $\Gamma$ and satisfying $\delta(\mu_i) \to \delta(\Gamma)$. In fact, each such $\mu_i$ can be taken to be the uniform probability measure supported on the   annulus given by
$ \{\gamma \in \Gamma \: : \: k \le \|\gamma\| < k+1\}$.

It is interesting to specialize Theorem \ref{thm:critical exponent for SRS} to the  situation when we know of an explicit   measure $\mu_\Gamma$ satisfying $\delta(\mu_\Gamma) = \dim_\textrm{Haus}(\partial X)$. In that case, we immediately obtain the following.

\begin{cor}
\label{cor:srs for connel-muchnik}
Let $X$ be a $\mathrm{CAT}(-1)$  space. Assume that $\Isom{X}$ admits a uniform lattice $\Gamma$. Fix\footnote{Such a probability measure $\mu_\Gamma$ always exists for any $\mathrm{CAT}(-1)$-space admitting a uniform lattice; see the preceding discussion relying on the work of \cite{Connell-Muchnik}.} a probability measure  $\mu_\Gamma$  whose support generates $\Gamma$ as a semigroup, of finite first moment and satisfying $\delta(\mu_\Gamma) = \dim_\mathrm{Haus}(\partial X)$.  Let $\nu$ be a non-trivial discrete $\mu$-stationary random subgroup  on $\Isom{X}$. Then 
\begin{itemize}
\item $\delta(\Delta) > \frac{1}{2} \dim_\mathrm{Haus}(\partial X)$ for $\nu$-almost every subgroup $\Delta$, and
\item if $\nu$-almost every subgroup $\Delta$ has divergence type then $\delta(\Delta) = \dim_\mathrm{Haus}(\partial X)$.
\end{itemize}
\end{cor}

Corollary \ref{cor:srs for connel-muchnik} certainly applies to all rank one symmetric spaces and Bruhat--Tits buildings.
Lastly we consider stationary random subgroups of free groups.

\begin{cor}
Let $F_k$ be the free group on $k$ generators. Let $\mu_k$ be the uniform measure on the standard symmetric generating set. Then any $\mu_k$-stationary random subgroup of $F_k$ has
\begin{itemize}
\item $\delta(H) > \frac{\log(2k-1)}{2}$ for $\nu$-almost every subgroup $\Delta$, and
\item if $\nu$-almost every subgroup $H$ has divergence type then $\delta(H) = \log(2k-1)$.
\end{itemize}
\end{cor}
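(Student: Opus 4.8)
The plan is to deduce this corollary directly from Theorem \ref{thm:critical exponent for SRS} by specializing to the action of $F_k$ on its Cayley graph, where throughout we take $k \ge 2$ so that $F_k$ is non-amenable. First I would let $X$ be the Cayley graph of $F_k$ with respect to the standard basis; this is the $2k$-regular tree and in particular a proper, geodesically complete Gromov hyperbolic geodesic metric space whose full isometry group $\Isom{X} = \mathrm{Aut}(X)$ is non-elementary. The group $F_k$ embeds into $\Isom{X}$ acting freely, properly and cocompactly. A $\mu_k$-stationary random subgroup $\nu$ of $F_k$ may be regarded as a $\mu_k$-stationary random subgroup of $\Isom{X}$ via the inclusion $\Sub{F_k} \into \DSub{\Isom{X}}$, which is legitimate because every subgroup of $F_k$ is discrete and $\mathrm{supp}(\mu_k) \subset F_k$. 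The hypotheses of Theorem \ref{thm:critical exponent for SRS} are then immediate: the measure $\mu_k$ is finitely supported, hence has finite $p$-th moment for every $p > 1$, while its support is the symmetric generating set $\{a_1^{\pm 1},\ldots,a_k^{\pm 1}\}$, which generates $F_k$ as a semigroup and yields a discrete subgroup acting properly and cocompactly on $X$.

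Next I would pin down the value $\delta(\mu_k) = \log(2k-1)$, already recorded in the introduction. At each step the word length of the simple random walk increases by one with probability $\frac{2k-1}{2k}$ and decreases by one with probability $\frac{1}{2k}$, so the drift equals $l(\mu_k) = \frac{k-1}{k}$; the asymptotic entropy is the classical value $h(\mu_k) = \frac{k-1}{k}\log(2k-1)$, whence
\begin{equation*}
\delta(\mu_k) = \frac{h(\mu_k)}{l(\mu_k)} = \log(2k-1).
\end{equation*}
This is exactly the equality case of the fundamental inequality of Guivarch for the free group, and it also matches the growth critical exponent $\delta(F_k) = \log(2k-1)$. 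Finally, since $X$ is a regular tree, $\Isom{X}$ acts with general type and full limit set, and an automorphism fixing every end of the tree is necessarily the identity (otherwise a nontrivial translation along one fixed geodesic would move the branch vertex of a transverse geodesic, violating uniqueness of geodesics between two fixed ends); hence the elliptic radical $\mathrm{E}(\Isom{X})$ is trivial. Consequently the hypothesis of Theorem \ref{thm:critical exponent for SRS} that $\nu$-almost every subgroup is not contained in the elliptic radical is equivalent to the (implicit) assumption, as in the preceding corollary, that $\nu$-almost every subgroup is non-trivial.

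With these identifications in hand the corollary follows at once. The strict lower bound in Theorem \ref{thm:critical exponent for SRS} gives $\delta(\Delta) > \frac{\delta(\mu_k)}{2} = \frac{\log(2k-1)}{2}$ for $\nu$-almost every non-trivial subgroup $\Delta$, which is the first bullet. For the second bullet, if $\nu$-almost every subgroup is of divergence type then the divergence case of Theorem \ref{thm:critical exponent for SRS} yields $\delta(\Delta) \ge \delta(\mu_k) = \log(2k-1)$ almost surely, while the trivial upper bound $\delta(\Delta) \le \delta(F_k) = \log(2k-1)$ holds because $\Delta$ is a subgroup of the cocompact group $F_k$; combining the two inequalities gives $\delta(\Delta) = \log(2k-1)$. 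As this is a straightforward specialization, the only genuine points of substance are the verification that $\delta(\mu_k) = \log(2k-1)$, i.e.\ equality in Guivarch's inequality for the simple random walk, and the observation that the elliptic radical vanishes so that non-triviality is the correct genericity hypothesis; neither presents a real obstacle.
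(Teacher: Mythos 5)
Your proposal is correct and follows essentially the same route as the paper: both reduce the corollary to Theorem \ref{thm:critical exponent for SRS} after identifying $\delta(\mu_k) = h(\mu_k)/l(\mu_k) = \log(2k-1)$, the only cosmetic difference being that you compute the drift and entropy of the simple random walk directly while the paper cites Ledrappier for this equality. Your added remarks --- that the elliptic radical of $\mathrm{Aut}$ of the tree is trivial so the genericity hypothesis amounts to non-triviality, and that the divergence-type conclusion combines the lower bound $\delta(\Delta) \ge \delta(\mu_k)$ with the trivial upper bound $\delta(\Delta) \le \delta(F_k)$ --- are exactly the points the paper leaves implicit.
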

\begin{proof}
The finitely supported measure $\mu_k$ satisfies
\begin{equation}
\delta(\mu) = \frac{l(\mu)}{h(\mu)} = \dim_\mathrm{Haus}(\partial X) = \log(2k-1)
\end{equation}
  by \cite{ledrappier2001some}. The desired conclusion follows from this and from Theorem \ref{thm:critical exponent for SRS}.
\end{proof}

\section{Random walks and confined subgroups}
\label{sec:random walks and confined subgroups}

In this section, we apply our results on critical exponents of stationary random subgroups 
to show that subgroups of small critical exponent are not confined.

\subsection*{Confined subgroups}

Let $G$ be a second countable locally compact group. 

\begin{defn}
\label{def:confined subgroup}
A subgroup $H$ of the group $G$  is called \emph{confined} if there is a compact subset $K \subset G$   so that  $(H^g \cap K) \setminus \{e\} \neq \emptyset $ for every element $g \in G$.
\end{defn}
The notion of confined subgroups admits an equivalent characterization in terms of the Chabauty topology.

\begin{prop}
\label{prop:Chabauty characterization of confined}
If the trivial subgroup $\{e\}$ does not belong to the Chabauty closure  $\overline{  \{H^g \: : \: g \in G\} } $    then the subgroup $H \le G$ is  confined. The converse direction holds provided that the group $G$ has no small subgroups\footnote{A topological group $G$ has \emph{no small subgroups} if it admits a neighborhood of the identity containing no non-trivial subgroups. For example, Lie groups as well as discrete groups have no small subgroups.}. 
\end{prop}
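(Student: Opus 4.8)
The plan is to reduce both implications to a single combinatorial reformulation of the condition that $\{e\}$ lie in the Chabauty closure $\overline{\{H^g : g \in G\}}$, and then to treat the two directions separately, the converse being the only place where the absence of small subgroups is used. First I would record the subbasis for the Chabauty topology on the space of closed subgroups of $G$: it is generated by the sets $\{F : F \cap C = \emptyset\}$ with $C \subset G$ compact and $\{F : F \cap U \neq \emptyset\}$ with $U \subset G$ open. The crucial observation is that every conjugate $H^g$ contains the identity $e$, as does the trivial subgroup $\{e\}$. Hence for any open $U$ with $e \in U$ the condition $H^g \cap U \neq \emptyset$ holds automatically for all $g$, so the only subbasic constraints separating $\{e\}$ from the family $\{H^g\}$ are those of the form $F \cap C = \emptyset$ with $e \notin C$. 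Absorbing finitely many compact sets into one, I would obtain the equivalence: $\{e\} \in \overline{\{H^g : g \in G\}}$ if and only if for every compact $C$ with $e \notin C$ there is some $g$ with $H^g \cap C = \emptyset$. Taking the contrapositive, $\{e\} \notin \overline{\{H^g\}}$ means precisely that there is a compact $C$ with $e \notin C$ such that $H^g \cap C \neq \emptyset$ for all $g$.

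The forward direction is then immediate: given such a compact $C$, since $e \notin C$ we have $(H^g \cap C) \setminus \{e\} = H^g \cap C \neq \emptyset$ for every $g$, so $C$ witnesses that $H$ is confined in the sense of Definition~\ref{def:confined subgroup}. No hypothesis on $G$ is required here.

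For the converse I would assume $H$ confined and fix a compact witness $K$ with $(H^g \cap K)\setminus\{e\} \neq \emptyset$ for all $g$, choosing $y_g \in (H^g \cap K)\setminus\{e\}$ for each $g$. The difficulty, and the reason small subgroups must be excluded, is that a priori $y_g$ may lie arbitrarily close to $e$, so $K$ need not yield a compact set bounded away from $e$ meeting every conjugate. To overcome this I would use the no-small-subgroups hypothesis, together with local compactness, to pick an open neighborhood $V$ of $e$ with compact closure $\overline V$ containing no nontrivial subgroup. If $y_g \notin V$ then $y_g \in K \setminus V$, a compact set excluding $e$. If instead $y_g \in V$, then $\langle y_g\rangle$ is a nontrivial subgroup and so is not contained in $\overline V$, hence not in $V$; letting $n \ge 2$ be minimal with $y_g^{\,n} \notin V$, we get $y_g^{\,n} = y_g^{\,n-1} y_g \in V \cdot V \subseteq \overline V \cdot \overline V$ while $y_g^{\,n} \notin V$, so $y_g^{\,n}$ lies in the compact set $(\overline V \cdot \overline V)\setminus V$, which excludes $e$, and $y_g^{\,n} \in H^g$ since $H^g$ is a subgroup. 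Thus in either case $H^g$ meets the single compact set $C^\ast = \big(K \cup (\overline V \cdot \overline V)\big)\setminus V$, which is compact and satisfies $e \notin C^\ast$. By the equivalence above this gives $\{e\} \notin \overline{\{H^g\}}$. The main obstacle is exactly this last step, where the power trick $y_g \mapsto y_g^{\,n}$ converts \enquote{meets $K$ near $e$} into \enquote{meets a compact set uniformly bounded away from $e$}, and it is precisely here that the no-small-subgroups assumption is indispensable.
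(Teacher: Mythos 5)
Your proof is correct and follows essentially the same route as the paper: the forward direction via the Chabauty subbasis (a basic neighborhood of $\{e\}$ reduces to avoiding a single compact set not containing $e$), and the converse by replacing the witness $K$ with $(K \cup \overline{V}\cdot\overline{V})\setminus V$ for a relatively compact identity neighborhood $V$ containing no nontrivial subgroups. Your write-up merely makes explicit the minimal-power trick $y_g \mapsto y_g^{\,n}$ that the paper leaves implicit in the phrase \enquote{$K$ can be replaced with $K_1$}.
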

\begin{proof}
Assume that the trivial subgroup is not in the Chabauty closure of the family of all subgroups conjugate to $H$. So, there is some Chabauty neighborhood $\Omega$ with $\{e\} \in \Omega$ but $H^g \notin \Omega$ for all elements $g \in G$. The description of the standard sub-basis for the Chabauty topology given in  \cite[\S1]{gelander2015lecture} shows that $\Omega$ contains a subset of the form $\{ L \in \Sub{G} \: : \: L \cap K = \emptyset\}$ for some compact subset $K \subset G $. The desired conclusion follows.

For the converse direction assume that the group $G$ has no small subgroups  and that $H$ is a confined subgroup. In particular, there is some compact subset $K \subset G$ so that $(H^g \cap K) \setminus \{e\} \neq \emptyset$ for every element $g \in G$. Let $V$ be a symmetric relatively compact identity neighborhood  in the group $G$ containing no non-trivial subgroups. Then the compact subset $K$ can be replaced with $K_1 = (K \cup \overline{V}^2) \setminus V$ to exhibit that the subgroup $H$ is confined. This means that any subgroup in the Chabauty closure $\overline{ \{H^g \: : \: g \in G\} }$ intersects the compact set $K_1$ non-trivially and is in particular non-trivial.
\end{proof}

\begin{example}
Let $G$ be a rank one simple Lie group with associated symmetric space $X$. A discrete torsion-free subgroup $\Gamma$ of the Lie group $G$ is confined if and only if there is an upper bound on the injectivity radius at all points of the locally symmetric space $M_\Gamma = \Gamma \backslash X$.
\end{example}

\subsection*{Continuity of the critical exponent} 
Our method relies on the fact that the critical exponent is   lower semi-continuous with respect to the Chabauty topology.

\begin{prop}
\label{prop:delta is semi-continuous}
Let $\Delta_n \to \Delta$ be a Chabauty converging sequence of discrete subgroups. Assume that the subgroup $\Delta$ is discrete. Then
\begin{equation}
\liminf_n \delta(\Delta_n) \ge \delta(\Delta).
\end{equation}
\end{prop}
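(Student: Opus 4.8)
The plan is to prove the statement by approximating the critical exponent of the limit group from within by that of a Schottky (ping-pong) subgroup, and then transferring this subgroup into the $\Delta_n$ using the stability of the ping-pong condition under Chabauty convergence. First I would dispose of the degenerate situations: since $\delta(\cdot)\ge 0$ always, there is nothing to prove when $\delta(\Delta)=0$, so I may assume $\delta(\Delta)>0$. It suffices to treat the case where $\Delta$ is non-elementary, which is the case relevant to our applications where the subgroups in question have full limit set; the elementary cases are either trivial (bounded and lineal actions have $\delta=0$) or have their growth concentrated in a parabolic subgroup fixing a boundary point, a situation I would handle by the analogous but more elementary perturbation of a finitely generated parabolic subgroup. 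Fixing any $s<\delta(\Delta)$, it then suffices to show $\liminf_n\delta(\Delta_n)\ge s$ and afterwards let $s\uparrow\delta(\Delta)$.

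Since $\Delta$ is a discrete non-elementary group of isometries of the proper Gromov hyperbolic space $X$, its critical exponent is approximated from below by the critical exponents of its finitely generated Schottky subgroups; I would use this to choose loxodromic elements $g_1,\dots,g_k\in\Delta$ generating a free Schottky subgroup $\Delta'=\langle g_1,\dots,g_k\rangle\le\Delta$ with $\delta(\Delta')>s$. Such a subgroup carries a ping-pong configuration: pairwise disjoint neighbourhoods $U_1^\pm,\dots,U_k^\pm$ of the fixed points of the $g_i$ in the compactification $\overline X=X\cup\partial X$ satisfying $g_i^{\pm1}(\overline X\setminus U_i^\mp)\subset U_i^\pm$. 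Because $\Isom{X}$ acts by homeomorphisms on the compact space $\overline X$ and the defining inclusions are strict, this configuration is an \emph{open} condition on the tuple $(g_1,\dots,g_k)\in\Isom{X}^k$, and it already forces each $g_i$ to be loxodromic with north--south dynamics.

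Next I would transfer $\Delta'$ into the approximating groups. Chabauty convergence $\Delta_n\to\Delta$ provides, for each $i$, elements $g_i^{(n)}\in\Delta_n$ with $g_i^{(n)}\to g_i$. By the openness of the ping-pong condition the perturbed tuple $(g_1^{(n)},\dots,g_k^{(n)})$ satisfies the same configuration for all large $n$, and hence generates a rank-$k$ Schottky subgroup $\Delta_n'\le\Delta_n$. Since the generators and their ping-pong data vary continuously, the word metric on $\Delta_n'$ stays uniformly comparable to $\|\cdot\|$ and the displacements $\|g_i^{(n)}\|$ converge to $\|g_i\|$, so the Schottky critical exponent depends continuously on the generators and $\delta(\Delta_n')\to\delta(\Delta')>s$. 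Therefore $\delta(\Delta_n)\ge\delta(\Delta_n')>s$ for all large $n$, whence $\liminf_n\delta(\Delta_n)\ge s$, and letting $s\uparrow\delta(\Delta)$ gives the claim.

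I expect the main obstacle to be the two quantitative inputs behind this scheme: the inward approximation $\delta(\Delta)=\sup_{\Delta'\text{ Schottky}}\delta(\Delta')$ and, more delicately, the continuity (or at least lower semicontinuity) of $\delta(\Delta_n')$ under perturbation of the generators, for which one must control the implied ping-pong constants uniformly in $n$ and use the discreteness of $\Delta$ to ensure that the limiting elements $g_i$ are isolated, so that the approximants are eventually distinct and the transferred configuration is genuinely non-degenerate. It is worth emphasising that the group structure is indispensable here: a naive finite-approximation argument only shows that the partial Poincar\'e sums of $\Delta_n$ at the exponent $s$ are eventually large, which does \emph{not} bound $\delta(\Delta_n)$ from below, and indeed Chabauty limits of finite truncations of a set of exponential growth would violate the conclusion, so the freeness extracted by the ping-pong is what does the work.
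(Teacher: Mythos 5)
Your route is genuinely different from the paper's. The paper's proof is soft: for each $n$ take a $\Delta_n$-quasi-conformal density of dimension $\delta(\Delta_n)$ (Patterson's construction, available for any discrete group), normalize at the basepoint, pass to a weak-$*$ limit, and observe that the limit is a $\Delta$-quasi-conformal density of dimension $\delta_0=\liminf_n\delta(\Delta_n)$; the existence of such a density forces $\delta(\Delta)\le\delta_0$ by \cite{coornaert1993mesures}. That argument treats all $\Delta$ uniformly and never needs to produce elements inside $\Delta_n$. Your scheme — inward approximation by Schottky subgroups plus stability of the ping-pong configuration under Chabauty perturbation — is a legitimate alternative in the general-type case, but as written it has two genuine gaps.

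First, the elementary cases are not actually handled. A discrete horocyclic group can have large critical exponent (a rank-$k$ parabolic subgroup of $\Isom{\mathbb{H}^n}$ has $\delta=k/2$), and there is no ``analogous but more elementary perturbation'': a parabolic $g_i$ is approximated by elements $g_i^{(n)}\in\Delta_n$ that may be elliptic or loxodromic, there is no open ping-pong certificate to transfer, and nothing in your argument prevents the growth of $\langle g_1^{(n)},\dots,g_k^{(n)}\rangle$ from collapsing. This case cannot be waved away, since in the paper's application (Theorem \ref{thm:not confined both cases at once}) the proposition is applied to arbitrary Chabauty limits of conjugates, which may well be elementary; the same remark applies to discrete focal actions, where no Schottky subgroup of general type exists at all. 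Second, the step ``the Schottky critical exponent depends continuously on the generators'' does not follow from what you offer in its support: uniform comparability of the word metric with $\|\cdot\|$ and convergence of the displacements only give two-sided \emph{multiplicative} bounds on $\delta(\Delta_n')$, uniform in $n$, not $\delta(\Delta_n')\to\delta(\Delta')$. To close this you must either (i) re-prove the inward approximation $\delta(\Delta)=\sup_{\Delta'\ \mathrm{Schottky}}\delta(\Delta')$ (itself a nontrivial theorem you quote without reference, cf.\ \cite{das2017geometry}) in a quantitative form, producing a ping-pong certificate for $\delta(\Delta')>s$ whose constants are stable under perturbation of the generators, or (ii) note that Schottky groups are of divergence type and apply the known divergence-type case of the very proposition you are proving (\cite[Proposition 7.2]{matsuzaki2020normalizer}) to the convergent sequence $\Delta_n'\to\Delta'$. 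Option (ii) salvages that step cleanly, but the elementary cases remain an unfilled hole in the argument.
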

This statement is well-known to experts. See for instance \cite[Theorem 7.7]{mcmullen1999hausdorff} for the case of Kleinian groups or \cite[Proposition 7.2]{matsuzaki2020normalizer} for general Gromov hyperbolic spaces (but with an additional assumption that the groups $\Gamma_n$ are of divergence type). As we could not locate a reference for the general statement (without this additional assumption) we provide a proof for the readers' convenience.
\begin{proof}[Proof of Proposition \ref{prop:delta is semi-continuous}]
Fix an arbitrary base point $x_0 \in X$. Denote $\delta_0 = \liminf \delta(\Delta_n)$. 
Up to passing to a subsequence we may assume that $\lim_n \delta(\Delta_n) =   \delta_0$. Let $\mathcal{M}(\partial X)$ denote the space of all positive measures on the boundary $\partial X$. For each $n$ let $\eta_n : X \to \mathcal{M}(\partial X)$ be some $\Delta_n$-quasi-conformal density of exponent $\delta(\Delta_n)$ normalized so that $(\eta_n)_{x_0}(\partial X) = 1$, in other words so that   $(\eta_n)_{x_0}$ is a probability measure. Let $\eta$ be any weak-$*$ accumulation point  of the probability measures $(\eta_n)_{x_0}$ so that $\eta$ is a probability measure on the boundary $\partial X$. Enrich $\eta$ to a quasi-conformal density by setting
\begin{equation}
\eta_x(\zeta) = e^{\delta_0 \beta_X(x_0,x)} \eta(\zeta)  \quad \forall x \in X, \zeta \in \partial X.
\end{equation}
Then $\eta$ satisfies the axioms of a $\Delta$-quasi-conformal density of exponent $\delta_0$. Note that a quasi-conformal measure is not required to have its support coincide with the limit set. Since $\Delta$ admits such a quasi-conformal measure we infer that $\delta(\Delta) \le \delta_0$, see \cite[Corollaire 6.6]{coornaert1993mesures} or \cite[Theorem 2.6]{matsuzaki2020normalizer}.
\end{proof}

\subsection*{Subgroups of small critical exponent are  not confined.}

We consider discrete subgroups of simple rank one analytic groups and show that small critical exponent implies that the subgroup in question is not confined. The  Archimedean and the non-Archimedean cases will be treated in parallel.

In the real analytic case  let $G$ be a rank one simple Lie group with associated symmetric space $X$. Fix an arbitrary base point $x \in X$.

In the non-Archimedean case let $k$ be a non-Archimedean local field. Let $\mathbb{G}$ be a  connected simply-connected semisimple $k$-algebraic linear group with $\mathrm{rank}_k(\mathbb{G}) = 1$ and without $k$-anisotropic factors. Take $G = \mathbb{G}(k)$ so that $G$ is a $k$-analytic group. Let $X$ be the Bruhat--Tits building associated to the group $G$. This means that  $X$ is a $(p^{e_1} + 1,p^{e_2} + 1)$-biregular tree  for some $e_1,e_2 \in \mathbb{N}$ \cite{carbone2001classification} where $p$ is the characteristic of the residue field of $k$.  Fix an arbitrary vertex $x \in X$.

 
In either case the metric space $X$ is Gromov hyperbolic and the analytic group $G$ is acting on the space $X$ by isometries. To simplify the situation we will find it convenient to pass to the quotient by the finite kernel $\mathrm{Z}(G)$ of the action of the analytic group $G$ on the metric space $X$  and   to  assume in effect that $G \le \Isom{X}$.

Denote $K = \mathrm{stab}_G(x)$ so that $K$ is a maximal compact subgroup of the analytic group  $G$.
 The compact subgroup $K$ is acting transitively on the visual boundary $\partial X$ of the Gromov hyperbolic space $X$. 
 Let $m$ denote the unique $K$-invariant probability measure\footnote{The probability measure $m$ can be identified with the Haar measure on the coset space $K/M$ where the closed subgroup $M$ is the stabilizer of some boundary point $\infty \in \partial X$.} on the visual boundary $\partial X$.  Note that the measure $m$ depends on the the choice of the point $x \in X$.

Let $\mu_0$ be the explicit bi-$K$-invariant probability measure on the analytic group $G$ constructed in the work \cite{GLM} for the Archimedean as well as for the   non-Archimedean case. 
 The measure $\mu_0$ has the following useful property ---  for every discrete group $\Gamma$ of the analytic group $G$, any weak-$*$ accumulation point   of the sequence of probability measures $\frac{1}{n} \sum_{i=1}^n \mu_0^{*i} * \mathrm{D}_{\Gamma}$ is a \emph{discrete} $\mu_0$-stationary random subgroup      \cite[Theorem 1.6]{fraczyk2023infinite}. 
 
We remark that  in the zero characteristic non-Archimedean case there is no need to consider the measure $\mu_0$ at all, since the analytic group $G$ has no small discrete subgroups \cite[Part II, Chapter V.9, Theorem 5]{serre2009lie}.
 

Let $\nu_0$ be the  $\mu_0$-stationary  boundary measure $\nu_0$ on the visual boundary $\partial X$ provided by Theorem \ref{thm:boundary measure}. The measure $\nu_0$ is $K$-invariant. This follows from the equation  $\nu_0 = \mu_0 * \nu_0$ and using the fact that the measure $\mu_0$ is bi-$K$-invariant. Since the visual boundary $\partial X$ admits a \emph{unique} $K$-invariant probability measure $m$ it must be the case that   $\nu_0 = m$.

Consider the probability measure $\mu_1$ on the group $\Isom{X}$ as follows. Recall that we are regarding the analytic group $G$ as a subgroup of $ \Isom{X}$.
\begin{enumerate}
    \item  \emph{The real case.} Any connected non-compact semisimple Lie group admits uniform (as well as non-uniform) lattices \cite{borel1963compact}. Fix an arbitrary uniform lattice $\Gamma$ in the Lie group $G$. Take $\mu_1$ to be either the probability measure $\mu_\Gamma$ constructed by Connell--Muchnik \cite{Connell-Muchnik} or the discretization of Brownian motion on the Lie group $G$, supported on the lattice $\Gamma$ and with respect to the base point  $x$. It is a property of these  classes of measures  (discussed in \S\ref{sec:critical exponents}) that the $\mu_1$-stationary boundary measure $\nu_1$ on the visual boundary $\partial X$ is the unique $K$-invariant one. 
    \item \emph{The non-Archimedean case.} The fundamental domain for the action of the full group of isometries $\Isom{X}$ on the Bruhat--Tits tree $X$ is a single edge. In particular, the group $\Isom{X}$ admits a uniform lattice $\Gamma$ of the form  $\Gamma = A \ast B$ where $A$ and $B$ respectively are finite groups acting  simply transitively on the link of two adjacent vertices $v_A$ and $v_B$ of the tree $X$.
Let $\mu_A$ and $\mu_B$ be the uniform probability measures on the two sets $A\setminus \{e\}$ and $B\setminus \{e\}$ respectively. Let $\mu_1$ be the probability measure supported on the uniform lattice $\Gamma$ given by 
\begin{equation}
\mu = \frac{|A|-1}{|A|-1 + \rho} \mu_A + \frac{|B|-1}{|B|-1 + \rho} \mu_B
\end{equation}
where $\rho > 0 $ is the unique positive real number satisfying
\begin{equation}
\frac{|A|-1}{|A|-1+\rho} + \frac{|B|-1}{|B|-1+\rho} = 1.
\end{equation}
The probability  measure $\mu_1$   satisfies $\delta(\mu_1) = \frac{h(\mu_1)}{l(\mu_1)} = \delta(\Gamma)$. Furthermore the $\mu_1$-stationary boundary stationary measure $\nu_1$ on the visual boundary $\partial X$ is $K$-invariant. See  \cite[Proposition 5.2]{mairesse2007random} for both facts. Alternatively, for a less explicit but more uniform treatment, one may take $\mu_1$ to be the measure $\mu_\Gamma$ constructed in \cite{Connell-Muchnik} with respect to this (or any other) uniform lattice.
\end{enumerate}

In either case $\mu_1$ is a probability measure supported on a uniform lattice $\Gamma$ in the group $\Isom{X}$ and the $\mu_1$-stationary boundary measure $\nu_1$ satisfies  $\nu_1 = m$.

We conclude that the stationary boundary measures $\nu_0$ and $\nu_1$ for the two probability measures $\mu_0$ and $\mu_1$ on the analytic group $G$ coincide. These stationary boundary measures on the visual boundary $\partial X$  provide a topological model for the Poisson boundary. It follows that a given random subgroup $\nu$ of the analytic group $G$    is $\mu_0$-stationary if and only if it is $\mu_1$-stationary (see e.g. the discussion on \cite[p. 428]{GLM}).

\begin{theorem}
\label{thm:not confined both cases at once}
Let $G$ be a  simple analytic group\footnote{Here   $G$ is understood to be either a  simple rank one Lie group or a non-Archimedean analytic group of the form $\mathbb{G}(k)$   considered above.} of rank one. Let $X$ be the  associated symmetric space or Bruhat--Tits tree. Let $\Delta$ be a discrete subgroup of the analytic group $G$. If $\delta(\Delta) \le \frac{\dim_{\mathrm{Haus}}(\partial X)}{2}$ then $\{e\} \in \overline{\{\Delta^g \: : \: g \in G\}}$.
\end{theorem}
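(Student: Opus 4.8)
The plan is to argue by contradiction, assembling the pieces developed above around Theorem \ref{thm:critical exponent for SRS} and the lower semicontinuity of the critical exponent. Suppose the discrete subgroup $\Delta \le G$ is confined. Since both the rank one Lie group and the $k$-analytic group $G$ have no small subgroups, Proposition \ref{prop:Chabauty characterization of confined} shows that the trivial subgroup $\{e\}$ does not lie in the Chabauty closure of the conjugacy orbit $\{\Delta^g : g \in G\}$; concretely, there is a compact set $K_1 \subset G$ meeting every subgroup of this closure nontrivially. I would then form the Ces\`aro averages $\tfrac{1}{n}\sum_{i=1}^n \mu_0^{*i} * \mathrm{D}_\Delta$ and pass to a weak-$*$ accumulation point $\nu$. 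By the defining property of the measure $\mu_0$ constructed in \cite{GLM} together with \cite[Theorem 1.6]{fraczyk2023infinite}, the measure $\nu$ is a \emph{discrete} $\mu_0$-stationary random subgroup of $G$. Because each $\mu_0^{*i} * \mathrm{D}_\Delta$ is supported on conjugates of $\Delta$, the measure $\nu$ is supported on the Chabauty closure of the conjugacy orbit of $\Delta$; in particular $\nu$-almost every subgroup meets $K_1$ and is therefore non-trivial.

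Next I would check that the hypotheses of Theorem \ref{thm:critical exponent for SRS} are met. Since $X$ is a rank one symmetric space or a biregular Bruhat--Tits tree, it is a geodesically complete $\CAT{-1}$-space, so the elliptic radical $\mathrm{E}(\Isom{X})$ is trivial; hence $\nu$-almost every subgroup, being non-trivial, is not contained in $\mathrm{E}(\Isom{X})$. Replacing $\mu_0$ by the lattice-supported discretization $\mu_1$ is legitimate because $\mu_0$ and $\mu_1$ share the same stationary boundary measure $\nu_0 = \nu_1 = m$, so $\nu$ is also $\mu_1$-stationary. The measure $\mu_1$ is supported on the uniform lattice $\Gamma$, has finite $p$-th moment for some $p > 1$ (finite exponential moment in the real case, finite support in the tree case), and satisfies $\delta(\mu_1) = \delta(\Gamma) = \dim_\mathrm{Haus}(\partial X)$ by Lemma \ref{lem:discretized Brownian motion has delta = delta(Gamma)} in the real case and by \cite{mairesse2007random} in the non-Archimedean case. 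Theorem \ref{thm:critical exponent for SRS} therefore gives $\delta(\Delta') > \tfrac{1}{2}\delta(\mu_1) = \tfrac{1}{2}\dim_\mathrm{Haus}(\partial X)$ for $\nu$-almost every subgroup $\Delta'$.

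Finally I would extract the contradiction from semicontinuity. A $\nu$-typical subgroup $\Delta'$ is a Chabauty limit of conjugates $\Delta^{g_n}$, and the critical exponent is a conjugacy invariant (it is independent of the base point), so $\delta(\Delta^{g_n}) = \delta(\Delta)$ for every $n$. As $\Delta'$ is discrete, Proposition \ref{prop:delta is semi-continuous} yields $\delta(\Delta') \le \liminf_n \delta(\Delta^{g_n}) = \delta(\Delta)$. Combining this with the hypothesis $\delta(\Delta) \le \tfrac{1}{2}\dim_\mathrm{Haus}(\partial X)$ and the strict bound of the previous step produces $\tfrac{1}{2}\dim_\mathrm{Haus}(\partial X) < \delta(\Delta') \le \delta(\Delta) \le \tfrac{1}{2}\dim_\mathrm{Haus}(\partial X)$, which is absurd. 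Hence $\Delta$ is not confined. In the zero characteristic non-Archimedean case the passage through $\mu_0$ can be omitted, since $G$ then has no small discrete subgroups and accumulation points of the Ces\`aro averages are automatically discrete. The main obstacle in this argument is precisely securing discreteness of the stationary random subgroup $\nu$ — this is the single nontrivial input, supplied by the measure $\mu_0$ and the key inequality of \cite{GLM}; the remaining steps are bookkeeping around Theorem \ref{thm:critical exponent for SRS} and the lower semicontinuity of $\delta$.
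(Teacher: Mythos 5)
Your argument is correct and follows the paper's proof essentially verbatim: the same Ces\`aro limit $\nu$ of $\mu_0^{*i}*\mathrm{D}_\Delta$, discreteness of $\nu$ via \cite{GLM} and \cite[Theorem 1.6]{fraczyk2023infinite}, transfer of stationarity from $\mu_0$ to $\mu_1$ through the common boundary measure, the strict bound of Theorem \ref{thm:critical exponent for SRS}, and Chabauty lower semicontinuity of the critical exponent (you even correct the slip in the paper's proof, which writes $\delta(\mu_1)=\frac{1}{2}\dim_{\mathrm{Haus}}(\partial X)$ where $\dim_{\mathrm{Haus}}(\partial X)$ is intended). The one inaccuracy is your assertion that the non-Archimedean group $\mathbb{G}(k)$ has no small subgroups --- it has arbitrarily small compact open subgroups --- but since every subgroup in play is a conjugate of the discrete group $\Delta$, the relevant substitute is the absence of small \emph{discrete} subgroups (as the paper notes in residue characteristic zero), and the paper invokes Proposition \ref{prop:Chabauty characterization of confined} at exactly the same point.
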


Recall that if $G$ is a Lie group then the statement of Theorem  \ref{thm:not confined both cases at once} can be reformulated to say that $\Delta$ is not confined, see Proposition \ref{prop:Chabauty characterization of confined} for details.

\begin{proof}[Proof of Theorem \ref{thm:not confined both cases at once}]
Let $\mu_1$ be the probability measure supported on some fixed uniform lattice in the analytic group $G$ as  constructed above, e.g. the one constructed in \cite{Connell-Muchnik}. It  satisfies $\delta(\mu_1) = \delta(\Gamma) = \frac{\dim_{\mathrm{Haus}}(\partial X)}{2}$.

Consider a discrete subgroup $\Delta$ of the Lie group $G$ and assume that  $\delta(\Delta) \le \frac{\delta(\mu_1)}{2}$. Let $\nu$
be any weak-$*$ accumulation point of the sequence of probability measures $\nu_n = \frac{1}{n} \sum_{i=1}^n \mu_\textrm{0}^{*i} \ast \mathrm{D}_\Delta$. The key property of the probability measure $\mu_0$ implies that the resulting  random subgroup $\nu$ is \emph{discrete} and $\mu_0$-stationary \cite[Theorem 1.6]{fraczyk2023infinite}. The last paragraph of the preceding discussion shows that the random subgroup $\nu$ is at the same time $\mu_1$-stationary.

We remark that in the zero characteristic non-Archimedean case there is no need to consider the measure $\mu_0$ at all and we may work with the measure $\mu_1$ directly. 

We know from Theorem \ref{thm:critical exponent for SRS} that $\nu$-almost every \emph{non-trivial} subgroup $\Lambda$ satisfies $\delta(\Lambda) > \frac{\delta(\mu_1)}{2}$.  One the other hand, the critical exponent   is a conjugation invariant and a lower semi-continuous function on the Chabauty space of discrete subgroups by Proposition \ref{prop:delta is semi-continuous}. So $\delta(\Lambda) \le \delta(\Delta)$ must hold $\nu$-almost surely. We conclude that    $\nu = \mathrm{D}_{\{e\}}$ where $\{e\}$ is  the trivial subgroup of the Lie group $G$. In the particular   the trivial subgroup $\{e\}$   lies is the Chabauty closure of the conjugacy class of the subgroup $\Delta$  by the Portmanteau theorem. 
\end{proof}

\subsection*{General Gromov hyperbolic groups}

In the case of free groups we recover the result of \cite[Corollary 3.2]{fraczyk2020kesten}.

\begin{theorem}
\label{thm:not confined free group case}
Let $F_k$ be the free group of rank $k \in \mathbb{N}$ and $X_k$ be the Cayley graph of $F_k$ with respect to the standard generating set.  Let $H$ be a subgroup of $F_k$ with $\delta(H) \le \frac{\log(2k-1)}{2}$. Then $H$ is not confined.
\end{theorem}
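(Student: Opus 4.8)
The plan is to run the argument of Theorem~\ref{thm:not confined both cases at once} in the much simpler discrete setting, where the space $X$ is the $2k$-regular tree and the group $F_k$ is already discrete, so that no auxiliary measure $\mu_0$ is needed. First I would record the relevant parameters: $X$ is a proper $\CAT{-1}$ (indeed $0$-hyperbolic) space whose visual boundary satisfies $\dim_{\mathrm{Haus}}(\partial X)=\log(2k-1)$, the group $\Isom{X}$ is of general type with \emph{trivial} elliptic radical (an isometry of a tree fixing all of $\partial X$ must be the identity), and $F_k$ acts on $X$ freely and cocompactly. Take $\mu_k$ to be the uniform probability measure on the standard symmetric generating set. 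It is finitely supported, hence has finite $p$-th moment for every $p>1$, its support generates $F_k$ as a semigroup, and for $k\ge 2$ the group $F_k$ is non-amenable so that $l(\mu_k)>0$ and $h(\mu_k)>0$; by the computation recalled in the free-group corollary of \S\ref{sec:critical exponents} one has $\delta(\mu_k)=\tfrac{h(\mu_k)}{l(\mu_k)}=\log(2k-1)$.

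Next, given a subgroup $H\le F_k$ with $\delta(H)\le \tfrac{\log(2k-1)}{2}=\tfrac{\delta(\mu_k)}{2}$, I would form the Ces\`aro averages $\nu_n=\tfrac1n\sum_{i=1}^n \mu_k^{*i}*\mathrm{D}_H$ on the compact Chabauty space $\Sub{F_k}$ and pass to a weak-$*$ accumulation point $\nu$. Then $\nu$ is $\mu_k$-stationary, and since every subgroup of the discrete group $F_k$ is discrete, $\nu$ is automatically a \emph{discrete} $\mu_k$-stationary random subgroup; this is precisely the point at which the free-group case sidesteps the discreteness issue that forced the use of the measure $\mu_0$ in the analytic case. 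Each $\nu_n$ is supported on the conjugacy class $\{H^g:g\in F_k\}$, so $\nu$ is supported on its Chabauty closure.

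The two competing estimates now collide exactly as before. On the one hand, restricting $\nu$ to the conjugation-invariant Borel set of nontrivial subgroups and renormalizing gives a $\mu_k$-stationary random subgroup whose almost-every member is nontrivial, hence (the elliptic radical being trivial) not contained in $\mathrm{E}(\Isom{X})$; Theorem~\ref{thm:critical exponent for SRS} then yields $\delta(\Lambda)>\tfrac{\delta(\mu_k)}{2}$ for $\nu$-almost every nontrivial $\Lambda$. On the other hand, for any $\Lambda$ in the Chabauty closure of the conjugacy class of $H$ there is a sequence $H^{g_n}\to\Lambda$, so the conjugation-invariance of $\delta$ together with its lower semicontinuity (Proposition~\ref{prop:delta is semi-continuous}) gives $\delta(\Lambda)\le\liminf_n\delta(H^{g_n})=\delta(H)\le\tfrac{\delta(\mu_k)}{2}$. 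These two bounds are incompatible on any set of nontrivial subgroups of positive $\nu$-mass, forcing $\nu=\mathrm{D}_{\{e\}}$. Thus the trivial subgroup lies in the Chabauty closure of the conjugacy class of $H$, and since the discrete group $F_k$ has no small subgroups, Proposition~\ref{prop:Chabauty characterization of confined} shows that $H$ is not confined.

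The main obstacle is entirely packaged inside the already-proved Theorem~\ref{thm:critical exponent for SRS}; the remaining work is bookkeeping. The one point demanding genuine care is the \emph{direction} of the semicontinuity inequality: one must push the pointwise bound $\delta(\Lambda)\le\delta(H)$ through the limit along conjugates using lower semicontinuity, rather than attempting to integrate $\delta$ against the $\nu_n$. One should also confirm at the outset that $\mathrm{E}(\Isom{X})$ is trivial, so that ``not contained in the elliptic radical'' is synonymous with ``nontrivial''—this is what makes the restriction-to-nontrivial-subgroups step legitimate and lets Theorem~\ref{thm:critical exponent for SRS} apply.
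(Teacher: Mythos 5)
Your proposal is correct and follows essentially the same route as the paper: run the argument of Theorem~\ref{thm:not confined both cases at once} with the uniform measure $\mu_k$ (for which $\delta(\mu_k)=\log(2k-1)$), noting that discreteness of the limiting stationary random subgroup is automatic because $F_k$ is itself discrete, so no analogue of $\mu_0$ is needed. The paper's proof is just a one-paragraph reduction to that earlier argument; you have merely written out the bookkeeping (Ces\`aro averages, triviality of the elliptic radical, the direction of the semicontinuity inequality, and the Chabauty characterization of confinement) explicitly and correctly.
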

\begin{proof}
The proof follows the same strategy as the proof of Theorem \ref{thm:not confined both cases at once}.  This time we use the uniform probability measure $\mu_k$ supported on the standard symmetric generating set for the group $F_k$. It is shown in \cite{mairesse2007random} that the measure $\mu_k$  satisfies 
\begin{equation}
\delta(\mu_k) =  \delta(\Gamma) = \dim_\mathrm{Haus}(\partial X_k) = \log(2k-1).
\end{equation}
The proof is simpler in this case as there is no need to worry about discreteness of the limiting stationary random subgroup (for the group $F_k$ itself is discrete).
\end{proof}

Finally we state a weaker analogue of Theorems \ref{thm:not confined both cases at once} and \ref{thm:not confined free group case} that applies more generally to all Gromov hyperbolic metric spaces. 


\begin{theorem}
\label{thm:not confined general Gromov hyperbolic case}
Let $X$ be a proper Gromov hyperbolic geodesic metric space with   isometry group $\Isom{X}$. Assume that the group $\Isom{X}$ admits a discrete subgroup $\Gamma$ which acts properly and cocompactly and    which intersects the elliptic radical $\mathrm{E}(\Isom{X})$ 
trivially. If $\Delta$ is any subgroup of $\Gamma$ with $\delta(\Delta) < \frac{\delta(\Gamma)}{2}$ then $\Delta$ is not confined. 
\end{theorem}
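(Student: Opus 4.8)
The plan is to follow the strategy of the proof of Theorem \ref{thm:not confined both cases at once}, but in the present discrete setting the argument becomes considerably simpler: every subgroup of $\Gamma$ is automatically discrete and the Chabauty space $\Sub{\Gamma}$ is compact, so any accumulation point of the relevant Ces\`aro averages is already a discrete stationary random subgroup and there is no need for the auxiliary discretization measure. First I would produce a suitable driving measure. Since $\delta(\Delta) < \tfrac12\delta(\Gamma)$ we have $2\delta(\Delta) < \delta(\Gamma)$, and because $\Gamma$ is a uniform lattice in $\Isom{X}$ the approximation result quoted in the introduction (the construction of \cite{gouezel2018entropy}) supplies finitely supported probability measures $\mu_i$, each supported on a finite symmetric generating set of $\Gamma$, with $\delta(\mu_i)\to\delta(\Gamma)$. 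Fix one of them, call it $\mu$, satisfying $\delta(\mu) > 2\delta(\Delta)$; being finitely supported it has finite $p$-th moment for every $p>1$, and its support generates the discrete cocompact group $\Gamma$, so $\mu$ meets the hypotheses of Theorem \ref{thm:critical exponent for SRS}.

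Next I would form the Ces\`aro averages $\nu_n = \frac1n\sum_{k=1}^n \mu^{*k} * \mathrm{D}_\Delta$ on the compact space $\Sub{\Gamma}\subset\Sub{\Isom{X}}$ and let $\nu$ be any weak-$*$ accumulation point. As recalled in the introduction, $\nu$ is a $\mu$-stationary random subgroup, and since each $\mu^{*k}*\mathrm{D}_\Delta$ is supported on conjugates $\Delta^g$ of $\Delta$ and the critical exponent is conjugation-invariant, all mass of every $\nu_n$ sits on subgroups $\Lambda$ with $\delta(\Lambda)=\delta(\Delta)$. The key step is then to apply Theorem \ref{thm:critical exponent for SRS}. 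The hypothesis $\Gamma\cap\mathrm{E}(\Isom{X})=\{e\}$ means that the only subgroup of $\Gamma$ lying in the elliptic radical is the trivial one, so every non-trivial subgroup of $\Gamma$ is not contained in $\mathrm{E}(\Isom{X})$. The set of non-trivial subgroups is invariant under conjugation; hence, if it carries positive $\nu$-mass, the conditional measure $\nu'$ is again $\mu$-stationary, and applying Theorem \ref{thm:critical exponent for SRS} to $\nu'$ shows that $\nu$-almost every non-trivial subgroup $\Lambda$ satisfies $\delta(\Lambda) > \tfrac12\delta(\mu) > \delta(\Delta)$.

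On the other hand, by the lower semicontinuity of the critical exponent (Proposition \ref{prop:delta is semi-continuous}) the sublevel set $\{\Lambda\in\Sub{\Gamma} : \delta(\Lambda)\le\delta(\Delta)\}$ is Chabauty-closed, and each $\nu_n$ is supported on it, so the Portmanteau theorem gives $\nu(\{\delta\le\delta(\Delta)\})=1$. This is incompatible with the strict lower bound on the non-trivial subgroups, so the set of non-trivial subgroups must be $\nu$-null, i.e. $\nu = \mathrm{D}_{\{e\}}$. Since $\mathrm{D}_{\{e\}}$ is thereby the unique accumulation point of $\nu_n$, compactness of $\Sub{\Gamma}$ yields $\nu_n\to\mathrm{D}_{\{e\}}$, and applying Portmanteau to a Chabauty neighbourhood of $\{e\}$ forces some conjugate $\Delta^g$ into every such neighbourhood. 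Hence $\{e\}$ lies in the Chabauty closure of the conjugacy class $\{\Delta^g : g\in\Gamma\}$. As the discrete group $\Gamma$ has no small subgroups, the converse direction of Proposition \ref{prop:Chabauty characterization of confined} then concludes that $\Delta$ is not confined.

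The only genuinely delicate point is reconciling the two opposing bounds on $\delta$: the strict lower bound coming from Theorem \ref{thm:critical exponent for SRS} requires $\nu$-generic subgroups to be non-trivial, equivalently not contained in the elliptic radical, and this is precisely where the hypothesis $\Gamma\cap\mathrm{E}(\Isom{X})=\{e\}$ enters. Everything else is a routine assembly of results already established, and in contrast to the analytic setting of Theorem \ref{thm:not confined both cases at once} there is no discreteness issue to overcome, since $\Gamma$ and hence all of its subgroups are discrete.
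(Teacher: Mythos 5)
Your proposal is correct and follows essentially the same route as the paper: approximate $\delta(\Gamma)$ by $\delta(\mu)$ for a finitely supported measure on $\Gamma$ via \cite{gouezel2018entropy}, take a weak-$*$ accumulation point of the Ces\`aro averages $\frac{1}{n}\sum_i \mu^{*i}*\mathrm{D}_\Delta$ (automatically discrete since it lives on $\Sub{\Gamma}$), and play Theorem \ref{thm:critical exponent for SRS} against the Chabauty semicontinuity of the critical exponent to force $\nu=\mathrm{D}_{\{e\}}$. The only cosmetic difference is that the paper's proof uses uniform measures on annuli $\{\gamma : k\le\|\gamma\|<k+1\}$ rather than on generating sets, and your explicit handling of the conditioning on non-trivial subgroups and of the role of the hypothesis $\Gamma\cap\mathrm{E}(\Isom{X})=\{e\}$ is a welcome elaboration of steps the paper leaves implicit.
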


The above result applies in particular in the case where the group $\Isom{X}$ itself is discrete.

\begin{proof}[Proof of Theorem \ref{thm:not confined general Gromov hyperbolic case}]
Let $\mu_k$ be the uniform probability measure supported on the finite set 
\begin{equation}
\label{eq:definition of annulus discrete}
  \{\gamma \in \Gamma \: : \: k \le \|\gamma\| < k+1\}
\end{equation}  
for every $k \in \mathbb{N}$. The sequence of probability measures $\mu_k$ satisfies 
 $\delta(\mu_k)\xrightarrow{k\to\infty}  \delta(\Gamma)$  \cite[Theorem 1.4]{gouezel2018entropy}. In particular  $\delta(\Delta) \le \frac{\delta(\mu_{k_0})}{2}$ holds for some fixed sufficiently large $k_0 \in \mathbb{N}$. Let $\nu$
be any weak-$*$ accumulation point of the sequence of probability measures $\nu_{n} = \frac{1}{n} \sum_{i=1}^n \mu_{k_0}^{*i} \ast \mathrm{D}_\Delta$. Since the convolution powers  $\mu_{k_0}^{*i}$ are all supported on the discrete subgroup $\Gamma$ and as $ \Delta \le \Gamma$, the resulting $\mu_{k_0}$-stationary random subgroup $\nu$ is almost surely contained in the discrete subgroup $\Gamma$. From this point we may conclude exactly as in the proof of Theorem \ref{thm:not confined both cases at once}.
\end{proof}

Lastly, if the proper space $X$ in Theorem \ref{thm:not confined general Gromov hyperbolic case} is  $\mathrm{CAT}(-1)$ rather than simply Gromov hyperbolic, then the strict inequality can be replaced by a non-strict inequality. Namely, any subgroup  $\Delta \le \Gamma$ with $\delta(\Delta)\leq  \frac{\delta(\Gamma)}{2}$ is not confined.
 

\bibliographystyle{alpha}
\bibliography{hyperbolicsrs}

\end{document}